\colorlet{P.-L.color}{OliveGreen}
\colorlet{FZcolor}{DarkOrchid}
\newcommand{\real}{\mathbb{R}}
\newcommand{\ints}{\mathbb{Z}}
\newcommand{\rats}{\mathbb{Q}}
\newcommand{\nats}{\mathbb{N}}
\newcommand{\proj}{\mathbf{P}}
\newcommand{\HH}{\mathbb{H}}
\newcommand{\Isom}{\mathrm{Isom}}
\DeclareMathOperator{\PGL}{PGL}
\DeclareMathOperator{\GL}{GL}
\DeclareMathOperator{\SL}{SL}
\DeclareMathOperator{\SO}{SO}
\DeclareMathOperator{\PSO}{PSO}
\DeclareMathOperator{\PO}{PO}
\DeclareMathOperator{\orth}{O}
\newcommand{\eps}{\epsilon}
\newcommand{\del}{\partial}
\DeclareMathOperator{\id}{id}
\DeclareMathOperator{\Stab}{Stab}
\newcommand{\actson}{\curvearrowright}
\DeclareMathOperator{\axis}{axis}
\DeclareMathOperator{\Aut}{Aut}
\DeclareMathOperator{\Geod}{Geod}
\DeclareMathOperator{\Hopf}{Hopf}
\newcommand{\grop}[3]{\langle #2, #3 \rangle_{#1}}
\newcommand{\dirac}{\mathcal{D}}
\newcommand{\horoball}{\mathcal{H}}
\newcommand{\horosphere}{\del\horoball}
\newcommand{\hor}{\mathtt h}
\newcommand{\orb}{\mathrm{orb}}
\newcommand{\prox}{\mathrm{prox}}
\newcommand{\con}{\mathrm{con}}
\newcommand{\sse}{\mathrm{sse}}
\newcommand{\bip}{\mathrm{bip}}
\newcommand{\spl}{\mathrm{spl}}
\newcommand{\cone}{\mathcal{C}}
\newcommand{\shadow}{\mathcal{O}}
\newcommand{\limshade}{\mathcal{L}}
\newcommand{\st}{\:|\:}
\newcommand{\flip}{\iota}
\newcommand{\Epsy}{\mathcal{E}}
\newcommand{\Em}{\mathcal{M}}
\DeclareMathOperator{\Leb}{Leb}
\DeclareMathOperator{\supp}{supp}
\newcommand{\Rnplusone}{\real^{\mathrm n+1}}
\newcommand{\RPn}{\proj(\Rnplusone)}
\newcommand{\dimension}{\mathrm{n}}
\DeclareMathOperator{\Span}{span}
\newcommand{\rkone}{\mathrm{r1}}
\newcommand{\prkone}{\mathrm{pr1}}
\newcommand{\Acal}{\mathcal{A}}
\newcommand{\Bcal}{\mathcal{B}}
\newcommand{\Ccal}{\mathcal{C}}
\DeclareMathOperator{\interior}{int}
\newcommand{\corfix}{F}
\newcommand{\nbconj}{N}
\newcommand{\Gdelta}{$\mathrm{G}_\delta$}
\newcommand{\diff}{d}
\newcommand{\widesim}[1][1.5]{\mathrel{\scalebox{#1}[1]{$\sim$}}}
\newcommand{\ie}{i.e.\ }
\newcommand{\resp}{resp.\ }
\theoremstyle{plain}
\newtheorem{thm}{Theorem}[section]
\newtheorem{lem}[thm]{Lemma}
\newtheorem{prop}[thm]{Proposition}
\newtheorem{corn}[thm]{Corollary}
\newtheorem*{thm*}{Theorem}
\newtheorem{cor}[thm]{Corollary}
\newtheorem{introthm}{Theorem}
\newtheorem{introcorn}[introthm]{Corollary}
\theoremstyle{definition}
\newtheorem{defn}[thm]{Definition}
\newtheorem{question}[thm]{Question}
\newtheorem{notation}[thm]{Notation}
\newtheorem{eg}[thm]{Example}
\newtheorem{rmk}[thm]{Remark}
\newtheorem{obs}[thm]{Observation}
\numberwithin{equation}{section}
\title{Ergodicity and equidistribution in Hilbert geometry}
\author{Pierre-Louis Blayac}
\author{Feng Zhu}
\begin{document}

\begin{abstract}
    We show that dynamical and counting results characteristic of negatively curved Riemannian geometry, or more generally CAT($-1$) or rank-one CAT(0) spaces, also hold for rank-one properly convex projective manifolds or orbifolds, equipped with their Hilbert metrics, admitting finite Sullivan measures built from appropriate conformal densities. In particular, this includes geometrically finite convex projective manifolds or orbifolds whose universal covers are strictly convex with $C^1$ boundary.
    
    More specifically, with respect to the Sullivan measure, the Hilbert geodesic flow is strongly mixing, and orbits and primitive closed geodesics equidistribute, allowing us to asymptotically enumerate these objects. 
\end{abstract}

\maketitle

In his influential thesis \cite{Margulis}, Margulis established counting results for negatively curved closed manifolds, by means of ergodicity and equidistribution results for the geodesic flows on these manifolds with respect to a suitable measure on the unit tangent bundle, called the Bowen--Margulis measure. 

In \cite{Roblin}, Roblin extended Margulis' results to the setting of quotients, not necessarily compact, of CAT($-1$) spaces $X$ by discrete subgroups of isometries $\Gamma \leq \Isom(X)$. More precisely, Roblin's results include ergodicity of the horospherical foliations, mixing of the geodesic flow, equidistribution of group orbits, equidistribution of primitive closed geodesics, and, in the geometrically finite case, asymptotic counting estimates. 

One key set of tools that allowed Roblin to work in this generality are Patterson--Sullivan densities, first developed in the context of real hyperbolic spaces by Patterson and Sullivan \cite{Patterson,Sullivan}. These densities are families of measures on the boundary at infinity. They are especially well-adapted to the geometry of the $\Gamma$-orbits in $X$, and they can be used to define useful flow-invariant measures on the unit tangent bundle, referred to here as Sullivan measures, which include the Bowen--Margulis measure. Patterson--Sullivan theory also has other applications than to equidistribution and counting problems, e.g.\ it can be used to compute Hausdorff dimensions of limit sets; for more on this topic, see the recent historical notes in \cite[p.\,2]{DeyKapovich}.

The techniques used by Margulis, Patterson, Sullivan, Roblin and others were later adapted to even more general settings. For instance, Link \cite{Link} recently used these to prove equidistribution results in rank-one quotients of Hadamard spaces.

Here we consider manifolds (and orbifolds) endowed with properly convex projective structures and associated Hilbert metrics (below, sometimes referred to as ``Hilbert geometries''). Properly convex Hilbert geometries, even strictly convex ones, are in general not CAT$(-1)$ or even CAT$(0)$ (see e.g.\ \cite[App.\,B]{Egloff}). 
Nevertheless, those of them which satisfy a rank-one condition, analogous to the rank-one condition in Riemannian or CAT(0) geometry, do exhibit substantial similarities to negatively curved Riemannian geometries.
In particular, 
there is a good theory of Patterson--Sullivan measures on these geometries, see \cite{crampon_these,zhu_conf_densities,Bray,Blayac_PSdensities}.
They also come with geodesic flows whose dynamics exhibit features of negative curvature; the study of the dynamics of these flows was initiated by Benoist \cite{Benoist_CDI}, with further contributions from Crampon--Marquis \cite{CM14} and Bray \cite{Bray}.

In \cite{Blayac_topmixing} and \cite{Blayac_PSdensities}, the first author obtained good dynamical properties for the Hilbert geodesic flow in the setting of rank-one properly convex Hilbert geometries, including topological mixing and strong mixing of the geodesic flow with respect to a Sullivan measure (constructed via a Patterson--Sullivan density).

In this paper, we show that we can use this mixing to obtain some of Roblin's equidistribution results in the setting of rank-one properly convex Hilbert geometries. These further generalize analogous results obtained by the second author for geometrically finite strictly convex Hilbert geometries in \cite{zhu_conf_densities}.

\subsection{Main results}

Our results will apply to properly convex domains $\Omega$ of the real projective space $\RPn = \real\proj^{\dimension}$ and rank-one subgroups $\Gamma \leq \Aut(\Omega)$, where $\Aut(\Omega) \leq \PGL(\Rnplusone)$ is the subgroup of invertible projective transformations preserving $\Omega$. 

A properly convex domain of $\RPn$ is an open set which is contained in some affine chart of $\RPn$ and which is bounded and convex in that affine chart in the usual Euclidean sense. {\bf Throughout this paper, we will write ``domain'' to denote a properly convex domain.}

A domain $\Omega$ comes with a 
Finsler metric $d_\Omega$, called the Hilbert metric (see \S\ref{sub:hilbgeom} for the full definition). The intersection of any projective line of $\RPn$ with $\Omega$, if non-empty, is a $d_\Omega$-geodesic, called a straight-line geodesic.
This allows us to define a Hilbert geodesic flow $(g^t)_{t\in\real}$ on the unit tangent bundle $S\Omega$ (it parametrises the straight-line geodesics). The subgroup $\Aut(\Omega)\leq\PGL(\Rnplusone)$ acts isometrically with respect to $d_\Omega$, and commutes with the action of $(g^t)$. Thus, given a discrete subgroup $\Gamma\leq\Aut(\Omega)$, the flow $(g^t)$ descends to a geodesic flow $(g^t_\Gamma)$ on the unit tangent bundle $S\Omega/\Gamma$ of the convex projective manifold or orbifold $\Omega/\Gamma$.

Benoist \cite{Benoist_CDI} studied the dynamics of $(g^t_\Gamma)$ on $S\Omega/\Gamma$ in the case where $\Gamma$ divides (\ie acts cocompactly on) $\Omega$ and $\Omega$ is strictly convex (\ie $\del\Omega$ contains no non-trivial line segments). Crampon--Marquis \cite{CM14} studied $(g^t_\Gamma)$ in cases where $\Omega$ is strictly convex and $\Omega/\Gamma$ is not necessarily compact, and Bray \cite{Bray} studied the case where $\Omega/\Gamma$ is compact three-dimensional, and $\Omega$ is not necessarily strictly convex.

In this paper, we study the dynamics of $(g^t_\Gamma)$ when $\Gamma\leq \Aut(\Omega)$ is non-elementary rank-one, \ie $\Gamma$ contains at least two elements $\gamma_1, \gamma_2 \in \Aut(\Omega)$ each preserving a different axis --- a straight-line geodesic $\axis(\gamma_i)$ of $\Omega$ both of whose endpoints are $C^1$ and strongly extremal points in $\del\Omega$. We refer the reader to \S\ref{subsec:bdry pts} for the precise definitions of $C^1$ and strongly extremal, and \S\ref{sec:rk1} for a longer description of the rank-one condition, which is due to M.\ Islam \cite{Mitul_rk1} and A.\ Zimmer \cite{zimmer_higher_rank}.

We can build a Sullivan measure on $S\Omega/\Gamma$, associated to a Patterson--Sullivan density on the boundary $\del\Omega$ (i.e.\ a measure on $\del\Omega$ satisfying good properties; see \S\ref{subsec:pat_sul} for the details) and prove that the Hilbert geodesic flow $(g^t_\Gamma)$ on $S\Omega / \Gamma$ is mixing with respect to this measure when it is finite.

Mixing then gives us, via an argument of Babillot \cite{Babillot}, equidistribution of the unstable horospheres; the precise statement is a bit technical and we refer the interested reader to Theorem \ref{thm:horosphere_equidist} for this result. Moreover, we have equidistribution of group orbits and of primitive closed geodesics:
\begin{introthm}\label{introthm:A}
Let $\Omega$ be a domain and $\Gamma \leq \Aut(\Omega)$ a non-elementary rank-one discrete subgroup such that $S \Omega / \Gamma$ admits a finite Sullivan measure $m_\Gamma$ associated to a $\Gamma$-equivariant conformal density $(\mu_x)_{x\in\Omega}$ of dimension $\delta = \delta(\Gamma)$.
Then 
\begin{enumerate}[(i)]
\item \label{introthm:orbit_equidist} (Equidistribution of group orbits, Theorem \ref{thm:orbit_equidist})
for all $x, y \in \Omega$, 
\[ \delta \|m_\Gamma\| e^{-\delta t} \, \sum_{\mathclap{\substack{\gamma \in \Gamma\\ d_\Omega(x, \gamma y) \leq t}}} \, \dirac_{\gamma y} \otimes \dirac_{\gamma^{-1} x} \xrightarrow[t\to+\infty]{} \mu_x \otimes \mu_y \] 
in $C(\bar\Omega \times \bar\Omega)^*$, the weak*-dual to the space of continuous functions on $\bar\Omega \times \bar\Omega$;
\item \label{introthm:pcgeod_equidist} (Equidistribution of primitive closed geodesics, Theorem \ref{thm:pcgeod_equidist})
writing $\mathcal{G}^{r1}_\Gamma(\ell)$ to denote the set of primitive closed rank-one geodesics (\ie periodic $(g^t_\Gamma)$-orbits lifting to axes of rank-one elements) of length at most $\ell$, we have
\[ \delta \ell e^{-\delta \ell} \sum_{g \in \mathcal{G}^{r1}_\Gamma(\ell)} \dirac_g \xrightarrow[\ell\to+\infty]{} \frac{m_\Gamma}{\|m_\Gamma\|} \]
in $C_c(S\Omega / \Gamma)^*$, the weak*-dual to the space of compactly-supported continuous functions on $S\Omega / \Gamma$.
\end{enumerate}
\end{introthm}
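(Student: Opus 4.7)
The plan is to follow Roblin's strategy, which derives orbit and closed-geodesic equidistribution from strong mixing of the geodesic flow---already established in this setting---using the Hopf parametrization to identify $S\Omega$ with an open subset of $\partial\Omega \times \partial\Omega \times \real$. In these coordinates the Sullivan measure $m_\Gamma$ has an essentially product form $e^{-\delta \grop{\cdot}{\cdot}{\cdot}}\, d\mu \otimes d\mu \otimes dt$ involving a Hilbert-adapted Gromov product $\grop{\cdot}{\cdot}{\cdot}$. The core step is to apply mixing to indicator functions of small flow boxes $B = A^- \times A^+ \times [0,\varepsilon]$ and to reorganize the $\Gamma$-sum in $\langle g^t_\Gamma \mathbf{1}_B, \mathbf{1}_{B'} \rangle$ into a count of $\gamma \in \Gamma$ such that $\gamma B$ meets $g^{-t} B'$; geometrically these are precisely the $\gamma$ with $\gamma^{-1}x$ and $\gamma y$ in prescribed shadows on $\partial\Omega$ and $d_\Omega(x,\gamma y) \approx t$.

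For part (i), I would place basepoints near $x$ and $y$, cover a neighborhood of the relevant locus in $\bar\Omega\times\bar\Omega$ by products of small open sets $A^+\times A^-$, and apply a Sullivan-type shadow lemma for rank-one Hilbert geometries (available in the Patterson--Sullivan references cited above) to turn shadow $\mu$-masses into the needed factor $e^{-\delta d_\Omega(x,\gamma y)}$. Strong mixing then produces the desired convergence when tested against the indicators $\mathbf{1}_{A^+}\otimes \mathbf{1}_{A^-}$, and a standard approximation argument extends this to arbitrary $\varphi\in C(\bar\Omega\times\bar\Omega)$. One must cut off a neighborhood of the non-rank-one part of $\partial\Omega$; its $\mu$-mass can be made small because the conformal density is supported on the limit set, which lies in the rank-one locus under the non-elementary rank-one hypothesis.

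For part (ii), I would parametrize $g \in \mathcal{G}^{r1}_\Gamma(\ell)$ by $\Gamma$-conjugacy classes of primitive rank-one loxodromic $\gamma$, whose axes project to $g$ with period $\ell(\gamma)=d_\Omega(x,\gamma x)-O(1)$ for $x$ near $\axis(\gamma)$. A Selberg-style unfolding writes the sum over such conjugacy classes with $\ell(\gamma)\le\ell$ as a sum over all rank-one loxodromics, with powers $\gamma^n$, $n\geq 2$, suppressed by the exponential factor $e^{-\delta\ell}$. Applying (i) with $x=y$ to test functions localized near $(\xi^-(\gamma),\xi^+(\gamma))$, combined with the transverse structure of $m_\Gamma$ in Hopf coordinates, converts this into the equidistribution of the arc-length measures on closed geodesics against $m_\Gamma/\|m_\Gamma\|$ when paired with any $\varphi\in C_c(S\Omega/\Gamma)$.

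The principal obstacle, relative to the CAT$(-1)$ case treated by Roblin, is the absence of (nonpositive) curvature: the Hopf parametrization, shadow lemma, and Gromov product are only well-behaved on the rank-one part of $\partial\Omega$. Ensuring that test functions can effectively be restricted to this locus, and that orbit elements and closed geodesics associated with non-rank-one boundary points make a negligible contribution in the limit, will require combining the rank-one hypothesis on $\Gamma$, the support of the conformal densities, and finiteness of $m_\Gamma$ to obtain uniform shadow-lemma bounds---of the same flavor as what is needed for the companion horosphere equidistribution result (Theorem~\ref{thm:horosphere_equidist}).
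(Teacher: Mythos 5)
Your outline follows essentially the same route as the paper: part (i) is proved by applying strong mixing to flow boxes $K^{\pm}(z,r,A)$ of vectors over geodesics with endpoints in prescribed shadows, converting the resulting count into measures of cones via conformality and shadow estimates, discarding the non-rank-one locus using a shadow-lemma domination of the orbital counting measure by $\mu_x$, and concluding by a partition of unity; part (ii) is deduced from (i) with $x=y$ by passing from orbit points to fixed-point pairs via a closing lemma and handling powers of strongly primitive elements exactly as in your ``unfolding'' step. The only caveats are presentational (the Hopf map is only a bijection off a null set in the non-strictly-convex case, and it is the Patterson--Sullivan measure, not the limit set itself, that concentrates on the rank-one locus), and the paper addresses both points where needed.
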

Here $\dirac_x$ denotes the Dirac mass at $x$ with mass $1$, and $\dirac_g$ denotes the Lebesgue measure supported along $g \in \mathcal{G}_\Gamma(\ell)$ with mass $1$.  Moreover, $\delta(\Gamma)$ is the \emph{critical exponent} of $\Gamma$ with respect to the Hilbert metric $d_\Omega$, \ie for any $x\in\Omega$, $$\delta(\Gamma)=\limsup_{r\to\infty}\frac1r \log\# \{\gamma \in \Gamma \:|\: d_\Omega(x,\gamma x)\leq r\}.$$

Integrating a constant function against both sides of Theorem~\ref{introthm:A}.\eqref{introthm:orbit_equidist} yields the following counting result for orbit points: for any $x, y \in \Omega$,
\begin{equation}\label{eq:counting_orbit}
\#\{\gamma\in\Gamma\st d_\Omega(x,\gamma y)\leq t\}\underset{t\to\infty}{\widesim}\frac{\|\mu_x\|\cdot\|\mu_y\| \cdot e^{\delta t}}{\delta\|m_\Gamma\|}.
\end{equation}

In the case of a compact quotient, we may also integrate a constant function against both sides in Theorem~\ref{introthm:A}.\eqref{introthm:pcgeod_equidist} to obtain a counting result for primitive closed geodesics.

More generally, we may obtain a similar counting result when $\Gamma$ acts convex cocompactly on $\Omega$, i.e.\ the convex core of $\Omega/\Gamma$ is non-empty and compact. This notion is due to Danciger--Gu\'eritaud--Kassel \cite{DGK}; see \S\ref{sec:geomfin} for more details.
Closed geodesics are always contained in the convex core, and if $\Gamma$ is non-elementary rank-one and convex cocompact, then it can be proved \cite[\S7.1]{Blayac_PSdensities} that  $m_\Gamma$ is concentrated on vectors based in the convex core (and hence finite). Thus integrating a compactly-supported function which is constant on the convex core in \eqref{introthm:pcgeod_equidist} yields:

\begin{introcorn}\label{cor:counting_geod}
Let $\Omega$ be a domain and $\Gamma \leq \Aut(\Omega)$ a convex cocompact, non-elementary rank-one and discrete subgroup. Then
\begin{equation*}
 \#\mathcal G^{r1}_\Gamma(\ell) \underset{\ell\to\infty}{\widesim}\frac{e^{\delta\ell}}{\delta\ell}.   
\end{equation*}
\end{introcorn}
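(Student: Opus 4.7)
The plan is to deduce the corollary by pairing the equidistribution statement in Theorem~\ref{introthm:A}.\eqref{introthm:pcgeod_equidist} against a suitable bump function. Since $\Gamma$ is convex cocompact, the image $K$ of the convex core in $S\Omega/\Gamma$ is compact, so I can choose a function $\phi\in C_c(S\Omega/\Gamma)$ with $0\leq \phi\leq 1$ and $\phi\equiv 1$ on $K$. The whole argument is then to compute the limit of $\int\phi\,d\nu_\ell$ on both sides of the equidistribution, where $\nu_\ell$ denotes the measure $\delta\ell e^{-\delta\ell}\sum_{g\in\mathcal G^{r1}_\Gamma(\ell)}\dirac_g$.

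First I would handle the left-hand side. Every primitive closed geodesic $g\in\mathcal G^{r1}_\Gamma(\ell)$ lifts to the axis of a rank-one element of $\Gamma$, whose two endpoints lie in the limit set, so $g$ is contained in $K$. Since $\dirac_g$ has total mass $1$ and $\phi\equiv 1$ on $K$, we obtain $\int\phi\,d\dirac_g=1$, and hence
\[
\int\phi\,d\nu_\ell \;=\; \delta\ell\, e^{-\delta\ell}\,\#\mathcal G^{r1}_\Gamma(\ell).
\]

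Next I would handle the right-hand side. By the fact recalled just before the corollary (citing \cite[\S7.1]{Blayac_PSdensities}), the finite Sullivan measure $m_\Gamma$ is concentrated on vectors based in the convex core, i.e.\ on $K$. Since $\phi\equiv 1$ there, $\int\phi\,dm_\Gamma=\|m_\Gamma\|$, and therefore $\int\phi\,d(m_\Gamma/\|m_\Gamma\|)=1$. Applying Theorem~\ref{introthm:A}.\eqref{introthm:pcgeod_equidist} to $\phi$ then yields
\[
\delta\ell\, e^{-\delta\ell}\,\#\mathcal G^{r1}_\Gamma(\ell) \xrightarrow[\ell\to+\infty]{} 1,
\]
which is the asymptotic stated in the corollary.

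There is no real obstacle here: the whole point of the hypothesis of convex cocompactness is to ensure both that the closed geodesics and the support of $m_\Gamma$ sit inside a single compact set, so that a constant function on that set can be replaced by a valid test function in $C_c(S\Omega/\Gamma)$. The substantive work is all carried by Theorem~\ref{introthm:A}.\eqref{introthm:pcgeod_equidist} and the cited finiteness/support result for $m_\Gamma$.
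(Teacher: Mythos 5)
Your proof is correct and is essentially the paper's own argument: the authors likewise deduce the corollary by integrating a compactly supported test function equal to $1$ on the (compact) unit tangent bundle over the convex core against both sides of Theorem~\ref{introthm:A}.\eqref{introthm:pcgeod_equidist}, using that closed geodesics lie in the convex core and that $m_\Gamma$ is concentrated on vectors based there (citing \cite[\S7.1]{Blayac_PSdensities}). No discrepancies to report.
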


If $\Omega$ is strictly convex with $C^1$ boundary (in which case all points of $\partial\Omega$ are $C^1$ and strongly extremal), then Corollary \ref{cor:counting_geod} can be extended to the class of discrete subgroups $\Gamma \leq \Aut(\Omega)$ acting geometrically finitely on $\del\Omega$, in the sense of Crampon--Marquis \cite{CM12}; here the convex core of $\Omega/\Gamma$ is not necessarily compact. 
The notion of geometric finiteness for strictly convex domains $\Omega$ with $C^1$ boundary is analogous to the negatively curved Riemannian notion of geometric finiteness; for definitions and a fuller description we refer the reader to \S\ref{sub:geomfin}. 

For this class of subgroups, we actually prove a stronger equidistribution result for the closed geodesics.

\begin{introthm}\label{introthm:B}
Let $\Omega$ be a strictly convex domain with $C^1$ boundary and $\Gamma \leq \Aut(\Omega)$ a non-elementary discrete subgroup acting geometrically finitely on $\del\Omega$. Then 
\begin{enumerate}[(i)]
\setcounter{enumi}{2}
\item (Theorem \ref{thm:finite_BMmeas}) $S\Omega / \Gamma$ admits a finite Sullivan measure $m_\Gamma$, associated to a $\Gamma$-equivariant conformal density of dimension $\delta = \delta(\Gamma)$;
\item \label{introthm:pcgeod_equidist_geomfin} (Theorem \ref{thm:pcgeod_equidist_geomfin})
writing $\mathcal{G}_\Gamma(\ell)$ to denote the set of primitive closed geodesics of length at most $\ell$,
\[ \delta \ell e^{-\delta \ell} \sum_{g \in \mathcal{G}_\Gamma(\ell)} \dirac_g \xrightarrow[\ell \to +\infty]{} \frac{m_\Gamma}{\|m_\Gamma\|} \]
in $C_b(S\Omega / \Gamma)^*$, the dual to the space of bounded continuous functions on $S\Omega / \Gamma$.
\end{enumerate}
\end{introthm}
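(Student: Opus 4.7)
For (iii), I would first construct a $\Gamma$-equivariant conformal density $(\mu_x)_{x \in \Omega}$ of dimension $\delta = \delta(\Gamma)$ by the classical Patterson--Sullivan procedure, taking weak-$*$ limits of normalized orbital Dirac sums and applying Patterson's regularization trick if the Poincar\'e series at $\delta$ happens to converge. The strictly convex $C^1$ hypothesis ensures that $\mu_x$ is supported on the limit set and is well-behaved under parabolic subgroups. Finiteness of the associated Sullivan measure $m_\Gamma$ then reduces, via the geometrically finite decomposition of the convex core of $\Omega/\Gamma$ into a compact ``thick'' part plus finitely many cusp neighborhoods $\mathcal{C}_1, \dots, \mathcal{C}_k$ corresponding to conjugacy classes of maximal parabolic subgroups $P_1, \dots, P_k$, to a cusp-by-cusp estimate. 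Using the Hopf parametrization of $S\Omega$, the Sullivan mass in the $i$th cusp is comparable to the series
\[
\sum_{p \in P_i} d_\Omega(x, p x)\, e^{-\delta\, d_\Omega(x, p x)},
\]
whose convergence is furnished by Crampon--Marquis' analysis of parabolic isometries in strictly convex $C^1$ Hilbert geometry: each $P_i$ is virtually nilpotent with polynomial displacement growth, and in the non-elementary geometrically finite regime one has the Dirichlet-type bound $2\delta(P_i) < \delta(\Gamma)$, which is more than enough for convergence.

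For (iv), the strictly convex $C^1$ hypothesis forces every boundary point to be $C^1$ and strongly extremal, so every axis of a non-trivial element of $\Gamma$ is rank-one and $\mathcal{G}_\Gamma(\ell) = \mathcal{G}^{r1}_\Gamma(\ell)$. Hence Theorem~\ref{introthm:A}\eqref{introthm:pcgeod_equidist} already gives convergence of $\nu_\ell := \delta \ell e^{-\delta \ell} \sum_{g \in \mathcal{G}_\Gamma(\ell)} \dirac_g$ to $m_\Gamma / \|m_\Gamma\|$ in $C_c(S\Omega/\Gamma)^*$. To upgrade the convergence to $C_b(S\Omega/\Gamma)^*$, it suffices to prove tightness of $\{\nu_\ell\}_\ell$: for every $\varepsilon > 0$ there must exist a compact set $K \subset S\Omega/\Gamma$ with $\limsup_{\ell \to \infty} \nu_\ell(S\Omega/\Gamma \setminus K) < \varepsilon$. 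Taking $K = K_T$ to be the complement of the depth-$\geq T$ cusp neighborhoods, I would bound $\nu_\ell(S\Omega / \Gamma \setminus K_T)$ by decomposing each closed geodesic into compact-core crossings and cusp excursions, and using a Finsler shadow-lemma argument to attach to each excursion of depth $\geq T$ a parabolic element $p \in P_i$ with $d_\Omega(x, px) \geq 2T$. The total weighted contribution is then dominated by the tail
\[
\sum_{i=1}^{k}\ \sum_{\substack{p \in P_i \\ d_\Omega(x, p x) \geq 2T}} d_\Omega(x, p x)\, e^{-\delta\, d_\Omega(x, p x)},
\]
which tends to $0$ as $T \to \infty$ by the convergence established in (iii).

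The main obstacle will be the tightness argument in (iv). In Hilbert geometry the ambient metric is not CAT$(-1)$ --- or even CAT$(0)$ in general --- so the classical Roblin- and Schapira-style cusp estimates cannot be imported directly, and the shadow-lemma input attaching a parabolic element to each deep cusp excursion must be carried out in the Finsler setting using the parabolic structure theory of Crampon--Marquis and the strictly convex $C^1$ hypothesis on $\del\Omega$. Obtaining the comparison with explicit, uniform constants --- so that excursion time into a depth-$T$ cusp is bounded above and below by the displacement of an associated parabolic element, independently of the closed geodesic --- is where the strictly convex and $C^1$ hypotheses play their most essential role, and where the bulk of the technical work will lie.
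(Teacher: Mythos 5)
Your overall route coincides with the paper's: part (iii) via the Patterson construction plus a cusp-by-cusp reduction of finiteness to the convergence of $\sum_{p \in P} d_\Omega(x,px)\,e^{-\delta_\Gamma d_\Omega(x,px)}$, and part (iv) by upgrading the $C_c^*$ convergence of Theorem~\ref{introthm:A}\eqref{introthm:pcgeod_equidist} to $C_b^*$ through a tightness estimate, using a shadow-lemma count to attach each deep cusp excursion of a closed geodesic to a parabolic element $p$ and bounding the excursion time by $d_\Omega(x,px)+O(1)-2r$. That said, two points need attention. The asserted ``Dirichlet-type bound'' $2\delta(P_i) < \delta(\Gamma)$ is false in general: already for geometrically finite Fuchsian groups with a cusp one has $\delta(P)=\tfrac12$ while $\delta(\Gamma)$ can be taken arbitrarily close to $\tfrac12$. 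What is true, and all that the series requires, is the critical gap $\delta(P) < \delta(\Gamma)$ (Lemma~\ref{lem:critgap_para}). Its proof is not free: the ping-pong estimate $g_\Gamma(s,x) \geq \sum_k \bigl(e^{-s\,d_\Omega(x,hx)}g_P(s,x)\bigr)^k$ yields a gap only because $g_P(s,x)\to\infty$ as $s\searrow\delta(P)$, i.e.\ because discrete parabolic subgroups of $\Aut(\Omega)$ are \emph{divergent} (Proposition~\ref{prop:parab diverge}, resting on the Crampon--Marquis structure theorem and a volume estimate for unipotent groups). ``Virtually nilpotent with polynomial displacement growth'' is the right intuition, but divergence at the critical exponent is the property that must actually be established; without it the gap can fail, as the Dal'bo--Otal--Peign\'e examples of geometrically finite manifolds with infinite Bowen--Margulis measure show.

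Second, several structural inputs you take for granted are precisely what must be proved for the \emph{boundary} notion of geometric finiteness, which is strictly weaker than geometric finiteness on $\Omega$ and is not covered by Crampon--Marquis: compactness of the non-cuspidal part of the biproximal unit tangent bundle (Proposition~\ref{prop:noncuspidal is cpct}), finiteness of the number of cusps together with a $\Gamma$-invariant family of disjoint horoballs (Proposition~\ref{prop:finitely many cusps}, Lemma~\ref{lem:disjoint horoballs}), and atomlessness of the Patterson--Sullivan measure at bounded parabolic points (Proposition~\ref{prop:noatoms}), which is what legitimizes writing the cusp mass as a double sum over translates $pF\times qF$ of a fundamental domain $F\subset\Lambda_\Gamma\smallsetminus\{\xi_P\}$. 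Finally, in (iv) your tightness is stated for the normalized measure $\delta\ell e^{-\delta\ell}\sum_g \dirac_g$, whereas the cusp-excursion estimate naturally controls the length-weighted measure $\delta e^{-\delta\ell}\sum_g \ell(g)\dirac_g$; the two differ by a factor $\ell/\ell(g)$ which is large on short geodesics, so one must first pass to the length-weighted measure and dispose of the short geodesics by the standard comparison, as the paper does following Roblin. With these ingredients supplied, your argument is the paper's.
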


Theorem~\ref{introthm:B} was proved by the second author in \cite{zhu_conf_densities} under the assumption that the action of $\Gamma$ is geometrically finite {\bf on $\mathbf\Omega$}. This other notion of geometric finiteness is stronger than geometric finiteness on $\partial\Omega$, and is also due to Crampon--Marquis (see Definition~\ref{defn:limitset}).

Integrating a constant function against both sides of Theorem~\ref{introthm:B}.\eqref{introthm:pcgeod_equidist_geomfin} yields the following counting result (see Corollary~\ref{cor:countgeomfin}):
\[ \#\mathcal G_\Gamma(\ell) \underset{\ell\to\infty}{\widesim}\frac{e^{\delta\ell}}{\delta\ell}. \]


In the case where $\Omega$ is strictly convex and $\Omega / \Gamma$ compact, Theorem~\ref{introthm:A}.\eqref{introthm:pcgeod_equidist} (and Theorem~\ref{introthm:B}) follows from the work of Benoist \cite{Benoist_CDI} (see the end of \S\ref{sub:hilbgeom} below) together with general equidistribution and counting results for Anosov systems due to Margulis \cite{Margulis}. Weaker results of a similar flavor also follow from Bray's work \cite{Bray} if $\Omega/\Gamma$ is compact three-dimensional, from Islam's work \cite{Mitul_rk1} if $\Omega/\Gamma$ is compact, and from the first author's work \cite{Blayac_PSdensities} when $\Gamma$ is convex cocompact. 

The proofs of our equidistribution results follow the gist of Roblin's proofs,
making heavy use of mixing, and of cones in the space and shadows on the boundary without reference to any notion of angle, which is not well-defined in Roblin's setting nor in ours. 
As noted above,
a key tool is the theory of Patterson--Sullivan densities and associated Sullivan measures.

We remark that while Theorem \ref{introthm:B} relies integrally on the geometric finiteness condition, Theorem \ref{introthm:A} applies to a larger class of discrete subgroups $\Gamma \leq \Aut(\Omega)$ admitting finite Sullivan measures. Note that there exist geometrically infinite hyperbolic surfaces with finite Bowen--Margulis measure (see \cite{Peigne_contrex,ST_entropinf}), and that there also exist geometrically finite Riemannian surfaces with non-constant negative curvature whose Bowen--Margulis measure is infinite (see \cite[Th.\,C]{DOP}).


\subsection{Counting and Patterson--Sullivan theory in higher rank} \label{sec:counting}

As alluded to above, Roblin's results continue a long line of equidistribution and counting results in negative curvature. 
Here, before proceeding with our principal contents, we briefly survey related results in the setting of discrete word-hyperbolic or relatively hyperbolic subgroups of higher-rank semi-simple Lie groups. This includes the setting of higher Teichm\"uller theory, which studies certain spaces of discrete representations of surface groups.


Let $G$ be a non-compact semi-simple real-algebraic group and $P < G$ a proper parabolic subgroup. One class of discrete word-hyperbolic subgroups of $G$ for which equidistribution and counting problems have been studied consists of those which satisfy the 
$P$-Anosov 
condition defined in \cite{Labourie} and \cite{GW}. 
If $G=\PGL(\Rnplusone)$ and $P$ is the stabilizer of a point in $\RPn$, then $P$-Anosov groups are called projective Anosov; these include the images of holonomy representations of strictly convex projective structures on closed manifolds by work of Benoist \cite{Benoist_CDI}). 
Danciger--Gu\'eritaud--Kassel proved in \cite{DGK} that a discrete subgroup $\Gamma<\PGL(\Rnplusone)$ preserving some domain is projective Anosov if and only if it acts convex cocompactly on some strictly convex domain with $C^1$ boundary. A similar result was proved independently by Zimmer \cite{Zimmer} if $\Gamma$ is irreducible.

The equidistribution results of the present paper apply, further, to two disjoint classes of groups which are not Anosov, and indeed not word-hyperbolic. 

First, they apply to non-hyperbolic groups acting convex cocompactly on non-strictly convex domains. There exist many such examples, see e.g. \cite{Benoist_CDIII,LudoThese,BDL_cvxproj_3mfd,choi2016convex,DGK,DGKLM}; many of them are relatively hyperbolic.

Second, our results also apply to images of geometrically finite but not convex cocompact holonomies of strictly convex projective structures. These are relatively hyperbolic, and those which moreover satisfy the strong condition of geometric finiteness (\ie on the domain $\Omega$, see Definition~\ref{defn:limitset}) satisfy a relative version of the Anosov condition (see \cite{KL} or \cite{reldomreps}).
Conjecturally, groups satisfying the weaker notion of geometric finiteness (\ie on the boundary $\partial\Omega$) also satisfy this relative version of the Anosov condition, and furthermore any relatively Anosov group preserving a properly convex domain may admit such a boundary geometrically finite action. This would be a relative version of the relationship between the projective Anosov condition and convex cocompactness in real projective geometry established in \cite{DGK} and \cite{Zimmer}, and would indicate that the results in this paper are applicable to a large class of relatively Anosov groups. 

In \cite{_cvx,Sambarino_hypcvx} (see also \cite[App.\,A]{LeonCgrowth}), Sambarino obtains equidistribution and counting results similar to the results presented here for irreducible projective Anosov groups, and for Zariski-dense $P_{\min}$-Anosov subgroups of any semi-simple group $G$, where $P_{\min}$ denotes the minimal parabolic (although he uses different terminology). A particular case of his counting results is the following
\begin{thm*}[{\cite[Th.\,C]{Sambarino_cvx}}]
Let $\Gamma < \PGL(\Rnplusone)$ be $P_{\min}$-Anosov, Zariski-dense and torsion-free.
Then there exists $h> 0$ such that
\begin{equation*}\label{eq:SamGeodCounting} \#\left\{ [\gamma] \in [\Gamma] \mbox{ primitive} \:|\: \frac12 \log\frac{\lambda_1}{\lambda_{\mathrm n+1}}(\gamma) \leq \ell\right\} \underset{\ell\to\infty}{\widesim} \frac{e^{h\ell}}{h\ell}, \end{equation*}
where $[\Gamma]$ is the set of conjugacy classes of $\Gamma$ and $\lambda_1(\gamma)$ (\resp $\lambda_{\mathrm n+1}(\gamma)$) is  the modulus of the largest (\resp smallest) eigenvalue of any lift of $\gamma$ in $\SL^\pm(\Rnplusone)$.
\end{thm*}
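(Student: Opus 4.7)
The plan is to follow Sambarino's strategy for counting in Anosov representations: transfer the problem to a reparametrized suspension flow over a subshift of finite type and apply a prime orbit theorem of Parry--Pollicott type. Observe first that $\ell(\gamma) := \frac12 \log(\lambda_1/\lambda_{\dimension+1})(\gamma)$ is a conjugation-invariant quantity depending only on the Jordan projection of $\gamma$ in the Cartan subalgebra $\mathfrak a$ of $\PGL(\Rnplusone)$, and by the $P_{\min}$-Anosov hypothesis it is comparable to the word-length on $\Gamma$ (in particular it is positive on all non-identity elements, since $\Gamma$ is torsion-free).

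First I would use the Anosov property to produce continuous $\Gamma$-equivariant boundary maps $\xi^\pm \colon \partial_\infty \Gamma \to G/P_{\min}$ and, via the Iwasawa/Cartan decomposition, a H\"older cocycle $c \colon \Gamma \times \partial_\infty \Gamma \to \real$ whose period at each hyperbolic element $\gamma$ is exactly $\ell(\gamma)$. Using a Markov coding of the Gromov geodesic flow on $(\partial_\infty^{(2)}\Gamma \times \real)/\Gamma$ (available since $\Gamma$ is word-hyperbolic), the cocycle $c$ then yields a H\"older reparametrization which is H\"older-conjugate to a suspension flow over a mixing subshift of finite type; torsion-freeness ensures that its primitive closed orbits correspond bijectively to primitive conjugacy classes in $\Gamma$, with period $\ell(\gamma)$. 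Ruelle's thermodynamic formalism identifies the exponential growth rate $h > 0$ of these orbits with the topological entropy of the suspension, equivalently with the critical exponent of the Poincar\'e series $\sum_{\gamma \in \Gamma} e^{-s \ell(\gamma)}$; finiteness of $h$ follows from the Anosov comparison with word-length.

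Parry--Pollicott's prime orbit theorem then upgrades this growth rate to the sharp asymptotic $\widesim e^{h\ell}/(h\ell)$, provided the reparametrized suspension is topologically weak-mixing, equivalently the period spectrum $\{\ell(\gamma) \st \gamma \in \Gamma\}$ is not contained in a discrete subgroup of $\real$. The hard part is precisely establishing this non-arithmeticity, and it is here that Zariski-density becomes indispensable: by Benoist's theorem on the limit cone, the subgroup of $\mathfrak a$ additively generated by the Jordan projections of elements of $\Gamma$ is dense in $\mathfrak a$, so evaluation by the linear functional $\tfrac12(\varepsilon_1 - \varepsilon_{\dimension+1})$ produces a dense, hence non-arithmetic, subgroup of $\real$. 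Combined with the Parry--Pollicott prime orbit theorem, this delivers the asymptotic with the constant $h$ equal to the critical exponent of the Poincar\'e series above.
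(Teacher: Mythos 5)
This theorem is not proved in the paper at all: it is quoted verbatim from Sambarino's work \cite[Th.\,C]{Sambarino_cvx} in the survey subsection on counting in higher rank, so there is no ``paper's own proof'' to compare against. Judged on its own terms, your plan is a faithful outline of the strategy Sambarino actually uses: equivariant boundary maps from the Anosov property, a H\"older cocycle whose periods realize $\ell(\gamma)=\frac12\log\frac{\lambda_1}{\lambda_{\mathrm n+1}}(\gamma)$, a Markov coding of the Gromov geodesic flow of the word-hyperbolic group $\Gamma$, identification of $h$ with the entropy of the reparametrized flow (equivalently the critical exponent of $\sum_\gamma e^{-s\ell(\gamma)}$), non-arithmeticity of the period spectrum via Benoist's theorem that the group generated by the Jordan projections of a Zariski-dense subgroup is dense in $\mathfrak a$, and finally the Parry--Pollicott prime orbit theorem. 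Two points deserve explicit attention in a full write-up: (a) the Markov coding is not a bijection on periodic orbits, only finite-to-one off a subset of strictly smaller entropy, so the standard Bowen/Parry--Pollicott overcounting argument is needed to see that the asymptotics are unaffected; (b) positivity and finiteness of $h$ require the uniform comparison between $\ell(\gamma)$ and word length furnished by the Anosov condition, which you correctly flag. It is also worth noting that the present paper's own machinery (Theorem~\ref{introthm:A} and Corollary~\ref{cor:counting_geod}, via mixing of the Sullivan measure in the style of Roblin rather than thermodynamic formalism) yields the same asymptotic only under the additional hypothesis that $\Gamma$ preserves a properly convex domain, in which case $P_{\min}$-Anosov forces convex cocompactness; your route, like Sambarino's, covers the general Zariski-dense $P_{\min}$-Anosov case where no invariant convex domain need exist.
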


Note that $\frac12 \log\frac{\lambda_1}{\lambda_{\mathrm n+1}}(\gamma)$ is the translation of length of $\gamma$ in $(\Omega,d_\Omega)$.

If $\Omega\subset\proj(\Rnplusone)$ is a domain and $\Gamma\subset\Aut(\Omega)$ is a $P_{\min}$-Anosov torsion-free subgroup which is Zariski-dense in $\PGL(\Rnplusone)$, then the conclusion of Corollary~\ref{cor:counting_geod} is a consequence of Sambarino's result above.
($P_{\min}$-Anosov is stronger than projective Anosov, and implies here convex cocompactness.)
Sambarino also obtains counting results for different notions of lengths than $\frac12 \log \frac{\lambda_1}{\lambda_{\mathrm n+1}} (\gamma)$ in any semi-simple group $G$. 

Moreover, Sambarino obtains orbit-counting results like \eqref{eq:counting_orbit} in the homogeneous space $\PGL(\Rnplusone)/M$ where $M$ consists of diagonal matrices with $\pm1$ diagonal entries, and in analogous homogeneous spaces for other semi-simple groups $G$.

Sambarino's results are also proven using Patterson--Sullivan theory, in his case in conjunction with the thermodynamical formalism.
These methods have also been extended to obtain $\Gamma$-orbit-counting results in other homogeneous spaces $G/H$, where $\Gamma\leq G$ is $P_{\min}$-Anosov, and $H\leq G$ ranges over a wider class of subgroups: \cite{LeonC,LeonCgrowth} studied the case when $G/H$ is a pseudo-Riemannian symmetric space, and \cite{EdwardsLeeOh} studied a more general case encompassing Sambarino's work.

\subsection*{Organization}
Section \ref{sec:hg} collects the necessary background material on Hilbert geometry and geometric finiteness in that setting.

Section \ref{sec:rk1} describes the rank-one condition and some of its consequences, including a topological mixing property for the geodesic flow of non-elementary rank-one convex projective manifold which generalises results of \cite{Blayac_topmixing} and is used to prove the mixing property of the Bowen--Margulis measure in \cite{Blayac_PSdensities}.

Section \ref{sec:ps} describes the construction of Patterson--Sullivan densities and Sullivan measures in our setting, and also a convex projective Hopf--Tsuji--Sullivan--Roblin dichotomy, which implies that, when finite, the Sullivan measure gives full measure to recurrent rank-one vectors.

Section \ref{sec:mixing} states the mixing result (Theorem \ref{thm:mixing}) which is a crucial ingredient in the following sections, and also contains the proof of the equidistribution of the unstable horospheres (Theorem \ref{thm:horosphere_equidist}).

Sections \ref{sec:orbit_equidist} and \ref{sec:pcgeod_equidist} contain the proofs of Theorem~\ref{introthm:A}.(i) and (ii) respectively. 

In Section~\ref{sec:perconj} (see also Section~\ref{sec:nb per conj geomfin}) we use results from the previous sections to prove counting results for rank-one conjugacy classes in the fundamental group $\Gamma$ (which are not in bijective correspondence with primitive closed geodesics if e.g.\ $\Gamma$ has torsion). 

Finally, Section \ref{sec:geomfin} contains the proofs of results specific to geometrically finite subgroups acting on strictly convex domains with $C^1$ boundary, as described in Theorem~\ref{introthm:B}.

\subsection*{Acknowledgements} 
The authors thank Harrison Bray, Fanny Kassel, Ludovic Marquis, and Ralf Spatzier for helpful and encouraging discussions, Yves Benoist for supplying a citation, and Dick Canary for helpful comments on early drafts.

FZ was partially supported by U.S. National Science Foundation (NSF) grant FRG 1564362 and ISF grant 18/171, and acknowledges support from NSF grants DMS 1107452, 1107263, 1107367 ``RNMS: Geometric Structures and Representation Varieties'' (the GEAR Network).

This project has also received funding from the European Research Council (ERC) under the European Union’s Horizon 2020 research and innovation programme (ERC starting grant DiGGeS, grant agreement No. 715982).

\section{Hilbert geometry}\label{sec:hg}
\subsection{Properly convex Hilbert geometries} \label{sub:hilbgeom}

As in the introduction, a properly convex domain (hereafter, ``domain'') $\Omega \subset \RPn = \real\proj^{\mathrm n}$ is an open set contained in some affine chart and bounded and convex in that affine chart, in the usual Euclidean sense.
The Hilbert metric $d_\Omega$ on a domain $\Omega$ is defined as follows: given $x, y \in \Omega$, extend the straight line between them so that it meets $\del \Omega$ in $a$ and $b$ (with $x$ between $a,y$). 
Then $d_\Omega(x,y) = \frac 12 \log \frac{|ay||bx|}{|ax||by|}$, where $|\cdot|$ denotes Euclidean distance in the affine chart.

\begin{figure}[ht!]
    \centering
    \includegraphics[width=.32\textwidth]{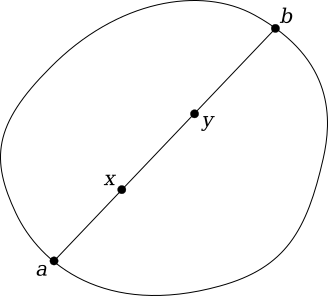}
\label{fig:hilbmetr}
\end{figure}

This can be shown to be a  projectively invariant metric: $d_{g\Omega}(gx,gy)=d_\Omega(x,y)$ for all $g\in\PGL(\Rnplusone)$ and $x,y\in\Omega$. In particular, $d_\Omega$ is well-defined independent of the choice of affine chart. Moreover, $d_\Omega$ is a Finsler metric, i.e. it is induced by an infinitesimal norm on the tangent bundle to $\Omega$.


Straight (projective) lines are always geodesics for $d_\Omega$. When $\Omega$ is {\bf strictly convex}, i.e.\ $\del\Omega$ contains no non-trivial line segments, these are the unique geodesics for this metric.

For $\Omega$ a strictly convex domain equipped with its Hilbert metric $d_\Omega$, the Hilbert geodesic flow $(g^t)_{t\in\real}$ on the unit tangent bundle $S \Omega$ is the unit (Hilbert) speed flow along the geodesics. We may similarly define the Hilbert geodesic flow $(g^t)_{t\in\real}$ on the unit tangent bundle $S \Omega$ when $\Omega$ is properly convex, not necessarily strictly convex, as the unit speed flow along the straight-line Hilbert geodesics.

We remark that there is a natural involution $\flip$ on $S\Omega$ such that $g^t \flip v=\flip g^{-t}v$ for all $v\in S\Omega$ and $t\in\real$ and $\flip$ does not change the foot-point of $v$.

When $\Omega$ is an ellipsoid, equipping $\Omega$ with its Hilbert metric gives us the projective model of real hyperbolic space. 
More generally, the geometry of strictly convex domains $\Omega \subset \RPn$ equipped with the Hilbert metric shares many features with negatively curved Riemannian geometries, even beyond what may be expected given $\delta$-hyperbolicity, although they are not in general Riemannian or even CAT($-1$). 
For instance, nearest-point projection from a point $z$ to a Hilbert geodesic $l \subset \Omega$ is well-defined on the nose, not just coarsely as in the case of general $\delta$-hyperbolic space. This follows from the strict convexity of metric balls \cite[(18.6)]{BusemannGeomGeod} and hence of the distance functions $d(z,\cdot): l \to \real$ for $z\in\Omega$.

On the other hand, we can start with $\Omega$ a simplex, and the resulting Hilbert geometry is isometric to $\real^{\mathrm n}$ endowed with a norm equivalent (though not equal) to the Euclidean norm. More generally, domains $\Omega$ which are not strictly convex, equipped with their Hilbert metrics, have geometric and dynamical features qualitatively similar to those of non-positively curved Riemannian geometries. The metric balls in this case remain convex \cite[(18.6)]{BusemannGeomGeod}, as do the distance functions $d(z,\cdot): l \to \real$ for any fixed $z\in\Omega$. While it is no longer in general true that the functions $t\mapsto d_\Omega(l_1(t),l_2(t))$ are convex for any two straight-line Hilbert geodesics $l_1$ and $l_2$, Crampon observed the following useful property; a complete proof can be found in  \cite[App.\,A]{Blayac_topmixing}.

\begin{lem}[{\cite[Lem.\,8.3]{Crampon09}}]\label{lem:crampon}
Let $\Omega \subset \RPn$ be a domain, and let $l_1$ and $l_2$ be two straight-line Hilbert geodesics of $\Omega$ parametrized with constant speed (not necessarily the same speed). Then for all $0\leq t\leq T$,
\[d_{\Omega}(l_1(t),l_2(t))\leq d_\Omega(l_1(0),l_2(0))+d_\Omega(l_1(T),l_2(T)).\]
\end{lem}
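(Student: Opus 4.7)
I would first reduce to low dimension: the four points $l_1(0), l_1(T), l_2(0), l_2(T)$ span a projective subspace $V\subset\RPn$ of dimension at most $3$, and $\Omega':=\Omega\cap V$ is properly convex in $V$ with $d_\Omega|_{\Omega'}=d_{\Omega'}$, since the cross-ratio defining Hilbert distance only depends on the projective line through the pair of points, which lies in $V$. Thus I may assume $\dimension\leq 3$, and in the generic coplanar case $\dimension = 2$.

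Next I would use the convexity of Hilbert balls (recalled in the excerpt just before the lemma) to deduce that, for any $z\in\Omega$ and any straight-line Hilbert geodesic $l$ parametrized with constant speed, $s\mapsto d_\Omega(z,l(s))$ is quasi-convex in $s$ (the affine reparametrization of $l$ has convex sublevel sets of $d_\Omega(z,\cdot)$, and monotone reparametrization preserves quasi-convexity). Writing $f(t):=d_\Omega(l_1(t),l_2(t))$, the plan is to construct for each $t\in[0,T]$ a point $p_t\in\Omega$ with
\[ d_\Omega(l_1(t),p_t)\leq f(0) \quad\text{and}\quad d_\Omega(p_t,l_2(t))\leq f(T); \]
then the triangle inequality immediately gives $f(t)\leq f(0)+f(T)$. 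A natural candidate arises from the projective geometry of the quadrilateral $l_1(0)\,l_1(T)\,l_2(T)\,l_2(0)$: in the $2$-dimensional reduction, one takes $p_t$ to be the intersection of the line through $l_1(t)$ and $l_2(t)$ with a suitable side or diagonal of this quadrilateral, chosen so that each of the two bounds above follows from quasi-convexity along the corresponding line.

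The main obstacle is making this construction of $p_t$ rigorous across all configurations: the quadrilateral can be convex or non-convex, its diagonals may meet inside or outside $\Omega$, and some degenerate cases (e.g.\ $l_1$ and $l_2$ sharing an endpoint on $\partial\Omega$, or meeting inside $\Omega$) need separate treatment. The non-coplanar $3$-dimensional case is the hardest: one would likely either project $l_2$ radially onto the $2$-plane containing $l_1$ and reduce to the planar argument, or prove a $3$-dimensional version directly. As a back-up strategy I would perform a direct cross-ratio computation in the $2$-dimensional reduction, carefully tracking how the Hilbert-speed parametrizations of $l_1$ and $l_2$ translate into affine parametrizations in a chosen affine chart, and extracting the weak convexity estimate $f(t)\leq f(0)+f(T)$ from the resulting formula.
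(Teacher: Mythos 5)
First, a point of reference: the paper does not actually prove this lemma — it quotes it from Crampon and points to the appendix of \cite{Blayac_topmixing} for a complete proof — so I am comparing your proposal against the argument in those cited sources rather than against anything internal to this paper.

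Your preliminary steps are sound: restricting to the projective span of the four points does preserve the Hilbert metric, and convexity of metric balls does give quasi-convexity of $z\mapsto d_\Omega(z_0,z)$ along any line. The skeleton of producing an interpolating point $p_t$ and applying the triangle inequality is also the right one, and your instinct to look at a diagonal of the quadrilateral is correct. But the core of the argument is missing, and the tool you invoke cannot supply it. Write $f(t)=d_\Omega(l_1(t),l_2(t))$. The two bounds you need, $d_\Omega(l_1(t),p_t)\leq f(0)$ and $d_\Omega(p_t,l_2(t))\leq f(T)$, are statements in which \emph{both} arguments move with $t$; quasi-convexity of the distance to a \emph{fixed} point only controls $d_\Omega(z_0,\cdot)$ on a segment by its values at the two endpoints of that segment, and for your candidate $p_t$ those endpoint values are quantities like $d_\Omega(l_1(t),l_1(0))$ or $d_\Omega(l_1(t),l_2(T))$, which are unrelated to $f(0)$ and $f(T)$ and can be arbitrarily large. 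So no choice of ``side or diagonal'' makes the two bounds ``follow from quasi-convexity along the corresponding line.'' Moreover, in the skew case (which is the generic one for $\dimension\geq 3$, not the exceptional one) the line $(l_1(t)\,l_2(t))$ meets neither diagonal nor the sides $[l_1(0)\,l_2(0)]$ and $[l_1(T)\,l_2(T)]$ in general, so your $p_t$ is not even defined; and radial projection of $l_2$ onto the plane of $l_1$ is not a Hilbert isometry, so that fallback is unavailable as well.

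The missing ingredient is a monotonicity statement, not a convexity one. Take $p_t=l_3(t)$, where $l_3$ is the constant-speed parametrization of the diagonal with $l_3(0)=l_1(0)$ and $l_3(T)=l_2(T)$, and prove the sublemma: \emph{if two straight-line geodesics parametrized with constant (possibly different) speeds issue from a common point at time $0$, then the distance between their time-$t$ points is non-decreasing in $t$}. This is where the real work lies; it is established by a cross-ratio comparison — the perspectivity centred at the common point carries the line through the two time-$t$ points to the line through the two time-$t'$ points ($t\leq t'$), preserves the cross-ratio, and sends the endpoints of the first chord outside the second chord, which forces the Hilbert distance to increase. Applying this once to the pair $(l_1,l_3)$ (common point at time $0$) and once to $(l_2,l_3)$ (common point at time $T$, running time backwards) gives $d_\Omega(l_1(t),l_3(t))\leq f(T)$ and $d_\Omega(l_3(t),l_2(t))\leq f(0)$, and the triangle inequality concludes. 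Note that each of these two comparisons involves geodesics sharing a point and is therefore automatically planar, so this route also disposes of the skew configuration that your construction cannot handle. Your ``back-up'' of a direct cross-ratio computation points in this direction, but as written it is a statement of intent rather than an argument.
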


Given a properly convex domain $\Omega$, we write $\Aut(\Omega)$ to denote the group of invertible projective automorphisms preserving $\Omega$, \ie 
\[ \Aut(\Omega) := \{\gamma \in \PGL(\Rnplusone) = \Aut(\RPn) \st \gamma(\Omega) \subset \Omega \} .\]
Projective automorphisms $\gamma \in \Aut(\Omega)$ are isometries of $\Omega$ equipped with the Hilbert metric $d_\Omega$; in fact, 
$\Aut(\Omega)$ coincides with the isometry group of $(\Omega, d_\Omega)$, except in the case of certain properly convex domains coming from symmetric spaces where $\Aut(\Omega)$ has index two inside the isometry group: see e.g. \cite[Prop.\,10.2]{Marquis} for the strictly convex case, or \cite{Walsh_Gauge} for the more general case.


The Hilbert metric $d_\Omega$ is proper (closed balls are compact), and this implies that any discrete subgroup $\Gamma \leq \Aut(\Omega)$ acts properly discontinuously on $\Omega$, with quotient $\Omega / \Gamma$ an orbifold (for $\Gamma$ torsion-free, a manifold) equipped with a {\it convex projective structure}, i.e. an atlas of charts to $\RPn$ which locally give the orbifold the geometry of projective space. 
The Hilbert metric $d_\Omega$ descends to a metric on the quotient $\Omega / \Gamma$, and the Hilbert geodesic flow $(g^t)$, since it commutes with the action of $\Gamma$, descends to a flow $(g_\Gamma^t)$ on the quotient $S\Omega / \Gamma $. 

As for real hyperbolic manifolds, there is a partial correspondence between periodic orbits of $(g^t_\Gamma)$ on $S\Omega/\Gamma$ and conjugacy classes of $\Gamma$, and the length of the orbit corresponding to (the conjugacy class of) $\gamma \in \PGL(\Rnplusone)$ can be obtained as the algebraic quantity 
\begin{equation} \label{eq:translength}
 \ell(\gamma)=\frac12 \log\frac{\lambda_1}{\lambda_{\mathrm{n}+1}} (\tilde \gamma),
\end{equation}
where $\tilde \gamma\in\GL(\Rnplusone)$ is any lift of $\gamma$, and $\lambda_1(\tilde \gamma),\dots,\lambda_{n+1}(\tilde \gamma)$ are the moduli of the (complex) eigenvalues of $\tilde \gamma$ (with multiplicity), ordered so that $\lambda_1(\tilde \gamma) \geq \dots\geq \lambda_{\mathrm{n}+1}(\tilde \gamma)$.

We say $\Omega$ is {\bf divisible} if there is some discrete subgroup $\Gamma \leq \Aut(\Omega)$ whose action on $\Omega$ is cocompact. The quotients $\Omega / \Gamma$ in this case are most closely analogous to closed hyperbolic manifolds, and share many of their good geometric and dynamical properties. In particular, in \cite{Benoist_CDI} Benoist shows that if 
$\Omega \subset \RPn$ is a properly convex domain which is divisible by $\Gamma$, then the following are equivalent:
\begin{enumerate}[i)]
\item $\Omega$ is strictly convex,
\item $\del\Omega$ is $C^1$,
\item $\Gamma$ is $\delta$-hyperbolic,
\item $(\Omega, d_\Omega)$ is $\delta$-hyperbolic,
\item the geodesic flow on the quotient $S\Omega / \Gamma$ is Anosov.
\end{enumerate}
Recall the Anosov condition for a flow roughly means that the tangent bundle splits into the flow direction, and the stable and unstable distributions, such that vectors in the stable (\resp\,unstable) distribution are uniformly exponentially contracted by the flow in forwards (\resp\,backwards) time. 


\subsection{Boundary points of properly convex domains}\label{subsec:bdry pts}
Let $\Omega \subset \RPn$ be a domain. Here we introduce notions and terminology which are useful for dealing more carefully with the boundary $\del\Omega$ when $\Omega$ is not strictly convex and so $\del\Omega$ may contain line segments. Most of this discussion is taken from \cite[\S2]{Blayac_PSdensities}.

The relative interior 
of a subset $K\subset\RPn$ is its topological interior 
with respect to the projective subspace it spans.
For $x \in \Omega$, the open face $F_\Omega(x)$ of $x$ in $\overline\Omega$ consists of the points $y \in \overline\Omega$ such that the segment $[x\,y]$ is contained in the relative interior of a segment contained in $\overline\Omega$.

A point $x \in \del \Omega$ is said to be {\bf extremal} (\resp {\bf strongly extremal}) if $F_\Omega(x) = {x}$ 
(resp.\ $[x,y]\cap\Omega\neq\emptyset$ for any $y\in\overline\Omega\smallsetminus\{x\}$);
$\Omega$ is strictly convex if all the points in the boundary are extremal (and hence strongly extremal).

A supporting hyperplane of $\Omega$ at $\xi \in \del \Omega$ is a hyperplane which contains $\xi$ but does not intersect $\Omega$. Note that there always exists such a hyperplane, by proper convexity. The point $\xi$ is said to be a {\bf $C^1$} point of $\del \Omega$ if there is only one supporting hyperplane of $\Omega$ at $\xi$.

We denote by $\del_{\sse} \Omega$ the set of $C^1$ and strongly extremal points in $\del\Omega$. Observe that when $\Omega$ is strictly convex with $C^1$ boundary, $\del_{\sse} \Omega = \del\Omega$.

Given $\xi, \eta \in \del\Omega$, we define
$d_{\overline\Omega}(\xi,\eta) := d_F(\xi,\eta)$ if $\xi$ and $\eta$ are in a common open face $F$, and $d_{\overline\Omega}(\xi,\eta) = \infty$ otherwise. This extends the distance $d_{\overline\Omega}(x,y) = d_\Omega(x,y)$ for $x,y\in\Omega$, in the sense that
$$\liminf_{x\to\xi,y\to\eta} d_{\overline\Omega}(x,y) \geq d_{\overline\Omega}(\xi,\eta). $$
If $\Omega$ is strictly convex with $C^1$ boundary, then $d_{\overline\Omega}(\xi,\eta)=\infty$ for all $\xi\in\del\Omega$ and $\eta\in\overline\Omega\smallsetminus\{\xi\}$. Below we will occasionally write $B_{\overline\Omega}(x,R)$, for $x\in\overline\Omega$ and $R>0$, to denote a closed ball of radius $R$ and center in the $d_{\overline\Omega}$-metric, \ie the closed ball centered at $x$ in the face of $x$ equipped with its own Hilbert metric. 

We also define the {\bf simplicial distance} $d_{\spl}(\xi,\eta)$ between two points of $\del\Omega$ as the minimal number of points $\xi_1,\dots,\xi_n \in \del\Omega$ such that $\xi_n=\eta$ and $$[\xi\, \xi_1] \cup [\xi_1 \xi_2] \cup\dots\cup [\xi_{n-1} \xi_n] \subset \del\Omega,$$
and we set $d_{\spl}(\xi,\xi)=0$.

Finally, $\Geod^\infty\Omega$ denotes the space of pairs of points $(\xi,\eta) \in (\del \Omega)^2$ for which there exists a bi-infinite straight-line geodesic in $\Omega$ with endpoints $\xi$ and $\eta$. When $\Omega$ is strictly convex with $C^1$ boundary, $\Geod^\infty\Omega = \del^2\Omega$, the set of pairs of distinct points of $\del\Omega$.

\subsection{Horofunctions, horoboundaries and horospheres}
\label{sub:horothings}

Next, we introduce some notions associated to the Hilbert geometry which will naturally arise in the construction of the Patterson--Sullivan densities which yield our ergodicity and equidistribution results: the horofunction boundary and associated objects in this subsection, and the Gromov product in the next.

Let $\Omega \subset \RPn$ be a domain. Given $z \in \Omega$, the {\bf horofunction} $\beta_z: \Omega \times \Omega \to \real$ is defined by 
\[ \beta_z(x,y) := d_\Omega(x,z) - d_\Omega(y,z) .\]
We remark that this uses the sign convention adopted in \cite{Roblin}, which is a little more intuitive geometrically and helpful for working with shadows (see \S\ref{subsec:shadows}); this is opposite to the general sign convention which appears e.g. in \cite{BridsonHaefliger}.

Let $C(\Omega)$ be the space of continuous ($\real$-valued) functions on $\Omega$, equipped with the topology of uniform convergence. If we fix a basepoint $o \in \Omega$, we may check that the map $\beta: \Omega \to C(\Omega)$
given by $z \mapsto \beta_z(\cdot,o)$ is an embedding (see \cite[Prop.\,2.2]{WalshEmbedding}); moreover, it is easy to check that $\beta_z(\cdot,o): \Omega \to \real$ is a 1-Lipschitz map for any $z\in\Omega$, using the triangle inequality.  By the Arzel\`a--Ascoli theorem, the image $\beta(\Omega)$ is relatively compact, and the horofunction compactification $\overline{\Omega}{}^{\hor}$ of $\Omega$ is defined as $\overline{\beta(\Omega)}$, and the {\bf horoboundary} $\del_{\hor}\Omega$ is given by $\overline{\beta(\Omega)} \smallsetminus \beta(\Omega)$, where the closure is taken in $C(\Omega)$. Points in the horoboundary will also be called horofunctions. This construction was first introduced by Gromov \cite[\S1.2]{GromovVrac}.

Horofunctions $\beta_\xi \in \overline\Omega{}^{\hor}$ satisfy a cocycle condition
\[  \beta_\xi(x, y) + \beta_\xi(y, z) =\beta_\xi(x, z) \]
for all $x,y,z \in \Omega$, and are also $\Aut(\Omega)$-invariant, in the sense that
\[  \beta_{\gamma\xi}(\gamma x, \gamma y) = \beta_\xi(x,y) \]
for all $\gamma \in \Aut(\Omega)$ and $x, y \in \Omega$. Below, we will sometimes also call a horoboundary point $\beta_\xi$ the ``horofunction based at $\xi \in \del_{\hor}\Omega$''.

The {\bf horoball} (\resp {\bf horosphere}) based (or centered) at $\beta_\xi \in \del_{\hor}\Omega$ (or, we will also write to lighten the notation, based or centered at $\xi$) and passing through $x \in \Omega$ is the set
\begin{gather*}
\horoball_\xi(x) = \{y \in \Omega \:|\: \beta_\xi(x,y) > 0\} \\
\left(\text{\resp }  \horosphere_\xi(x) = \{y \in \Omega \:|\: \beta_\xi(x,y) = 0\}  \right).
\end{gather*}
Note that horospheres (\resp horoballs) are limits of spheres (\resp balls) for the Hausdorff topology; this implies that horoballs are convex, since Hilbert balls are convex \cite[(18.12)]{BusemannGeomGeod}, and their boundaries are the horospheres, which are homeomorphic to $\real^{\dimension-1}$.

By a theorem of Walsh \cite[Th.\,1.3]{Walsh}, the identity map on $\Omega$ extends to a continuous surjective map $\pi_{\hor}: \overline\Omega{}^{\hor} \to \overline\Omega$.

If we are given $\xi\in\del_{\hor}\Omega$ and one horosphere centered at $\xi$, then we can geometrically describe all horospheres centered at $\xi$ in terms of the given horosphere and the projection $\pi_{\hor}(\xi)\in\del\Omega$. More precisely, for all $x,y\in\Omega$ with $y\in[x\,\pi_{\hor}(\xi)]$, it is an immediate consequence of Proposition~\ref{fact:horofoliation}
below that the map sending $x'\in\horosphere_\xi(x)$ to the unique point $y'\in[x \,\pi_{\hor}(\xi))$ at distance $d_\Omega(x,y)$ from $x'$ is a homeomorphism from $\horosphere_\xi(x)$ onto $\horosphere_\xi(y)$; thus the horospheres centered at $\xi$ foliate the domain $\Omega$.
\begin{prop}[{\cite[Lem.\,4.3]{Blayac_PSdensities}}]
\label{fact:horofoliation}
 Let $\Omega$ be a domain and $\xi\in\overline\Omega{}^{\hor}$, then $\beta_\xi(x,y)=d_\Omega(x,y)$ for all $x,y\in\Omega$ such that $y\in[x\,\pi_{\hor}(\xi)]$. 
\end{prop}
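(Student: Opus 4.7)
The plan is to prove the proposition by first noting that the triangle inequality $d_\Omega(x,z)-d_\Omega(y,z)\leq d_\Omega(x,y)$ passes to the limit to give $\beta_\xi(x,y)\leq d_\Omega(x,y)$ for every $\xi\in\overline\Omega{}^{\hor}$ and every $x,y\in\Omega$. So the content of the statement is the reverse inequality under the hypothesis $y\in[x\,\pi_{\hor}(\xi)]$. If $\xi$ lies in $\Omega$ itself then $\beta_\xi(x,y)=d_\Omega(x,\xi)-d_\Omega(y,\xi)$, which equals $d_\Omega(x,y)$ by the triangle equality along the straight-line segment $[x\,\xi]$; so I may assume $\xi\in\del_{\hor}\Omega$.

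For that case I take any sequence $(w_n)$ in $\Omega$ converging to $\xi$ in $\overline\Omega{}^{\hor}$, so that
\[ \beta_\xi(x,y)=\lim_{n\to\infty}\bigl(d_\Omega(x,w_n)-d_\Omega(y,w_n)\bigr). \]
Walsh's continuity theorem \cite[Th.\,1.3]{Walsh} upgrades this to $w_n\to\eta:=\pi_{\hor}(\xi)$ in $\overline\Omega$. Writing $s:=d_\Omega(x,y)$ and picking $n$ large enough that $d_\Omega(x,w_n)\geq s$, I let $y_n$ be the unique point on the straight-line segment $[x\,w_n]$ at $d_\Omega$-distance $s$ from $x$. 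Triangle equality on this straight-line geodesic gives $d_\Omega(x,w_n)=s+d_\Omega(y_n,w_n)$, so it will suffice to show $d_\Omega(y,w_n)-d_\Omega(y_n,w_n)\to 0$.

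The crucial ingredient is then the convergence $y_n\to y$ in the Euclidean (hence Hilbert) topology. In an affine chart containing $\overline\Omega$, $w_n\to\eta$ Euclideanly, so the unit vector $(w_n-x)/|w_n-x|$ converges to $(\eta-x)/|\eta-x|$ and the projective line through $x$ and $w_n$ converges to the line through $x$ and $\eta$, which supports $y$. By proper convexity of $\Omega$ the two boundary intersections of this line depend continuously on its direction, and the far endpoint (in the $w_n$-direction from $x$) converges to $\eta$ itself; the cross-ratio formula defining the Hilbert distance along the line then depends continuously on these endpoints, and $y_n$, which is determined from $x$ by a fixed value of this cross-ratio, converges to $y$. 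Since $y\in\Omega$, this gives $d_\Omega(y,y_n)\to 0$, and the triangle inequality yields $|d_\Omega(y,w_n)-d_\Omega(y_n,w_n)|\leq d_\Omega(y,y_n)\to 0$; combining with the triangle equality above gives $\beta_\xi(x,y)\geq s=d_\Omega(x,y)$, closing the argument.

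The main obstacle to this plan is the convergence $y_n\to y$ in the case where $\eta$ lies on a nontrivial open face of $\del\Omega$: a priori, the boundary endpoint of the line through $x$ and $w_n$ could accumulate on a point of $\overline{F_\Omega(\eta)}$ other than $\eta$. This is avoided precisely because that endpoint is the value of the gauge function of $\Omega$ at $x$ in the direction $w_n-x\to\eta-x$, forcing convergence to $\eta$ itself; a brief continuity argument for the gauge function will justify this step in the formal write-up.
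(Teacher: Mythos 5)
Your argument is correct and is essentially the paper's proof unpacked: both reduce the statement to the additivity of $d_\Omega$ along the straight segment $[x\,w_n]$ for interior points $w_n\to\xi$, plus the continuity fact that these segments pass arbitrarily close to $y$ (your $y_n\to y$ is exactly the approximation the paper invokes when it chooses $(z,y')$ close to $(\xi,y)$ with $y'\in[x\,z]$). The paper phrases this as a one-line contradiction while you run the limit directly and justify the endpoint-continuity step explicitly, but the content is the same.
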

\begin{proof}
 If $\beta_\xi(x,y)<d_\Omega(x,y)$, then we can find $(z,y')\in\Omega^2$ close enough to $(\xi,y)$ such that $\beta_{z}(x,y')<d_\Omega(x,y')$ and $y'\in[x\,z]$, which contradicts the fact that $[x\,z]$ is a geodesic segment for the Hilbert metric. Hence $\beta_\xi(x,y)$ is bounded below by $d_\Omega(x,y)$; the other bound follows from the triangle inequality.
\end{proof}

We also make the following elementary observation:
\begin{lem} \label{lem:disjhoroballs}
 Given a domain $\Omega\subset \RPn$ and two distinct points $\xi,\eta\in\overline\Omega{}^{\hor}$, if $x\in[\pi_{\hor}(\xi) \, \pi_{\hor}(\eta)]\cap \Omega$ then $\horoball_\xi(x)$ and $\horoball_\eta(x)$ are disjoint.
 
 \begin{proof}
  Suppose there is $y\in \horoball_\xi(x)\cap \horoball_\eta(x)$. Then we can find $(\xi',x',\eta')\in\Omega^3$ close enough to $(\xi,x,\eta)$ such that $\beta_{\xi'}(x',y)>0$ and $\beta_{\eta'}(x',y)>0$ and $x'\in [\xi'\eta']$. This leads to the following contradiction:
  \begin{align*}
   d_\Omega(\xi',\eta') & = d_\Omega(\xi',x')+d_\Omega(x',\eta') \\
   & = \beta_{\xi'}(x',y) +d_\Omega(\xi',y)+\beta_{\eta'}(x',y)+d_\Omega(y,\eta') \\
   & > d_\Omega(\xi',\eta').\qedhere
  \end{align*}
 \end{proof}
\end{lem}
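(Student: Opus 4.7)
The plan is to argue by contradiction, in the spirit of the proof of Proposition~\ref{fact:horofoliation} just given. First I would suppose there exists $y \in \horoball_\xi(x) \cap \horoball_\eta(x)$, so that $\beta_\xi(x,y) > 0$ and $\beta_\eta(x,y) > 0$. The idea is then to perturb the picture so that the two horofunctions are replaced by horofunctions based at interior points $\xi', \eta' \in \Omega$ (where the cocycle structure is just a difference of Hilbert distances), and $x$ is replaced by a nearby point $x' \in [\xi'\eta']$, after which the contradiction will come from a direct triangle-inequality computation.

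For the approximation, I would use the definition of the horoboundary as the closure of $\beta(\Omega)$ in $C(\Omega)$: pick sequences $\xi_n,\eta_n \in \Omega$ with $\beta_{\xi_n}(\cdot, o) \to \beta_\xi(\cdot, o)$ and $\beta_{\eta_n}(\cdot, o) \to \beta_\eta(\cdot, o)$ uniformly on compact subsets of $\Omega$. Walsh's continuous surjection $\pi_{\hor}: \overline\Omega{}^{\hor} \to \overline\Omega$ then forces $\xi_n \to \pi_{\hor}(\xi)$ and $\eta_n \to \pi_{\hor}(\eta)$ in $\overline\Omega$. Since $x \in \Omega$ lies on the segment $[\pi_{\hor}(\xi)\,\pi_{\hor}(\eta)]$, the projective segments $[\xi_n \eta_n]$ converge to $[\pi_{\hor}(\xi)\,\pi_{\hor}(\eta)]$, and because $x$ is in the interior of $\Omega$ I may, for $n$ large enough, pick $x_n \in [\xi_n \eta_n]$ with $x_n \to x$. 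Continuity of the horofunctions then yields $\beta_{\xi_n}(x_n, y) \to \beta_\xi(x, y) > 0$ and $\beta_{\eta_n}(x_n, y) \to \beta_\eta(x, y) > 0$, so both quantities remain strictly positive for all sufficiently large $n$.

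Finally, expanding $\beta_{\xi_n}(x_n, y) = d_\Omega(x_n, \xi_n) - d_\Omega(y, \xi_n)$ and the analogous identity for $\eta_n$, and using that $x_n$ lies on the segment $[\xi_n \eta_n]$, I would compute
\begin{align*}
d_\Omega(\xi_n, \eta_n)
&= d_\Omega(\xi_n, x_n) + d_\Omega(x_n, \eta_n) \\
&= \beta_{\xi_n}(x_n, y) + d_\Omega(\xi_n, y) + \beta_{\eta_n}(x_n, y) + d_\Omega(y, \eta_n) \\
&> d_\Omega(\xi_n, \eta_n),
\end{align*}
where the strict inequality combines the positivity of the two $\beta$ terms with the triangle inequality $d_\Omega(\xi_n, y) + d_\Omega(y, \eta_n) \geq d_\Omega(\xi_n, \eta_n)$. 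This is the desired contradiction.

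The only step that requires care is the approximation: one must guarantee that the interior approximants $\xi_n, \eta_n$ can be chosen so that the segment $[\xi_n \eta_n]$ contains a point close to $x$. This relies on continuity (in the Hausdorff sense) of the segment map with respect to its endpoints, together with the crucial hypothesis that $x$ lies in $\Omega$ (rather than merely on the closed boundary segment), so that for large $n$ the interior point $x$ is still in the interior of the perturbed segment. Everything else is bookkeeping with the cocycle definition of $\beta$ and the triangle inequality.
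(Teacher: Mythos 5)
Your proof is correct and is essentially the paper's own argument: approximate $\xi,\eta$ by interior points, pick a nearby point on the perturbed segment, use continuity of the horofunctions to preserve the two strict inequalities, and derive the contradiction from the same triangle-inequality computation. The extra care you take with the Hausdorff continuity of the segment map just fills in a step the paper leaves implicit.
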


It follows from this that
\begin{prop}[{cf. \cite[Fig 7.4]{CM12}}] \label{prop:horoball_geomdesc}
Given $\Omega$ a domain, $\xi, \eta \in \del\Omega$ two distinct boundary points, and two horoballs $\horoball_\xi$ and $\horoball_\eta$, centered at $\xi$ and $\eta$ respectively, $\horoball_\xi \cap \horoball_\eta \neq \varnothing$ if and only if $(\xi\,\eta) \cap \horoball_\xi \cap \horoball_\eta \neq \varnothing$.
\begin{proof}
It is clear that $\horoball_\xi \cap \horoball_\eta \neq \varnothing$ if $(\xi\,\eta) \cap \horoball_\xi \cap \horoball_\eta \neq \varnothing$.

Conversely, suppose $\horoball_\xi \cap \horoball_\eta \cap (\xi\,\eta) = \varnothing$. Since $(\xi\eta)$ is connected, it contains a point $x$ outside of $\horoball_\xi\cup\horoball_\eta$. Now $\horoball_\xi\subset\horoball_\xi(x)$ and $\horoball_\eta\subset\horoball_\eta(x)$ by definition of the horoballs, and $\horoball_\xi(x)\cap\horoball_\eta(x)$ is empty by the previous lemma.
\end{proof}
\end{prop}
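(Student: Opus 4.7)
The ``if'' direction is immediate from set inclusion, so I plan to focus on the converse, which I would prove in contrapositive form: assuming $(\xi\eta)\cap\horoball_\xi\cap\horoball_\eta=\varnothing$, I aim to derive $\horoball_\xi\cap\horoball_\eta=\varnothing$. The strategy is to exhibit a point $x$ on the chord $(\xi\eta)\cap\Omega$ lying outside both horoballs, and then reduce to Lemma~\ref{lem:disjhoroballs}.

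To produce such an $x$, I would parametrize the chord $(\xi\eta)\cap\Omega$ affinely by $(0,1)$ with $\xi$ at $0$ and $\eta$ at $1$. Since each horoball is open and convex, its intersection with the chord is an open sub-interval. Using Proposition~\ref{fact:horofoliation} to identify $\beta_\xi$ with signed Hilbert arc length along the chord toward $\xi$, the sub-interval corresponding to $\horoball_\xi$ reaches down to $0$ and hence has the form $(0,a)$ for some $a\in(0,1)$ (it cannot extend all the way to $1$, or it would immediately meet the nonempty $\horoball_\eta$ sub-interval near $1$, contradicting the hypothesis); symmetrically the $\horoball_\eta$ sub-interval has the form $(b,1)$ for some $b\in(0,1)$. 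The disjointness hypothesis forces $a\leq b$, and any chord-point $x$ with parameter in $[a,b]$ then lies outside both horoballs.

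Next I would invoke the monotonicity of horoballs with a fixed center: the family of horoballs centered at $\xi$ is totally ordered by inclusion, and a direct cocycle computation gives that $x\notin\horoball_\xi$ implies $\horoball_\xi\subseteq\horoball_\xi(x)$; analogously $\horoball_\eta\subseteq\horoball_\eta(x)$. Since $x\in(\pi_{\hor}(\xi)\,\pi_{\hor}(\eta))\cap\Omega$ by construction, Lemma~\ref{lem:disjhoroballs} applies to yield $\horoball_\xi(x)\cap\horoball_\eta(x)=\varnothing$, whence $\horoball_\xi\cap\horoball_\eta=\varnothing$ as desired.

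The only mildly subtle ingredient is verifying that the chord sub-interval for $\horoball_\xi$ genuinely accumulates at $\xi$ while staying bounded away from $\eta$ (and symmetrically for $\horoball_\eta$); this is precisely where Proposition~\ref{fact:horofoliation} is essential, as it forces $\beta_\xi$ to grow linearly as one travels along the chord toward $\xi$ and to decrease linearly as one travels away from it.
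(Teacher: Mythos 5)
Your proposal is correct and takes essentially the same route as the paper's proof: produce a point $x$ of the chord lying outside both horoballs (the paper gets this from connectedness of $(\xi\,\eta)$, you from the explicit interval description of $\horoball_\xi\cap(\xi\,\eta)$ and $\horoball_\eta\cap(\xi\,\eta)$ via Proposition~\ref{fact:horofoliation}), observe that $\horoball_\xi\subset\horoball_\xi(x)$ and $\horoball_\eta\subset\horoball_\eta(x)$, and conclude with Lemma~\ref{lem:disjhoroballs}. The extra care you take in justifying the existence of $x$ is a harmless expansion of the paper's one-line connectedness argument.
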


As noted by Crampon--Marquis \cite{CM12} and Bray \cite{Bray}, regularity properties of the projective boundary $\del\Omega$ have repercussions for the geometry and regularity of the horospheres. In particular any $\xi\in\del\Omega$ has exactly one preimage in $\del_{\hor}\Omega$ if and only if it is a $C^1$ point of $\del\Omega$ \cite[Lem.\,3.2]{Bray}. We will from now on abuse notation by identifying any $C^1$ point of the projective boundary with its preimage in the horoboundary. 

If $\xi$ is $C^1$, the horospheres centered at $\xi$ coincide with the algebraic horospheres defined by Cooper--Long--Tillmann \cite[\S3]{CLT}. More precisely, for $\eta\in\del\Omega\smallsetminus T_\xi\del\Omega$ and $x\in (\xi\,\eta)$, the horosphere $\horosphere_\xi(x)$ is the image of the map that sends  $\zeta\in\del\Omega\smallsetminus T_\xi\del\Omega$ to the point of intersection 
$[\xi \, \zeta] \cap \Span\left(x, T_\xi\del\Omega \cap \Span(\eta,\zeta) \right)$, 
and moreover this map is the restriction of a projective transformation that fixes every point of $T_\xi\del\Omega$.

As a consequence, if $\Omega$ is strictly convex with $C^1$ boundary, then the horoboundary identifies with $\del\Omega$, all horofunctions are $C^1$, and all horoballs are strictly convex with $C^1$ boundary.

\subsection{Gromov products}

Given $\Omega \subset \RPn$ a domain and $x, y, z \in \Omega$, the {\bf Gromov product} $\grop{x}yz$ is defined as 
\[ \grop{x}yz = \frac 12[ d_\Omega(x,y)+d_\Omega(x,z)-d_\Omega(y,z) ] .\]
Roughly speaking, it measures how much the sides of a geodesic triangle overlap. For instance, given a tree $T$, we have $\grop{x}yz = 0$ for any $x, y, z \in T$; more generally, the smaller the Gromov product, the thinner the geodesic triangle is. Indeed, there is a characterization of $\delta$-hyperbolicity in terms of the Gromov product.

We note the following transformation properties that are useful for proving the invariance of our Sullivan measures below: for all $\phi \in \Aut(\Omega)$,
\begin{align*}
\grop{\phi x}{\phi\xi}{\phi\eta} = \grop{x}\xi\eta & = \grop{x'}\xi\eta + \frac 12\left(\beta_\xi(x,x') + \beta_\eta(x,x') \right)
\end{align*}
We remark also that $\beta_\xi(x,u) + \beta_\eta(x,u) = 2 \grop{x}\xi\eta$, for any $u \in (\xi\,\eta)$. 

As in CAT$(-1)$ geometry (though not always for general Gromov hyperbolic spaces, see \cite[\S III.H.3.15]{BridsonHaefliger}), the Gromov product may be extended continuously to an appropriate set of pairs of points of the boundary, here $\pi_{\hor}^{-1}(\Geod^\infty \Omega)$, where $\Geod^\infty \Omega = \{(x,y)\in\del\Omega{}^2 \:|\: [x\,y]\cap\Omega\neq\varnothing\}$ is as at the end of \S\ref{subsec:bdry pts};
we also set $\Geod\Omega=\{(x,y)\in\overline\Omega{}^2 \:|\: [x\,y] \cap \Omega \neq \varnothing\}$. 

\begin{prop} [{\cite[Prop.\,3.1]{Blayac_PSdensities}}]\label{prop:Gromovprod}
The map $(\xi,\eta,x) \mapsto \grop x\xi\eta$ defined on $\Omega^3$ extends continuously to $\pi_{\hor}^{-1}(\Geod\Omega)\times\Omega$, and the extension continues to satisfy
\begin{gather*}
x \in (\xi\eta) \implies \grop x\xi\eta = 0 \\
2 \grop x\xi\eta = 2 \grop y\xi\eta + \beta_\xi(x,y) + \beta_\eta(x,y) \\
| \grop x\xi\eta - \grop y\xi\eta | \leq d_\Omega(x,y)
\end{gather*}
\begin{proof}
Let $A:=\{(\xi,\eta,x,y)\in\pi_{\hor}^{-1}(\Geod\Omega)\times\Omega^2 \:|\: y\in[\xi\,\eta]\}$. The projection $(\xi,\eta,x,y) \mapsto (\xi,\eta,x)$ from $A$ to $\pi_{\hor}^{-1}(\Geod\Omega)\times\Omega$ is continuous, surjective and open. Therefore, it is enough to prove that the function $(\xi,\eta,x,y)\mapsto \langle \xi,\eta \rangle_x$ defined on $A\cap\Omega^4$ extends continuously to $A$. This is an immediate consequence of the fact that $\langle \xi,\eta \rangle_x = \beta_\xi(x,y) + \beta_\eta(x,y)$ for any $(\xi,\eta,x,y)\in A\cap\Omega^4$.
\end{proof}
\end{prop}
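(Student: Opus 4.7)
The plan is to leverage the identity $2\grop x\xi\eta = \beta_\xi(x,u) + \beta_\eta(x,u)$ for $u \in (\xi\,\eta)$, noted just before the statement. Since horofunctions $\beta_\xi$ are defined for any $\xi \in \overline{\Omega}{}^{\hor}$, the right-hand side provides a natural candidate extension. So I would \emph{define}
\[
\grop x\xi\eta := \tfrac12\bigl(\beta_\xi(x,u) + \beta_\eta(x,u)\bigr)
\]
for $(\xi,\eta) \in \pi_\hor^{-1}(\Geod\Omega)$ and $x\in\Omega$, taking any $u \in [\pi_\hor(\xi)\,\pi_\hor(\eta)]\cap\Omega$; such a $u$ exists by definition of $\Geod\Omega$.

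The first step is well-definedness: the sum must not depend on the choice of $u$ on this straight-line segment. Given a second choice $u'$, the horofunction cocycle property reduces the difference to $\beta_\xi(u',u) + \beta_\eta(u',u)$. Since both $u, u'$ lie on the geodesic $[\pi_\hor(\xi)\,\pi_\hor(\eta)]$, Proposition~\ref{fact:horofoliation} applies in both directions to give $\beta_\xi(u',u)$ and $\beta_\eta(u',u)$ as $\pm d_\Omega(u',u)$ with opposite signs, so the sum vanishes. As an immediate bonus, the identity $\grop{x}\xi\eta = 0$ for $x \in (\xi\,\eta)$ drops out by taking $u = x$ in the definition.

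For continuity, I would work with the auxiliary space $A := \{(\xi,\eta,x,u) \in \pi_\hor^{-1}(\Geod\Omega)\times\Omega^2 : u \in [\pi_\hor(\xi)\,\pi_\hor(\eta)]\}$. The forgetful projection $A \to \pi_\hor^{-1}(\Geod\Omega)\times\Omega$ is continuous, surjective, and open, the openness following from local selection of $u$ along the varying geodesic lines. On $A$, the function $(\xi,\eta,x,u) \mapsto \tfrac12(\beta_\xi(x,u)+\beta_\eta(x,u))$ is jointly continuous, since $\xi \mapsto \beta_\xi$ is by construction a continuous embedding into $C(\Omega)$. By the well-definedness step, this function descends through the projection to a continuous map on $\pi_\hor^{-1}(\Geod\Omega)\times\Omega$, yielding the required extension.

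Finally, the cocycle-like relation $2\grop x\xi\eta - 2\grop y\xi\eta = \beta_\xi(x,y) + \beta_\eta(x,y)$ is immediate from the horofunction cocycle property applied to the definition (pick the same $u$ on both sides), and the Lipschitz bound then follows since horofunctions are $1$-Lipschitz in their first argument, giving $|\beta_\xi(x,y)+\beta_\eta(x,y)| \leq 2 d_\Omega(x,y)$. The main obstacle I anticipate is the well-definedness step for boundary points lying in non-trivial faces of $\overline\Omega$, but this is precisely what Proposition~\ref{fact:horofoliation} is designed to handle, so the argument goes through uniformly over $\pi_\hor^{-1}(\Geod\Omega)$.
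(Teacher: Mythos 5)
Your argument is correct and follows essentially the same route as the paper's proof: both pass to the auxiliary space $A$ of quadruples $(\xi,\eta,x,u)$ with $u$ on the geodesic, use that the forgetful projection is continuous, open and surjective, and extend via the identity $2\grop x\xi\eta=\beta_\xi(x,u)+\beta_\eta(x,u)$. Your explicit well-definedness check (independence of the choice of $u$, via Proposition~\ref{fact:horofoliation}) is a detail the paper leaves implicit, and is a worthwhile addition rather than a deviation.
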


It is immediate from the first and third properties in the above proposition that
\begin{corn} \label{lem:grop_bound} 
If $(\xi,\eta,x) \in \pi_{\hor}^{-1}(\Geod\Omega) \times \Omega$, then 
$\grop x\xi\eta \leq d_\Omega(x,(\xi\,\eta))$.
\end{corn}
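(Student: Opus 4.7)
The plan is to combine the first and third bullet points in Proposition~\ref{prop:Gromovprod}: the vanishing of the Gromov product on the bi-infinite geodesic and the 1-Lipschitz dependence of $\grop{\cdot}{\xi}{\eta}$ on the base point. Since $(\xi,\eta)\in\pi_{\hor}^{-1}(\Geod\Omega)$, by definition there is a straight-line geodesic in $\Omega$ with endpoints $\pi_{\hor}(\xi)$ and $\pi_{\hor}(\eta)$, so the set $(\xi\,\eta)\cap\Omega$ is non-empty, and $d_\Omega\bigl(x,(\xi\,\eta)\bigr)$ makes sense as an infimum of $d_\Omega(x,y)$ over $y\in(\xi\,\eta)\cap\Omega$.

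First I would pick an arbitrary $y\in(\xi\,\eta)\cap\Omega$. Then $(\xi,\eta,y)\in\pi_{\hor}^{-1}(\Geod\Omega)\times\Omega$, so the first property of Proposition~\ref{prop:Gromovprod} gives $\grop{y}{\xi}{\eta}=0$. Applying the third (Lipschitz) property to the pair $(x,y)$ yields
\[
\grop{x}{\xi}{\eta}=\bigl|\grop{x}{\xi}{\eta}-\grop{y}{\xi}{\eta}\bigr|\leq d_\Omega(x,y).
\]
Since $y\in(\xi\,\eta)\cap\Omega$ was arbitrary, taking the infimum over such $y$ gives $\grop{x}{\xi}{\eta}\leq d_\Omega\bigl(x,(\xi\,\eta)\bigr)$, which is the claim.

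There is no real obstacle here; the statement is a formal corollary of Proposition~\ref{prop:Gromovprod} once one knows that the straight-line geodesic $(\xi\,\eta)\cap\Omega$ is non-empty, which is built into the hypothesis $(\xi,\eta)\in\pi_{\hor}^{-1}(\Geod\Omega)$. The only minor point to check is that $\grop{x}{\xi}{\eta}$ is non-negative (so that we may drop the absolute value after dropping the zero term); this follows from the extended identity $2\grop{x}{\xi}{\eta}=\beta_\xi(x,y)+\beta_\eta(x,y)$ for $y\in(\xi\,\eta)$ together with Proposition~\ref{fact:horofoliation}, which shows that each summand equals $d_\Omega(x,y')$ for a suitable $y'\in[x\,\pi_{\hor}(\xi)]$ or $[x\,\pi_{\hor}(\eta)]$ and is therefore $\geq 0$.
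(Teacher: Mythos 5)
Your proof is correct and is exactly the argument the paper intends: the corollary is stated as immediate from the first property ($\grop y\xi\eta=0$ for $y\in(\xi\,\eta)$) and the third (1-Lipschitz dependence on the base point) of Proposition~\ref{prop:Gromovprod}, followed by taking the infimum over $y\in(\xi\,\eta)\cap\Omega$. Your extra remark on non-negativity is fine but not strictly needed, since $\grop x\xi\eta\leq|\grop x\xi\eta-\grop y\xi\eta|$ already holds without it.
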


If $\Omega$ is strictly convex with $C^1$ boundary, then $\del_{\hor}\Omega$ is identified with $\del\Omega$, and we see that the Gromov product is well defined on the set $\del^2\Omega$ of pairs of distinct points of $\del\Omega$.



\subsection{Convex cocompactness and geometric finiteness} \label{sub:geomfin}

Let $\Omega$ be a domain and $\Gamma\leq\Aut(\Omega)$ be a discrete subgroup. The {\bf full orbital limit set}, introduced in \cite{DGK}, is
\begin{equation}\label{eq:orb}
\Lambda_\Gamma^{\orb} := \bigcup_{x \in \Omega} \overline{\Gamma\cdot x}\smallsetminus\Gamma\cdot x.
\end{equation}
The {\bf convex core} of the quotient $M=\Omega/\Gamma$ is the projection of the convex hull $CH(\Lambda_\Gamma^{\orb})$ in $\Omega$ of the full orbital limit set. The idea is that the convex core contains all the dynamics of the geodesic flow: for instance, one can check that any non-wandering geodesic lies in it, see \cite[Obs.\,2.12]{Blayac_topmixing}. We will obtain our strongest results when the convex core is geometrically well-understood, more precisely when it is compact (\ie the group is convex cocompact). If $\Omega$ is strictly convex with $C^1$ boundary, these results extend to the case where the convex core can be decomposed into a compact part and finitely many cusp-like parts (\ie the group is geometrically finite).

\begin{defn}[{\cite[Def.\,1.11]{DGK}}]
$\Gamma \leq \Aut(\Omega)$ is {\bf convex cocompact} if the quotient of the convex core 
$CH(\Lambda_\Gamma^{\orb}) / \Gamma$
is compact and non-empty.
\end{defn}

Equivalently, one can characterize the subgroups $\Gamma\leq\Aut(\Omega)$ acting convex cocompactly on $\Omega$ in terms of its action on the boundary:
\begin{defn}
$\xi \in \Lambda_\Gamma^{\orb}$ is a {\bf conical limit point} if there exist a sequence of elements $(\gamma_n) \subset \Gamma$ and $x \in \Omega$ such that $(\gamma_n x)_n$ tends to $\xi$ and $\sup_n d_\Omega(\gamma_n x, [x\,\xi)) < \infty$. We denote by $\Lambda_\Gamma^{\con}$ the set of all conical limit points in $\Lambda_\Gamma^{\orb}$.
\end{defn}
\begin{prop}[{\cite[Cor.\,4.9 \& Lem.\,4.19]{DGK}}]
$\Gamma \leq \Aut(\Omega)$ is convex cocompact if and only if $\Lambda_\Gamma^{\orb} = \Lambda_\Gamma^{\con}$ and this set is closed.
\end{prop}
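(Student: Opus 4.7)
The plan is to prove both implications, exploiting the interplay between the compactness of $CH(\Lambda_\Gamma^{\orb})/\Gamma$ and the boundary dynamics of $\Gamma$. Throughout, fix a basepoint $o \in CH(\Lambda_\Gamma^{\orb})$.

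For the forward direction, assume $\Gamma$ is convex cocompact and fix a compact fundamental domain $K \subset CH(\Lambda_\Gamma^{\orb})$ of diameter $D$. I would first establish the identity $\Lambda_\Gamma^{\orb} = \overline{CH(\Lambda_\Gamma^{\orb})} \cap \del\Omega$ (closure in $\overline\Omega$); closedness of $\Lambda_\Gamma^{\orb}$ follows immediately. The inclusion $\subseteq$ is direct from the definition. For $\supseteq$, given $\xi$ in the right-hand side, pick $y_n \in CH(\Lambda_\Gamma^{\orb})$ with $y_n \to \xi$ and write $y_n = \gamma_n k_n$ with $k_n \in K$. Then $d_\Omega(\gamma_n o, y_n) = d_\Omega(o, k_n) \leq D$, so $\gamma_n o \to \xi$, placing $\xi$ in $\Lambda_\Gamma^{\orb}$.

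To prove conicality, given $\xi \in \Lambda_\Gamma^{\orb}$, observe that $\xi \in \overline{CH(\Lambda_\Gamma^{\orb})}$ and $o \in CH(\Lambda_\Gamma^{\orb})$, so by convexity the ray $[o\,\xi) = [o\,\xi] \cap \Omega$ is contained in $CH(\Lambda_\Gamma^{\orb})$. Pick $z_n$ on this ray with $z_n \to \xi$; write $z_n = \gamma_n k_n$ with $k_n \in K$. Then $\gamma_n o \to \xi$ and $d_\Omega(\gamma_n o, [o\,\xi)) \leq D$, showing $\xi \in \Lambda_\Gamma^{\con}$.

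For the backward direction, assume $\Lambda_\Gamma^{\orb} = \Lambda_\Gamma^{\con}$ is closed and suppose for contradiction that $CH(\Lambda_\Gamma^{\orb})/\Gamma$ is not compact. Then there is a sequence $y_n \in CH(\Lambda_\Gamma^{\orb})$ with $d_\Omega(\Gamma o, y_n) \to \infty$; up to extraction $y_n$ converges to some $\xi \in \overline\Omega \setminus \Omega$, i.e.\ $\xi \in \del\Omega$. By Carathéodory, each $y_n$ is a convex combination of a uniformly bounded number of points of $\Lambda_\Gamma^{\orb}$; passing to subsequential limits (using that $\Lambda_\Gamma^{\orb}$ is closed) and using that $\xi \in \del\Omega$ lies in the same open face as those limits with positive weight, one extracts a point $\eta \in \Lambda_\Gamma^{\orb}$ in the closed face $\overline{F_\Omega(\xi)}$. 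By the conicality hypothesis applied to $\eta$, there exist $\gamma_n \in \Gamma$ with $\gamma_n o \to \eta$ and $d_\Omega(\gamma_n o, [o\,\eta)) \leq C$; combining this with control on how $y_n$ sits relative to the ray $[o\,\eta)$ (via the Busemann/cocycle comparison from Proposition~\ref{fact:horofoliation} and Lemma~\ref{lem:crampon}), one shows that $\gamma_n^{-1} y_n$ remains in a compact subset of $\Omega$, contradicting $d_\Omega(\Gamma o, y_n) \to \infty$.

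The main obstacle is the backward direction, specifically the step that translates a conical limit point nearby $\xi$ into effective $\Gamma$-translates bringing $y_n$ back into a compact set: when $\Omega$ is not strictly convex, $\xi$ itself need not be an extremal or conical point, and one must exploit the geometry of the face containing $\xi$ together with the conical approximation of $\eta \in \Lambda_\Gamma^{\orb}$ to produce the desired $\gamma_n$. This face-by-face analysis is where the argument of \cite{DGK} is most delicate.
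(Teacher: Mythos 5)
The paper does not prove this proposition --- it is quoted directly from \cite{DGK} --- so your argument can only be checked against the statement itself, and as written it has a genuine gap in each direction. In the forward direction, the step ``$y_n\to\xi$ and $d_\Omega(\gamma_n o,y_n)\leq D$, hence $\gamma_n o\to\xi$'' fails for general properly convex $\Omega$: bounded Hilbert distance does not force a common limit when $\del\Omega$ contains segments. By the semicontinuity $\liminf_{x\to\xi,\,y\to\eta}d_{\overline\Omega}(x,y)\geq d_{\overline\Omega}(\xi,\eta)$ recalled in \S\ref{subsec:bdry pts}, the accumulation points of $(\gamma_n o)$ are only constrained to lie in $B_{\overline\Omega}(\xi,D)\subset F_\Omega(\xi)$, and they may differ from $\xi$ whenever $\xi$ is not extremal. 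This breaks both halves of your forward argument: the inclusion $\overline{CH(\Lambda_\Gamma^{\orb})}\cap\del\Omega\subseteq\Lambda_\Gamma^{\orb}$ (hence closedness), and, more seriously, conicality, since the definition requires $\gamma_n x\to\xi$ exactly, not merely accumulation somewhere in the open face of $\xi$. Your argument is complete when $\Omega$ is strictly convex (so that $F_\Omega(\xi)=\{\xi\}$), but the proposition is stated and used for arbitrary domains, and dealing with faces is precisely the content of the cited results of \cite{DGK}.

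The backward direction is a plan rather than a proof, as you yourself acknowledge. The decisive step --- converting the conical approximation of some $\eta\in\overline{F_\Omega(\xi)}\cap\Lambda_\Gamma^{\orb}$ into elements $\gamma_n$ for which $\gamma_n^{-1}y_n$ stays in a fixed compact subset of $\Omega$ --- is exactly where the theorem lives, and it is only asserted. Note in particular that $\eta$ is only located in the \emph{closed} face of $\xi$, so $d_{\overline\Omega}(\xi,\eta)$ may be infinite and $d_\Omega(y_n,[o\,\eta))$ may be unbounded; Lemma~\ref{lem:crampon} and Proposition~\ref{fact:horofoliation} by themselves will not bring $y_n$ within bounded distance of the orbit points furnished by conicality of $\eta$, so the contradiction with $d_\Omega(\Gamma o,y_n)\to\infty$ is not established. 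A complete argument either restricts to the strictly convex $C^1$ setting (where both directions go through essentially as you wrote them) or carries out the face-by-face analysis of \cite{DGK}.
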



Now further suppose, for the rest of this section, that $\Omega$ is strictly convex with $C^1$ boundary. In this case, the orbital limit set $\Lambda_\Gamma^{\orb}$ of any discrete subgroup $\Gamma\leq \Aut(\Omega)$ is equal to $\overline{\Gamma\cdot x}\smallsetminus\Gamma\cdot x$ for any $x\in\Omega$, and is closed; therefore $\Gamma$ acts convex cocompactly on $\Omega$ if and only if every point of $\Lambda_\Gamma^{\orb}$ is conical. 

Moreover, the above definition of conical limit points is equivalent, in this case, to a characterization purely in terms of the action $\Gamma \actson \del\Omega$ \cite[Lem.\,5.10]{CM12}.

Geometric finiteness can be seen as a weakening of convex cocompactness, where we allow non-conical limit points, which must however satisfy good properties that allow for some control on the non-compact parts of the convex core.
This notion arose first in the setting of Kleinian groups, and has subsequently been extended to higher dimensions and the more general setting of pinched negative curvature in \cite{Bowditch_GF}; the group-theoretic notion of relative hyperbolicity, see e.g. \cite{Bowditch_relhyp}, may be seen as an extension of geometric finiteness to a more general $\delta$-hyperbolic setting. 

Before we state the definition of geometrically finite subgroups of $\Aut(\Omega)$, we recall the classification of elements and of elementary discrete subgroups of $\Aut(\Omega)$ due to Crampon--Marquis \cite[Th.\,3.3 \& \S3.5]{CM12} (keep in mind that we have assumed $\Omega$ is strictly convex with $C^1$ boundary). Any element $\gamma\in\Aut(\Omega)$ is 
\begin{itemize}
    \item either {\bf elliptic}: it fixes a point of $\Omega$;
    \item or {\bf parabolic}: it fixes a unique point of $\overline\Omega$, which is in the boundary $\partial\Omega$;
    \item or {\bf hyperbolic}: it fixes exactly two points of $\overline\Omega$, which are in the boundary.
\end{itemize}
Any discrete subgroup $\Gamma\leq \Aut(\Omega)$ is 
\begin{itemize}
    \item either {\bf elliptic}: $\#\Lambda_\Gamma^{\orb}=0$, and $\Gamma$ is finite, fixes a point of $\Omega$, and consists of elliptic elements;
    \item or {\bf parabolic}: $\#\Lambda_\Gamma^{\orb}=1$, and $\Gamma$ fixes a unique point of $\del\Omega$, consists of elliptic and (at least one) parabolic elements, and acts properly discontinuously on $\del\Omega \smallsetminus \Lambda_\Gamma^{\orb}$ (see Section~\ref{sec:parabgroups} for more properties of these groups);
    \item or {\bf elementary hyperbolic}: $\#\Lambda_\Gamma^{\orb}=2$, and $\Gamma$ consists of elliptic and (at least one) hyperbolic elements, and any hyperbolic element generates a finite-index subgroup;
    \item or {\bf non-elementary}: $\#\Lambda_\Gamma^{\orb}=\infty$, and $\Gamma$ acts minimally on $\Lambda_\Gamma^{\orb}$ (which is perfect), contains a non-abelian free subgroup made of hyperbolic elements, and fixes no point of $\overline\Omega$.
\end{itemize}

\begin{defn} \label{defn:limitset}
Let $\Omega$ be a strictly convex domain with $C^1$ boundary and $\Gamma$ be a discrete subgroup of $\Aut(\Omega)$.

$\xi \in \Lambda_\Gamma^{\orb}$ is a {\bf bounded parabolic point} if the stabilizer $\Stab_\Gamma(\xi)$ is parabolic and acts cocompactly on $\Lambda_\Gamma^{\orb} \smallsetminus \{\xi\}$. We say $\Gamma$ {\bf acts geometrically finitely on $\del\Omega$} if every point in $\Lambda_\Gamma^{\orb}$ is either conical or bounded parabolic.

$\xi \in \Lambda_\Gamma^{\orb}$ is a {\bf uniformly bounded parabolic point} if $\Stab_\Gamma(\xi)$ is parabolic and acts cocompactly on 
the set of lines through $\xi$ which meet $\overline{CH(\Lambda_\Gamma^{\orb} \setminus \{\xi\})}$, the closure of the convex hull of $\Lambda_\Gamma^{\orb} \setminus \{\xi\}$. We say $\Gamma$  {\bf acts geometrically finitely on $\Omega$ } if every point in $\Lambda_\Gamma^{\orb}$ is either conical or uniformly bounded parabolic.
\end{defn}
Observe that if $\Gamma$ acts geometrically finitely on $\Omega$, then it also acts geometrically finitely on $\del\Omega$; the converse is not true in general (see \cite[\S10.3]{CM12}). We work in the present article with groups $\Gamma$ that act geometrically finitely on $\del\Omega$. 

Crampon--Marquis gave a more concrete description \cite[Th.\,1.2]{CM12} of groups that acts geometrically finitely on $\Omega$, in terms of geometrical and topological properties of the quotient $\Omega/\Gamma$. For example, they proved that $\Gamma$ acts geometrically finitely on $\Omega$ if and only if $CH(\Lambda_\Gamma^{\orb})/\Gamma$ admits a decomposition into a compact part and finitely many disjoint non-compact but well-understood parts (see the standard parabolic regions in \cite[Def.\,7.22]{CM12}), and this property implies that $\Omega/\Gamma$ is tame (\ie the interior of a compact orbifold with boundary), that $\Gamma$ is hyperbolic relative to the maximal parabolic subgroups, and that $\Gamma$ is finitely presented. 

We will establish and use partial results of a similar nature 
for groups acting geometrically finitely on the boundary $\del\Omega$.
More specifically,  
we will decompose into compact and non-compact parts a subset of $CH(\Lambda_\Gamma^{\orb})/\Gamma$, which is the quotient by $\Gamma$ of the union of lines between points in $\Lambda_\Gamma^{\orb}$ (see Propositions~\ref{prop:finitely many cusps} and \ref{prop:noncuspidal is cpct}).

If $\Gamma$ acts geometrically finitely on $\Omega$, and in addition the parabolic ends satisfy an additional technical condition (``asymptotic hyperbolicity''; this condition is always satisfied, for instance, when the quotient $\Omega / \Gamma$ has finite volume), then the Hilbert geodesic flow on $S \Omega / \Gamma$ is uniformly hyperbolic on the non-wandering set $\mathtt{NW}$ \cite[Th.\ 5.2]{CM14}, generalizing Benoist's result that the Hilbert geodesic flow on a compact convex projective manifold is Anosov. 

There is also an analogue of Benoist's characterization of strict convexity in the finite-volume setting, due to Cooper--Long--Tillmann \cite[Th.\ 0.15]{CLT},
which states that for 
$M = \Omega / \Gamma$ a properly convex manifold of finite volume which is the interior of a compact manifold $N$ with boundary and where the holonomy of each component of $\del N$ is parabolic, the following are equivalent:
\begin{enumerate}[i)]
\item $\Omega$ is strictly convex,
\item $\del\Omega$ is $C^1$,
\item $\pi_1N$ is hyperbolic relative to the subgroups of the boundary components.
\end{enumerate}
We remark that there is a fair amount of overlap between the geometric results in \cite{CM12} and \cite{CLT}, although the presentation of their results differs somewhat; the reader may, in large part, consult either or both of these sources to taste.

\section{The rank-one condition} \label{sec:rk1}

Our ergodicity and equidistribution results will require at least a little negative curvature, which may be provided by $\Omega$ being strictly convex with $C^1$ boundary, or more generally by the rank-one condition.

In Riemannian geometry, or more generally CAT$(0)$ geometry, the rank-one condition describes, in the first instance, geodesics in ``negatively curved directions'', or isometries which act like hyperbolic isometries in negatively curved geometry. We then say that any discrete group of isometries containing a rank-one isometry is rank-one, because --- very loosely speaking --- these isometries tend to display north-south dynamics that propagate their rank-one behavior throughout the subgroup.

In the setting of non-positively curved Riemannian manifolds, the higher-rank rigidity theorem of Ballmann--Spatzier tells us that any irreducible compact Riemannian manifold of non-positive curvature either is rank-one (\ie its isometry group is rank-one), or is a higher-rank locally symmetric space. 

Recently, A.\ Zimmer established  a similar result \cite[Th.\,1.4]{zimmer_higher_rank} for closed convex projective manifolds, using a definition of the rank-one condition in Hilbert geometry first defined and studied by M.\ Islam:
\begin{defn}[\cite{Mitul_rk1}]
Given a domain $\Omega \subset \RPn$, an infinite-order element $\gamma \in \Aut(\Omega)$ is {\bf rank-one} if it preserves a straight-line geodesic of $\Omega$ both of whose endpoints are $C^1$ and strongly extremal points of $\del\Omega$.

A discrete subgroup $\Gamma \leq \Aut(\Omega)$, and the quotient $\Omega/\Gamma$ are said to be rank-one if $\Gamma$ contains a rank-one element.
\end{defn}
Note that rank-one elements are biproximal \cite[Prop.\,6.3]{Mitul_rk1}, and that any conjugate of a rank-one element remains rank-one.

A rank-one group $\Gamma$, and the quotient $\Omega/\Gamma$, are said to be {\bf elementary} if and only if $\Gamma$ is virtually isomorphic to $\ints$, and {\bf non-elementary} otherwise. This definition is compatible with the one from the previous section.

\begin{defn}
An element $\gamma \in \PGL(\Rnplusone)$ is {\bf proximal} if it has an attracting fixed point in $\RPn$, which is denoted by $\gamma^+$. If $\gamma$ and its inverse are both proximal, then we say that $\gamma$ is {\bf biproximal}, and write $\gamma^- := (\gamma^{-1})^+$.

Given a subgroup $\Gamma \leq\PGL(\Rnplusone)$, the {\bf proximal limit set} $\Lambda_\Gamma^{\prox}$ is the closure of the set of all attracting fixed points of proximal elements in $\Gamma$.

Below, we will write $\Lambda_\Gamma$ to refer to $\Lambda_\Gamma^{\prox}$, and label other limit sets, where we refer to them, with their respective superscripts ($\Lambda_\Gamma^{\orb}$, $\Lambda_\Gamma^{\con}$, etc.)
\end{defn}

One can show that $\Lambda_\Gamma\subset\Lambda_\Gamma^{\orb}$, and moreover that if $\Omega$ is strictly convex with $C^1$ boundary and $\Gamma$ is discrete, then the proximal limit set as defined here agrees with the full orbital limit set as defined in (\ref{eq:orb}) unless $\Gamma$ is a parabolic group. 

The proximal limit set can be used, as in \cite[Def.\,1.1]{Blayac_topmixing} to define a natural subset of the unit tangent bundle $SM$ of a convex projective manifold $M$ given by the set of vectors $v\in SM$ such that $\tilde v^+$ and $\tilde v^-$ belong to $\Lambda_\Gamma$ for any lift $\tilde v\in S\Omega$: this is called the {\bf biproximal unit tangent bundle} and denoted by $SM_{\bip}$. 

When $\Omega$ is strictly convex with $C^1$ boundary, $SM_{\bip} $ is the non-wandering set. Using Zimmer's higher-rank rigidity result \cite[Th.\,1.4]{zimmer_higher_rank}, the first author proved \cite{Blayac_proxestbord} that if $M$ is rank-one and compact, then $SM_{\bip}=SM$.
We will see (Corollary \ref{cor:rk1_top_mixing}) that the geodesic flow is topologically mixing on the biproximal unit tangent bundle of non-elementary rank-one convex projective manifolds.

\subsection{Abundance of rank-one geodesics}

Fixed points of rank-one elements behave well for our purposes; the next two results are precise instances of this general statement:

\begin{lem} \label{lem:diff att => diff rep}
Let $\Omega \subset \RPn$ be a domain and $\Gamma$ a discrete subgroup of $\Aut(\Omega)$. Let $\gamma_1$, $\gamma_2\in\Gamma$ be rank-one elements. 
If $\gamma_1^+=\gamma_2^+$, then $\gamma_1^-=\gamma_2^-$, and in fact $\gamma_1$ and $\gamma_2$ must have a common power.

\begin{proof}
Let $l_i := (\gamma_i^- \gamma_i^+)$ be the axes for $\gamma_i$ ($i=1,2$). These are well-defined and unique (\cite[Def 6.2 and Prop 6.3]{Mitul_rk1}), and $\gamma_i^{\pm1}$ translates along $l_i$ towards $\gamma_i^\pm$.

Fix $x_i \in l_i$ such that $\beta_{\gamma_1^+}(x_1,x_2)=0$. Given $n \geq 1$, write $m_n := \left \lfloor n \frac{d_\Omega(x_1, \gamma_1 x_1)}{d_\Omega(x_2, \gamma_2 x_2)} \right\rfloor$. Fix $\eps > 0$. Since $\gamma_i^k x_i = \gamma_i^+$ as $k\to\infty$, and $\gamma_1^+ = \gamma_2^+$, thus by \cite[Prop.\,5.1]{Blayac_topmixing}, we have 
\begin{equation*}
d_\Omega(x_1,\gamma_1^{-n}\gamma_2^{m_n}x_2) = d_\Omega(\gamma_1^n x_1, \gamma_2^{m_n} x_2) < \eps 
\end{equation*}
for all large enough $n$.

Since the action of $\Gamma$ on $\Omega$ is properly discontinuous, this implies for all sufficiently large $n$ we have $\gamma_1^{-n} \gamma_2^{m_n} = \id$. Fix any one such $N$, and define $h := \gamma_1^N=\gamma_2^{m_N}$. 

Then $d_\Omega(h^{-p}x_1,h^{-p}x_2) = d_\Omega(x_1,x_2)<\infty$ for all $p \in \ints_{>0}$. Since $h^{-p} x_i \to \gamma_i^-$ as $p \to \infty$, and the $\gamma_i^-$ are strongly extremal points, this implies that $\gamma_1^- = \gamma_2^-$.
\end{proof}
\end{lem}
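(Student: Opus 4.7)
The plan is to produce a common power of $\gamma_1$ and $\gamma_2$ via a properness argument, and then to use that common power to force the repelling fixed points to coincide.

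First, I would invoke the existence and uniqueness of axes for rank-one elements, so that each $\gamma_i$ preserves a unique axis $l_i = (\gamma_i^-\gamma_i^+)$ and acts on it by translation of length $\tau_i := d_\Omega(x, \gamma_i x)$ for any $x \in l_i$. Pick basepoints $x_i \in l_i$ normalized by the condition $\beta_{\gamma_1^+}(x_1, x_2) = 0$, which geometrically synchronizes the two sequences of iterates toward the common attracting fixed point.

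Next, I would choose $m_n := \lfloor n\tau_1/\tau_2 \rfloor$, so that $\gamma_1^n x_1$ and $\gamma_2^{m_n} x_2$ travel essentially the same Hilbert distance along their respective axes toward $\gamma_1^+ = \gamma_2^+$. Because this common endpoint is $C^1$ and strongly extremal (by the rank-one hypothesis), a convergence lemma in the rank-one setting (Prop.\,5.1 of \cite{Blayac_topmixing}) gives
\[ d_\Omega\bigl(\gamma_1^n x_1, \gamma_2^{m_n} x_2\bigr) \xrightarrow[n\to\infty]{} 0, \]
and rewriting the left side as $d_\Omega(x_1, \gamma_1^{-n}\gamma_2^{m_n} x_1)$ we see that the elements $\gamma_1^{-n}\gamma_2^{m_n}$ displace $x_1$ by arbitrarily small amounts. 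Proper discontinuity of $\Gamma \actson \Omega$ then forces $\gamma_1^{-N}\gamma_2^{m_N} = \id$ for some $N$, yielding the desired common power $h := \gamma_1^N = \gamma_2^{m_N}$.

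Finally, to get $\gamma_1^- = \gamma_2^-$, I would run the common power backwards: the orbits $h^{-p} x_1$ and $h^{-p} x_2$ converge to $\gamma_1^-$ and $\gamma_2^-$ respectively, while $d_\Omega(h^{-p}x_1, h^{-p} x_2) = d_\Omega(x_1,x_2)$ stays constant since $h$ is an isometry. Because $\gamma_1^-$ is a strongly extremal point of $\del\Omega$, any two sequences in $\Omega$ converging to $\gamma_1^-$ while remaining at bounded Hilbert distance must have the same limit, so $\gamma_2^- = \gamma_1^-$. The main technical obstacle is the comparable-rate convergence in the second step, where the $C^1$ and strongly extremal hypotheses built into the rank-one condition are essential — this is what allows Hilbert distances between points approaching the same boundary point along nearly-parallel straight lines to tend to zero.
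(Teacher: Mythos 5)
Your proposal reproduces the paper's argument essentially verbatim: the same normalization $\beta_{\gamma_1^+}(x_1,x_2)=0$, the same choice $m_n=\lfloor n\tau_1/\tau_2\rfloor$, the same appeal to \cite[Prop.\,5.1]{Blayac_topmixing} plus proper discontinuity to extract a common power $h$, and the same use of strong extremality of the $\gamma_i^-$ to conclude from the constancy of $d_\Omega(h^{-p}x_1,h^{-p}x_2)$. The only slip is the rewriting $d_\Omega(\gamma_1^n x_1,\gamma_2^{m_n}x_2)=d_\Omega(x_1,\gamma_1^{-n}\gamma_2^{m_n}x_1)$, where the final point should be $x_2$ rather than $x_1$; this is a typo and does not affect the argument.
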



The following proposition is a generalisation of \cite[Cor.\,6.3]{CM14}:

\begin{prop}\label{prop:density of att-rep pairs}
Let $\Omega \subset \RPn$ be a domain and $\Gamma$ a non-elementary rank-one discrete subgroup of $\Aut(\Omega)$. Then $\Lambda_\Gamma$ is infinite and is the smallest $\Gamma$-invariant closed subset of $\overline\Omega$, 
and $\{(\gamma^-,\gamma^+) \st \gamma\in\Gamma \text{ rank-one}\}$ is dense in $\Lambda_\Gamma \times \Lambda_\Gamma$.
\begin{proof}
We start by showing $\#\Lambda_\Gamma \geq 3$. Fix a rank-one element $\gamma_0\in\Gamma$. Suppose, on the contrary, that $\#\Lambda_\Gamma \leq 2$, so that $\Lambda_\Gamma =\{\gamma_0^-,\gamma_0^+\}$. Any infinite-order element $\gamma \in \Gamma$ preserves the line $(\gamma_0^-\gamma_0^+)$ and is hence rank-one, and by Lemma~\ref{lem:diff att => diff rep} it has a common power with $\gamma_0$. This implies, by discreteness of $\Gamma$, that $\gamma_0$ generates a finite-index subgroup of $\Gamma$, contradicting our assumption that $\Gamma$ is non-elementary.

Therefore $\Gamma$ contains a proximal element $\gamma$ with $\gamma^+ \notin \{\gamma_0^-,\gamma_0^+\}$. We denote by $\gamma_*^-$ the repelling hyperplane of $\gamma$, which does not intersect $\Omega$. If $\gamma_0^+ \in \gamma_*^-$, then $\gamma_*^- \cap \del\Omega =\{\gamma_0^+\}$ since $\gamma_0^+$ is strongly extremal, so $g$ is rank-one \cite[Lem.\,3.2]{Blayac_topmixing} and $\gamma^+=\gamma_0^+$ by Lemma~\ref{lem:diff att => diff rep}, which is a contradiction. Hence $\gamma_0^+\not\in \gamma_*^-$, and similarly $\gamma_0^-\not\in \gamma_*^-$. The points $\{\gamma^n\gamma_0^+\}_n$ are pairwise distinct, and hence $\Lambda_\Gamma$ is infinite.

Next we show that $\Lambda_\Gamma$ is the smallest $\Gamma$-invariant closed subset of $\overline\Omega$, by showing that $\Lambda_\Gamma \subset\overline{\Gamma\cdot \xi}$ for any $\xi \in \overline\Omega$. Given any such $\xi$, the sequence $(\gamma_0^n\xi)_{n\geq 0}$ converges to some $\eta\in\{\gamma_0^-,\gamma_0^+\}$ (with $\eta=\gamma_0^-$ if and only if $\xi=\gamma_0^-$). The point $\gamma^+$, being the limit of $(\gamma^n\eta)_n$, belongs to $\overline{\Gamma\cdot\xi}$. Hence, $\gamma_0^\pm$, being the limit of $(\gamma_0^{\pm n} \gamma^+)_{n\geq 0}$, belongs to $\overline{\Gamma\cdot\xi}$.
For any proximal element $h\in\Gamma$, there exists $\zeta\in\{\gamma_0^+,\gamma_0^-\} \smallsetminus h_*^-$, and $h^+ = \lim_{n\to\infty} h^n\zeta$ is in $\overline{\Gamma\cdot \xi}$. Thus $\Lambda_\Gamma\subset \overline{\Gamma \cdot \xi}$, as desired.

Finally, let us show that $\{(\gamma^-,\gamma^+) : \gamma\in\Gamma \text{ rank-one}\}$ is dense in $\Lambda_\Gamma \times \Lambda_\Gamma$. Fix two non-empty open subsets $U,V\subset\Lambda_\Gamma$. Note that $\Lambda_\Gamma$ is 
infinite and minimal under the action of $\Gamma$,
and using minimality we can find two (rank-one) conjugates $\gamma_1$ and $\gamma_2$ of $\gamma_0$ such that $\gamma_1^+\in U$ and $\gamma_2^+\in V\smallsetminus\{\gamma_1^+,\gamma_1^-\}$. Consider $h_n=\gamma_1^n\gamma_2^{-n}$ for $n\geq 0$; we will show that, for $n$ large enough, $h_n$ is rank-one with $h_n^+\in U$ and $h_n^-\in V$. 

By Lemma~\ref{lem:diff att => diff rep}, the four points $\gamma_1^-$, $\gamma_1^+$, $\gamma_2^-$ and $\gamma_2^+$ are distinct. Consider lifts $\tilde{\gamma}_1$ and $\tilde{\gamma}_2\in\GL(\Rnplusone)$, and numbers $\lambda_1,\lambda_2>0$ such that for $i=1,2$:
\begin{equation*}
    \tilde{\gamma}_i^n \xrightarrow[n\to\infty]{} \pi_i^+ \text{ and  } \lambda_i^n\tilde{\gamma}_i^{-n}\xrightarrow[n\to\infty]{} \pi_i^-,
\end{equation*}
where $\pi_i^\pm$ is the projector on $\gamma_i^\pm$ along $\gamma_{i*}^\mp$. For each $n\geq 0$ set $\tilde{h}_n:=\tilde{\gamma}_1^n\tilde{\gamma}_2^{-n}$. Then
\begin{align*}
    \lambda_2^n\tilde{h}_n  \xrightarrow[n\to\infty]{} \pi_1^+\pi_2^- \quad \text{and} \quad \lambda_1^n\tilde{h}_n^{-1} \xrightarrow[n\to\infty]{} \pi_2^+\pi_1^-.
\end{align*}
Since $\gamma_2^-\neq\gamma_1^-= \ker\pi_1^+ \cap \overline\Omega$, the endomorphism $\pi_1^+\pi_2^-$ is non-zero, and so is its square, since $\gamma_1^+\neq\gamma_2^+ = \ker\pi_2^- \cap \del\Omega$. Thus, $\pi_1^+\pi_2^-$ is, up to a multiplicative scalar, the rank-one projector on $\gamma_1^+$ along $\gamma_{2*}^+$. Similarly, $\pi_2^+\pi_1^-$ is the rank-one projector on $\gamma_2^+$ along $\gamma_{1*}^+$. As a consequence, $g_n$ is biproximal for $n$ large enough, and $(h_n^+)_n$ (\resp $(h_n^-)_n$)  converges to $\gamma_1^+$ (\resp $\gamma_2^+$). It remains to show that $h_n$ is rank-one for $n$ large enough, but this is a direct consequence of \cite[Prop.\,3.4.1]{Blayac_topmixing}.
\end{proof}
\end{prop}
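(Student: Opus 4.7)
The plan has three separate parts, paralleling the three assertions.

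For infinitude of $\Lambda_\Gamma$, I would first argue $\#\Lambda_\Gamma \geq 3$ by contradiction. Fix a rank-one $\gamma_0\in\Gamma$ with biproximal fixed points $\gamma_0^\pm$. If $\Lambda_\Gamma=\{\gamma_0^-,\gamma_0^+\}$, then any infinite-order $\gamma\in\Gamma$ preserves this two-point set, so $\gamma^2$ (or $\gamma$) fixes both points and preserves the line $(\gamma_0^-\gamma_0^+)$. Since the endpoints are $C^1$ and strongly extremal, $\gamma^2$ is rank-one, and Lemma~\ref{lem:diff att => diff rep} forces $\gamma^2$ to share a power with $\gamma_0$. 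Discreteness then makes $\langle\gamma_0\rangle$ of finite index, contradicting non-elementarity. Hence there is a proximal $\gamma\in\Gamma$ with $\gamma^+\notin\{\gamma_0^-,\gamma_0^+\}$. I would then show the repelling hyperplane $\gamma_*^-$ avoids both $\gamma_0^\pm$: otherwise, strong extremality would force $\gamma_*^-\cap\partial\Omega=\{\gamma_0^\pm\}$, making $\gamma$ rank-one and contradicting Lemma~\ref{lem:diff att => diff rep}. Hence $\gamma^n\gamma_0^+\to\gamma^+$ with all terms pairwise distinct, and infinitude of $\Lambda_\Gamma$ follows.

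For the minimality statement, I would show $\Lambda_\Gamma\subseteq\overline{\Gamma\cdot\xi}$ for every $\xi\in\overline\Omega$. Applying the north-south dynamics of $\gamma_0$, either $\xi=\gamma_0^-$ (trivial) or $\gamma_0^n\xi\to\gamma_0^+$, putting $\gamma_0^+$ in the closure. Iterating the $\gamma$ from Step~1 then puts $\gamma^+$ in the closure, and applying $\gamma_0^{\pm n}$ to $\gamma^+$ recovers both $\gamma_0^\pm$. For an arbitrary proximal $h\in\Gamma$, at least one of $\gamma_0^\pm$ lies outside $h_*^-$, so $h^n$ applied to that point tends to $h^+$, placing every proximal fixed point in $\overline{\Gamma\cdot\xi}$, and taking closures gives all of $\Lambda_\Gamma$.

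The density of rank-one fixed-point pairs is the most delicate part. Given open sets $U,V\subseteq\Lambda_\Gamma$, use minimality plus density of conjugates of $\gamma_0^+$ (since the conjugation orbit of $\gamma_0^+$ under $\Gamma$ sits inside $\Lambda_\Gamma$ and is $\Gamma$-invariant, so its closure contains $\Lambda_\Gamma$) to choose rank-one conjugates $\gamma_1,\gamma_2$ of $\gamma_0$ with $\gamma_1^+\in U$ and $\gamma_2^+\in V$, where also $\gamma_2^+\notin\{\gamma_1^-,\gamma_1^+\}$; Lemma~\ref{lem:diff att => diff rep} then makes the four points $\gamma_1^\pm,\gamma_2^\pm$ distinct. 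Now consider $h_n:=\gamma_1^n\gamma_2^{-n}$. Choose lifts $\tilde\gamma_i$ and scalars $\lambda_i>0$ so that $\tilde\gamma_i^n\to\pi_i^+$ and $\lambda_i^n\tilde\gamma_i^{-n}\to\pi_i^-$ (projectors onto $\gamma_i^\pm$ along $\gamma_{i*}^\mp$). Then $\lambda_2^n\tilde h_n\to\pi_1^+\pi_2^-$ and $\lambda_1^n\tilde h_n^{-1}\to\pi_2^+\pi_1^-$.

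The main obstacle is showing these two limit operators are non-zero rank-one projectors, and then that $h_n$ is actually rank-one. Non-vanishing follows from the transversality of the four fixed points: $\gamma_2^-\notin\gamma_{1*}^-$ gives $\pi_1^+\pi_2^-\neq 0$, and squaring gives the rank-one structure via $\gamma_1^+\notin\gamma_{2*}^-$; symmetrically for the inverse. Hence $h_n$ is biproximal for large $n$ with $h_n^+\to\gamma_1^+\in U$ and $h_n^-\to\gamma_2^+\in V$. Finally, rank-oneness of $h_n$ itself for $n$ large follows from \cite[Prop.~3.4.1]{Blayac_topmixing}, since the limiting attracting/repelling data are inherited from the rank-one $\gamma_i$. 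This yields a rank-one $h_n$ with $(h_n^-,h_n^+)\in V\times U$ (after relabeling), proving density.
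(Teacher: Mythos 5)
Your proposal is correct and follows essentially the same route as the paper's own proof: the same contradiction argument via Lemma~\ref{lem:diff att => diff rep} for $\#\Lambda_\Gamma\geq 3$, the same north--south dynamics chain for minimality, and the same construction $h_n=\gamma_1^n\gamma_2^{-n}$ with projector limits and the citation of \cite[Prop.\,3.4.1]{Blayac_topmixing} for density. Your passage to $\gamma^2$ to handle a possible swap of $\gamma_0^\pm$ is a minor (and welcome) tightening of a step the paper leaves implicit.
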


\subsection{Irreducibility for rank-one groups}

Let $V = \Rnplusone$. Recall that a subgroup $\Gamma \leq \PGL(V)$ is said to be {\bf irreducible} if it does not preserve any proper subspace of $\proj(V)$, and {\bf strongly irreducible} if any finite-index subgroup of $\Gamma$ is irreducible. 
Not all non-elementary rank-one discrete groups of automorphisms of properly convex domains are strongly irreducible: consider for example a cocompact lattice $\Gamma$ of $\SO(1,2) < \SO(1,3) < \SL(4,\real)$. In this case, $\Gamma$ preserves the span of $\Lambda_\Gamma$, which is a proper subspace of $\proj(\real^4)$. Crampon and Marquis thought that this was basically the only obstruction (see \cite[Lem.\,2.4]{CM14}). This is not exactly true, though not far from the truth: the other possible obstruction is that the \emph{dual} proximal limit set $\Lambda^*_\Gamma$ may not span $\proj(V^*)$, as explained in the next proposition.

We recall that, given a domain $\Omega \subset \proj(V)$, the dual domain $\Omega^*$ consists of all points in $\proj(V^*)$ corresponding to hyperplanes of $\proj(V)$ which do not intersect $\bar\Omega$. Given a subgroup $\Gamma \leq \Aut(\Omega)$, we write $\Gamma^*$ to denote the image of $\Gamma$ under the natural identification between $\PGL(V)$ and $\PGL(V^*)$ sending $\gamma\in\PGL(V)$ to $(f\mapsto f\circ \gamma^{-1})\in\PGL(V^*)$, and $\Lambda_\Gamma^*$ to denote the proximal limit set of $\Gamma^*$ in $\del\Omega^*$.

\begin{prop} \label{prop:spanspan_irr}
Suppose we have $\Omega \subset \proj(V)$ a domain, 
and $\Gamma$ a non-elementary rank-one discrete subgroup of $\Aut(\Omega)$. 

The following are equivalent 
\begin{enumerate}[(i)]
    \item \label{item:span} $\Lambda_\Gamma$ spans $\proj(V)$ and $\Lambda^*_{\Gamma}$ spans $\proj(V^*)$.
    \item \label{item:irred} $\Gamma$ is irreducible.
    \item \label{item:strong irred} $\Gamma$ is strongly irreducible.
\end{enumerate}
\begin{proof}
Strong irreducibility implies {\it a fortiori} irreducibility, \ie \eqref{item:strong irred} implies \eqref{item:irred}.

\eqref{item:irred} implies \eqref{item:span} because $\Lambda_\Gamma$ (\resp $\Lambda^*_\Gamma$) is $\Gamma$-invariant (\resp $\Gamma^*$-invariant), and $\Gamma$ is irreducible if and only if $\Gamma^*$ is irreducible.

The limit set of any finite-index subgroup of $\Gamma$ is $\Lambda_\Gamma$, and its dual limit set is $\Lambda^*_\Gamma$. Thus, to establish that \eqref{item:span} implies \eqref{item:strong irred}, it is enough to prove that \eqref{item:span} implies \eqref{item:irred}.

Suppose we have a $\Gamma$-invariant subspace $W\subset V$. If $\proj(W)$ contains a point of $\Lambda_\Gamma$, then it contains them all by Proposition~\ref{prop:density of att-rep pairs}, and $W=V$ since $\Lambda_\Gamma$ spans $V$. Let us assume the contrary. Then  for any proximal element $\gamma\in\Gamma$, we have $\gamma^+\not\subset W$, and one can check that this implies that $W\subset\gamma_*^-$ (because $W$ is $\gamma$-invariant). In other words, $W \subset \bigcap_{H\in\Lambda^*_\Gamma} H$, and this right-hand side is trivial since $\Lambda^*_\Gamma$ spans $V^*$.
\end{proof} \end{prop}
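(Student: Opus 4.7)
My plan is to establish the implications (iii) $\Rightarrow$ (ii) $\Rightarrow$ (i) $\Rightarrow$ (iii). The first is tautological. For (ii) $\Rightarrow$ (i): if $\Gamma$ is irreducible, then the linear span of $\Lambda_\Gamma$ is a $\Gamma$-invariant subspace, so it equals all of $\proj(V)$ (and $\Lambda_\Gamma$ is non-empty — even infinite — by Proposition~\ref{prop:density of att-rep pairs}); the dual statement follows by applying the same argument to $\Gamma^*$, using that $\Gamma$ is irreducible if and only if $\Gamma^*$ is under the identification $\PGL(V) \simeq \PGL(V^*)$.

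The real content is (i) $\Rightarrow$ (iii). I would first reduce from strong irreducibility to irreducibility. Any finite-index subgroup $\Gamma_0 \leq \Gamma$ is still non-elementary rank-one: a suitable power of any rank-one element of $\Gamma$ lies in $\Gamma_0$ and is still rank-one, and if $\Gamma_0$ were virtually cyclic then so would $\Gamma$ be. Applying Proposition~\ref{prop:density of att-rep pairs} to both $\Gamma$ and $\Gamma_0$ shows $\Lambda_{\Gamma_0} = \Lambda_\Gamma$ (each proximal attractor of $\Gamma$ is a limit of proximal attractors in $\Gamma_0$), and the same reasoning applied to the dual action gives $\Lambda_{\Gamma_0}^* = \Lambda_\Gamma^*$. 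So hypothesis (i) transfers verbatim to any finite-index subgroup, and it suffices to show that (i) forces $\Gamma$ itself to be irreducible.

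Now assume (i), and suppose for contradiction that $W \subsetneq V$ is a non-zero $\Gamma$-invariant subspace. I split into two cases. If $\proj(W) \cap \Lambda_\Gamma \neq \varnothing$, then $\proj(W) \cap \overline\Omega$ is a non-empty closed $\Gamma$-invariant subset of $\overline\Omega$ meeting $\Lambda_\Gamma$, so by the minimality statement of Proposition~\ref{prop:density of att-rep pairs} it contains all of $\Lambda_\Gamma$; this contradicts the assumption that $\Lambda_\Gamma$ spans $\proj(V)$. Otherwise $\proj(W) \cap \Lambda_\Gamma = \varnothing$, so for every proximal $\gamma \in \Gamma$ the attractor $\gamma^+$ lies outside $\proj(W)$. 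Using the $\gamma$-invariant decomposition $V = L \oplus \gamma_*^-$, where $L$ is the line over $\gamma^+$, and applying $\lambda_1(\gamma)^{-n}\gamma^n$ to an arbitrary $w \in W$, the $L$-component of $w$ must vanish (otherwise $\gamma^+$ would lie in the closed $\gamma$-invariant set $W$). Hence $W \subset \gamma_*^-$ for every proximal $\gamma$, so $W$ lies in the intersection $\bigcap_{H \in \Lambda_\Gamma^*} H$, which is trivial since $\Lambda_\Gamma^*$ spans $\proj(V^*)$.

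The only mildly delicate step is the dynamical argument in the second case showing $W \subset \gamma_*^-$, but this is just the standard consequence of proximal north–south dynamics applied to a closed invariant subspace; everything else is bookkeeping with the minimality of $\Lambda_\Gamma$ and the spanning hypotheses on $\Lambda_\Gamma$ and $\Lambda_\Gamma^*$. The symmetry between the primal and dual spanning conditions is what makes Case 2 work — without the dual hypothesis, the example of a cocompact lattice of $\SO(1,2)$ sitting in $\SL(4,\real)$ would obstruct the argument.
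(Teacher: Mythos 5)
Your proof is correct and follows essentially the same route as the paper's: the same reduction of strong irreducibility to irreducibility via the invariance of $\Lambda_\Gamma$ and $\Lambda_\Gamma^*$ under finite-index subgroups, and the same two-case argument (minimality of $\Lambda_\Gamma$ plus the spanning hypothesis in one case, containment of $W$ in every repelling hyperplane plus the dual spanning hypothesis in the other). The only difference is that you spell out the north--south dynamics step that the paper leaves as ``one can check,'' which is a fine addition but not a new idea.
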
 

\subsection{Restricting and projecting properly convex domains} \label{sub:restrict_project}

As observed by Crampon--Marquis, Proposition~\ref{prop:spanspan_irr} has interesting consequences: in many cases, given a non-elementary rank-one group $\Gamma$, one can, by ``restricting to the spans of $\Lambda_\Gamma$ and $\Lambda^*_\Gamma$'', project $\Gamma$ onto a strongly irreducible rank-one group $\Gamma'$, and then try to pull back nice properties of $\Gamma'$ to nice properties of $\Gamma$. Let us formalize this idea.

Let $V=\Rnplusone$, and consider two subspaces $V_1,V_2\subset V$. Given a subgroup $G\leq\GL(V)$ (or $\PGL(V)$), we denote by $G^{V_1}$ (\resp $G^{V_1,V_2}
$) the set of elements of $G$ preserving $\proj(V_1)$ (\resp $\proj(V_1)$ and $\proj(V_2)$); we naturally identify $V_1/(V_1\cap V_2)$ (\resp $(V/V_2)^*$) as a subspace of $V/V_2$ (\resp $V^*$).
To produce the natural map from $\GL(V)^{V_1,V_2}$ to $\GL(V_1/(V_1\cap V_2))$ (\resp from $\GL(V)^{V_2}$ to $\GL((V/V_2)^*)$) one can equivalently go through $\GL(V_1)^{V_1\cap V_2}$ or $\GL(V/V_2)^{V_1/(V_1\cap V_2)}$ (\resp $\GL(V/V_2)$ or $\GL(V^*)^{(V/V_2)^*}$). 

We set, for any $\gamma\in\PGL(V)$ and any lift $\tilde\gamma \in\GL(V)$,
\begin{equation}\label{eq:kappa}
 \kappa(\gamma)=\|\tilde \gamma\|\cdot\|\tilde \gamma^{-1}\|,
\end{equation}
where $\|\cdot\|$ is a fixed norm on $\mathrm{End(V)}$.

Consider a properly convex domain $\Omega\subset\proj(V)$. By an abuse of notation, we write $\Omega\cap V_1$ to denote $\Omega\cap\proj(V_1)$ and $\Omega/V_2$ to denote the projection of $\Omega$ in $\proj(V/V_2)$. Assume that $\Omega\cap V_1\neq\varnothing$, that $\overline\Omega\cap V_2=\varnothing$ (\ie $\Omega^*\cap (V/V_2)^*\neq\varnothing$), and that $(\Omega\cap V_1)/(V_1\cap V_2) = (\Omega/V_2)\cap (V_1/(V_1\cap V_2))$. Observe that $(\Omega/V_2)^*$ is naturally identified with $\Omega^*\cap (V/V_2)^*$. Denote by $\rho$ the natural map from $\Aut(\Omega)^{V_1,V_2}$ to $\Aut((\Omega\cap V_1)/(V_1\cap V_2))$. We will find a constant $C>0$ such that 
\begin{equation}\label{eq:rho proper}
 C^{-1}\kappa(\gamma) \leq \kappa(\rho(\gamma)) \leq C\kappa(\gamma)
\end{equation}
for any $\gamma\in\Aut(\Omega)^{V_1,V_2}$; in particular, this will imply that the map $\rho$ is proper. Using a duality argument, one checks without difficulty that it suffices to find $C>0$ such that $C^{-1}\kappa(\gamma) \leq \kappa(\rho_1(\gamma)) \leq C\kappa(\gamma)$ for any $\gamma\in\Aut(\Omega)^{V_1}$, where $\rho_1$ is the natural map from $\Aut(\Omega)^{V_1}$ to $\Aut(\Omega\cap V_1)$. This is an immediate consequence of the following 
\begin{prop}[{\cite[Prop.\,10.1]{DGK}}]\label{prop:kappa and Hilbert}
 For any pointed domain $x\in\Omega\subset\proj(V)$, there exists $C>0$ such that $C^{-1}\kappa(\gamma)\leq e^{2d_\Omega(x,\gamma x)}\leq C\kappa(\gamma)$ for any $\gamma\in\Aut(\Omega)$.
\end{prop}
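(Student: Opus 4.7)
The plan is to bracket both quantities by working with a determinant-normalized lift of $\gamma$ and using a formulation of the Hilbert metric in terms of linear functionals on the dual cone.

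\textbf{Setup and normalization.} Fix a Euclidean norm on $V$. Since $\kappa$ descends to a function on $\PGL(V)$, we may work with the lift $\tilde\gamma \in \SL^{\pm}(V)$; then $\kappa(\gamma) = \sigma_1(\tilde\gamma)/\sigma_{n+1}(\tilde\gamma)$, where $\sigma_1 \geq \dots \geq \sigma_{n+1}$ are the singular values of $\tilde\gamma$ (which satisfy $\prod_i \sigma_i = 1$). Fix a lift $\tilde x \in V$ of $x$ inside a properly convex cone $\tilde\Omega \subset V$ over $\Omega$, and let $\tilde\Omega^* \subset V^*$ be the dual cone. A routine reformulation of the cross-ratio definition gives, for $[\tilde u],[\tilde v] \in \Omega$,
\begin{equation*}
e^{2 d_\Omega([\tilde u],[\tilde v])} = \Bigl(\sup_{\varphi \in \tilde\Omega^*} \tfrac{\varphi(\tilde v)}{\varphi(\tilde u)} \Bigr) \Bigl( \sup_{\psi \in \tilde\Omega^*} \tfrac{\psi(\tilde u)}{\psi(\tilde v)} \Bigr) ,
\end{equation*}
each supremum being attained at a functional supporting $\tilde\Omega$ at one of the two boundary points of the line through $[\tilde u], [\tilde v]$.

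\textbf{Upper bound $e^{2d_\Omega(x,\gamma x)} \leq C\kappa(\gamma)$.} Apply the formula with $\tilde u = \tilde x$, $\tilde v = \tilde\gamma\tilde x$, and rewrite $\varphi(\tilde\gamma\tilde x) = (\tilde\gamma^t\varphi)(\tilde x)$; the elementary estimate $\|\tilde\gamma^t\varphi\| \leq \|\tilde\gamma\|\cdot\|\varphi\|$ then bounds the first supremum by $\|\tilde\gamma\|\cdot\|\tilde x\|\cdot\sup_{\varphi\in\tilde\Omega^*\setminus\{0\}}\|\varphi\|/\varphi(\tilde x)$. Proper convexity of $\Omega$ ensures that $\varphi(\tilde x)>0$ for every nonzero $\varphi \in \tilde\Omega^*$, so normalizing $\|\varphi\|=1$ and using compactness of the unit sphere in $\tilde\Omega^*$ yields a bound depending only on $\tilde x$ and $\Omega$. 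The symmetric argument handles the second supremum with $\|\tilde\gamma^{-1}\|$ in place of $\|\tilde\gamma\|$.

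\textbf{Lower bound $\kappa(\gamma) \leq C e^{2d_\Omega(x,\gamma x)}$.} We exhibit $\varphi \in \tilde\Omega^*$ witnessing the right-hand side being large. Because $\tilde\Omega^*$ has nonempty interior in $V^*$ (dual proper convexity), we have a splitting $V^* = \tilde\Omega^* - \tilde\Omega^*$ with a uniform bound on the decomposition: any unit $\varphi_0 \in V^*$ decomposes as $\varphi_+ - \varphi_-$ with $\varphi_\pm \in \tilde\Omega^*$ and $\|\varphi_\pm\|$ bounded. Choosing $\varphi_0$ so that $\|\tilde\gamma^t\varphi_0\| = \|\tilde\gamma\|$, at least one of the $\tilde\gamma^t \varphi_\pm$ has norm $\geq \tfrac12\|\tilde\gamma\|$; evaluating against $\tilde x$ (using again that $\varphi(\tilde x)$ is comparable to $\|\varphi\|$ on $\tilde\Omega^*$) gives the required lower bound for the first supremum, and the symmetric argument applied to $\tilde\gamma^{-1}$ handles the second.

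\textbf{Main obstacle.} The key technical point in both directions is the uniform comparison between $\varphi(\tilde x)$ and $\|\varphi\|$ on $\tilde\Omega^*$, which is exactly where proper convexity is used; making this genuinely uniform in $\gamma$ (rather than only on a bounded region) requires that the estimate depend only on the position of $\tilde x$ inside the cone $\tilde\Omega$, not on the particular $\gamma$.
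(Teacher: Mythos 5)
The paper does not prove this proposition itself --- it is quoted verbatim from Danciger--Gu\'eritaud--Kassel \cite[Prop.\,10.1]{DGK} --- so there is no in-paper argument to compare against; judged on its own, your proof is correct and is essentially the standard cone/dual-cone argument behind the cited result. The factorization $e^{2d_\Omega([\tilde u],[\tilde v])}=\bigl(\sup_{\varphi}\varphi(\tilde v)/\varphi(\tilde u)\bigr)\bigl(\sup_{\psi}\psi(\tilde u)/\psi(\tilde v)\bigr)$ over the dual cone is a correct restatement of the cross-ratio definition (the two suprema are attained at supporting functionals at the two boundary points $a$ and $b$), and the key uniform comparison $c\,\|\varphi\|\leq\varphi(\tilde x)\leq\|\tilde x\|\,\|\varphi\|$ for $\varphi$ in the closed dual cone follows, as you say, from compactness of the unit sphere of $\tilde\Omega^{*}$ together with the fact that a nonzero nonnegative linear functional cannot vanish at an interior point of the cone; this constant depends only on $(\Omega,\tilde x)$ and not on $\gamma$, which settles the uniformity worry you raise at the end. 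The lower bound via the uniform splitting $V^{*}=\tilde\Omega^{*}-\tilde\Omega^{*}$ (taking $\varphi_{\pm}$ built from a fixed interior functional $\varphi_{c}$ with $B(\varphi_{c},\eps)\subset\tilde\Omega^{*}$) is also sound. Two small points you should make explicit: choose the lift $\tilde\gamma$ that preserves the cone $\tilde\Omega$ rather than $-\tilde\Omega$ (this does not change $\kappa(\gamma)$ and is what guarantees $\tilde\gamma^{t}\tilde\Omega^{*}=\tilde\Omega^{*}$, which you use tacitly when reindexing the second supremum); and note that since all norms on $\mathrm{End}(V)$ are equivalent, replacing the paper's fixed norm by the operator norm, so that $\kappa(\gamma)=\|\tilde\gamma\|\,\|\tilde\gamma^{-1}\|=\sigma_1/\sigma_{\mathrm n+1}$, only changes the constant $C$.
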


Another useful formula will be the following: for any $\gamma\in\Aut(\Omega)^{V_1,V_2}$,
\begin{equation}\label{eq:ellrho is ell}
 \ell(\rho(\gamma))=\ell(\gamma),
\end{equation}
where $\ell(\gamma)$ is defined as in (\ref{eq:translength}).
To see this, it suffices, using again a duality argument, to fix $\gamma \in \Aut(\Omega)^{V_1}$ with $\ell(\gamma)>0$ and check that $\ell(\rho_1(\gamma))=\ell(\gamma)$. It is obvious that $\ell(\rho_1(\gamma))\leq \ell(\gamma)$. To establish the converse inequality, we only need to show that $\gamma^\pm \cap V_1$ are non-empty, where $\gamma^+$ (\resp $\gamma^-$) is the sum of all generalized eigenspaces associated to eigenvalues with maximal (\resp minimal) moduli. Observe that any accumulation point of $(\gamma^{\pm n} x)_{n}$, where $x\in\Omega\cap V_1$, belongs to $\gamma^\pm \cap \proj(V_1)$.

Let us apply the previous observations to a non-elementary rank-one discrete subgroup  $\Gamma\leq\Aut(\Omega)$. Then $V_1 := \Span(\Lambda_\Gamma)$ intersects $\Omega$, and $V_2':=\Span(\Lambda^*_\Gamma)$ intersects $\Omega^*\neq\varnothing$. Furthermore, $(\Omega\cap V_1)/(V_1\cap V_2)$ is equal to $(\Omega/V_2)\cap (V_1/(V_1\cap V_2))$, where $V_2=(V^*/V_2')^*$ is the intersection of all hyperplanes in $\Lambda^*_\Gamma$. Thus, we obtain by Proposition~\ref{prop:spanspan_irr}, and \eqref{eq:rho proper} and \eqref{eq:ellrho is ell}, the following

\begin{lem}\label{lem:proj rk1 gp}
If $\Gamma$ is a non-elementary rank-one discrete group preserving a domain $\Omega$ (i.e. $\Omega \subset \proj(V)$ is a domain and $\Gamma \leq \Aut(\Omega)$), then $\Gamma$ projects via a morphism $\rho$ with finite kernel onto a strongly irreducible rank-one discrete group preserving a domain $\Omega'$, with $\ell(\rho(\gamma))=\ell(\gamma)$ for any $\gamma\in\Gamma$.
\end{lem}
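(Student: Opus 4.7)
The plan is to set up $\rho$ exactly as indicated in the paragraph preceding the lemma and then verify in turn each of the conclusions (well-defined target domain, discreteness, finite kernel, rank-one, strongly irreducible, and length preservation). Concretely, I take $V_1:=\Span(\Lambda_\Gamma)$ and $V_2$ equal to the intersection of all hyperplanes in $\Lambda^*_\Gamma$, and let $\Omega':=(\Omega\cap V_1)/(V_1\cap V_2)$; this is a domain because the axis of any rank-one element of $\Gamma$ lies in $\proj(V_1)\cap\Omega$ (so $V_1\cap\Omega\ne\varnothing$), while dually $\overline\Omega\cap V_2=\varnothing$. The compatibility identity $(\Omega\cap V_1)/(V_1\cap V_2)=(\Omega/V_2)\cap(V_1/(V_1\cap V_2))$ needed for the construction is noted already in \S\ref{sub:restrict_project}. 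Any element of $\Gamma$ preserves both $V_1$ and $V_2$ (since each preserves $\Lambda_\Gamma$ and $\Lambda^*_\Gamma$), so $\Gamma\subset\Aut(\Omega)^{V_1,V_2}$ and the morphism $\rho$ from \S\ref{sub:restrict_project} restricts to a morphism $\rho\colon\Gamma\to\Aut(\Omega')$.

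Next, I deduce the group-theoretic properties. The properness estimate \eqref{eq:rho proper} says $\kappa(\rho(\gamma))\asymp\kappa(\gamma)$, equivalently (via Proposition~\ref{prop:kappa and Hilbert}) the Hilbert displacements on $\Omega$ and $\Omega'$ are within a uniform additive constant of one another. From this, discreteness of $\Gamma$ in $\Aut(\Omega)$ forces discreteness of $\rho(\Gamma)$ in $\Aut(\Omega')$ and finiteness of $\ker(\rho|_\Gamma)$. The length identity $\ell(\rho(\gamma))=\ell(\gamma)$ is precisely \eqref{eq:ellrho is ell}.

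To see that $\rho(\Gamma)$ is rank-one, I pick a rank-one element $\gamma_0\in\Gamma$, whose axis has endpoints $\gamma_0^\pm\in\Lambda_\Gamma\subset\proj(V_1)$, so both endpoints descend to strongly extremal and $C^1$ points of $\del\Omega'$ fixed by $\rho(\gamma_0)$, and $\rho(\gamma_0)$ preserves the image straight-line geodesic; hence $\rho(\gamma_0)$ is rank-one in $\Aut(\Omega')$. For strong irreducibility of $\rho(\Gamma)$, I apply Proposition~\ref{prop:spanspan_irr} to $\rho(\Gamma)\leq\Aut(\Omega')$: by Proposition~\ref{prop:density of att-rep pairs} the limit set $\Lambda_{\rho(\Gamma)}$ is the image of $\Lambda_\Gamma$ under the quotient map $\proj(V_1)\dashrightarrow\proj(V_1/(V_1\cap V_2))$, and by choice of $V_1$ this image spans $\proj(V_1/(V_1\cap V_2))$. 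Dually, identifying $(\Omega')^*$ with $\Omega^*\cap(V/V_2)^*$, the dual limit set $\Lambda^*_{\rho(\Gamma)}$ is the image of $\Lambda^*_\Gamma$, which spans $\proj((V/V_2)^*)=\proj((V_1/(V_1\cap V_2))^*)$ by the definition of $V_2$. Proposition~\ref{prop:spanspan_irr} then upgrades this spanning condition to strong irreducibility.

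The main obstacle I expect is the spanning statement for the dual limit set $\Lambda^*_{\rho(\Gamma)}$: one has to verify carefully that the chosen $V_2$ correctly captures ``everything annihilated by $\Lambda^*_\Gamma$'' so that the dual limit set of $\rho(\Gamma)$ really fills up $\proj((V_1/(V_1\cap V_2))^*)$, which amounts to tracking the natural identifications between $(\Omega')^*$, the dual in $V^*$, and the quotient $(V_1/(V_1\cap V_2))^*$ carefully. Everything else is either explicitly set up in \S\ref{sub:restrict_project} or a direct application of earlier propositions.
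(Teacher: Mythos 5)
Your proposal is correct and takes essentially the same route as the paper: the paper's proof of Lemma~\ref{lem:proj rk1 gp} is precisely the construction in \S\ref{sub:restrict_project} with $V_1=\Span(\Lambda_\Gamma)$ and $V_2$ the intersection of the hyperplanes in $\Lambda^*_\Gamma$, combined with the properness estimate \eqref{eq:rho proper}, the length identity \eqref{eq:ellrho is ell}, and Proposition~\ref{prop:spanspan_irr}. You merely spell out the verifications (discreteness, finite kernel, rank-one image, spanning of the limit set and its dual) that the paper leaves implicit.
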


\subsection{The length spectrum of a non-elementary rank-one group}
\label{sub:lenspec}

Given a rank-one convex projective manifold $M=\Omega/\Gamma$, the {\bf length spectrum} of $\Gamma$ is the collection 
$ \left\{ \ell(\gamma) \:|\: \gamma \in \Gamma \right\} $. It contains the lengths of all closed rank-one Hilbert geodesics in $SM=S\Omega/\Gamma$, which are given by the set of $\ell(\gamma)$ as $\gamma$ varies over the rank-one elements of $\Gamma$. The following result says that the length spectrum of a non-elementary rank-one group is {\bf non-arithmetic}.

\begin{lem}\label{lem:nonarithm}
Let $\Omega\subset \RPn$ be a domain and $S\subset\Aut(\Omega)$ a sub-semigroup that generates a non-elementary rank-one discrete subgroup $\Gamma$. Then $\{\ell(s) \:|\: s\in S\}$ generates a dense subgroup of $\real$.
\end{lem}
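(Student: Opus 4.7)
I would argue by contradiction: if the conclusion fails, the subgroup $\Lambda := \langle \ell(s) : s \in S \rangle_{\ints} \subset \real$ is discrete, hence of the form $c\ints$ for some $c \geq 0$. Using Lemma~\ref{lem:proj rk1 gp} I would first reduce to the case where $\Gamma$ is strongly irreducible, since the projection $\rho$ has finite kernel, preserves $\ell$, and $\rho(S)$ remains a sub-semigroup generating the non-elementary rank-one image $\rho(\Gamma)$. The case $c = 0$ can be excluded directly: if $\ell(s) = 0$ for every $s \in S$, then $S$ consists of non-loxodromic elements, but semigroup closure forces either some product of elements of $S$ to be loxodromic or $\Gamma = \langle S\rangle$ to fix a common point of $\overline\Omega$, both contradictions.

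Second, I would produce two rank-one elements $a, b \in S$ whose four fixed points $a^\pm, b^\pm$ are pairwise distinct. Proposition~\ref{prop:density of att-rep pairs} provides abundant rank-one elements in $\Gamma$ with dense fixed-point pairs; a ping-pong/contraction argument using only semigroup products (high powers and products of two sufficiently transverse elements of $S$) transfers such configurations into $S$ itself. Strong irreducibility is essential here to rule out the collapse of all relevant axes into a proper projective subspace.

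Third --- the core step --- is an asymptotic translation length formula: for $a, b \in S$ rank-one with pairwise distinct fixed points, $a^n b^m \in S$ is rank-one for all large $n, m$, and
\[
\ell(a^n b^m) = n\, \ell(a) + m\, \ell(b) + C(a, b) + o(1) \qquad \text{as } n, m \to \infty,
\]
where $C(a, b)$ depends continuously on the four fixed points $a^\pm, b^\pm$. This follows from the expansions $a^n \approx \lambda_1(a)^n \pi_a^+$ and $b^m \approx \lambda_1(b)^m \pi_b^+$ (with $\pi_a^+, \pi_b^+$ the rank-one attracting projectors), together with biproximal analogues controlling the smallest eigenvalues; in geometric language, $C(a, b)$ can be written in terms of Gromov products of $a^\pm, b^\pm$ at a basepoint $o$ using the cocycle identities of Proposition~\ref{prop:Gromovprod}, essentially of the form $-2\grop{o}{a^+}{b^-} - 2\grop{o}{a^-}{b^+}$ up to conventions.

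Finally, since $a^n b^m \in S$ we have $\ell(a^n b^m) \in c\ints$; combined with $\ell(a), \ell(b) \in c\ints$, the asymptotic formula forces $C(a, b) \in c\ints$. Iterating the construction, one replaces $b$ by rank-one elements of $S$ whose fixed-point pairs sweep out a dense subset of $\Lambda_\Gamma \times \Lambda_\Gamma$ (again using Proposition~\ref{prop:density of att-rep pairs} applied within $S$), making $C(a, \cdot)$ take a continuum of values; these cannot all lie in the discrete set $c\ints$, which is a contradiction. The main obstacle I expect is precisely engineering the variation of these fixed-point pairs using only products of elements of $S$, not of $\Gamma$: without inverses at our disposal, the continuous family of axes must be built from iterated semigroup products of three or more transverse rank-one elements of $S$, and checking that the Gromov product varies non-trivially along such a family is the most delicate point.
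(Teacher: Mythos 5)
Your overall strategy --- reduce to the strongly irreducible case via Lemma~\ref{lem:proj rk1 gp}, then derive non-arithmeticity from the asymptotic formula $\ell(a^nb^m)=n\ell(a)+m\ell(b)+C(a,b)+o(1)$ --- is essentially an attempt to reprove from scratch the result that the paper simply cites: after the same reduction, the paper invokes Benoist's non-arithmeticity theorem for Zariski-dense sub-semigroups together with a corollary from \cite{Blayac_topmixing} (the latter handling the fact that strong irreducibility does not force Zariski density in $\PGL(\Rnplusone)$, e.g.\ the $\PO(\dimension,1)$ case). The reduction step and the asymptotic length formula in your sketch are both sound, and the observation that $C(a,b)\in c\ints$ because $c\ints$ is closed is correct.

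The genuine gap is in your concluding step. The semigroup $S$ is countable, so $\{C(a,b)\st b\in S\}$ is a countable set and cannot ``take a continuum of values''; what density of the fixed-point pairs plus continuity of $C(a,\cdot)$ actually yields is that $C(a,\cdot)$ maps the \emph{closure} of the set of pairs $(b^-,b^+)$ into $c\ints$. That closure sits inside $\Lambda_\Gamma\times\Lambda_\Gamma$, which is typically a Cantor set (totally disconnected), and a continuous function on a totally disconnected compactum can perfectly well take values in a discrete subgroup of $\real$ --- so there is no contradiction at this point. Ruling this out is precisely the hard content of Benoist's theorem, which uses that the Zariski closure of a sub-semigroup is a group (hence $S$ is Zariski dense in the Zariski closure of $\Gamma$) and properties of the limit cone; continuity and density alone do not suffice. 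A secondary issue of the same flavour: Proposition~\ref{prop:density of att-rep pairs} is a statement about the group $\Gamma$, and your ping-pong sketch for transferring transverse rank-one pairs into $S$ itself (without inverses) also ultimately rests on this Zariski-density-of-semigroups input, which your proposal never isolates. As written, the argument would fail exactly where you flag it as ``the most delicate point.''
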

\begin{proof}
 By Lemma~\ref{lem:proj rk1 gp}, the length spectrum of $S$ is the length spectrum of a semi-group $S'$ that generates a strongly irreducible group and preserves a domain. 
 Therefore the lemma follows immediately from \cite[Rem.\,p.\,165]{Benoist2000} and \cite[Cor.\,4.4]{Blayac_topmixing}.
\end{proof}

By the work of Dal'bo \cite{Dal99,Dal00} (on negatively curved Riemannian manifolds), we know that non-arithmeticity of the length spectrum is closely related to mixing of the geodesic flow and will be useful for establishing this property.

We can further deduce, from the next lemma, that the the \emph{local} length spectrum of a non-elementary rank-one group is also non-arithmetic.
\begin{lem}\label{lem:locnonarithm}
 Let $\Omega\subset \RPn$ be a domain and $\Gamma \leq \Aut(\Omega)$ a non-elementary rank-one discrete subgroup. Then the set of lengths of rank-one periodic orbits through any non-empty open subset $U\subset SM_{\bip}$ generates a dense subgroup of $\real$.
\end{lem}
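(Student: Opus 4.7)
The plan is to reduce the claim to Lemma \ref{lem:nonarithm} by exhibiting a sub-semigroup $S \subset \Gamma$, generating a non-elementary rank-one discrete subgroup, all of whose elements are rank-one with axis projecting through $U$. Granted this, Lemma \ref{lem:nonarithm} forces $\{\ell(s) : s \in S\}$ to generate a dense subgroup of $\real$, and this set sits inside the length spectrum of rank-one periodic orbits through $U$.

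First, pick $v_0 \in U$ with a lift $\tilde v_0 \in S\Omega$, and let $\xi_\pm \in \Lambda_\Gamma$ be its forward and backward endpoints (well-defined since $U \subset SM_{\bip}$, and $\xi_- \neq \xi_+$). Choose small disjoint neighborhoods $W_\pm$ of $\xi_\pm$ in $\overline\Omega$. Proposition \ref{prop:density of att-rep pairs} produces rank-one elements $\gamma \in \Gamma$ with $(\gamma^-, \gamma^+) \in W_- \times W_+$, and by Lemma \ref{lem:diff att => diff rep} any two such elements with distinct fixed-point pairs automatically have four pairwise distinct fixed points. Pick any two, call them $h_1$ and $h_2$, whose fixed points are close enough to $\xi_\pm$ that the axis of each $h_i$ contains an open segment inside some small fixed lift $\tilde U \subset S\Omega$ of $U$; this is possible because the projective line through $(\gamma^-, \gamma^+)$ depends continuously on the endpoints and contains the basepoint of $\tilde v_0$ when $(\gamma^-, \gamma^+) = (\xi_-, \xi_+)$.

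Next, run the projective ping-pong argument, exactly as in the final paragraph of the proof of Proposition \ref{prop:density of att-rep pairs}. For $N$ sufficiently large, every positive word
\[ w \;=\; h_1^{n_1} h_2^{m_1} h_1^{n_2} h_2^{m_2} \cdots h_2^{m_{k-1}} h_1^{n_k}, \qquad k \geq 1,\ n_i \geq N,\ m_j \geq N, \]
is biproximal with $w^\pm$ arbitrarily close to $h_1^\pm$, and is moreover rank-one by \cite[Prop.\,3.4.1]{Blayac_topmixing}; for $N$ large enough, $\axis(w)$ is then close enough to $\axis(h_1)$ to project through $U$. Let $S$ denote the collection of all such words. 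Concatenating two such words merges the terminal $h_1$-power of the first with the initial $h_1$-power of the second (the merged exponent being $\geq 2N \geq N$), yielding another element of $S$; hence $S$ is a sub-semigroup of $\Gamma$. Moreover, $\langle S \rangle$ contains $h_1 = h_1^{N+1}(h_1^N)^{-1}$; and, setting $u = h_1^N h_2^N h_1^N$ and $u' = h_1^N h_2^{N+1} h_1^N$, the quotient $u' u^{-1} = h_1^N h_2 h_1^{-N}$ is in $\langle S \rangle$, whence $h_2 \in \langle S \rangle$. Thus $\langle S \rangle = \langle h_1, h_2 \rangle$ is a non-elementary rank-one discrete subgroup, and Lemma \ref{lem:nonarithm} applied to $S$ finishes the argument.

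The main obstacle is to certify that the ping-pong words $w$ are rank-one and not merely biproximal: their attracting and repelling fixed points, although close to $h_1^\pm$, must themselves be $C^1$ and strongly extremal in $\del\Omega$, a property that is not in general open. This is precisely what the criterion \cite[Prop.\,3.4.1]{Blayac_topmixing} supplies, by propagating the rank-one condition from the given axes of $h_1, h_2$ to the axes of their ping-pong products. The remaining ingredients --- density of rank-one fixed-point pairs in $\Lambda_\Gamma \times \Lambda_\Gamma$, the projective ping-pong configuration, and continuity of the ``line through two boundary points'' map --- are routine in this setting.
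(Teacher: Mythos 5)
Your overall architecture coincides with the paper's: use Proposition~\ref{prop:density of att-rep pairs} to find rank-one elements adapted to $U$, run ping-pong to produce a sub-semigroup $S\subset\Gamma$ generating a non-elementary rank-one discrete subgroup and consisting of elements whose axes project through $U$, then invoke Lemma~\ref{lem:nonarithm}. The semigroup bookkeeping, the recovery of $h_1$ and $h_2$ from $\langle S\rangle$, and the claim that the axes of the words meet a lift $\widetilde U$ (by uniform proximity of $w^\pm$ to $h_1^\pm$ together with continuity of the line-through-two-boundary-points map) are all sound.

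The gap sits exactly where you locate ``the main obstacle'': certifying that \emph{every} word $w\in S$ is rank-one. \cite[Prop.\,3.4.1]{Blayac_topmixing} does not deliver this. As it is used in the proof of Proposition~\ref{prop:density of att-rep pairs}, that proposition is a \emph{sequential} criterion: it takes a sequence $(g_n)$ which, after rescaling, converges to a rank-one projector onto a point already known to be $C^1$ and strongly extremal (there, $\gamma_1^+$, resp.\ $\gamma_2^+$), and concludes that $g_n$ is rank-one for $n$ large. For your semigroup $S$ with $N$ fixed, the infinitely many words have attracting and repelling fixed points that merely accumulate in small neighborhoods of $h_1^\pm$; the relevant rescaled limits along subsequences of distinct words are projectors onto accumulation points of $\{w^\pm : w\in S\}$, and there is no reason these accumulation points lie in $\del_{\sse}\Omega$ — being a $C^1$ and strongly extremal point is neither an open nor a closed condition on $\del\Omega$ — so the hypothesis of the proposition cannot be verified, and no compactness-and-contradiction argument closes the loop. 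Biproximality plus proximity of $w^\pm$ to points of $\del_{\sse}\Omega$ does not imply rank-one-ness. The paper supplies the missing mechanism differently: it first shrinks $U$ so that $d_{\spl}(v^-,v^+)\geq 3$ for every $v\in\widetilde U$ (possible because $\{d_{\spl}\geq 3\}$ is open and holds along the axis of $\gamma_1$, whose endpoints are strongly extremal), uses \cite[Lem.\,4.7]{Blayac_topmixing} to arrange that all sandwich words $\gamma_1^N\gamma_{i_1}^N\cdots\gamma_{i_n}^N\gamma_1^N$ are biproximal with axes through $\widetilde U$, and then concludes that each such word is rank-one from \cite[Fact\,2.14]{Blayac_PSdensities} (a biproximal element whose fixed points are at simplicial distance at least $3$ is rank-one). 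You need this, or an equally uniform criterion, for your words $w$; with it, the rest of your argument goes through.
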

\begin{proof}
Let $\widetilde{U}\subset S\Omega$ be the preimage of $U$. By Proposition~\ref{prop:density of att-rep pairs}, there exists a rank-one element $\gamma_1\in\Gamma$ whose axis encounters $\widetilde{U}$. Up to reducing $U$, we may assume that $d_{\spl}(\phi_{-\infty}v,\phi_\infty v)\geq 3$ for any $v\in\widetilde U$ (see \S\ref{subsec:bdry pts} for the definition of $d_{\spl}$). Since $\Gamma$ is non-elementary, we can find a rank-one element $\gamma_2\in\Gamma$ such that $\gamma_1^+$, $\gamma_1^-$, $\gamma_2^+$ and $\gamma_2^-$ are four distinct points. 

By \cite[Lem.\,4.7]{Blayac_topmixing}, we can find $N\geq 1$ such that $\gamma_1^N$ and $\gamma_2^N$ generate a non-elementary, rank-one free subgroup $\Gamma'\subset\Gamma$ whose elements are biproximal, and such that the axis of every element of the form $\gamma_1^N\gamma_{i_1}^N\cdots\gamma_{i_n}^N\gamma_1^N$ encounters $\widetilde U$, where $n\geq 0$ and $1\leq i_1,\dots,i_n\leq 2$.

Let $\Gamma^+\subset\Gamma'$ be the sub-semigroup generated by $\gamma_1^N$ and $\gamma_1^N\gamma_2^N\gamma_1^N$; it generates $\Gamma'$ as a group. By Lemma~\ref{lem:nonarithm}, the additive group generated by $\{\ell(\gamma) \:|\: \gamma\in\Gamma^+\}$, is dense in $\real$. But by construction, each non-trivial element $\gamma$ of $\Gamma^+$ is biproximal with axis \emph{through $\widetilde{U}$}; furthermore $d_{\spl}(\gamma^-,\gamma^+)\geq 3$, and this implies that $\gamma$ is rank-one by \cite[Fact\,2.14]{Blayac_PSdensities}. By definition the axis of each non-trivial element of $\Gamma^+$ projects on a rank-one periodic geodesic \emph{through $U$}, whose length is $\ell(\gamma)$.
\end{proof}

\begin{corn} \label{cor:rk1_top_mixing}
Let $\Omega\subset \RPn$ be a domain and $\Gamma\leq\Aut(\Omega)$ a non-elementary rank-one discrete subgroup. Then the geodesic flow on $SM_{\bip}$ is topologically mixing.
\end{corn}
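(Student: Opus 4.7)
The topological mixing of $(g^t_\Gamma)$ on $SM_{\bip}$ will follow from Lemma~\ref{lem:locnonarithm} via the classical Dal'bo argument combined with topological transitivity. For transitivity, given non-empty open $U, V \subset SM_{\bip}$, I would use Proposition~\ref{prop:density of att-rep pairs} to find rank-one elements whose axes pass close to chosen lifts of vectors in $U$ and $V$, and then invoke their north--south dynamics to produce a flow time $t_0 > 0$ with $g^{t_0}(U) \cap V \neq \varnothing$; this argument is already carried out in \cite{Blayac_topmixing} in the convex cocompact case and extends verbatim to the present non-elementary rank-one setting.

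By continuity, I would then pick $v_0 \in U$, an open ball $B \subset U$ around $v_0$, and $\delta > 0$ such that $g^{t_0+s}(B) \subset V$ for all $|s| < \delta$. Applying Lemma~\ref{lem:locnonarithm} to $B$, the set $L$ of lengths of rank-one periodic orbits meeting $B$ generates a dense subgroup of $\real$. A short elementary argument then shows that the semigroup $\Sigma(L)$ of non-negative integer combinations of elements of $L$ is $\delta$-dense in $[T_\delta, \infty)$ for some $T_\delta > 0$: split an integer combination $\sum_i n_i \ell_i$ within $\delta$ of $0$ into its positive and negative parts $a := \sum_{n_i>0} n_i\ell_i$ and $b := \sum_{n_j<0} (-n_j)\ell_j$, both lying in $\Sigma(L)$ and satisfying $0 < a - b < \delta$; then the consecutive runs $\{(N-k)b + ka : 0\leq k \leq N\}$ for $N \geq b/(a-b)$ yield a $\delta$-dense subset of $[T_\delta,\infty)$ inside $\Sigma(L)$.

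For each sufficiently large $t$, one picks $s \in \Sigma(L)$ with $|s - (t - t_0)| < \delta$ and realizes $s$ as the total length of a trajectory starting and ending inside $B$, by concatenating loops around the rank-one periodic orbits witnessing the summands of $s$ and switching between them inside $B$; flowing the resulting vector for the extra time $t_0$ then lands in $V$, which produces a point of $g^t(U) \cap V$. The main technical obstacle is the concatenation step: shadowing piecewise-periodic trajectories in a Hilbert setting where CAT($0$) is unavailable. Since, however, all the relevant periodic orbits pass through the common compact ball $B$, the switching is confined to a bounded region and can be controlled via the north--south dynamics of the rank-one generators of these orbits and the Hilbert convexity estimates in Lemma~\ref{lem:crampon}, along the lines of the arguments in \cite{Blayac_topmixing}.
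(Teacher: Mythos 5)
Your overall strategy (non-arithmeticity of the local length spectrum, plus transitivity, plus a Dal'bo-style concatenation of periodic orbits) is the classical route, and it is in fact the route taken \emph{inside} \cite{Blayac_topmixing}; the paper itself does not re-run this argument but simply quotes \cite[Prop.\,6.1]{Blayac_topmixing} together with Lemma~\ref{lem:locnonarithm} and one additional linear-algebraic fact, namely that $\Lambda_\Gamma$ is not contained in $\Span(\gamma^+,\gamma^\natural)\cup\Span(\gamma^-,\gamma^\natural)$ for any biproximal $\gamma$. So at the level of ideas you are reconstructing the proof of the cited proposition rather than giving an alternative to the paper's argument.

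The genuine gap is exactly the step you flag as ``the main technical obstacle'' and then do not carry out: the concatenation/shadowing step. Realizing an element $s=\sum_i n_i\ell_i$ of your semigroup as the length (up to $\delta$) of an actual orbit segment that starts and ends in $B$ requires a closing/specification-type statement: one must switch from a neighborhood of one periodic orbit to another inside $B$ and control the accumulated time error over arbitrarily many switches. In negative curvature this is done with the local product structure of the stable and unstable foliations; on a general properly convex rank-one quotient the flow is not Anosov, there is no local product structure on $SM_{\bip}$ in general, and Lemma~\ref{lem:crampon} only bounds the distance between two constant-speed straight lines by its values at the endpoints of a time interval --- it does not produce an orbit shadowing a prescribed pseudo-orbit, nor does it control the time shift incurred at each switch. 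Constructing these connecting orbits is precisely the content of \cite[Prop.\,6.1]{Blayac_topmixing} (and is where the hypothesis comparing $\Lambda_\Gamma$ with $\Span(\gamma^\pm,\gamma^\natural)$ enters, to guarantee enough heteroclinic connections), so appealing at this point to ``the arguments in \cite{Blayac_topmixing}'' is not an independent proof but a restatement of the citation the paper already makes. Your transitivity paragraph has a milder version of the same issue: density of the pairs $(\gamma^-,\gamma^+)$ in $\Lambda_\Gamma\times\Lambda_\Gamma$ (Proposition~\ref{prop:density of att-rep pairs}) does not by itself give an orbit from $U$ to $V$ without continuity of the Hopf parametrization on the relevant set of endpoint pairs, which is delicate when $\Omega$ is not strictly convex. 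As written, the proposal does not close the argument; it should either supply the shadowing estimates or simply invoke \cite[Prop.\,6.1]{Blayac_topmixing} after verifying its hypotheses, as the paper does.
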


\begin{proof}
It is easy to establish, when $\Gamma$ is rank-one and non-elementary, that $\Lambda_\Gamma$ is not contained in $\Span(\gamma^+, \gamma^\natural) \cup \Span(\gamma^-, \gamma^\natural)$ for any biproximal $\gamma\in\Gamma$, where $\gamma^\natural$ denotes the unique $\gamma$-invariant complement of $\Span(\gamma^+, \gamma^-)$. The desired corollary is then a consequence of \cite[Prop.\,6.1]{Blayac_topmixing}, Lemma~\ref{lem:locnonarithm}, and this aforementioned fact.
\end{proof}

\section{Patterson--Sullivan and (Bowen--Margulis--)Sullivan measures}\label{sec:ps}
\subsection{Conformal densities and Patterson--Sullivan measures} \label{subsec:pat_sul}
Fix a domain $\Omega$ and let $\Gamma \leq \Aut(\Omega)$ be a discrete subgroup.

A {\bf conformal density of dimension $\delta \geq 0$} (or a {\bf $\delta$-conformal dimension}, for short) on $\Omega$ is a function $\mu$ which associates to each $x \in \Omega$ a positive finite measure $\mu_x$ on $\del_{\hor} \Omega$, satisfying the property that $\mu_{x'}$ is absolutely continuous with respect to $\mu_x$ for all $x, x' \in X$, with Radon-Nikodym derivative given by
\[ \frac{d\mu_{x'}}{d\mu_x}(\xi) = e^{-\delta \beta_\xi(x',x)} \]
where $\beta_\xi$ denotes the horofunction based at $\xi \in \del_{\hor}\Omega$.
A density $\mu$ is said to be {\bf $\Gamma$-equivariant} if $\gamma_*\mu_x = \mu_{\gamma x}$ for all $\gamma \in \Gamma$ and all $x \in \Omega$.

The (convex projective) {\bf critical exponent} $\delta_\Gamma$ (also written $\delta(\Gamma)$ or just $\delta$ if the context is clear) of $\Gamma$ is the critical exponent of the Poincar\'e series 
$\sum_{\gamma \in \Gamma} e^{-s\cdot d_\Omega(x,\gamma x)}$, i.e. the infimum of all $s$ for which the series converges.
It is straightforward to check, using the triangle inequality, that the convergence of the Poincar\'e series, and hence the critical exponent, is well-defined independent of the choice of basepoint $x$; in fact, the convergence of the Poincar\'e series and $\delta_\Gamma$ do not depend on $\Omega$ (see Proposition~\ref{prop:kappa and Hilbert}).
The series may or may not converge at $s = \delta(\Gamma)$: if it does not, we say that $\Gamma$ is {\bf divergent}.

One can construct a $\Gamma$-equivariant conformal density of dimension $\delta(\Gamma)$ as follows: fix a basepoint $o \in \Gamma$. Given $s < \delta(\Gamma)$, define 
\[ \mu_{x,s} := \frac{1}{\sum_{\gamma \in \Gamma} e^{-s \cdot d_\Omega(\gamma \cdot o, x)}} \sum_{\gamma \in \Gamma} e^{-s \cdot d_\Omega(\gamma \cdot o, x)} \dirac_{\gamma \cdot o} \]
if $\Gamma$ is divergent, where we use $\dirac_x$ to denote the Dirac delta measure supported at $x$; if $\Gamma$ is not divergent, then set
\[ \mu_{x,s} := \frac{1}{\sum_{\gamma \in \Gamma} j(d_\Omega(\gamma \cdot o, x)) e^{-s \cdot d_\Omega(\gamma \cdot o, x)}} \sum_{\gamma \in \Gamma}  j(d_\Omega(\gamma \cdot o, x)) e^{-s \cdot d_\Omega(\gamma \cdot o, x)} \dirac_{\gamma \cdot o} \]
where $j: \real_+ \to \real_+$ is a suitable auxiliary function of subexponential growth (\ie $t\mapsto j(t)e^{-\eta t}$ is bounded for any $\eta > 0$), more precisely such that the modified Poincar\'e series $\sum_{\gamma \in \Gamma} j(d_\Omega(\gamma \cdot o, x)) e^{-s \cdot d_\Omega(\gamma \cdot o, x)}$ diverges at $s = \delta_\Gamma$ (for full details, see \cite{Patterson} or \cite{Sullivan}.)

Consider a weak* accumulation point $\mu_x$ of $\mu_{x,s}$ when $s$ tends to $\delta(\Gamma)$. Following the arguments in \cite{Patterson}, one may check that this yields a $\Gamma$-equivariant conformal density of dimension $\delta(\Gamma)$.

This construction was originally due to Patterson \cite{Patterson} and Sullivan \cite{Sullivan}, and these conformal densities are commonly known as Patterson--Sullivan densities, and the individual measures $\mu_x$ (for each fixed $x \in \Omega$) as Patterson--Sullivan measures.

We can consider our conformal densities to be families of measures on $\del\Omega$ by taking the push-forward by $\pi_{\hor}$ of a conformal density on $\del_{\hor}\Omega$; such conformal densities, measuring subsets of $\del\Omega$, remain $\Gamma$-quasi-invariant. As noted above, when $\Omega$ has $C^1$ boundary, $\del\Omega \cong \del_{\hor} \Omega$ and the push-forward conformal densities are identified with the conformal densities on $\del_{\hor}\Omega$.

\subsection{Shadows} \label{subsec:shadows}
One geometric way of understanding Patterson--Sullivan measures is provided by the Sullivan shadow lemma, which estimates the measure of a particular family of sets in terms of Hilbert distances.

Given $x, y \in \Omega $ and $r > 0$, define the shadow 
\[ \shadow_r(x,y) := \{\xi \in \del\Omega \st\, (x\,\xi) \cap \overline B_\Omega(y,r) \neq \varnothing \} \]
where $(x\,\xi)$ denotes the geodesic ray starting from $x$ in the direction of $\xi$. We may also take $x \in \del\Omega$, in which case $(x\,\xi)$ should be interpreted as the bi-infinite geodesic with endpoints $x$ and $\xi$.
The terminology comes from viewing $\shadow_r(x,y)$ as the shadow cast by the ball $\overline B_\Omega(y,r)$ on the boundary $\del \Omega$, when we have a light source located at the point $x$. 

We work here with \emph{closed} shadows, contrary to the common convention (adopted for instance in Roblin \cite[\S1B]{Roblin}) of working with \emph{open} shadows (cast by open balls): this makes almost no difference in practice since open shadows are contained in closed shadows, and contain closed shadows of any smaller radius.

The shadow lemma states, informally, that the sizes of suitably chosen shadows may be approximated in terms of distances between orbit points. Before presenting the shadow lemma, we note a lemma used in its proof which will also be useful later on; it is an immediate consequence of the definition of shadows and Proposition~\ref{fact:horofoliation}.
\begin{lem} \label{lem:roblin_12}
For all $\xi \in \pi_{\hor}^{-1} \left( \shadow_r(x,y) \right)$, we have \[ d_\Omega(x,y) - 2r < \beta_\xi(x,y) \leq d_\Omega(x,y) .\]
\end{lem}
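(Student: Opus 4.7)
The plan is to split the two inequalities. Let $\xi \in \pi_{\hor}^{-1}(\shadow_r(x,y))$, and set $\bar\xi := \pi_{\hor}(\xi) \in \del\Omega$, so by definition of the shadow we can pick a point $z \in (x\,\bar\xi) \cap \overline B_\Omega(y,r)$.

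For the upper bound, I would simply observe that horofunctions are $1$-Lipschitz (this is the same argument as the one recalled in \S\ref{sub:horothings} to show that the horofunction embedding has relatively compact image): since $\beta_\xi$ is the limit of the functions $\beta_{z_n}(\cdot,\cdot) = d_\Omega(\cdot, z_n) - d_\Omega(\cdot, z_n)$ for some sequence $z_n \to \xi$ in $\Omega$, the triangle inequality applied to each $\beta_{z_n}(x,y)$ immediately gives $\beta_\xi(x,y) \leq d_\Omega(x,y)$.

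For the lower bound, the key input is Proposition~\ref{fact:horofoliation}, which applies because $z$ lies on the ray $[x\,\bar\xi)$: it yields $\beta_\xi(x,z) = d_\Omega(x,z)$. Using the cocycle property from \S\ref{sub:horothings} to write
\[
\beta_\xi(x,y) = \beta_\xi(x,z) + \beta_\xi(z,y) = d_\Omega(x,z) + \beta_\xi(z,y),
\]
I would then bound each term. The Lipschitz estimate applied to $\beta_\xi(z,y)$ gives $\beta_\xi(z,y) \geq -d_\Omega(y,z) \geq -r$, and the triangle inequality applied to the triple $x,y,z$ gives $d_\Omega(x,z) \geq d_\Omega(x,y) - d_\Omega(y,z) \geq d_\Omega(x,y) - r$. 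Summing these two estimates yields $\beta_\xi(x,y) \geq d_\Omega(x,y) - 2r$.

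The only subtle point — and the one I expect to be the main obstacle — is upgrading the last inequality to the strict one claimed in the statement. Simultaneous equality in the two estimates above would force $y$ to lie on a geodesic from $x$ to $z$ and simultaneously on the geodesic ray from $z$ to $\bar\xi$; since $z \in [x\,\bar\xi]$ this quickly becomes contradictory, so at least one of the two inequalities must be strict. (Alternatively, one can pick $z$ slightly in the interior of the ball by replacing $r$ with $r - \varepsilon$ for small $\varepsilon > 0$ and then let $\varepsilon \to 0$; since the statement only needs a strict inequality, not a sharp one, this is harmless.)
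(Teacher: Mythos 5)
Your two-sided bound $d_\Omega(x,y) - 2r \leq \beta_\xi(x,y) \leq d_\Omega(x,y)$ is proved correctly, and by exactly the route the paper intends: the paper offers no written proof beyond the remark that the lemma ``is an immediate consequence of the definition of shadows and Proposition~\ref{fact:horofoliation}'', and your combination of that proposition (applied to a point $z \in (x\,\bar\xi) \cap \overline B_\Omega(y,r)$), the cocycle identity, the $1$-Lipschitz property of horofunctions, and the triangle inequality is precisely that argument.

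The strictness step, however, is a genuine gap, and it cannot be closed in the stated generality. Equality in $d_\Omega(x,z) \geq d_\Omega(x,y) - d_\Omega(y,z)$ only says that $z$ is \emph{metrically} between $x$ and $y$; in a non-strictly convex domain this does not make $x$, $y$, $z$ projectively collinear, and likewise $\beta_\xi(y,z) = d_\Omega(y,z)$ does not force $z$ onto the projective ray $[y\,\bar\xi)$, so no contradiction with $z \in (x\,\bar\xi)$ follows. In fact equality is attained: take $\Omega$ the open square, whose Hilbert geometry is isometric to $(\real^2,\|\cdot\|_\infty)$ (coordinatewise $\tanh^{-1}$); let $x=(0,0)$, let $\xi$ be the horofunction $\beta_\xi(a,b)=\min(b_1,b_2)-\min(a_1,a_2)$, which is the limit of $\beta_{(n,n)}$ and projects under $\pi_{\hor}$ to a corner $\bar\xi$, and let $y=(t+r,\,t-r)$ with $t>r>0$. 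Then $z=(t,t)$ lies on $(x\,\bar\xi)=\{(s,s): s>0\}$ with $d_\Omega(y,z)=r$, so $\xi \in \pi_{\hor}^{-1}(\shadow_r(x,y))$, yet $\beta_\xi(x,y)=t-r=d_\Omega(x,y)-2r$ exactly. Your fallback of replacing $r$ by $r-\eps$ does not help either, since $z$ can be the unique point of the ray meeting $\overline B_\Omega(y,r)$ and it sits on the boundary sphere. So only the non-strict inequality is true for general properly convex domains (strictness does hold when $\Omega$ is strictly convex with $C^1$ boundary, where metric betweenness forces collinearity and your contradiction goes through); this is harmless for the paper, since every subsequent use --- e.g.\ Lemma~\ref{lem:estimate Busemann behind shadows} and the shadow lemma --- only requires the non-strict bound.
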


\begin{lem}[Sullivan shadow lemma, {\cite[Lem.\,8]{zhu_conf_densities}} or {\cite[Lem.\,4.2]{Blayac_PSdensities}}] \label{lem:sullivan_shadow}
Let $\Omega$ be a domain, $\Gamma\leq\Aut(\Omega)$ a non-elementary rank-one discrete subgroup, $\mu$ be a $\Gamma$-equivariant conformal density of dimension $\delta$ on $\Omega$, and $x \in \Omega$. Then, for all large enough $r > 0$, there exists $C > 0$ such that for all $\gamma \in \Gamma$,
\[ \frac1C e^{-\delta \cdot d_\Omega (x,\gamma x)} \leq \mu_x\left( \shadow_r(x,\gamma x) \right) \leq Ce^{-\delta \cdot d_\Omega(x,\gamma x)} .\]
\end{lem}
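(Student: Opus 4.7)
My plan is to extract both bounds from a single conformal identity, reduce the lower bound to a uniform positivity statement about shadow measures, and rule out failure of that via a contradiction argument exploiting the rank-one hypothesis. For the upper bound, I would combine the Radon--Nikodym relation $d\mu_{x'}/d\mu_x(\xi) = e^{-\delta \beta_\xi(x', x)}$, the equivariance $\gamma_*\mu_x = \mu_{\gamma x}$, and the $\Aut(\Omega)$-invariance $\beta_{\gamma\eta}(\gamma x, \gamma y) = \beta_\eta(x,y)$ of horofunctions to obtain
\[
\mu_x(\shadow_r(x, \gamma x)) = \int_{\shadow_r(\gamma^{-1} x, x)} e^{\delta \beta_\eta(x, \gamma^{-1} x)} \, d\mu_x(\eta).
\]
By Lemma \ref{lem:roblin_12} together with the cocycle $\beta_\eta(x,\gamma^{-1}x) = -\beta_\eta(\gamma^{-1}x,x)$, on $\shadow_r(\gamma^{-1}x,x)$ the integrand lies between $e^{-\delta d_\Omega(x,\gamma x)}$ and $e^{2\delta r} e^{-\delta d_\Omega(x,\gamma x)}$, giving
\[
e^{-\delta d_\Omega(x,\gamma x)}\,\mu_x(\shadow_r(\gamma^{-1}x,x)) \leq \mu_x(\shadow_r(x,\gamma x)) \leq e^{2\delta r}\|\mu_x\|\,e^{-\delta d_\Omega(x,\gamma x)}.
\]
The upper bound is now immediate, and (using that shadows are monotone in $r$) the lower bound reduces to finding $r_0, c > 0$ with $\mu_x(\shadow_{r_0}(\gamma x, x)) \geq c$ uniformly in $\gamma \in \Gamma$.

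Assume for contradiction that no such $r_0, c$ exist; then for every large $r$ there is a sequence $\gamma_n \in \Gamma$ with $\mu_x(\shadow_r(\gamma_n x, x)) \to 0$. Fix such $r$ and a sequence; proper discontinuity forces $d_\Omega(x, \gamma_n x) \to \infty$, and compactness of $\overline\Omega$ lets me pass to a subsequence with $\gamma_n x \to \xi_\infty \in \del\Omega$. The key geometric step is: for any $\eta \in \del\Omega \setminus \overline{F_\Omega(\xi_\infty)}$, the segment $[\xi_\infty, \eta]$ must meet $\Omega$ (else $\eta$ would lie in the face of $\xi_\infty$), yielding a geodesic $(\xi_\infty \, \eta)$; if this geodesic passes at distance less than $r$ from $x$, then the projective segments $[\gamma_n x, \eta]$ converge to $[\xi_\infty, \eta]$ in $\RPn$ by continuity of segments in their endpoints, and hence pass within distance $r$ of $x$ in the Hilbert metric for all $n$ large, placing $\eta$ in $\shadow_r(\gamma_n x, x)$. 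Fatou's lemma then gives $\mu_x$-measure zero for the set of such $\eta$, and letting $r \to \infty$ shows that $\mu_x$ is supported on the closed face $\overline{F_\Omega(\xi_\infty)}$.

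The principal obstacle is ruling out this degeneration. By $\Gamma$-equivariance and positivity of the Radon--Nikodym derivative, $\supp \mu_x$ is closed and $\Gamma$-invariant, hence contains the proximal limit set $\Lambda_\Gamma$---the smallest such set, by Proposition \ref{prop:density of att-rep pairs}. Thus $\Lambda_\Gamma$ would lie in a proper closed face of $\del\Omega$, and in particular in a proper projective subspace. To rule this out I would apply Lemma \ref{lem:proj rk1 gp} and project $\Gamma$, with finite kernel, onto a strongly irreducible rank-one group preserving some domain $\Omega'$ (with translation lengths preserved by \eqref{eq:ellrho is ell} and displacements controlled by \eqref{eq:rho proper}); by Proposition \ref{prop:spanspan_irr}, the image proximal limit set spans the ambient projective space and so cannot lie in any proper face of $\del\Omega'$. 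Transporting the conformal density through this projection and rerunning the face argument in $\Omega'$ then delivers the desired contradiction. The subtlest step is this face-and-irreducibility analysis, where the non-elementary rank-one hypothesis enters most forcefully; when $\Omega$ is strictly convex with $C^1$ boundary, every boundary point is strongly extremal and the face argument collapses to the classical non-atomicity contradiction of Patterson--Sullivan.
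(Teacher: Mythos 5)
Your upper bound is correct and standard: the chain
\[
\mu_x(\shadow_r(x,\gamma x))=\mu_{\gamma^{-1}x}(\shadow_r(\gamma^{-1}x,x))=\int_{\shadow_r(\gamma^{-1}x,x)}e^{-\delta\beta_\eta(\gamma^{-1}x,x)}\,d\mu_x(\eta)
\]
together with Lemma \ref{lem:roblin_12} does give both the upper bound and the reduction of the lower bound to a uniform positive lower bound on $\mu_x(\shadow_{r_0}(\gamma x,x))$. The gap is in how you rule out the degeneration. Your key geometric claim --- that $\eta\in\del\Omega\smallsetminus\overline{F_\Omega(\xi_\infty)}$ forces $[\xi_\infty\,\eta]\cap\Omega\neq\varnothing$ --- is false for non-strictly-convex domains, which is exactly the case this lemma is needed for. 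Take $\Omega$ a simplex, $\xi_\infty$ a vertex and $\eta$ an interior point of a non-adjacent edge of a facet containing $\xi_\infty$: then $F_\Omega(\xi_\infty)=\{\xi_\infty\}$, so $\eta\notin\overline{F_\Omega(\xi_\infty)}$, yet $[\xi_\infty\,\eta]$ lies entirely in $\del\Omega$. What your Fatou argument actually yields is that $\mu_x$ concentrates on $\{\eta\st [\xi_\infty\,\eta]\cap\Omega=\varnothing\}$, i.e.\ on $\{\eta\st d_{\spl}(\xi_\infty,\eta)\leq 1\}\cup\{\xi_\infty\}$, and this set need not lie in any proper projective subspace (for the simplex and a vertex it is the union of all facets through that vertex, which spans everything), so the planned contradiction with irreducibility cannot close. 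A further problem with the endgame is that a $\delta$-conformal density on $\Omega$ does not transport to one on the projected domain $\Omega'$ of Lemma \ref{lem:proj rk1 gp}: the estimate \eqref{eq:rho proper} only controls displacements up to a bounded multiplicative error, and the Busemann cocycles of $\Omega$ and $\Omega'$ do not correspond, so ``rerunning the face argument in $\Omega'$'' is not available. (There is also a smaller issue that $\xi_\infty$ depends on $r$, so ``letting $r\to\infty$'' needs a diagonal extraction; that part is fixable.)

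The repair is to let the rank-one hypothesis enter through \emph{strong extremality} rather than through faces and irreducibility. Since $\supp\mu_x$ is closed and $\Gamma$-invariant, Proposition \ref{prop:density of att-rep pairs} gives $\Lambda_\Gamma\subset\supp\mu_x$, and the same proposition provides strongly extremal points of $\Lambda_\Gamma$ (attracting fixed points of rank-one elements) other than $\xi_\infty$. If $\eta$ is such a point, then by definition $[\xi_\infty\,\eta]\cap\Omega\neq\varnothing$, say meeting $B_\Omega(x,R)$; the condition ``$[y\,\zeta]\cap B_\Omega(x,R+1)\neq\varnothing$'' is open in $(y,\zeta)\in\overline\Omega\times\overline\Omega$, so there is a neighborhood $W$ of $\eta$ in $\del\Omega$ with $W\subset\shadow_{R+1}(\gamma_n x,x)$ for all large $n$, whence $\mu_x(\shadow_{R+1}(\gamma_n x,x))\geq\mu_x(W)>0$, contradicting your hypothesis that these measures tend to $0$ (a finite cover of $\del\Omega$ by such neighborhoods of candidate limit points $\xi_\infty$ gives the required uniformity in $\gamma$). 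This replaces both the face analysis and the reduction to the strongly irreducible case; in the strictly convex $C^1$ setting it degenerates, as you say, to the classical support/non-atomicity argument.
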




\subsection{The Hopf parametrization}\label{sec:hopf}

Let $\Omega\subset \RPn$ be a domain with basepoint $o\in\Omega$. The {\bf Hopf parametrization} based at $o$ is the continuous surjective map
 \[\Hopf_o\colon \pi_{\hor}^{-1}(\Geod^{\infty}\Omega)\times\real\longrightarrow S\Omega,\]
 that sends $(\xi,\eta,t)\in\pi_{\hor}^{-1}(\Geod^{\infty} \Omega)\times\real$ to the vector $\Hopf_o(\xi,\eta,t)$ which is tangent to the geodesic $(\pi_{\hor}(\xi)\;\pi_{\hor}(\eta))$ and such that $\beta_{\eta}(o,\pi\Hopf_o(\xi,\eta,t))=t$.

When the context is clear, we will simply write $\Hopf$ instead of $\Hopf_o$. Changing the base-point for $x\in\Omega$ yields the following formula: for any $(\xi,\eta,t)\in\pi_{\hor}^{-1}(\Geod^{\infty} \Omega)\times\real$,
\begin{equation*}
 \Hopf_x(\xi,\eta,t)=\Hopf_o(\xi,\eta,t+\beta_\eta(o,x)).
\end{equation*}

We can lift to $\pi_{\hor}^{-1}(\Geod^{\infty}\Omega) \times\real$ the three actions on the unit tangent bundle $S\Omega$ given by the geodesic flow, $\Aut(\Omega)$ and the flip involution $\flip$, so that the Hopf parametrization is equivariant; observe that, apart from the action of the geodesic flow, these actions depend on the base-point $o$: given $(\xi,\eta,t)\in \pi_{\hor}^{-1}(\Geod^{\infty}\Omega)\times\real$ and $s\in\real$ and $\gamma\in\Aut(\Omega)$ these lifts may be written as
\begin{align*}
    g^s(\xi,\eta,t)&=(\xi,\eta,t+s), \\
    \gamma \cdot(\xi,\eta,t)&=(\gamma\xi,\gamma\eta,t+\beta_\eta(\gamma^{-1}o,o)), \\
    \flip(\xi,\eta,t)&=(\eta,\xi, \langle \xi,\eta \rangle_o -t).
\end{align*}

If $\Omega$ is strictly convex with $C^1$ boundary, then $\pi_{\hor}^{-1}(\Geod^\infty\Omega)$ and $\Geod^\infty\Omega$ are both identified with $\partial^2\Omega$ (the set of pairs of distinct points of $\partial\Omega$). The Hopf parametrization is then a homeomorphism from $\partial^2\Omega\times\real$ to $S\Omega$.

\subsection{(Bowen--Margulis--)Sullivan measures}
Let $\Omega$ be a domain, $\Gamma\leq\Aut(\Omega)$ a discrete subgroup and $\mu$ a $\Gamma$-equivariant conformal density of dimension $\delta\geq 0$ on $\del_{\hor}\Omega$. We first define a measure $m^{\hor}$ on $\pi_{\hor}^{-1}(\Geod^{\infty}\Omega)\times\real$, by using the following formula of Sullivan \cite[Prop.\,11]{Sullivan}:
\[ dm^{\hor}(\xi,\eta,t) = e^{2\delta \langle \xi,\eta \rangle_x} d\mu_x(\xi) \, d\mu_x(\eta) \, dt ,\]
where $dt$ denotes (the infinitesimal form of) the Lebesgue measure on the $\real$ factor; $x \in S\Omega$ is an arbitrary base-point, and one can check that $m^{\hor}$ is independent of the choice of $x$. Under the assumption that $\Gamma$ is rank-one and non-elementary, one can verify that $m^{\hor}$ defines a non-zero Radon measure (since the function that sends $(\xi,\eta,x)\in\pi_{\hor}^{-1}(\Geod^\infty\Omega) \times\Omega$ to $\grop x\xi\eta$ is continuous).

One may also check that $m^{\hor}$ is invariant under the actions of the geodesic flow, $\Gamma$ and the flip involution. As a consequence, it induces a measure $m^{\hor}_\Gamma$ on the quotient $\pi_{\hor}^{-1}(\Geod^{\infty}\Omega) \times\real / \Gamma$. It also induces a $(g^t,\Gamma,\flip)$-invariant measure $m$ on the unit tangent bundle $S\Omega$ by simply pushing forward via the Hopf parametrization. Finally, this induces a $(g^t,\flip)$-invariant measure $m_\Gamma$ on $S\Omega/\Gamma$. We will call these measures ($m$ and $m_\Gamma$) {\bf Sullivan measures} associated to our conformal density $\mu$.


For cocompact irreducible $\Gamma$, this measure coincides with the Bowen--Margulis measure, which is the unique measure of maximal entropy of a topologically mixing Anosov flow \cite[Th.\,1.3]{Blayac_PSdensities}. We remark that Roblin \cite{Roblin} refers to $m_\Gamma$ as a Bowen--Margulis--Sullivan measure, and also that these $m_\Gamma$ are examples of what Paulin--Pollicott--Schapira \cite{PPS15} refer to $m_\Gamma$ as Gibbs measures (with zero Gibbs potential, and the Busemann cocycle).

If $\Omega$ is strictly convex with $C^1$ boundary, then  we may identify $m^\hor$ with $m$ and $m^\hor_\Gamma$ with $m_\Gamma$ since we have identified $\pi_{\hor}^{-1}(\Geod^\infty\Omega)$ with $\partial^2\Omega$, and since the Hopf parametrization is a homeomorphism.

\subsection{The Hopf--Tsuji--Sullivan--Roblin dichotomy}

In this section we state a convex projective version of the celebrated Hopf--Tsuji--Sullivan--Roblin (HTSR) dichotomy, which establishes equivalences between several different ways of characterizing a subgroup of automorphisms as ``small'' or ``large'' in terms of conformal measures, associated Sullivan measures, and ergodicity of the geodesic flow.
It will be useful for us as a criterion for ergodicity of the Sullivan measure $m_\Gamma$ on $S\Omega/\Gamma$ under the action of the geodesic flow $(g^t_\Gamma)$, and as a tool for showing that our Patterson--Sullivan measures concentrate on the $C^1$ and strongly extremal points of $\del\Omega$.


A proof of the divergent case of the HTSR dichotomy (Theorem~\ref{thm:HTSR}(\ref{item:divergent}) below), in the case where $\Omega$ is strictly convex with $C^1$ boundary and $\Gamma$ acts geometrically finitely on $\Omega$, can also be found in \cite[Prop.\,12,14,16 and Cor.\,13]{zhu_conf_densities}.

\begin{thm}[{\cite[Th.\,1.6]{Blayac_PSdensities}}]\label{thm:HTSR}
 Let $\Omega$ be a domain, and $\Gamma\leq\Aut(\Omega)$ be a non-elementary rank-one discrete subgroup. Let $o\in\Omega$ and $\delta\geq 0$, let $(\mu_x)_{x\in\Omega}$ be a $\Gamma$-equivariant conformal density of dimension $\delta$ on $\Omega$, and $m$ and $m_\Gamma$ be the induced Sullivan measures on $S\Omega$ and $SM = S\Omega/\Gamma$. Then there are two possibilities:
\begin{enumerate}
\item \label{item:convergent} either $\sum_{\gamma\in\Gamma} e^{-\delta \cdot d_\Omega(o,\gamma o)}<\infty$, and then $\mu_o(\Lambda_\Gamma^{\con})=0$ and the geodesic flow on $(SM,m_\Gamma)$ is dissipative and non-ergodic;
\item \label{item:divergent} or $\sum_{\gamma\in\Gamma} e^{-\delta \cdot d_\Omega(o,\gamma o)}=\infty$, in which case $\delta=\delta_\Gamma$, $\Gamma$ is divergent, and 
\begin{itemize}
 \item $(\mu_x)$ is the only $\delta_\Gamma$-conformal density on $\Omega$ (up to scaling); 
 \item $\mu_o$ is non-atomic, $\mu_o(\del_{\sse}\Omega\cap\Lambda_\Gamma \cap \Lambda_\Gamma^{\con})=1$ and $\supp(m)=SM_{\bip}$;
 \item the geodesic flow on $(SM,m_\Gamma)$ is conservative and ergodic;
\end{itemize}
\end{enumerate}
\end{thm}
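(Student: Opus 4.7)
The plan is to parallel Roblin's approach in the CAT$(-1)$ setting, adapting the arguments to Hilbert geometry via the Sullivan shadow lemma (Lemma~\ref{lem:sullivan_shadow}) and the rank-one structure developed in Section~\ref{sec:rk1}. I would treat the two alternatives in turn, using conservativity/ergodicity of the flow as the glue that ties the measure-theoretic statements together.

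For the convergent case, the key ingredient is Borel--Cantelli applied to the shadow cover of $\Lambda_\Gamma^{\con}$. By definition, every $\xi \in \Lambda_\Gamma^{\con}$ lies in $\shadow_r(o, \gamma o)$ for infinitely many $\gamma \in \Gamma$ (once $r$ is large enough to contain a fixed orbit representative), so $\Lambda_\Gamma^{\con} \subset \limsup_\gamma \shadow_r(o, \gamma o)$. The Sullivan shadow lemma gives $\mu_o(\shadow_r(o, \gamma o)) \leq C e^{-\delta \, d_\Omega(o, \gamma o)}$, so convergence of the Poincaré series yields $\sum_\gamma \mu_o(\shadow_r(o, \gamma o)) < \infty$, and Borel--Cantelli forces $\mu_o(\Lambda_\Gamma^{\con}) = 0$. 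Dissipativity and non-ergodicity of $(g^t_\Gamma, m_\Gamma)$ then follow by transporting this via the Hopf parametrization: conservative vectors are those with both endpoints conical (up to the flip symmetry and standard recurrence arguments), so they have zero $m_\Gamma$-measure; non-ergodicity follows from the existence of wandering sets of positive measure constructed from shadows.

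For the divergent case I expect the work to split into four stages. First, $\delta = \delta_\Gamma$ and divergence of $\Gamma$: if a $\delta$-conformal density exists with diverging Poincaré series at $\delta$, then $\delta \geq \delta_\Gamma$ trivially, while the shadow lemma plus a packing argument (shadows at bounded depth from $o$ have bounded overlap in $\mu_o$) forces $\delta \leq \delta_\Gamma$. Second, $\mu_o(\Lambda_\Gamma^{\con}) = 1$: this is the delicate converse to Borel--Cantelli, which requires a quasi-independence property of shadows. Following Roblin, I would show that shadows $\shadow_r(o, \gamma_1 o)$ and $\shadow_r(o, \gamma_2 o)$ overlap only when $\gamma_1 o$ and $\gamma_2 o$ lie essentially on a common geodesic ray from $o$; a Vitali-type covering combined with divergence of $\sum_\gamma \mu_o(\shadow_r(o,\gamma o))$ then produces positive measure for $\Lambda_\Gamma^{\con}$. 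Third, ergodicity of the flow on $(SM, m_\Gamma)$: a $(g^t_\Gamma)$-invariant function lifts to a $(g^t,\Gamma)$-invariant function on $S\Omega$; conservativity plus the Hopf argument (using absolute continuity of strong unstable/stable conditionals of $m$ with respect to $\mu_o$, as recorded in the Hopf parametrization formula for $m$) shows it depends only on the forward endpoint, and symmetrically on the backward endpoint; $\Gamma$-invariance combined with density of the set $\{(\gamma^-, \gamma^+) : \gamma \text{ rank-one}\}$ in $\Lambda_\Gamma \times \Lambda_\Gamma$ (Proposition~\ref{prop:density of att-rep pairs}) forces the function to be $m_\Gamma$-a.e.\ constant. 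Fourth, uniqueness of $(\mu_x)$: any two $\delta_\Gamma$-conformal densities induce Sullivan measures that are pairwise absolutely continuous, and the ratio of their boundary measures defines a $\Gamma$-invariant measurable function, which ergodicity forces to be a.e.\ constant.

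The remaining assertions — $\mu_o$ non-atomic, $\mu_o(\del_{\sse}\Omega \cap \Lambda_\Gamma \cap \Lambda_\Gamma^{\con}) = 1$, and $\supp(m) = SM_{\bip}$ — follow from the rank-one structure. Non-atomicity is immediate from uniqueness and non-elementarity (an atom would yield a $\Gamma$-invariant finite orbit, contradicting non-elementarity). Concentration on $\del_{\sse}\Omega$: the $\Gamma$-invariant set $\del_{\sse}\Omega$ has non-zero $\mu_o$-measure because it contains the endpoints of rank-one axes, which are dense in $\Lambda_\Gamma$ by Proposition~\ref{prop:density of att-rep pairs}; the zero--one law from ergodicity of $(g^t_\Gamma)$ together with minimality of $\Gamma \actson \Lambda_\Gamma$ upgrades this to full measure. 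Finally, $\supp(m) = SM_{\bip}$ since $\supp(\mu_o) = \Lambda_\Gamma$ by the minimality above and the Hopf parametrization description of $m$.

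The main obstacle is the quasi-independence argument for $\mu_o(\Lambda_\Gamma^{\con}) = 1$ in the divergent case: in CAT$(-1)$ settings this is straightforward using angle comparisons and visual metrics, but here one must develop the analogous geometric control on shadows and Gromov products purely from the Hilbert geometry, leveraging the rank-one hypothesis (via the tools of Section~\ref{sec:rk1}) to compensate for the absence of genuine negative curvature.
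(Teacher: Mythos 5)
A preliminary caveat about the comparison itself: the paper does not prove Theorem~\ref{thm:HTSR}; it is imported verbatim from \cite[Th.\,1.6]{Blayac_PSdensities}, so there is no in-paper argument to measure your proposal against. That said, your outline does follow the Roblin-style template that the cited reference adapts to Hilbert geometry --- Borel--Cantelli with the shadow lemma in the convergent case, converse Borel--Cantelli via quasi-independence of shadows for $\mu_o(\Lambda_\Gamma^{\con})=1$, the Hopf argument for ergodicity, and uniqueness from ergodicity --- and you correctly flag the quasi-independence step as the place where real geometric work is needed.

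There is, however, a genuine gap in the one assertion that is specific to the non-strictly-convex setting, namely $\mu_o(\del_{\sse}\Omega\cap\Lambda_\Gamma\cap\Lambda_\Gamma^{\con})=1$. You argue that $\del_{\sse}\Omega$ has positive $\mu_o$-measure ``because it contains the endpoints of rank-one axes, which are dense in $\Lambda_\Gamma$,'' and then upgrade to full measure by a zero--one law. Density gives nothing here: the set of fixed points of rank-one elements of $\Gamma$ is countable, hence $\mu_o$-null as soon as $\mu_o$ is non-atomic --- which you establish a few lines earlier. So the positivity step collapses, and with it the zero--one-law upgrade (which in any case requires ergodicity of the $\Gamma$-action on $(\del_{\hor}\Omega,\mu_o)$, not merely ergodicity of the flow). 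The actual mechanism must tie $C^1$-ness and strong extremality to the conical approach itself --- roughly, a limit point approached radially by orbit points shadowing a rank-one axis inherits the $C^1$ and strongly extremal properties --- and this has to be established \emph{before} the Hopf argument, not deduced from it, since one needs $\mu_o\otimes\mu_o$ to concentrate on pairs in $\Geod^\infty\Omega$ with finite Gromov product for the product structure of $m$ to be usable at all. A smaller, fixable slip: non-atomicity does not follow from ``an atom would yield a finite invariant orbit'' (it would not); it follows from $\mu_o(\Lambda_\Gamma^{\con})=1$ together with the estimate $\mu_o(\{\xi\})\leq C e^{-\delta\, d_\Omega(o,\gamma_n o)}\to 0$ for conical $\xi$, which is exactly the inequality you already wrote down in the convergent case.
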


Recall that a measurable flow $(\phi^t)_t$ is {\bf conservative} if $\{t \:|\: \phi^t(A)\cap A\neq\varnothing\}$ is unbounded for any set $A$ with positive measure, and {\bf dissipative} if one can cover the ambient space by countably many wandering measurable sets (\ie sets $A$ such that $\{t \:|\: \phi^t(A)\cap A\neq\varnothing\}$ is bounded). The only facts about conservativity and dissipativity that we need in this paper are that the two notions are mutually exclusive, and the Poincar\'e recurrence theorem, which states that any flow that preserves a finite measure is conservative. Below, our assumptions will always contain or imply that the Sullivan measure $m_\Gamma$ is finite, and hence that we are in the divergent case of the HTSR dichotomy. 

Within the divergent case of the dichotomy, the most important part for us is the fact that the Patterson--Sullivan measures are concentrated on $\del_{\sse}\Omega$. Since, by \cite[Lem.\,3.2]{Bray}, $\del_{\sse}\Omega$ can be identified with its preimage in the horoboundary $\del_{\hor}\Omega$, an immediate consequence is that from the point of view of the Patterson--Sullivan (\resp Sullivan) measures we deal with here, $\pi_{\hor}$ (\resp the Hopf parametrization) is a bijection, and the dynamics of the geodesic flow on $\left( (\del_{\hor}\Omega^2\times \real)/\Gamma,m^{\hor}_\Gamma \right)$ and on $(S\Omega/\Gamma,m_\Gamma)$ are the same.

\section{Mixing of the geodesic flow} \label{sec:mixing}

In this section, we state the measure-theoretic mixing result which will be a crucial ingredient in the equidistribution results which follow. 
\begin{defn} 
Given a finite-measure Borel space $(X,\nu)$, a flow $(g^t)_{t \in \real}$ on $X$ is {\bf (strongly) mixing} with respect to $\nu$ if for all $\varphi, \psi \in L^2(X,\nu)$,
\[ \int_X (\varphi \circ g^t) \cdot \psi \,d\nu 
\xrightarrow[t\to\pm\infty]{}
\frac{1}{\|\nu\|} \int_X \varphi \,d\nu \cdot \int_X \psi \,d\nu ,\]
or equivalently if for all Borel subsets $A, B \subset X$, we have
$$\nu(A \cap g^t B) \xrightarrow[t\to\pm\infty]{} \frac{\nu(A) \nu(B)}{\|\nu\|} .$$
\end{defn}

Mixing is a characteristic property of geodesic flows in negative curvature; one may view measure-theoretic mixing as a quantitative version of topological mixing.

Measure-theoretic mixing results
have been proven in a wide range of settings where Sullivan measures may be defined, for instance for geometrically finite subgroups in constant negative curvature (see e.g. \cite{Rudolph}), or in great generality for all discrete groups of CAT$(-1)$ isometries with quotient admitting a finite Sullivan measure by Roblin \cite[Th.\ 3.1]{Roblin}. (See also \cite[\S3]{Sambarino_hypcvx} for related results about the mixing of Weyl chamber flows.)

\begin{thm}[{\cite[Th.\,18]{zhu_conf_densities}} or {\cite[Th.\,1.6]{Blayac_PSdensities}}] \label{thm:mixing}
Let $\Omega \subset \RPn$ be a domain and $\Gamma \leq\Aut(\Omega)$ be a non-elementary rank-one
discrete group such that $S \Omega / \Gamma$ admits a finite Sullivan measure $m_\Gamma$ 
associated to a $\Gamma$-equivariant $\delta_\Gamma$-conformal density.

Then the Hilbert geodesic flow $(g_\Gamma^t)_{t \in \real}$ on $S\Omega / \Gamma$ is mixing with respect to $m_\Gamma$.
\end{thm}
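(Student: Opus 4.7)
The plan is to combine three ingredients: the finiteness of $m_\Gamma$, the Hopf--Tsuji--Sullivan--Roblin dichotomy (Theorem~\ref{thm:HTSR}), and the non-arithmeticity of the length spectrum (Lemmas~\ref{lem:nonarithm} and \ref{lem:locnonarithm}), stitched together by a Babillot-style argument.

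First I would set the stage. Since $m_\Gamma$ is finite and $(g_\Gamma^t)$-invariant, Poincar\'e recurrence forces the flow to be conservative. By Theorem~\ref{thm:HTSR} we are therefore in the divergent case, so that $\delta = \delta_\Gamma$, the flow on $(SM,m_\Gamma)$ is ergodic, the conformal density $(\mu_x)$ is unique up to scaling and concentrated on $\del_{\sse}\Omega \cap \Lambda_\Gamma^{\con}$, and $\supp(m) = SM_{\bip}$. In particular, via the Hopf parametrization the measure $m$ identifies, on a full-measure subset, with the product-type expression $e^{2\delta \grop{o}{\xi}{\eta}}\, d\mu_o(\xi)\, d\mu_o(\eta)\, dt$ on $\del_{\sse}\Omega^2 \times \real$, so that all measurable operations on $SM$ can be done on this explicit model.

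Next I would upgrade ergodicity to mixing. Suppose for contradiction that mixing fails: then one can find bounded continuous $\phi, \psi$ on $SM$ and a sequence $t_n \to \infty$ along which $\int (\phi \circ g_\Gamma^{t_n})\psi\, dm_\Gamma$ tends to some $L \neq \|m_\Gamma\|^{-1} \int \phi\, dm_\Gamma \int \psi\, dm_\Gamma$. Passing to a further subsequence, $\phi \circ g_\Gamma^{t_n}$ has a weak $L^2$-limit $\phi_\infty$ which is $(g_\Gamma^t)$-invariant. The heart of Babillot's method is then to exploit that two straight-line geodesics sharing a common forward endpoint converge exponentially in forward time (quantified here by Lemma~\ref{lem:crampon} and Proposition~\ref{fact:horofoliation}), to show that the lift of $\phi_\infty$ to $S\Omega$ is invariant under the strong stable horocyclic equivalence relation; by symmetry under $\flip$, it is also invariant under the strong unstable one. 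Topological mixing on $SM_{\bip}$ (Corollary~\ref{cor:rk1_top_mixing}) ensures that bistable-unstable invariance forces $\phi_\infty$ to be measurable with respect to a sub-$\sigma$-algebra on which ergodicity kicks in.

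The final step is where non-arithmeticity enters. Lemma~\ref{lem:locnonarithm} says that through any open set in $SM_{\bip}$ the lengths of closed rank-one geodesics generate a dense subgroup of $\real$. In Babillot's setup this density implies that the cocycle measuring the relative $g^t$-displacement between nearby stable and unstable leaves takes values in a dense subgroup of $\real$, which rules out any non-trivial measurable solution to the functional equation characterizing $\phi_\infty$ other than a constant. Combined with ergodicity this yields $\phi_\infty = \|m_\Gamma\|^{-1} \int \phi\, dm_\Gamma$ a.e., contradicting the choice of $L$ and establishing mixing.

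The main obstacle will be making Babillot's shadowing and horocyclic-invariance step precise in a Hilbert setting that is not CAT($-1$) and where $\del\Omega$ need not be strictly convex. Concretely, I expect the delicate points to be: checking that the strong stable/unstable partitions of $SM_{\bip}$ are measurable with a well-controlled disintegration of $m$ (of the form $e^{\delta \beta_\xi} d\mu_o(\eta)\, dt$ along weak-stable leaves); verifying that the asymptotic displacement cocycle between two geodesics with a common endpoint is continuous and non-degenerate on the support of these disintegrations; and ensuring that the concentration of $\mu_o$ on $\del_{\sse}\Omega$ is enough to confine the whole argument to the nice locus where the horospheres foliate properly (Proposition~\ref{fact:horofoliation}, Lemma~\ref{lem:disjhoroballs}). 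Once these geometric pieces are in place, the non-arithmeticity input from Lemma~\ref{lem:locnonarithm} plugs directly into Babillot's scheme.
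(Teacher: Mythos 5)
Your overall strategy is the right one, and it is essentially the strategy the paper itself points to: the paper does not prove Theorem~\ref{thm:mixing} but quotes it from \cite[Th.\,18]{zhu_conf_densities} and \cite[Th.\,1.6]{Blayac_PSdensities}, describing those proofs precisely as an adaptation of Babillot's scheme in which one shows that failure of mixing forces the length spectrum into a discrete subgroup of $\real$, contradicting Lemma~\ref{lem:nonarithm}. Your use of finiteness of $m_\Gamma$, the divergent case of Theorem~\ref{thm:HTSR}, the product structure of $m$ in the Hopf parametrization, and the non-arithmeticity input all match that outline.

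However, two steps in your middle paragraph would fail as written, and they are the crux of Babillot's method rather than peripheral details. First, a weak $L^2$-limit $\phi_\infty$ of $\phi\circ g_\Gamma^{t_n}$ is \emph{not} $(g_\Gamma^t)$-invariant in general; what one actually proves (using uniform continuity of $\phi$ and the contraction along stable leaves coming from Lemma~\ref{lem:crampon}) is that $\phi_\infty$ is measurable with respect to the $\sigma$-algebra of strong-stable-saturated sets. Second, the flip $\flip$ conjugates $g^t$ to $g^{-t}$, so it exchanges weak limits along $t_n\to+\infty$ with weak limits along $t_n\to-\infty$; it does not make the \emph{same} function $\phi_\infty$ invariant under both the stable and unstable relations. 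The real content of Babillot's argument is the adjoint/double-limit device (weak limits of $\phi\circ g^{t_n-t_m}$, i.e.\ of the operators $V^*V$ where $V$ is a weak operator limit of the translation operators $U_{t_n}$), which manufactures a single non-constant function measurable with respect to both foliations; only then does one bring in the cross-ratio, show that such a function forces the (local) length spectrum into a proper closed subgroup of $\real$, and contradict Lemma~\ref{lem:locnonarithm}. If you repair these two points, your outline coincides with the cited proofs, together with the reduction to the strongly irreducible case via \S\ref{sub:restrict_project} and \S\ref{sub:lenspec} that the paper notes.
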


\begin{rmk}\label{rmk:mixing in the universal cover}
For the sake of clarity moving forward, we formulate the previous result here in terms of functions on the universal cover. Let $\hat{X}$ be either $S\Omega$ or $\pi_{\hor}^{-1}(\Geod^{\infty}\Omega)\times\real$, and let $m$ and $m_\Gamma$ denote the Sullivan measures on $\hat{X}$ and $\hat{X} / \Gamma$ respectively. For any measurable $m$-integrable (\resp non-negative) function $\phi$ on $\hat{X}$, we denote by $\int_\Gamma\phi$ the measurable $m_\Gamma$-integrable (\resp non-negative) function on $X = \hat{X}/\Gamma$ defined by
\begin{equation*}
    \int_\Gamma\phi(v):=\sum_{\gamma\in\Gamma}\phi(\gamma\tilde{v})
\end{equation*}
 for any $v\in X$, where $\tilde{v}\in \hat{X}$ is any lift of $v$. By the definition of $m_\Gamma$, we have
 \begin{equation*}
     \int_{\hat{X}}\phi\,dm=\int_{X}\left(\int_\Gamma\phi\right)\,dm_\Gamma.
 \end{equation*}
 If $\phi,\psi$ are integrable functions on $\hat{X}$ such that $|\phi|\int_\Gamma|\phi|$ and $|\psi|\int_\Gamma|\psi|$ are integrable on $\hat{X}$ (e.g.\ if $\phi$ and $\psi$ are bounded with compact support), then it is easy to see that 
 \begin{equation*}
     \left(\int_\Gamma\phi\right)\left(\int_\Gamma\psi\right) = \sum_{\gamma\in\Gamma}\int_\Gamma(\phi\cdot(\psi\circ\gamma)).
 \end{equation*}
 Therefore Theorem~\ref{thm:mixing} can be reformulated as
 \begin{equation*}
     \sum_{\gamma\in\Gamma}\int_{\hat{X}}\phi\cdot (\psi\circ\gamma\circ g^{t})\,dm \xrightarrow[t\to\infty]{} \frac{1}{\|m_\Gamma\|} \int_{\hat{X}}\phi\,dm \cdot \int_{\hat{X}}\psi\,dm.
 \end{equation*}
 In particular, if $A,B\subset \hat{X}$ are relatively compact Borel subsets, then
 \begin{equation*}
     \sum_{\gamma\in\Gamma}m(A\cap g^t\gamma B) \xrightarrow[t\to\infty]{} \frac{m(A)m(B)}{\|m_\Gamma\|}.
 \end{equation*}
 Also, if $\tilde\psi$ is a $\Gamma$-invariant function on $\hat{X}$ that lifts a square-integrable function $\psi$ on $X = \hat{X}/\Gamma$ and $\phi$ is an integrable function on $\hat{X}$ such that $|\phi|\int_\Gamma|\phi|$ is integrable, then
 \begin{equation*}
     \int_{\hat{X}}\phi\cdot (\tilde\psi\circ g^t)\,dm \xrightarrow[t\to\infty]{} \frac{1}{\|m_\Gamma\|}\int_{\hat{X}}\phi\,dm \cdot \int_{X}\psi\,dm_\Gamma.
 \end{equation*}
\end{rmk}

Theorem \ref{thm:mixing} was proven in the case of $\Omega$ strictly convex with $C^1$ boundary by the second author \cite[Th.\,18]{zhu_conf_densities}, and then in the more general properly convex rank-one case by the first author \cite[Th.\,1.6]{Blayac_PSdensities}. 

Very briefly, the proofs adapt arguments from \cite{Babillot} and \cite{Ricks}, using cross-ratios, length spectrums, and topological mixing. More specifically, it proceeds by showing that if $m_\Gamma$ is not mixing, then the length spectrum is contained in a discrete subgroup of $\real$; since this is not the case, $m$ must in fact be mixing. 

We refer the interested reader to \cite{Blayac_PSdensities} or \cite{zhu_conf_densities} for the details.
We remark that the proof in the case, when $\Gamma$ is not strongly irreducible reduces to the strongly irreducible case using the results in \S\ref{sub:restrict_project} and \S\ref{sub:lenspec} here.




\subsection*{Equidistribution of unstable horospheres}

We further remark that Babillot obtains equidistribution of unstable horospheres as a consequence of mixing of the geodesic flow \cite[Th.\,3]{Babillot}, 
and we can do likewise here. 

Unstable horospheres of $S\Omega$ are sets of vectors tangent to geodesics backwards-asymptotic to a common point $\xi\in\del\Omega$, and with foot-points along  horospheres centered at a common preimage of $\xi$ in $\del_{\hor}\Omega$ (as described in \S\ref{sub:horothings}); if $\xi$ is $C^1$, then these are strong unstable manifolds for the Hilbert geodesic flow. The Hopf parametrization we adopted in \S\ref{sec:hopf} is convenient for parametrizing stable horospheres, but not unstable horospheres, and so we define the reversed Hopf parametrization $\Hopf^-:=\flip\circ \Hopf\circ \flip$ (recall that $\flip$ denotes the flip involution); using the reversed Hopf parametrization, we may write unstable horospheres of $\pi_{\hor}^{-1}(\Geod^\infty\Omega)\times \real$ as sets of the form $\{\xi\} \times J \times \{t\}$ with $\xi\in\del_{\hor}\Omega$ and $t\in\real$.

\begin{thm} \label{thm:horosphere_equidist}
Let $\Omega \subset \RPn$ be a domain and $\Gamma \leq \Aut(\Omega)$ be a non-elementary rank-one
discrete group such that $S \Omega / \Gamma$ admits a finite Sullivan measure $m_\Gamma$ associated to a $\Gamma$-invariant $\delta(\Gamma)$-conformal density $(\mu_x)$.

Fix a basepoint $o \in \Omega$. Let $J \subset \del_{\hor}\Omega$ be a closed subset with $\mu_o(J) \neq 0$, 
and $\xi_0 \in \supp(\mu_o)$ be such that $\{\xi_0\} \times J\subset \pi_{\hor}^{-1}(\Geod^\infty \Omega)$.

Then for any bounded and uniformly continuous function $\phi: S\Omega / \Gamma \to \real$, we have
\[ \frac{1}{c_J(\xi_0)} \int_J \tilde\phi(\xi_0,\eta,t) e^{-2\delta_\Gamma \grop{o}{\xi_0}\eta} d\mu_o(\eta) 
\xrightarrow[t\to\+\infty]{}
\frac{1}{\|m_\Gamma\|} \int_{S\Omega/\Gamma} \phi \,dm_\Gamma \]
where $c_J(\xi_0) := \int_J e^{-2\delta_\Gamma \grop{o}{\xi_0}\eta} d\mu_o(\eta)$ and the function $\tilde\phi$ is the $\Gamma$-invariant lift of $\phi$ to $\pi_{\hor}^{-1}(\Geod\Omega)\times\real$.
\end{thm}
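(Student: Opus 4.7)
I would follow the approach of Babillot \cite{Babillot}, who deduced equidistribution of unstable horospheres from mixing of the geodesic flow in the CAT$(-1)$ setting; here the required mixing is provided by Theorem \ref{thm:mixing}. The idea is to realize the horosphere-slice integral on the left-hand side as the limit of integrals over thickened boxes transverse to the flow, to which mixing can be applied directly.

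Concretely, fix small $\tau>0$ and a relatively open neighborhood $W\subset\del_{\hor}\Omega$ of $\xi_0$ with $0<\mu_o(W)<\infty$, and consider the box
\[
B_{W,\tau}:=W\times J\times[0,\tau]\subset\pi_{\hor}^{-1}(\Geod^\infty\Omega)\times\real,
\]
which under the Hopf parametrization projects to a thin tubular neighborhood in $S\Omega$ of the unstable horosphere slice $\{\xi_0\}\times J\times\{t\}$. Using the explicit Sullivan formula $dm=e^{2\delta_\Gamma\grop{o}{\xi}{\eta}}\,d\mu_o(\xi)\,d\mu_o(\eta)\,dt$ together with Fubini,
\[
m(B_{W,\tau})=\tau\int_W\int_J e^{2\delta_\Gamma\grop{o}{\xi}{\eta}}\,d\mu_o(\eta)\,d\mu_o(\xi),
\]
which by continuity of the Gromov product (Proposition \ref{prop:Gromovprod}) is asymptotic to $\mu_o(W)\cdot\tau\cdot c_J(\xi_0)$ as $W$ shrinks to $\{\xi_0\}$, once the sign of the Gromov-product exponent is tracked correctly under the involution $\flip$ relating the standard and reversed Hopf parametrizations.

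Apply Theorem \ref{thm:mixing}, in the universal-cover form of Remark \ref{rmk:mixing in the universal cover}, with $\varphi:=\chi_{B_{W,\tau}}$ (smoothed out by bump functions if needed, noting that $B_{W,\tau}$ has $m$-null boundary for generic $W$ and $\tau$) and the $\Gamma$-invariant lift $\tilde\phi$, to obtain
\[
\int_{\hat X}\chi_{B_{W,\tau}}(\xi,\eta,s)\,\tilde\phi(\xi,\eta,s+t)\,dm(\xi,\eta,s)\xrightarrow[t\to+\infty]{}\frac{m(B_{W,\tau})}{\|m_\Gamma\|}\int_{S\Omega/\Gamma}\phi\,dm_\Gamma.
\]
The left-hand side, expanded by Fubini, is a triple integral of $\tilde\phi(\xi,\eta,s+t)$ against the Sullivan weight over $W\times J\times[0,\tau]$. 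For $W$ small and $\tau$ small, this triple integral is approximated, up to an error that can be made uniform in $t$ large, by
\[
\mu_o(W)\cdot\tau\cdot\int_J\tilde\phi(\xi_0,\eta,t)\,e^{2\delta_\Gamma\grop{o}{\xi_0}{\eta}}\,d\mu_o(\eta).
\]
Dividing by $\mu_o(W)\cdot\tau\cdot c_J(\xi_0)$ and passing to the iterated limits $t\to+\infty$, then $W\to\{\xi_0\}$ and $\tau\to 0$, yields the theorem.

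The main obstacle, beyond the routine continuity of the Gromov product, is the uniform approximation $\tilde\phi(\xi,\eta,s+t)\approx\tilde\phi(\xi_0,\eta,t)$ as $t\to+\infty$. In the CAT$(-1)$ setting this follows immediately from exponential contraction along stable manifolds, but in our rank-one Hilbert setting one has to argue more carefully. The divergent case of the HTSR dichotomy (Theorem \ref{thm:HTSR}, applicable here since the finite Sullivan measure $m_\Gamma$ forces the geodesic flow to be conservative) ensures that $\mu_o$-almost every $\eta\in J$ is $C^1$ and strongly extremal, and combined with the convexity property of Lemma \ref{lem:crampon} this yields the required uniform closeness of the Hopf lifts $\Hopf(\xi,\eta,s+t)$ and $\Hopf(\xi_0,\eta,t)$ modulo $\Gamma$, via control of how the geodesics $(\xi,\eta)$ and $(\xi_0,\eta)$ converge toward their common forward endpoint $\eta$.
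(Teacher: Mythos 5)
Your proposal follows essentially the same route as the paper: apply mixing (in the universal-cover form of Remark \ref{rmk:mixing in the universal cover}) to the indicator of a thin box $I \times J \times [0,r]$ around the horosphere slice, compare the box integral to the slice integral via continuity of the Gromov product and the approximation $\tilde\phi(\xi,\eta,t+s)\approx\tilde\phi(\xi_0,\eta,t)$, and pass to iterated limits. The only inessential deviation is your appeal to the HTSR dichotomy for the key approximation step: the paper gets the required uniformity for \emph{all} $\eta\in J$ from compactness of $I\times J$ together with the fact that $\Hopf^-(\xi,\eta,s)$ and $\Hopf^-(\xi_0,\eta,0)$ lie on a common weak stable leaf, along which the geodesic flow does not increase distances (a consequence of Lemma \ref{lem:crampon}), so no almost-everywhere reduction is needed.
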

Recall that if $\Omega$ is strictly convex with $C^1$ boundary, then the (reversed) Hopf parametrization is a homeomorphism. Therefore, in this case, $\pi_{\hor}^{-1}(\Geod\Omega)\times\real$, $\del^2\Omega\times\real$ and $S\Omega$ are all identified, the support of $\mu_0$ is $\Lambda_\Gamma$, and the condition $\xi_0\times J\subset \pi_{\hor}^{-1}(\Geod^\infty\Omega)$ in the above statement can be simply reformulated as $\xi_0\not\in J$.

\begin{proof}
We may suppose that $\phi$ is non-negative. Consider a compact neighborhood $I_0 \ni \xi_0$ sufficiently small such that $I_0 \times J\subset\pi_{\hor}^{-1}(\Geod^\infty\Omega)$. Then for any $\eps > 0$, we can choose a compact neighborhood $I \subset I_0$ of $\xi_0$ and $r >0$ such that 
\begin{enumerate}[(i)]
    \item $1-\eps \leq \frac{e^{-2\delta_\Gamma \grop o{\xi_0}\eta}}{e^{-2\delta_\Gamma \grop o\xi\eta}} \leq 1+\eps$ for any $(\xi,\eta) \in I \times J$, and
    \item $\left| \tilde\phi(\xi,\eta,t+s) - \tilde\phi(\xi_0,\eta,t) \right| \leq \eps$ for any $(\xi,\eta) \in I \times J$, $s \in [0,r]$, and $t > 0$.
\end{enumerate}
The second property holds since choosing $I$ and $r$ sufficiently small ensures that, for $\xi \in I$ and $s \in [0,r]$, the vectors $\Hopf^-(\xi,\eta,s)$ and $\Hopf^-(\xi_0,\eta,0)$ are uniformly (in $\eta \in J$) close. Moreover, since they both belong to the same weak stable leaf, given by the vectors pointing towards $\pi_{\hor}(\eta)$,
flowing them by the geodesic flow does not increase their distance \cite[Prop.\,5.1]{Blayac_topmixing}.

It then follows from properties (i) and (ii) that, for any fixed $t$, the integral
\[ \frac{1}{c_J(\xi_0)} \int_J \tilde\phi(\xi_0,\eta,t) e^{-2\delta_\Gamma\grop o{\xi_0}\eta} d\mu_o (\eta)\]
differs from
\begin{align*} 
& \frac{1}{c_J(\xi_0) r \mu_o(I)} \int_{I \times J \times [0,r]} \tilde\phi(\xi,\eta,t+s) e^{-2\delta_\Gamma\grop o\xi\eta} \,d\mu_o(\xi) \,d\mu_o(\eta) \,ds \\
& \quad = \frac{1}{c_J(\xi_0) r \mu_o(I)} \int_{I \times J \times [0,r]} \tilde\phi(\xi,\eta,t+s) \,dm^{\hor}
\end{align*}
by at most $\eps\left( 1 + \frac{2}{1-\eps} |\phi|_\infty \right)$.
By the mixing property of the geodesic flow and Remark~\ref{rmk:mixing in the universal cover} (which are still true if we replace $S\Omega$ by $\pi_{\hor}^{-1}(\Geod^\infty\Omega) \times\real$ since $\Hopf^-$ is a bijection on sets of full measure), we may choose $t$ large enough so that 
\[ \left|\int_{I \times J \times [0,r]} \tilde\phi(\xi,\eta,t+s) \,dm^{\hor} - \frac{m^{\hor}(I \times J \times [0,r])}{\|m_\Gamma\|} \cdot \int_{S\Omega/\Gamma} \phi \,dm_\Gamma \right| \leq \eps r \mu_o(I) ,\]
whence the conclusion follows, since
\[ \frac{m^{\hor}(I \times J \times [0,r])}{r\mu_o(I)} = \frac{1}{\mu_o(I)} \int_{I \times J} e^{-2\delta_\Gamma \grop o\xi\eta} \,d\mu_o(\xi) \,d\mu_o(\eta) \]
is bounded from below by $(1-\eps) \, c_J(\xi_0)$ and from above by $(1+\eps) \; c_J(\xi_0)$.
\end{proof}

\section{Equidistribution of group orbits} \label{sec:orbit_equidist}

In this section, following \cite[Th.\,4.1.1]{Roblin}, we prove an orbital equidistribution result, with consequences for orbital counting functions:
\begin{thm}
\label{thm:orbit_equidist}
Let $\Omega\subset \RPn$ be a domain, and suppose $\Gamma < \Aut(\Omega)$ is a non-elementary rank-one discrete subgroup such that $S \Omega / \Gamma$ admits a finite Sullivan measure $m_\Gamma$ 
associated to a $\Gamma$-equivariant conformal density $\mu$ 
of dimension $\delta(\Gamma)$.
Then, for all $x, y \in \Omega$,  
\[ \delta \|m_\Gamma\| e^{-\delta t} \sum_{\substack{\gamma \in \Gamma\\ d_\Omega(x, \gamma y) \leq t}} \dirac_{\gamma y} \otimes \dirac_{\gamma^{-1} x} \xrightarrow[t\to+\infty]{} \mu_x \otimes \mu_y \] 
weakly in $C(\bar\Omega \times \bar\Omega)^*$.
\end{thm}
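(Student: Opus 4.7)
The plan is to adapt Roblin's strategy from \cite[Ch.\,4]{Roblin}: deduce orbit equidistribution from the mixing of the geodesic flow (Theorem~\ref{thm:mixing}) together with the shadow lemma (Lemma~\ref{lem:sullivan_shadow}) and the conformal properties of $\mu$. By Stone--Weierstrass and the portmanteau theorem, to check the stated weak-$*$ convergence against any continuous function on $\bar\Omega\times\bar\Omega$ it suffices to test against product characteristic functions $\chi_U\otimes\chi_V$ for open sets $U,V\subset\bar\Omega$ with $\mu_x(\partial U)=\mu_y(\partial V)=0$. Since $m_\Gamma$ is finite, the flow is conservative (Poincar\'e recurrence), so by the HTSR dichotomy (Theorem~\ref{thm:HTSR}) we lie in the divergent case, and $\mu_x,\mu_y$ are concentrated on $\Lambda_\Gamma\cap\del_{\sse}\Omega$; in particular $\pi_{\hor}$ is essentially a bijection on the supports of $\mu_x,\mu_y$, and all the geometric objects below are unambiguous.

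The core statement is
\[ \delta\|m_\Gamma\|e^{-\delta t}\#\bigl\{\gamma\in\Gamma\st d_\Omega(x,\gamma y)\leq t,\ \gamma y\in U,\ \gamma^{-1}x\in V\bigr\}\xrightarrow[t\to+\infty]{}\mu_x(U)\mu_y(V). \]
Fix small parameters $\Delta,r>0$. I would construct two relatively compact ``boxes'' $A,B\subset\pi_{\hor}^{-1}(\Geod^\infty\Omega)\times\real$ in Hopf coordinates so that $A$ consists of triples $(\xi,\eta,s)$ with $\eta$ in a small neighborhood of $U\cap\Lambda_\Gamma$, $\xi$ in an auxiliary neighborhood $\tilde V$ of a chosen point of $\Lambda_\Gamma$, $s$ in an interval of length $\Delta$ arranged so that $\Hopf(\xi,\eta,s)$ has foot-point within $B_\Omega(x,r)$; and $B$ is constructed analogously with $\eta\in\tilde U$, $\xi\in V\cap\Lambda_\Gamma$, and foot-point near $y$. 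Using $dm^{\hor}=e^{2\delta\grop o\xi\eta}d\mu_o(\xi)d\mu_o(\eta)ds$ and the $\delta$-conformality relating $\mu_o$ to $\mu_x$ and $\mu_y$, one computes $m(A)\approx\Delta\,\mu_x(U)\mu_x(\tilde V)$ and $m(B)\approx\Delta\,\mu_y(\tilde U)\mu_y(V)$, up to multiplicative errors that vanish as $r,\Delta\to 0$ and the auxiliary sets shrink.

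Apply Theorem~\ref{thm:mixing} in its universal-cover formulation (Remark~\ref{rmk:mixing in the universal cover}):
\[ \sum_{\gamma\in\Gamma}m(A\cap g^t\gamma B)\xrightarrow[t\to\infty]{}\frac{m(A)m(B)}{\|m_\Gamma\|}. \]
The condition $A\cap g^t\gamma B\neq\varnothing$ furnishes a straight-line Hilbert geodesic passing within $r$ of both $x$ and $\gamma y$, with forward endpoint in $U$ and, after applying $\gamma$ to reindex, with backward endpoint corresponding to $\gamma^{-1}x\in V$; combined with Lemma~\ref{lem:crampon} and Lemma~\ref{lem:roblin_12}, this forces $d_\Omega(x,\gamma y)\in[t-O(r+\Delta),t+O(r+\Delta)]$. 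A direct computation with the cocycle for $\beta_\xi$ then gives
\[ m(A\cap g^t\gamma B)\approx\Delta\cdot e^{-\delta d_\Omega(x,\gamma y)}\cdot\frac{m(A)m(B)}{\Delta^2\mu_x(U)\mu_y(V)} \]
for each qualifying $\gamma$. Slicing $t$ into intervals of length $\Delta$ and summing the mixing identity across them, then running an Abel/Riemann summation over the resulting weighted counts over slabs $d_\Omega(x,\gamma y)\in[T,T+\Delta]$, converts this into the cumulative count, with $\sum_k e^{-\delta T_k}\Delta\to\delta^{-1}e^{\delta t}$ producing the prefactor $\delta\|m_\Gamma\|$ on the left-hand side.

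The principal difficulty will be uniform bookkeeping of the several approximations: the shadow-lemma bounds come with a constant depending on a radius $R$; the correspondence between ``the straight-line geodesic through a vector in $A$ has forward endpoint in $U$'' and ``$\gamma y\in U$'' only becomes exact as $r\to 0$ and $R\to\infty$; and the approximation of $m(A),m(B)$ by products of $\mu$-masses of open sets sharpens only as the auxiliary neighborhoods shrink. Balancing these limits simultaneously is the technical crux; it is made possible by the hypothesis $\mu_x(\partial U)=\mu_y(\partial V)=0$ (so that boundary effects are negligible) and by the concentration of $\mu_x,\mu_y$ on $\del_{\sse}\Omega$ from the HTSR dichotomy (so that the horofunction, shadow and Hopf-coordinate descriptions all agree), exactly as in Roblin's original argument in the CAT($-1$) setting.
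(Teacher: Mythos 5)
Your overall strategy is the paper's own: apply mixing in the universal cover to products of geometrically defined boxes (the paper's sets $K^\pm(z,r,A)$ in Proposition~\ref{prop:1ere_etage} are exactly your Hopf-coordinate boxes), convert the per-$\gamma$ contribution into shadow measures via the shadow lemma and conformality, and integrate against $e^{\delta t}\,dt$ to recover the cumulative count with the prefactor $\delta\|m_\Gamma\|$. Two concrete gaps remain.

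First, and most seriously: the mixing argument counts those $\gamma$ for which the \emph{geodesic} from near $x$ to near $\gamma y$ has endpoints in prescribed boundary sets, i.e.\ it controls $\nu_{x,y}^t$ on products of cones $\cone_r^\pm(x,A)\times\cone_r^\pm(y,B)$, whereas the theorem requires controlling $\nu_{x,y}^t$ on neighborhoods $U\times V$ in $\bar\Omega\times\bar\Omega$. You acknowledge this discrepancy but propose to dispose of it using $\mu_x(\del U)=0$ and the concentration of $\mu_x$ on $\del_{\sse}\Omega$. Those facts control the \emph{limit} measure, not the counting measures along the way: what is needed is a bound, uniform in $t$, showing that $e^{-\delta t}\alpha_{x,y}^t$ gives asymptotically negligible mass to sets such as $\mathcal{A}\cap\Omega\smallsetminus\cone_r^-(x,A^+)$ and $\cone_r^+(x,A^-)\smallsetminus\mathcal{A}$. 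The paper supplies this with two dedicated lemmas: Lemma~\ref{lem:no extremal point} shows the closures of these discrepancy sets contain no extremal point of $\bar\Omega$, and Lemma~\ref{lem:alpha<mu} shows every weak-* accumulation point of $e^{-\delta t}\alpha_{x,y}^t$ is dominated by $C\mu_x$ (proved from the shadow lemma via a bounded-multiplicity covering of shadows), hence vanishes on compact sets of non-extremal points. Without this domination your argument only equidistributes the \emph{directions} of orbit points; that suffices when $\Omega$ is strictly convex with $C^1$ boundary, but not in the general rank-one properly convex case where $\del\Omega$ may contain segments and non-extremal points.

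Second, for the mixing step to be non-vacuous you need $m(A),m(B)>0$, i.e.\ the geodesics joining your auxiliary neighborhood $\tilde V$ to $U\cap\Lambda_\Gamma$ must pass through $B_\Omega(x,r)$ for a positive-measure set of endpoint pairs. For arbitrary $x$ this can fail. The paper's Proposition~\ref{prop:1ere_etage} therefore first treats basepoints $x\in(\xi_0\,\xi_0')$ and $y\in(\eta_0\,\eta_0')$ with $\xi_0',\eta_0'\in\Lambda_\Gamma$, so that the shadows $\shadow_r(\xi_0,x)$ and $\shadow_r(\eta_0,y)$ have positive mass, and then transfers to general $x,y$ in a second stage using the Busemann cocycle and conformality. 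Your sketch omits this reduction; it is needed even in Roblin's CAT($-1$) setting.
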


This has as immediate corollaries, by integrating in one or both factors,
\begin{cor}
$\displaystyle \delta \|m_\Gamma\| e^{-\delta t} \!\!\! \sum_{\substack{\gamma \in \Gamma \\ d_\Omega(x,\gamma y) \leq t}}  \dirac_{\gamma y} \xrightarrow[t\to\infty]{} \|\mu_y\| \, \mu_x$ weakly in $C(\bar\Omega)^*$. 
\end{cor}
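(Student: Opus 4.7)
The plan is to deduce this corollary directly from Theorem~\ref{thm:orbit_equidist} by pairing the weak* convergence of measures on $\bar\Omega \times \bar\Omega$ with test functions that depend only on the first factor. More precisely, given any $\phi \in C(\bar\Omega)$, I would consider the function $F \in C(\bar\Omega \times \bar\Omega)$ defined by $F(u,v) := \phi(u)$, which is continuous on the compact space $\bar\Omega \times \bar\Omega$ and hence a valid test function for the convergence in Theorem~\ref{thm:orbit_equidist}.

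Pairing the right-hand side of the theorem with $F$ gives
\[ \int_{\bar\Omega \times \bar\Omega} F \, d(\mu_x \otimes \mu_y) = \|\mu_y\| \int_{\bar\Omega} \phi \, d\mu_x, \]
while pairing each term on the left yields
\[ \int_{\bar\Omega \times \bar\Omega} F \, d(\dirac_{\gamma y} \otimes \dirac_{\gamma^{-1} x}) = \phi(\gamma y). \]
Consequently,
\[ \delta \|m_\Gamma\| e^{-\delta t} \!\!\! \sum_{\substack{\gamma \in \Gamma \\ d_\Omega(x,\gamma y) \leq t}} \phi(\gamma y) \xrightarrow[t\to\infty]{} \|\mu_y\| \int_{\bar\Omega} \phi \, d\mu_x, \]
which is precisely the claimed weak* convergence in $C(\bar\Omega)^*$.

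There is no real obstacle here: the argument is simply the observation that test functions $\phi \in C(\bar\Omega)$ pull back to continuous test functions on $\bar\Omega \times \bar\Omega$ via the first-factor projection, and this projection sends $\dirac_{\gamma y} \otimes \dirac_{\gamma^{-1} x}$ to $\dirac_{\gamma y}$ and $\mu_x \otimes \mu_y$ to $\|\mu_y\| \cdot \mu_x$. The second corollary (obtained by integrating in both factors) then follows by applying the same argument with $\phi \equiv 1$, yielding the orbital counting estimate~\eqref{eq:counting_orbit}.
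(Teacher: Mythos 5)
Your proposal is correct and coincides with the paper's (one-line) justification: the corollary is obtained from Theorem~\ref{thm:orbit_equidist} by integrating out the second factor, i.e.\ pairing with test functions $F(u,v)=\phi(u)$, exactly as you do. The computations $\int F\,d(\dirac_{\gamma y}\otimes\dirac_{\gamma^{-1}x})=\phi(\gamma y)$ and $\int F\,d(\mu_x\otimes\mu_y)=\|\mu_y\|\int\phi\,d\mu_x$ are right, so nothing is missing.
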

\begin{cor}
$\# \left\{ \gamma \in \Gamma \st d_\Omega(x, \gamma y) \leq t \right\} \underset{t\to\infty}{\widesim} \frac{\|\mu_x\| \|\mu_y\|}{\delta \|m_\Gamma\|} e^{\delta t}$, \ie the ratio of the two sides goes to 1 as $t \to \infty$.
\end{cor}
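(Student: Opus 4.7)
The plan is to prove Theorem~\ref{thm:orbit_equidist}, from which the final counting corollary follows by testing the preceding measure-level corollary against $1\otimes 1$ (using $\mu_x(\bar\Omega)=\|\mu_x\|$). The strategy follows Roblin's argument \cite[Th.\,4.1.1]{Roblin}, reducing equidistribution to the mixing of the geodesic flow (Theorem~\ref{thm:mixing}) via a carefully chosen family of flow boxes in the Hopf parametrization.

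By a portmanteau/Stone--Weierstrass argument, and using that $\supp(\mu_o)\subset\del_{\sse}\Omega\cap\Lambda_\Gamma^{\con}$ (Theorem~\ref{thm:HTSR}, divergent case, forced by finiteness of $m_\Gamma$), it suffices to test convergence against indicators $1_{U^+}\otimes 1_{V^-}$ for small open neighborhoods $U^+\ni\xi_0$, $V^-\ni\eta_0$ of chosen points $\xi_0,\eta_0\in\supp(\mu_o)\cap\del_{\sse}\Omega$ with $\mu_x(\partial U^+)=\mu_y(\partial V^-)=0$. I would build flow boxes
\[ A:=\Hopf_x(U^-\times U^+\times[0,\epsilon]),\quad B:=\Hopf_y(V^-\times V^+\times[0,\epsilon]) \]
in $S\Omega$, where $U^-$ (resp.\ $V^+$) is a small neighborhood of the opposite endpoint $\xi_0'$ of the geodesic through $x$ with one endpoint $\xi_0$ (resp.\ $\eta_0'$ through $y$ with one endpoint $\eta_0$). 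Vanishing of the Gromov product at the basepoint along such geodesics gives $m(A)\approx\epsilon\mu_x(U^-)\mu_x(U^+)$, and similarly for $m(B)$.

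Applying mixing (Theorem~\ref{thm:mixing} via Remark~\ref{rmk:mixing in the universal cover}) yields
\[ \sum_{\gamma\in\Gamma} m(A\cap g^{-t}\gamma B)\xrightarrow[t\to\infty]{}\frac{m(A)m(B)}{\|m_\Gamma\|}. \]
Working in Hopf-$x$ coordinates and using $\Gamma$-equivariance of the horofunction cocycle, $m(A\cap g^{-t}\gamma B)$ vanishes unless $\gamma$ is \emph{aligned} (i.e., $\gamma$ sends $(\eta_0,\eta_0')$ close to $(\xi_0,\xi_0')$) and $d_\Omega(x,\gamma y)$ lies within $\epsilon$ of $t$. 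The conformal relation $d\mu_x=e^{-\delta\beta_{\cdot}(x,\gamma y)}\,d\mu_{\gamma y}$, combined with $\gamma_*\mu_y=\mu_{\gamma y}$, yields shadow-type estimates: $\mu_x(\gamma V^+)\approx e^{-\delta d_\Omega(x,\gamma y)}\mu_y(V^+)$ in the contracting direction toward $\xi_0$, while $\mu_x(U^-\cap\gamma V^-)\approx\mu_x(U^-)$ because the expanding translate $\gamma V^-$ eventually saturates the fixed $U^-$. Together with the $s$-integration, each aligned $\gamma$ contributes a tent-in-$t$ of total mass $\approx\epsilon^2\mu_x(U^-)\mu_y(V^+)e^{-\delta d_\Omega(x,\gamma y)}$ centered at $t=d_\Omega(x,\gamma y)$.

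Equating the Riemann-sum $\sum_\gamma m(A\cap g^{-t}\gamma B)\approx\mu_x(U^-)\mu_y(V^+)e^{-\delta t}\epsilon^2\rho(t)$ with the mixing limit extracts the density of aligned orbits $\rho(t)\sim e^{\delta t}\mu_x(U^+)\mu_y(V^-)/\|m_\Gamma\|$, hence
\[ \#\{\gamma:d_\Omega(x,\gamma y)\leq T,\ \gamma\text{ aligned}\}\sim \frac{e^{\delta T}\mu_x(U^+)\mu_y(V^-)}{\delta\|m_\Gamma\|}; \]
verifying that the cone from $x$ over $U^+$ matches $U^+\subset\bar\Omega$ up to asymptotically negligible error then gives Theorem~\ref{thm:orbit_equidist}. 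The main obstacle is the careful, uniform-in-$t$ justification of the shadow-saturation estimate $\mu_x(U^-\cap\gamma V^-)\approx\mu_x(U^-)$ and the verification that non-aligned $\gamma$ contribute negligibly; the rank-one hypothesis and the concentration of $\mu_o$ on $\del_{\sse}\Omega$ (HTSR) provide, via the shadow lemma (Lemma~\ref{lem:sullivan_shadow}) and Proposition~\ref{fact:horofoliation}, the uniform substitute for the angle-based arguments of Roblin's CAT$(-1)$ proof.
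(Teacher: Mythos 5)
Your proposal is correct and follows essentially the same route as the paper's proof of Theorem~\ref{thm:orbit_equidist} (itself modelled on Roblin's): mixing applied to small flow boxes, integrated against $e^{\delta t}$ over $[0,T]$, with the shadow lemma and conformality turning each contributing $\gamma$ into a tent of mass $\approx e^{-\delta\, d_\Omega(x,\gamma y)}$ centered at $t=d_\Omega(x,\gamma y)$, followed by an approximation argument passing from boundary neighborhoods to continuous test functions; the paper merely packages the boxes as the sets $K^\pm(z,r,A)$ and your ``aligned'' condition as membership of $(\gamma y,\gamma^{-1}x)$ in products of cones $\cone_r^\pm$. The one point to watch is that your backward neighborhood $U^-$ of $\xi_0'$ must carry positive $\mu_x$-mass, \ie $\xi_0'$ must lie in $\Lambda_\Gamma$ --- which is exactly why the paper first proves the estimate for basepoints lying on geodesics with both endpoints in the limit set (Stage 1) and then transfers to arbitrary $x,y$ by a Busemann-cocycle comparison (Stage 2).
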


The second corollary is most directly a counting result; the corollary before that 
is an equidistribution result for the Patterson--Sullivan measures. The theorem includes both of these statements, and is more directly related to the mixing of the Hilbert geodesic flow (Theorem \ref{thm:mixing}), as we shall see below in the proof.

These results are very much in the spirit of results first formulated by Margulis in the setting of closed manifolds of variable negative curvature \cite{Margulis} and subsequently extended and generalized to much more general settings with some trace of negative curvature. We refer the interested reader to \cite{Babsurvey} and the beginning of \cite[Chap.\,4]{Roblin} for a more extended discussion of this history. 

\begin{proof}[Proof of Theorem \ref{thm:orbit_equidist}]

Let $\nu^t_{x,y}$ denote the measure $\delta \|m_\Gamma\| e^{-\delta t} \sum_{\gamma \in \Gamma} \dirac_{\gamma y} \otimes \dirac_{\gamma^{-1} x}$. To prove the desired convergence, we need to show that 
\[ \int_{\bar\Omega \times \bar\Omega} \varphi \,d\nu^t_{x,y} \xrightarrow[t\to\infty]{} \int_{\bar\Omega \times \bar\Omega} \varphi \,d(\mu_x \otimes \mu_y) \]
for all $\varphi \in C(\bar\Omega \times \bar\Omega)$.

Let us give a overview of the proof before plunging into the details. The proof uses mixing of the geodesic flow applied to suitable geometrically-described sets: given $x \in \Omega$, $A \subset \del\Omega$, and $r>0$, define
\begin{align*}
\cone_r^+(x,A) & := \left\{ y \in \Omega \st \exists x' \in B(x,r), \xi \in A : B(y, r) \,\cap\, (x'\xi) \neq \varnothing \right\} ,\\
\cone_r^-(x,A) & := \left\{ y \in \Omega \st B(y,r) \subset \bigcap_{x' \in B(x,r)} \; \bigcup_{\xi\in A} \; (x'\xi) \right\} .
\end{align*}

These may be thought of as expanded or contracted cones from $x$ to $A$, with the parameter $r$ controlling the expansion or contraction. We can use mixing to show that the $(\mu_x \otimes \mu_y)$-measures of  sufficiently small measurable sets $\bar{\mathcal{A}} \times \bar{\mathcal{B}} \subset \bar\Omega \times \bar\Omega$ are uniformly well-approximated by $\nu_{x,y}^t$-measures of corresponding products of cones over related sets $A$ and $B$. Here ``sufficiently small'' means ``contained in one of a system of neighborhoods $\hat{V} \times \hat{W} \subset \bar\Omega \times \bar\Omega$, one for each $(\xi_0,\eta_0) \in \del\Omega \times \del\Omega$.'' 

Using this, we can approximate
any sufficiently small Borel subset of $\overline\Omega \times \overline\Omega$ using products of such cones. From there, using standard arguments to approximate continuous positive functions using characteristic functions, we obtain the desired convergence of integrals if we replace the domain $\overline\Omega \times \overline\Omega$ with $\hat{V} \times \hat{W}$. We are then done by taking a finite subcover of the cover of the compact $\overline\Omega \times \overline\Omega$ by these neighbourhoods $\hat{V} \times \hat{W}$ and using a partition of unity subordinate to this subcover.

The technical crux of the proof is then the following 
\begin{prop} \label{prop:1ere_etage}
In the setting of Theorem~\ref{thm:orbit_equidist}, fix $\eps > 0$, $\xi_0, \eta_0 \in \del\Omega$ extremal and $\mathcal C^1$, and $x, y \in \Omega$. Then there exist $R>0$ and open neighborhoods $V$ and $W$ of $\xi_0$ and $\eta_0$ (resp.) in $\del\Omega$, such that for all Borel subsets $A \subset V, B \subset W$, we have 
\begin{align*}
\limsup_{T\to+\infty} \nu_{x,y}^T (\cone_R^-(x,A) \times \cone_R^-(y,B)) & \leq e^\eps \mu_x(A) \, \mu_y(B) ,\\
\liminf_{T\to+\infty} \nu_{x,y}^T (\cone_R^+(x,A) \times \cone_R^+(y,B)) & \geq e^{-\eps} \mu_x(A) \, \mu_y(B) .
\end{align*}
\end{prop}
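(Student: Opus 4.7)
The plan is to follow the strategy of \cite[\S4.1]{Roblin}: reduce the cone counts to mixing of the geodesic flow (Theorem~\ref{thm:mixing}) applied to well-chosen dynamical neighborhoods in $S\Omega$, and then integrate against the flow parameter.

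First, using that $\xi_0, \eta_0$ are $C^1$ (hence have unique lifts to $\del_{\hor}\Omega$ by \cite[Lem.\,3.2]{Bray}), I would choose a small $r > 0$ and compact neighborhoods $V \ni \xi_0$, $W \ni \eta_0$ in $\del\Omega$ with $\tilde W \times \tilde V \subset \pi_{\hor}^{-1}(\Geod^{\infty}\Omega)$, small enough that by continuity of the Gromov product (Proposition~\ref{prop:Gromovprod}), $e^{2\delta \grop{x}{\xi}{\eta}}$ and $e^{2\delta \grop{y}{\xi}{\eta}}$ lie within $e^{\pm\eps/10}$ of $e^{2\delta \grop{x}{\eta_0}{\xi_0}}$ and $e^{2\delta \grop{y}{\eta_0}{\xi_0}}$ respectively on $\tilde W \times \tilde V$. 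For Borel $A \subset V$ and $B \subset W$, I then define dynamical neighborhoods using the Hopf parametrizations based at $x$ and $y$:
\[ \mathcal{U}_A := \Hopf_x(\tilde W \times \tilde A \times (-r,r)), \qquad \mathcal{U}_B := \Hopf_y(\tilde V \times \tilde B \times (-r,r)). \]

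Next I would establish, for some $R = R(r) = O(r)$, two-sided inclusions between cone conditions and dynamical intersections: (a) if $\mathcal{U}_A \cap \gamma g^S \mathcal{U}_B \neq \varnothing$ for some $S > 0$, then $\gamma y \in \cone^+_R(x, A)$, $\gamma^{-1} x \in \cone^+_R(y, B)$, and $d_\Omega(x, \gamma y) \in [S - O(r), S]$; (b) conversely, if $\gamma y \in \cone^-_R(x, A)$, $\gamma^{-1} x \in \cone^-_R(y, B)$, and $T_\gamma := d_\Omega(x, \gamma y)$, then $\mathcal{U}_A \cap \gamma g^S \mathcal{U}_B$ has positive $m$-measure for every $S$ in an interval of length $\Theta(r)$ around $T_\gamma$. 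Geometrically, $\mathcal{U}_A$ consists of vectors based $O(r)$-near $x$ aimed into $A$, and $\gamma g^S \mathcal{U}_B$ (after applying $\gamma^{-1}$ and $g^{-S}$) of vectors based $O(r)$-near $y$ aimed into $B$. Direction (a) matches the lower bound for the $\cone^+$ count (hence the liminf statement), while direction (b) matches the upper bound for the $\cone^-$ count (hence the limsup statement).

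Now, mixing (Theorem~\ref{thm:mixing} and Remark~\ref{rmk:mixing in the universal cover}) gives
\[ \sum_{\gamma \in \Gamma} m(\mathcal{U}_A \cap \gamma g^S \mathcal{U}_B) \xrightarrow[S\to\infty]{} \frac{m(\mathcal{U}_A) m(\mathcal{U}_B)}{\|m_\Gamma\|} , \]
while the Hopf formula $dm^{\hor} = e^{2\delta \grop{o}{\xi}{\eta}} d\mu_o(\xi) d\mu_o(\eta) \, dt$, combined with Fubini and the identity $\int |I \cap (I' + t)|\,dt = |I|\cdot|I'|$, yields
\[ \int_\real m(\mathcal{U}_A \cap \gamma g^S \mathcal{U}_B)\, dS = (2r)^2 \int_{\tilde W \cap \gamma \tilde V} \int_{\tilde A \cap \gamma \tilde B} e^{2\delta \grop{x}{\xi}{\eta}} d\mu_x(\xi) d\mu_x(\eta), \]
together with the asymptotics $m(\mathcal{U}_A) \approx 2r \cdot e^{2\delta \grop{x}{\eta_0}{\xi_0}} \mu_x(W) \mu_x(A)$ and $m(\mathcal{U}_B) \approx 2r \cdot e^{2\delta \grop{y}{\eta_0}{\xi_0}} \mu_y(V) \mu_y(B)$. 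A standard Tauberian step (multiplying the mixing limit by $\delta e^{-\delta S}$, integrating over $[0, T]$, switching sum and integral via Fubini, and letting $T \to \infty$) converts this into an asymptotic for $\delta \|m_\Gamma\| e^{-\delta T} \#\{\gamma : T_\gamma \leq T,\, \text{cone conditions}\}$, using the translation of Step 2. The key identities for the cancellation are Lemma~\ref{lem:roblin_12} (giving $\beta_\eta(x, \gamma y) \approx T_\gamma$ on the relevant shadow) and the $\Gamma$-equivariance identity $\mu_x(\gamma B) = \mu_{\gamma^{-1} x}(B) = \int_B e^{\delta \beta_{\eta'}(y, \gamma^{-1} x)} d\mu_y(\eta')$, which transfers the integration from $\gamma \tilde B$ back to $\tilde B$ and produces a factor $e^{\delta T_\gamma}$ that cancels $e^{-\delta T_\gamma}$ from the Tauberian step.

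The main obstacle is the bookkeeping in the last step: verifying that the various exponential factors --- $e^{\pm\delta T_\gamma}$ from the Tauberian reduction and the $\Gamma$-equivariance change of variables, and $e^{2\delta \grop{x}{\eta_0}{\xi_0}}$ and $e^{2\delta \grop{y}{\eta_0}{\xi_0}}$ from $m(\mathcal{U}_A)$, $m(\mathcal{U}_B)$ versus $\int m(\mathcal{U}_A \cap \gamma g^S \mathcal{U}_B) dS$ --- cancel cleanly, leaving the product $\mu_x(A) \mu_y(B)$ with the correct normalization $\delta \|m_\Gamma\|$. The calibration of $r$, $V$, $W$ against $\eps$ must be tuned so that the continuity estimates of Step 1, the inclusion errors $O(r)$ of Step 2, and the shadow-versus-cone discrepancies from Lemma~\ref{lem:roblin_12} all compose to a total error of at most $e^{\pm \eps}$.
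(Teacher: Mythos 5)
Your overall strategy --- applying mixing to flow boxes in $S\Omega$, integrating against the flow parameter, and converting the result into a count of orbit points in cones via shadow and Busemann estimates --- is exactly the route the paper takes (following Roblin), but the construction of your flow boxes has a defect that breaks the correspondence with the cone conditions. The pair of conditions $\gamma y\in\cone_R(x,A)$ and $\gamma^{-1}x\in\cone_R(y,B)$ translates, on the oriented geodesic from $x$ to $\gamma y$, into: \emph{forward} endpoint in $A$ and \emph{backward} endpoint in $\gamma B$ (pulling back by $\gamma^{-1}$, the ray from $y$ toward $\gamma^{-1}x$ is the backward ray of $(\gamma^{-1}x\ y)$). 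Your $\mathcal{U}_B=\Hopf_y(\tilde V\times\tilde B\times(-r,r))$ has forward endpoint in $B$, so $\gamma g^S\mathcal{U}_B$ has forward endpoint in $\gamma B$, and the intersection $\mathcal{U}_A\cap\gamma g^S\mathcal{U}_B$ is supported on geodesics whose forward endpoint lies in $A\cap\gamma B$ --- as your displayed formula with $\int_{\tilde A\cap\gamma\tilde B}$ records. For the relevant $\gamma$ (with $\gamma y$ near $\xi_0$ and $\gamma^{-1}x$ near $\eta_0$) the backward endpoint of $(x\ \gamma y)$ lies in $\gamma B$ but nowhere near $A$, so this intersection is empty and mixing yields $0=0$, proving nothing. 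The fix is the flip involution: the paper sets $K^-:=\flip K^+(y,r,B)$, so that $\gamma K^-$ consists of vectors whose \emph{backward} endpoint is in $\gamma B$. (Relatedly, the time weight in the integral must be $e^{+\delta t}$: the per-$\gamma$ contribution carries a factor $\mu_x(\gamma B)\asymp e^{-\delta d_\Omega(x,\gamma y)}\mu_y(B)$, which only the weight $e^{+\delta t}$ cancels so as to produce a raw count of orbit points.)

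The second problem is that you restrict \emph{both} endpoints of each flow box ($\mathcal{U}_A$ to $\tilde W\times\tilde A$), which does two harms. First, requiring the backward endpoint of $(x\ \gamma y)$ to lie in $W$ (a fixed neighborhood of $\eta_0$) is an extra condition with no counterpart in the cone conditions --- that endpoint is near $\gamma\eta_0$-type points, not near $\eta_0$ --- so $\{\gamma:\mathcal{U}_A\cap\gamma g^S\mathcal{U}_B\neq\varnothing\}$ would omit most of the orbit points being counted. Second, your normalizing factors $\mu_x(W)$ and $\mu_y(V)$ can vanish, since the proposition allows $\xi_0,\eta_0$ to be arbitrary $C^1$ extremal points, not necessarily in $\Lambda_\Gamma$. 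The paper instead takes $K^+(x,r,A)$ with the second endpoint free, so the auxiliary factor is $\mu_x(\shadow_r(\xi_0,x))$, and guarantees its positivity by first proving the statement for $x\in(\xi_0\,\xi_0')$ and $y\in(\eta_0\,\eta_0')$ with $\xi_0',\eta_0'\in\Lambda_\Gamma$ (Stage 1), then transferring to arbitrary $x,y$ by a Busemann-cocycle comparison that replaces $R=1$ by $R=1+\max\{d_\Omega(x,x_0),d_\Omega(y,y_0)\}$ (Stage 2). This two-stage reduction, together with the control of the finitely many exceptional $\gamma$ (small $d_\Omega(x,\gamma y)$, or orbit points in the cones but outside the calibrated neighborhoods --- the constants $c_1,\dots,c_6$), is absent from your outline and is not merely bookkeeping.
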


\begin{proof}
We proceed by estimating the $\nu_{x,y}^T$-measure of products of cones using several other geometric objects, all naturally equivariant under the isometries of $\Omega$: 
\begin{enumerate}[1.]
\item For $z \in \Omega$ and $(\xi,\eta) \in \Geod^\infty \Omega$,
let $z_{\xi\eta}$ denote the point of $S\Omega$ parallel to $(\eta\,\xi)$ (\ie determining a geodesic with forward endpoint $\xi$) with foot-point the middle point of the segment $(\eta\,\xi)\cap \overline{B}_\Omega(z,d_\Omega(z,(\eta\,\xi))$.
Given in addition $r > 0$ and $A \subset \del\Omega$, define 

\[  K^+(z,r,A) := \left\{ g^s z_{\xi\eta} \st -\frac{r}2 \leq s \leq \frac{r}2, (\xi,\eta) \in \Geod^\infty \Omega, \eta \in A, d_\Omega(z,(\xi\,\eta)) \leq r \right\} .\]
We remark that when $\Omega$ is strictly convex with $C^1$ boundary, the foot-point is also the nearest-point projection of $z$ onto $(\xi\,\eta)$.

Inverting the role of $\xi$ and $\eta$ in the above definition yields $\flip  K^+(z,r,A) =:  K^-(z,r,A)$. We will also write $ K(z,r)$ to denote $ K^+(z,r,\del\Omega) \cup  K^-(z,r,\del\Omega)$. We remark that $ K(z,r) \subset S\overline B_\Omega(z,3r/2)$ by construction.

\item Given $r > 0$ and $a, b \in \Omega$ with $d_\Omega(a,b) > 2r$, we will consider the following enlarged and contracted shadows:
\begin{align*}
{\shadow}_r^+(a,b) & := \left\{ \xi \in \del\Omega \st \exists a' \in \overline{B}_\Omega(a,r):\, (a'\xi) \cap \overline B_\Omega (b,r) \neq \varnothing \right\} ,\\
{\shadow}_r^-(a,b) & := \left\{ \xi \in \del\Omega \st \forall a' \in \overline B_\Omega(a,r):\, (a'\xi) \cap \overline B_\Omega(b,r) \neq \varnothing \right\} .
\end{align*}

Observe that, when $a \to \eta \in \del \Omega$, if $\eta$ is extremal then these variant shadows have as a common limit
(in the Hausdorff topology) 
\[ \shadow_r(\eta,b) = \left\{\xi \in \del\Omega \st (\eta\,\xi) \cap \overline B(b,r) \neq \varnothing \right\} =: \shadow_r^\pm(\eta,b) .\]

Note also that 
\begin{equation}\label{eq:shad+ in shad}
 \shadow_r^+(a,b)\subset\shadow_{2r}(a,b).
\end{equation}
Indeed if $a'\in\overline B_\Omega(a,r)$, $b'\in \overline B_\Omega(b,r)$ and $\xi \in \del\Omega$ are aligned in this order, then the point $b''\in [a\,\xi)$ at distance $d_\Omega(a',b')$ from $a$ is at distance at most $2r$ from $b$ since by Lemma~\ref{lem:crampon} it is at distance at most $r$ from $b'$.

\item For $r > 0$ and $a, b \in \Omega$ with $d_\Omega(a,b) > 2r$, we denote by $\limshade_r(a,b)$ the set of $(\xi,\eta) \in \Geod^\infty\Omega$ such that the geodesic $(\xi\eta)$ crosses first $\overline B_\Omega(a,r)$ and then $\overline B_\Omega(b,r)$. Observe that
\begin{equation}\label{eq:O-<L<O+}
\shadow_r^-(b,a) \times \shadow_r^-(a,b) \subset
\limshade_r(a,b) \subset \shadow_r^+(b,a) \times \shadow_r^+(a,b).
\end{equation}
\end{enumerate}

Now write $\eps'=\eps/100$. Choose $r \in \left( 0, \min\{1,\eps'/\delta\} \right)$ with $\mu_x(\del\shadow_r(\xi_0,x)) = 0 = \mu_y(\del\shadow_r(\eta_0,y))$. Note $\del\shadow_r$ is the boundary of $\shadow_r$ as a subset of $\del\Omega$.

\subsection{Stage 1}
We first prove the result for $x,y \in \Omega$ where $x \in(\xi_0 \xi_0')$ and $y \in (\eta_0 \eta_0')$ for some $\xi_0', \eta_0' \in \Lambda_\Gamma$ (and with $R=1$). Thus we have $\xi_0' \in \shadow_r(\xi_0,x)$, and similarly $\eta_0' \in \shadow_r(\eta_0,y)$; hence
\[ M:=r^2\, \mu_x(\shadow_r(\xi_0,x)) \, \mu_y(\shadow_r(\eta_0,y)) > 0 .\]
Take two open sets $\widehat{V}_1, \widehat{W}_1$ of $\overline\Omega$, containing $\xi_0, \eta_0$ respectively, and sufficiently small so that for all $a \in \widehat{V}_1, b \in \widehat{W}_1$, we have 
\begin{align} 
\label{eq:measure of shadows1}
e^{-\eps'} \mu_x( \shadow_r(\xi_0,x)) \leq \mu_x( \shadow_r^\pm(a,x)) & \leq e^{\eps'} \mu_x ( \shadow_r(\xi_0,x)) ,\\
\label{eq:measure of shadows2}
e^{-\eps'} \mu_y( \shadow_r(\eta_0,y)) \leq \mu_y( \shadow_r^\pm(b,y)) & \leq e^{\eps'} \mu_y ( \shadow_r(\eta_0,y)).
\end{align}
This is possible because $\xi_0$ and $\eta_0$ are extremal. Indeed one can see that for any compact subset $K\subset\shadow_r(\xi_0,x)$ and any neighbourhood $U$ of $ \shadow_r(\xi_0,x) $, there exists a neighbourhood $\widehat{U}$ of $\xi_0$ in $\overline{\Omega}$ such that for any $a\in\widehat{U}$,
\begin{equation*}
    K\subset\shadow^-_r(a,x)\subset\shadow^+_r(a,x)\subset U.
\end{equation*}
Then, using again the fact that $\xi_0$ and $\eta_0$ are extremal, we can find two open sets $\widehat{V}, \widehat{W}$ of $\overline\Omega$, containing $\xi_0, \eta_0$ respectively, and sufficiently small so that for all $a \in \widehat{V}, b \in \widehat{W}$, we have $\overline{B}_{\overline\Omega}(a,1)\subset\widehat{V}_1$ and $\overline{B}_{\overline\Omega}(b,1)\subset\widehat{W}_1$. 

Choose open neighborhoods $V, W$ of $\xi_0, \eta_0$ (resp.) in $\del\Omega$, such that $\overline{V} \subset \widehat{V} \cap \del\Omega$ and $\overline{W} \subset \widehat{W} \cap \del\Omega$. In general, these will be the open neighborhoods we desire. 

Consider Borel subsets $A \subset V$ and $B \subset W$. Let $K^+ := K^+(x,r,A)$ and $K^- := K^-(y,r,B)$, and, for $T>0$,
\begin{equation*}
S^\pm_T := \{\gamma\in\Gamma \:|\: d_\Omega(x,\gamma y)\leq T, \ \gamma y\in \cone_1^\pm(x,A), \ \gamma^{-1}x\in\cone_1^\pm(y,B)\}.
\end{equation*}

We will estimate asymptotically, in two different ways, the quantity
\[ \int_0^T e^{\delta t} \sum_{\gamma \in \Gamma} m(K^+ \cap g^{-t} \gamma K^-) \,dt .\]
On the one hand, this can be estimated using our mixing result. On the other hand, we can obtain a different estimate by examining how the elements of $\Gamma$ contribute to the various parts of the integral; we will observe that the elements which contribute are, asymptotically as $T \to \infty$, those in $S^\pm_T$.

More precisely, we establish the following estimates for $T>0$ large enough:
\begin{align}
 \label{eq:crux1}
 & e^{\delta T} M \mu_x(A) \mu_y(B) 
 \leq e^{10\eps'}\delta \|m_\Gamma\| \int_0^{T-3r} e^{\delta t} \sum_{\gamma \in \Gamma} m( K^+ \cap g^{-t} \gamma  K^- ) dt +c_1 \\
 \label{eq:crux2}
 & \leq e^{(10+5)\eps'} \delta \|m_\Gamma\|  r^2 \sum_{\mathclap{\gamma \in S^+_T}} \mu_x( \shadow_r^+(\gamma y, x)) \mu_x( \shadow_r^+(x, \gamma y)) e^{\delta\cdot d_\Omega(x, \gamma y)} + c_1 + c_2 \\ 
 \label{eq:crux3}
 & \leq e^{(10+5+6)\eps'}  e^{\delta T} M \nu_{x,y}^T(\cone^+_1(x,A) \times \cone^+_1(y,B)) + c_1 + c_2 + c_3,
\end{align}
and
\begin{align}
\label{eq:crux1'} & e^{\delta T} M \mu_x(A) \mu_y(B)
 \geq e^{-6\eps'} \delta \|m_\Gamma\| \int_0^{T+3r} e^{\delta t} \sum_{\gamma \in \Gamma} m( K^+ \cap g^{-t} \gamma  K^- ) dt - c_4 \\
 \label{eq:crux2'}
 & \geq e^{-(6+3)\eps'} \delta \|m_\Gamma\| r^2 \sum_{\mathclap{\gamma \in S^-_T}} \mu_x( \shadow_r^-(\gamma y, x)) \mu_x( \shadow_r^-(x, \gamma y)) e^{\delta\cdot d_\Omega(x, \gamma y)} - c_4 - c_5 \\ 
 \label{eq:crux3'}
 & \geq e^{-(6+3+2)\eps'} M e^{\delta T} \nu_{x,y}^T(\cone^-_1(x,A) \times \cone^-_1(y,B)) - c_4 - c_5 - c_6,
\end{align}
where $(c_i)_{1\leq i\leq 6}$ are constants independent of $T$.

\subsubsection{\eqref{eq:crux3} and \eqref{eq:crux3'}: shadows to cones} 
\eqref{eq:crux3} and \eqref{eq:crux3'} are consequences of the definition of $\nu_{x,y}^T$, of the conformality of $(\mu_z)_z$, and of \eqref{eq:measure of shadows1} and \eqref{eq:measure of shadows2}. Indeed, consider the following slight modification of Lemma \ref{lem:roblin_12}.
\begin{lem}[{\cite[Lem.\,4.3]{Blayac_PSdensities}}]\label{lem:estimate Busemann behind shadows}
For all $\xi \in \pi_{\hor}^{-1} \left( \shadow^+_r(x,y) \right)$, we have 
$$d_\Omega(x,y) - 4r \leq \beta_\xi(x,y) \leq d_\Omega(x,y).$$ 
\end{lem}

We apply this to obtain, by conformality of $(\mu_z)_z$, that, for any $\gamma\in\Gamma$,
\begin{align*}
\mu_y( \shadow_r^\pm(\gamma^{-1}x, y)) \leq \mu_x( \shadow_r^\pm(x,\gamma y)) e^{\delta \cdot d_\Omega(x,\gamma y)} \leq e^{4\eps'} \mu_y( \shadow_r^\pm(\gamma^{-1} x, y)).
\end{align*}

Denote $S^\pm=\bigcup_{T>0}S^\pm_T$ and $S:=\{\gamma\in\Gamma \st \gamma y\in\widehat{V}_1 , \gamma^{-1}x\in\widehat{W}_1\}$. By \eqref{eq:measure of shadows1} and \eqref{eq:measure of shadows2}, and using the definition of $\nu_{x,y}^T$ we obtain, on the one hand,
\begin{align*}
\sum_{\mathclap{\gamma \in S^+_T}} & \mu_x
 ( \shadow_r^+(\gamma y, x)) \, \mu_x( \shadow_r^+(x, \gamma y)) e^{\delta \cdot d(x, \gamma y)} \\
 & \leq e^{4\eps'}\sum_{\mathclap{\gamma \in S^+_T}} \mu_x( \shadow_r^+(\gamma y, x)) \, \mu_y( \shadow_r^+(\gamma^{-1}x,y))\\
& \leq e^{4\eps'} \sum_{\mathclap{\gamma \in S^+_T\cap S}} \mu_x( \shadow_r^+(\gamma y, x)) \, \mu_y( \shadow_r^+(\gamma^{-1}x,y)) + e^{4\eps'} \Vert\mu_x\Vert \cdot \Vert\mu_y\Vert \cdot |S^+\smallsetminus S| \\
& \leq e^{6\eps'} r^{-2}M\cdot |S^+_T\cap S| + c_3' 
 \leq e^{6\eps'} r^{-2}M\cdot |S^+_T| + c_3' \\
 &\leq e^{6\eps'} r^{-2}M\cdot \frac{e^{\delta T}}{\delta\Vert m_\Gamma \Vert} \nu_{x,y}^T(\cone^+(x,A)\times \cone_1^+(y,B)) + c_3',
\end{align*}
where one can check that $c_3':=e^{4\eps'} \Vert\mu_x\Vert \cdot \Vert\mu_y\Vert \cdot |S^+\smallsetminus S|$ is finite. Indeed, since $\overline V \subset \widehat{V}$ which is open in $\overline \Omega$, there must exist some $R>0$ such that for any $x'\in \overline{B}_{\overline\Omega}(x,1)$, $\xi\in\overline V$ and $z\in [x'\xi)$, if $d_\Omega(x,z)\geq R$, then $z\in \widehat{V}$; as a consequence, if $\gamma y\in\cone_1^+(x,A)\smallsetminus\widehat{V}_1$, then $d_\Omega(x,\gamma y)\leq R+1$. Similarly one can find $R'>0$ such that $d_\Omega(x,\gamma y)\leq R'$ whenever $\gamma^{-1}x\in\cone_1^+(y,B)\smallsetminus \widehat{W}_1$.

On the other hand,
\begin{align*}
\sum_{\mathclap{\gamma \in S^-_T}} \mu_x( \shadow_r^-(\gamma y, x)) &  \mu_x( \shadow_r^-(x, \gamma y)) e^{\delta d(x, \gamma y)} 
\geq \sum_{\mathclap{\gamma \in S^-_T\cap S}} \mu_x( \shadow_r^-(\gamma y, x)) \mu_y( \shadow_r^-(\gamma^{-1}x,y)) \\
& \geq e^{-2\eps'} r^{-2}M\cdot |S^-_T\cap S|  \\
& \geq e^{-2\eps'} r^{-2}M\cdot |S^-_T| - e^{2\eps'} r^{-2}M\cdot |S^-\smallsetminus S| \\
&= e^{-2\eps'} r^{-2}M\cdot \frac{e^{\delta T}}{\delta\Vert m_\Gamma \Vert} \nu_{x,y}^T(\cone_1^-(x,A)\times \cone_1^-(y,B)) - c_6',
\end{align*}
where one can check that $c_6':=e^{-2\eps'} r^{-2}M\cdot |S^-\smallsetminus S|$ is finite.

\subsubsection{ \eqref{eq:crux2}: geodesic corridors to shadows, upper bound} 

We may verify, by recalling the definitions of $m$ and $K^\pm$, that for $\gamma \in \Gamma$ with $d_\Omega(x, \gamma y) > 2r$, we have
\begin{equation}\label{eq:m(K+nK-)}
m( K^+ \cap g^{-t} \gamma  K^-) = \int \frac{d\mu_x(\xi) \,d\mu_x(\eta)}{e^{-2 \delta \grop{x}\xi\eta}} \int_{-r/2}^{r/2} \mathbf{1}_{ K(\gamma y,r)} (g^{t+s} x_{\xi\eta} ) \,ds 
\end{equation}
where the integral is supported on $ \limshade_r(x, \gamma y) \cap (\gamma B \times A)$. 

Then \eqref{eq:crux2} is a consequence of the following facts:
\begin{enumerate}[(i)]
\item the following non-negative number is finite:
\begin{align*}
    c_2' & := \int_0^\infty e^{\delta t} \sum_{d_\Omega(x,\gamma y)\leq 2r} m( K^+ \cap g^{-t} \gamma  K^- ) dt;
\end{align*}
indeed, if $\gamma\in\Gamma$ is such that $d_\Omega(x,\gamma y)\leq 2r$, then $ K^+ \cap g^{-t} \gamma  K^-$ is empty as soon as $t> 5r$;
\item
for $(\xi,\eta) \in  \limshade_r(x,\gamma y)$, $|s| \leq \frac r2$, and $T > 0$, we see, by examining the definition of $ K(\gamma y, r)$, that
\[ \int_0^{T-3r} e^{\delta t} \mathbf{1}_{ K(\gamma y,r)} (g^{t+s} x_{\xi\eta} ) dt \leq e^{3\delta r} r e^{\delta\cdot d_\Omega(x,\gamma y)} \leq e^{3\eps'} r e^{\delta\cdot d_\Omega(x,\gamma y)} \]
and also that this integral is zero if $d_\Omega(x,\gamma y) > T$;  

\item $e^{-2 \delta \grop{x}\xi\eta} \geq e^{-2\delta r} \geq e^{-2\eps'}$ for $(\xi,\eta) \in \limshade_r(x,\gamma y)$;
\item if $ \limshade_r(x, \gamma y) \cap (\gamma B \times A) \neq \varnothing$, then $\gamma y \in \cone_1^+(x,A)$ and $\gamma^{-1} x \in \cone_1^+(y,B)$ (we have taken care to ensure $r < 1$);

\item according to \eqref{eq:O-<L<O+}, $ \limshade_r(x,\gamma y) \subset  \shadow_r^+(\gamma y, x) \times  \shadow_r^+(x,\gamma y)$. 
\end{enumerate}

\subsubsection{ \eqref{eq:crux2'}: geodesic corridors to shadows, lower bound}

\eqref{eq:crux2'} follows from \eqref{eq:m(K+nK-)} together with the following facts:
\begin{enumerate}[(i)]
\item $e^{-\grop{x}\xi\eta} \leq 1$;
\item for $(\xi, \eta) \in  \limshade_r(x, \gamma y)$, $|s| \leq \frac r2$ and $T > 0$, we have
\[ \int_0^{T+3r} e^{\delta t} \mathbf{1}_{ K(\gamma y,r)} (g^{t+s} x_{\xi\eta} ) dt \geq e^{-3\eps'} r e^{\delta\cdot d_\Omega(x,\gamma y)} \]
if $3r \leq d_\Omega(x, \gamma y) \leq T$;
\item 
if $\gamma y \in \cone_1^-(x,A)$, then $A \supset  \shadow_r^-(x,\gamma y)$. Similarly, if $\gamma^{-1} x \in \cone_1^-(y,B)$, then $B \supset  \shadow_r^-(y, \gamma^{-1} x)$, i.e. $\gamma B \supset  \shadow_r^-(\gamma y, x)$. By \eqref{eq:O-<L<O+}, if both conditions are satisfied, then
\[  \limshade_r(x, \gamma y) \cap (\gamma B\times A)  \supset  \shadow_r^-(\gamma y,x)\times  \shadow_r^-(x,\gamma y);\]

\item the following non-negative number is finite:
\begin{align*}
c_5'&:=e^{-3\eps'} r^2\sum_{d_\Omega(x,\gamma y)\leq 3r} \mu_x(\shadow_r^-(\gamma y,x)) \, \mu_x(\shadow_r^-(x,\gamma y)) e^{\delta \cdot d_\Omega(x,\gamma y)}\\
& \leq e^{-3\eps'}r^2e^{3\delta r}\Vert\mu_x\Vert^2\#\{\gamma\in\Gamma \st d_\Omega(x,\gamma y)\leq 3r\}.
\end{align*}
\end{enumerate}

\subsubsection{\eqref{eq:crux1} and \eqref{eq:crux1'}: the mixing step}

Since the geodesic flow in the quotient is strongly mixing with respect to $m_\Gamma$ (Theorem \ref{thm:mixing}; see also Remark~\ref{rmk:mixing in the universal cover}), we have some $t_0 > 0$, such that for all $t > t_0$,
\begin{equation} e^{-\eps'} m(K^+) \, m(K^-) \leq \|m_\Gamma\| \sum_{\gamma \in \Gamma} m(K^+ \cap g^{-t} \gamma K^-)  \leq e^{\eps'} m(K^+) \, m(K^-) . \label{eqn:nsc411_mixing_closed} \end{equation}
Recalling the definition of $ K^+ =  K^+(x,r,A)$, we see that
\[ m( K^+) = r \int_A d\mu_x(\xi) \int_{ \shadow_r(\xi,x)} e^{\delta \grop{x}\xi\zeta} d\mu_x(\zeta) .\]
Since $0\leq\grop{x}\xi\zeta \leq r$ (from Corollary \ref{lem:grop_bound}) and $A \subset \widehat{V}_1$, by \eqref{eq:measure of shadows1} we obtain
\[e^{-\eps'} \mu_x( \shadow_r(\xi_0,x)) \leq \int_{ \shadow_r(\xi,x)} e^{2\delta \grop{x}\xi\zeta} d\mu_x(\zeta) \leq e^{3\eps'} \mu_x( \shadow_r(\xi_0,x)) \] 
and hence
\begin{equation} e^{-\eps'} r \, \mu_x(A) \, \mu_x(\shadow_r(\xi_0,x)) \leq m(K^+) \leq e^{3\eps'} r \, \mu_x(A) \, \mu_x(\shadow_r(\xi_0,x)) .\label{eqn:nsc411_K+_closed} \end{equation}

Arguing similarly with $ K^- =  K^-(y,r,B)$, we obtain
\begin{equation} e^{-\eps-} r \, \mu_y(B) \, \mu_y( \shadow_r(\eta_0,y)) \leq m(K^-) \leq e^{3\eps'} r \, \mu_y(B) \, \mu_y( \shadow_r(\eta_0,y)) . \label{eqn:nsc411_K-_closed} \end{equation}
Hence, using (\ref{eqn:nsc411_mixing_closed}) we have 
\begin{align*}
\resizebox{0.95\width}{!}{$\displaystyle \delta \|m_\Gamma\| \int_0^{T-3r} e^{\delta t} \sum_{\gamma \in \Gamma} m( K^+ \cap g^{-t} \gamma  K^- ) dt$}
& \geq \resizebox{0.95\width}{!}{$\displaystyle \delta\|m_\Gamma\| \int_{T_0}^{T-3r} e^{\delta t} \sum_{\gamma \in \Gamma} m( K^+ \cap g^{-t} \gamma  K^- ) dt$} \\ 
& \geq e^{-\eps'} m( K^+) m( K^-) \int_{T_0}^{T-3r} \delta  e^{\delta t}
dt \end{align*}
and so, by (\ref{eqn:nsc411_K+_closed}) and (\ref{eqn:nsc411_K-_closed}),
\begin{align*}
 \delta \|m_\Gamma\| \int_0^{T-3r} & e^{\delta t} \sum_{\gamma \in \Gamma} m(K^+ \cap g^{-t} \gamma  K^-) dt \\
& = e^{-\eps'} m(K^+) \, m(K^-) \, (e^{\delta T} e^{-3\delta r} - e^{T_0}) \\
& \geq e^{-4\eps'} e^{\delta T} m(K^+) \, m(K^-) - c_1' \\
& \geq e^{-6\eps'} e^{\delta T} r^2 \mu_x(A) \, \mu_y(B) \, \mu_x( \shadow_r(\xi_0,x)) \, \mu_x( \shadow_r(\eta_0,y))  - c_1' \\
& \geq e^{-6\eps'} e^{\delta T} M \mu_x(A) \, \mu_y(B) - c_1',
\end{align*} 
where $c_{1}' := e^{T_0} e^{-\eps/3} m(K^+) \, m(K^-)$.
Similarly, 
\begin{align*}
\delta \|m_\Gamma\| \int_0^{T+3r} & e^{\delta t} \sum_{\gamma \in \Gamma} m( K^+ \cap g^{-t} \gamma  K^- ) dt \\
& \leq \delta\|m_\Gamma\| \int_{T_0}^{T+3r} e^{\delta t} \sum_{\gamma \in \Gamma} m( K^+ \cap g^{-t} \gamma  K^- ) dt + c_4'\\ 
& \leq e^{\eps'} m(K^+) \, m(K^-) \int_{T_0}^{T+3r} \delta  e^{\delta t} + c_4'\\
& = e^{\eps'} m(K^+) \, m(K^-) \, (e^{\delta T} e^{3\delta r} - e^{T_0}) + c_4' \\
& \leq e^{4\eps'} e^{\delta T} m(K^+) \, m(K^-) + c_4' \\
& \leq e^{10\eps'} e^{\delta T} M \, \mu_x(A) \, \mu_y(B) + c_4'
\end{align*}
where $c_4':= \delta \|m_\Gamma\| \int_0^{T_0} e^{\delta t} \sum_{\gamma \in \Gamma} m( K^+ \cap g^{-t} \gamma  K^- ) dt$.

\subsection{Stage 2}
For more general $x,y \in \Omega$, 
given $\xi_0, \eta_0 \in \del\Omega$ $\mathcal C^1$ and extremal, choose $\zeta_0 \in \Lambda_\Gamma \smallsetminus \{\xi_0,\eta_0\}$ which is strongly extremal, and $x_0\in (\xi_0\zeta_0)$ and $y_0 \in (\eta_0\zeta_0)$. From the previous step we have neighborhoods $V_0, W_0$ of $\xi_0, \eta_0$ (respectively) such that the result of the lemma holds for $x_0$ and $y_0$ in the place of $x$ and $y$ and $V_0$ and $W_0$ in the place of $V$ and $W$.

Let then $\widehat{V_0}$ and $\widehat{W_0}$ be two open sets of $\overline\Omega$ containing respectively $\xi_0$ and $\eta_0$ such that $\widehat{V_0} \cap \del\Omega \subset V_0$ and $\widehat{W_0} \cap \del\Omega\subset W_0$, and for all $a \in \pi_{\hor}^{-1}\widehat{V_0}$ and $b \in \pi_{\hor}^{-1}\widehat{W_0}$, 
\begin{align*}
|\beta_a(x_0,x) - \beta_{\xi_0}(x_0,x)| & < \frac{\eps}{6\delta} \\
|\beta_b(y_0, y) - \beta_{\eta_0}(y_0,y)| & < \frac{\eps}{6\delta}.
\end{align*}

Set $R := 1 + \max\{d_\Omega(x,x_0), d_\Omega(y,y_0)\}$, and take a neighbourhood $\widehat{V}_1$ (\resp $\widehat{W}_1$) of $\xi_0$ (\resp $\eta_0$) such that $\overline{B}_{\overline\Omega}(z,R)$ is contained in $\widehat{V}_0$ (\resp $\widehat{W}_0$) for any $z$ in $\widehat{V}_1$ (\resp $\widehat{W}_1$).
Take two open neighborhoods $V$ and $W$ of $\xi_0$ and $\eta_0$ (respectively) in $\del\Omega$, such that $\overline{B}_{\overline\Omega}(\overline{V},R) \cap \del\Omega \subset \widehat{V_1} \cap \del\Omega$ and $\overline{B}_{\overline\Omega}(\overline{W},R) \cap \del\Omega \subset \widehat{W_1} \cap \del\Omega$. Consider $A \subset V$ and $B \subset W$. {\bf We now relate the orbit of $y$ seen from $x$ to that of $y_0$ seen from $x_0$}, thanks to the following observations:

First, using the definition of $R$ and $\cone_R^\pm$, one can easily verify that, for any $\gamma\in\Gamma$,
    \begin{gather*}
        (\gamma y,\gamma^{-1}x)\in \cone_R^-(x,A)\times\cone_R^-(y,B) \Rightarrow (\gamma y_0,\gamma^{-1} x_0) \in \cone_1^-(x_0,A)\times\cone_1^-(y_0,B), \\
        (\gamma y_0,\gamma^{-1}x_0)\in \cone_1^+(x_0,A)\times\cone_1^+(y_0,B) \Rightarrow (\gamma y,\gamma^{-1} x) \in \cone_R^+(x,A)\times\cone_R^+(y,B).
    \end{gather*}
    
Second, if $(\gamma y, \gamma^{-1}x) \in \widehat{V_1} \times \widehat{W_1}$, then $\gamma y_0 \in\widehat{V_0}$ and $\gamma^{-1}x \in \widehat{W_0}$, whence
    \begin{align*}
        d_\Omega(x_0, \gamma y_0) & = d_\Omega(x_,\gamma y_0) + \beta_{\gamma y_0}(x_0,x)\\
        & \leq d_\Omega(x, \gamma y_0) +  \beta_{\xi_0}(x_0, x) + \frac\eps{6\delta} \\
        & = d_\Omega(y_0, \gamma^{-1}x) + \beta_{\xi_0}(x_0,x) + \frac\eps{6\delta} \\
        & \leq d_\Omega(y, \gamma^{-1} x) + t_0 + \frac\eps{3\delta},
    \end{align*}
    where $t_0:=\beta_{\eta_0}(y_0,y) + \beta_{\xi_0}(x_0,x)$. Symmetrically, if $(\gamma y_0, \gamma^{-1}x_0) \in \widehat{V_1} \times \widehat{W_1}$, then
    \begin{equation*}
        d_\Omega(x_0,\gamma y_0)\geq d_\Omega(y, \gamma^{-1} x) + t_0 - \frac\eps{3\delta}.
    \end{equation*}

Third, the sets ($\cone_R^+(x,A)\cup \cone_R^+(x_0,A)) \smallsetminus\widehat{V_1}$ and $(\cone_r^+(y,B)\cup \cone_r^+(y_0,B))\smallsetminus\widehat{W_1}$ are bounded in $\Omega$ because any accumulation point of any sequence of $\cone_r^+(x,A)$ which diverges in $\Omega$ must belong to $\overline{B}(\overline A,r)\subset \widehat{V_1}$. Therefore
    \begin{gather*}
        \nu_{x,y}^t\left(\cone_R^-(x,A) \times \cone_R^-(y,B)\smallsetminus \widehat{V_1}\times\widehat{W_1}\right)\xrightarrow[t\to\infty]{}0, \\
        \nu_{x_0,y_0}^t\left(\cone_R^+(x_0,A) \times \cone_R^+(y_0,B)\smallsetminus \widehat{V_1}\times\widehat{W_1}\right)\xrightarrow[t\to\infty]{}0.
    \end{gather*}
i.e. when we are taking the limits of the measures of $\cone_R^-(x,A) \times \cone_R^-(y,B)$ and $\cone_R^+(x_0,A) \times \cone_R^+(y_0,B)$, we may restrict to looking at points inside $\widehat{V_1} \times \widehat{W_1}$, where the distance estimates from the previous observation hold.

Together, these observations imply that
\begin{align*}
    \resizebox{0.96\width}{!}{$\displaystyle \limsup_{t\to+\infty} \nu_{x,y}^t(\cone_R^-(x,A) \times \cone_R^-(y,B))$}
    & \leq \resizebox{0.96\width}{!}{$\displaystyle e^{\delta \cdot (t_0 + \frac{\eps}{3\delta})} \limsup_{t\to+\infty} \nu_{x_0,y_0}^{t + t_0 + \frac\eps{3\delta}} \left( \cone_1^-(x_0,A) \times \cone_1^-(y_0,B) \right) $} \\
    & \leq e^{2\eps/3} e^{\delta t_0} \mu_{x_0}(A) \, \mu_{y_0}(B) \\
    & \leq e^{2\eps/3} (e^{\delta \cdot (\beta_{\xi_0}(x_0,x)}\mu_{x_0}(A)) \, (e^{\beta_{\eta_0}(y_0,y))}\mu_{y_0}(B)) \\
    & \leq e^\eps \mu_x(A) \, \mu_y(B),
\end{align*}
where we have used the conformality of the $(\mu_z)_z$ to say, for example, that $\frac{d\mu_{x_0}}{d\mu_x}(\xi)$ is bounded above by $e^{-\delta\beta_{\xi_0}(x_0,x)+\frac\eps6}$ for any $\xi\in A$.

Similarly one shows that 
\begin{align*}
\resizebox{0.97\width}{!}{$\displaystyle \liminf_{t\to+\infty} \nu_{x,y}^t 
    (\cone_R^+(x,A)  \times \cone_R^+(y,B))$}
    & \geq 
\resizebox{0.97\width}{!}{$\displaystyle e^{\delta \left(t_0 - \frac{\eps}{3\delta} \right)} \liminf_{t\to+\infty} \nu_{x_0,y_0}^{t + t_0 - \frac\eps{3\delta}} \left( \cone_1^+(x_0,A) \times \cone_1^+(y_0,B) \right)$} \\
    & \geq e^{-2\eps/3} e^{\delta t_0} \mu_{x_0}(A)\, \mu_{y_0}(B) \\
    & \geq e^{-\eps} \mu_x(A)\,\mu_y(B).
\end{align*}

This concludes the proof of Proposition~\ref{prop:1ere_etage}.
\end{proof}

\subsection{Negligibility from lack of extremal points}

Here we pause to prove two lemmas which will together be used in the next step of the proof.
The first shows that a sum of Dirac masses at orbit points is bounded above by a Patterson--Sullivan measure, and hence has negligible mass away from extremal points. The second establishes that the differences between measurable subsets of $\overline\Omega$ and certain associated cones do not contain extremal points.

Given $\Omega\subset \RPn$ a domain and $\Gamma\leq\Aut(\Omega)$ a discrete subgroup, $x,y\in\Omega$ and $t\geq 0$, we write $\alpha_{x,y}^t$ to denote the measure appearing in the first corollary to Theorem \ref{thm:orbit_equidist}, \ie
 \begin{equation*}
  \alpha_{x,y}^t:= \sum_{\substack{\gamma\in\Gamma \\ d_\Omega(x,\gamma y)\leq t}} \dirac_{\gamma y}.
 \end{equation*}
 If $M=\Omega/\Gamma$ is non-elementary and rank-one, then the Sullivan shadow lemma (Lemma~\ref{lem:sullivan_shadow}) implies that $(e^{-\delta_\Gamma t}\alpha_{x,y}^t(\overline\Omega))_t$ is bounded, see e.\,g.\ \cite[Prop.\,4.7]{Blayac_PSdensities}.
The following lemma is a refinement of this idea:
 
 \begin{lem}\label{lem:alpha<mu}
  Let $\Omega\subset \RPn$ be a domain, $\Gamma\leq\Aut(\Omega)$ a divergent non-elementary rank-one discrete subgroup, and $(\mu_x)$ a conformal density of dimension $\delta=\delta_\Gamma$ on $\Omega$.
  Then there exists $C>0$ such that $
   \alpha\leq C\mu_x$ for any accumulation point $\alpha$ of $(\alpha_{x,y}^t)_{t\to\infty}$.
   
  In particular, if $K\subset\overline{\Omega}$ is compact and does not contain any extremal point, then
  \begin{equation*}
      \alpha_{x,y}^t(K) \xrightarrow[t\to\infty]{} 0.
  \end{equation*}
 
 \begin{proof}
  Since $\del_{\sse}\Omega$ has full $\mu_x$-measure by Theorem~\ref{thm:HTSR}, and by the interior regularity of finite measures, it is enough to find $C>0$ and $R>0$ such that for any compact subset $K\subset \overline{\Omega}$, 
  \begin{equation*}
   \alpha(K)\leq C\mu_x(\overline{B}_{\overline{\Omega}}(K,R)).
  \end{equation*}
 
  By Lemma~\ref{lem:sullivan_shadow}, there exist $R>0$ and $C_1>0$ such that for any $\gamma\in\Gamma$,
  \begin{equation*}
   \mu_x(\shadow_R(x,\gamma y))\geq C_1^{-1}e^{-\delta d_\Omega(x,\gamma y)}.
  \end{equation*}
  Fix $\epsilon>0$. Let $U$ be a neighbourhood in $\overline{\Omega}$ of $\overline{B}_{\overline{\Omega}}(K,R)$ such that $\mu_x(U)\leq \mu_x(\overline{B}_{\overline{\Omega}}(K,R))+\epsilon$. Let $V$ be a neighbourhood in $\overline{\Omega}$ of $K$ such that $\shadow_R(x,z)\subset U$ for any $z\in V\cap\Omega$. Observe that for all $t\geq 0$ and $\xi\in\del\Omega$,
  \begin{equation}\label{Equation : intersection d ombres}
   \#\{\gamma \st t-1\leq d_\Omega(x,\gamma y)\leq t ,\  \xi\in\shadow_R(x,\gamma y)\}\leq \#\{g \st d_\Omega(y,g y)\leq 4R+1\}.
  \end{equation}
  Indeed, if $\gamma,g\in\Gamma$ are such that $t-1\leq d_\Omega(x,\gamma y),d_\Omega(x,\gamma g y)\leq t$ and $\xi\in\shadow_R(x,\gamma y)\cap\shadow_R(x,\gamma g y)$, then let $y_1\in [x\,\xi)\cap B_\Omega(\gamma y,R)$ and $y_2\in[x\,\xi)\cap B_\Omega(\gamma g y,R)$, so that
  \begin{align*}
   d_\Omega(y,gy) & = d_\Omega(\gamma y,\gamma g y)
   \leq R + d_\Omega(y_1,y_2) + R \\
   & = 2R + | d_\Omega(x,y_1) - d_\Omega(x,y_2) | \\
   & \leq 4R + | d_\Omega(x,\gamma y) - d_\Omega(x,\gamma g y) |
   \leq 4R+1.
  \end{align*}

Then we have
\begin{align*}
   \alpha(K) & \leq \limsup_{t\to\infty} e^{-\delta t}\alpha_{x,y}^t(V) \\
   & := \limsup_{t\to\infty} e^{-\delta t} \# \{\gamma \st 0\leq d_\Omega(x,\gamma y)\leq t, \gamma y\in V\} \\
   & \leq e^{\delta} \limsup_{t\to\infty} \! \sum_{1\leq k\leq \lfloor t+1\rfloor} e^{\delta (k-\lfloor t+1\rfloor)}e^{-\delta k} \# \{\gamma \st k-1\leq d_\Omega(x,\gamma y)\leq k, \gamma y\in V\} \\
   & \leq C_1e^{\delta} \limsup_{n\to\infty} \sum_{1\leq k\leq n} e^{\delta (k-n)} \!\!\! \sum_{\substack{k-1\leq d_\Omega(x,\gamma y)\leq k\\\gamma y\in V}} \!\!\!\!\! \mu_x(\shadow_R(x,\gamma y)).
\end{align*}
Now recall that
$\shadow_R(x,\gamma y)\subset U$ for any $\gamma y \in V$. Hence by \eqref{Equation : intersection d ombres} we have
\begin{align*}
\alpha(K) & \leq C_1e^\delta \limsup_{n\to\infty} \sum_{1\leq k\leq n} e^{\delta (k-n)} \int_U  \sum_{\substack{k-1\leq d_\Omega(x,\gamma y)\leq k\\\gamma y\in V}} \!\!\! \mathbf{1}_{\shadow_R(x,\gamma y)}(\xi) \;d\mu_x(\xi) \\
   & \leq C_1e^\delta \limsup_{n\to\infty} \sum_{1\leq k\leq n} e^{\delta (k-n)} \#\{\gamma \st d_\Omega(y,\gamma y)\leq 4R+1\} \; \mu_x(U) \\
   & \leq C\mu_x(\overline{B}_{\overline{\Omega}}(K,R))+C\epsilon,
\end{align*}
where $C:=\frac{C_1 e^\delta }{1-e^{-\delta}} \cdot \#\{\gamma \:|\: d_\Omega(y,\gamma y)\leq 4R+1\}$. The previous estimates hold for any $\epsilon>0$, and therefore we have
$\alpha(K)\leq C \mu_x\left( \overline{B}_{\overline{\Omega}}(K,R) \right)
 $ as desired. \end{proof}
\end{lem}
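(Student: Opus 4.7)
The plan is to compare the normalised measures $e^{-\delta t}\alpha_{x,y}^t$ to $\mu_x$ via a shadow-counting argument driven by the Sullivan shadow lemma (Lemma~\ref{lem:sullivan_shadow}). By weak-* lower semi-continuity on open sets and outer regularity of $\mu_x$ on $\overline\Omega$, it suffices to produce constants $C, R > 0$ (independent of $K$) so that for every compact $K \subset \overline\Omega$ and every accumulation point $\alpha$,
\[\alpha(K) \leq C\, \mu_x\!\left( \overline{B}_{\overline\Omega}(K, R) \right).\]
Fix $R$ large enough that Lemma~\ref{lem:sullivan_shadow} yields $\mu_x(\shadow_R(x,\gamma y)) \geq C_1^{-1} e^{-\delta d_\Omega(x,\gamma y)}$ for every $\gamma \in \Gamma$ and some $C_1 > 0$. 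The idea is then to replace each Dirac weight $e^{-\delta d_\Omega(x,\gamma y)}$ by the mass $\mu_x(\shadow_R(x,\gamma y))$, provided we can control the multiplicity with which these shadows overlap.

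The key technical input, which I expect to be the main obstacle, is a uniform bound on shadow overlaps: for every $\xi \in \del\Omega$ and every $t$,
\[\#\{\gamma \in \Gamma \st t-1 \leq d_\Omega(x,\gamma y) \leq t,\ \xi \in \shadow_R(x,\gamma y)\} \leq N_R,\]
where $N_R := \#\{g \in \Gamma \st d_\Omega(y,g y) \leq 4R+1\} < \infty$ by proper discontinuity of the action. The proof is a triangle-inequality argument: if $\gamma$ and $\gamma g$ both contribute, pick $y_1, y_2 \in (x\,\xi)$ with $d_\Omega(\gamma y, y_1), d_\Omega(\gamma g y, y_2) \leq R$; since $y_1$ and $y_2$ lie on a common ray based at $x$, $|d_\Omega(x,y_1)-d_\Omega(x,y_2)| \leq 2R+1$, and two further applications of the triangle inequality give $d_\Omega(\gamma y,\gamma g y) \leq 4R+1$.

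With the overlap bound in hand, fix $\epsilon > 0$ and pick a neighbourhood $V \supset K$ small enough that $\shadow_R(x,z) \subset U$ for every $z \in V \cap \Omega$, where $U$ is an open neighbourhood of $\overline{B}_{\overline\Omega}(K, R)$ with $\mu_x(U) \leq \mu_x(\overline{B}_{\overline\Omega}(K, R)) + \epsilon$. I would partition $\{\gamma \in \Gamma \st d_\Omega(x,\gamma y) \leq t,\ \gamma y \in V\}$ into integer-distance annuli, bound each annulus from above via the shadow lemma, and integrate the resulting sum of characteristic functions $\mathbf{1}_{\shadow_R(x,\gamma y)}$ against $\mu_x$; the overlap bound contributes a factor of $N_R$ and confines the integration to $U$. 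Summing the geometric tail $\sum_{k \leq n} e^{\delta(k-n)}$, which is uniformly bounded in $n$ since $\delta > 0$, yields $\alpha(K) \leq C\,\mu_x(\overline{B}_{\overline\Omega}(K, R)) + C\epsilon$; sending $\epsilon \to 0$ completes the first assertion.

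For the ``in particular'' statement, the divergent case of the HTSR dichotomy (Theorem~\ref{thm:HTSR}) concentrates $\mu_x$ on $\del_{\sse}\Omega$. If $K \subset \overline\Omega$ has no extremal points, then each $\xi \in K \cap \del\Omega$ has a non-trivial open face, while the $d_{\overline\Omega}$-ball of any radius around a strongly extremal point reduces to that point; consequently $\overline{B}_{\overline\Omega}(K, R) \cap \del\Omega$ is disjoint from $\del_{\sse}\Omega$, and $\overline{B}_{\overline\Omega}(K, R) \cap \Omega$ carries no $\mu_x$-mass since $\mu_x$ lives on the boundary. Hence $\mu_x(\overline{B}_{\overline\Omega}(K, R)) = 0$, and the first part gives $\alpha(K) = 0$ for every accumulation point, so $e^{-\delta t}\alpha_{x,y}^t(K) \to 0$.
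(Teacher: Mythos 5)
Your proposal follows essentially the same route as the paper's proof: the same reduction via the concentration of $\mu_x$ on $\del_{\sse}\Omega$ and regularity of measures, the same application of the Sullivan shadow lemma, the same overlap bound $\#\{\gamma \st t-1\leq d_\Omega(x,\gamma y)\leq t,\ \xi\in\shadow_R(x,\gamma y)\}\leq \#\{g \st d_\Omega(y,gy)\leq 4R+1\}$ proved by the same triangle-inequality argument, and the same annulus decomposition with integration of indicator functions against $\mu_x$ over the enlarged neighbourhood. The argument is correct, including the observation for the ``in particular'' part that $\overline{B}_{\overline{\Omega}}(K,R)$ meets $\del_{\sse}\Omega$ only in $K$ itself and hence is $\mu_x$-null when $K$ contains no extremal points.
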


This lemma will be useful below in combination with the next one, which shows that certain sets we will want to have small measure do not contain extremal points: 
\begin{lem}\label{lem:no extremal point}
Let $\Omega\subset \RPn$ be a domain, $r>0$, $x\in\Omega$ and $\Acal\subset \overline{\Omega}$ be measurable. 
Then for any open neighborhood $A^+$ of $\overline{\Acal}\cap\del\Omega$ in $\del\Omega$, and any compact subset $A^- \subset \interior(\Acal)\cap\del\Omega$, $\overline{\Acal\cap\Omega\smallsetminus \cone_r^-(x,A^+)}$ and $\overline{\cone_r^+(x,A^-)\smallsetminus\Acal}$ do not contain any extremal point of $\overline{\Omega}$.
\end{lem}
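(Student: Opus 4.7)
The plan is to argue by contradiction in each case, extracting an approximating sequence $y_n \to \xi$ inside the relevant set and deriving a contradiction from an elementary convexity argument in an affine chart. The key geometric input is the following standard property of Hilbert geometries: if $\xi \in \del\Omega$ is extremal and $(y_n) \subset \Omega$ converges to $\xi$, then for every $r > 0$ the closed balls $\overline{B_\Omega(y_n, r)}$ converge to $\{\xi\}$ in the Hausdorff topology on $\overline\Omega$, because any accumulation point of a sequence at bounded Hilbert distance from $y_n$ must lie in the open face $F_\Omega(\xi) = \{\xi\}$.

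For the first claim, suppose some extremal $\xi$ lies in $\overline{\Acal \cap \Omega \smallsetminus \cone_r^-(x, A^+)}$ and pick $y_n \in \Acal \cap \Omega \smallsetminus \cone_r^-(x, A^+)$ with $y_n \to \xi$. Then $\xi \in \overline\Acal \cap \del\Omega \subset A^+$. By definition of $\cone_r^-$, for each $n$ there exist $x_n' \in B(x, r)$ and $z_n \in B(y_n, r)$ such that the projective ray from $x_n'$ through $z_n$ lands at some $\eta_n \in \del\Omega \smallsetminus A^+$. Extracting a subsequence, $x_n' \to x_\infty$ in the compact set $\overline{B_\Omega(x,r)} \subset \Omega$, $z_n \to \xi$ by the key fact, and $\eta_n \to \eta_\infty \in \del\Omega \smallsetminus A^+$ (closed). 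Since $z_n$ lies on the Euclidean segment $[x_n', \eta_n]$ in an affine chart containing $\Omega$, passing to the limit gives $\xi \in [x_\infty, \eta_\infty]$; but the half-open segment $[x_\infty, \eta_\infty)$ lies entirely in the open convex set $\Omega$, so the boundary point $\xi$ must coincide with $\eta_\infty$, contradicting $\xi \in A^+$.

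For the second claim, suppose an extremal $\xi$ lies in $\overline{\cone_r^+(x, A^-) \smallsetminus \Acal}$ and pick $y_n \in \cone_r^+(x, A^-) \smallsetminus \Acal$ with $y_n \to \xi$. By definition of $\cone_r^+$, there exist $x_n' \in B(x, r)$, $\eta_n \in A^-$, and $z_n \in B(y_n, r) \cap (x_n' \eta_n)$. Again $z_n \to \xi$ and, up to extraction, $x_n' \to x_\infty \in \Omega$ and $\eta_n \to \eta_\infty \in A^-$ (compact). The same segment argument forces $\xi = \eta_\infty \in A^- \subset \interior(\Acal)$, which is open in $\overline\Omega$, so $y_n \in \Acal$ for all sufficiently large $n$, contradicting $y_n \notin \Acal$.

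The only non-routine point is the Hausdorff convergence $\overline{B_\Omega(y_n, r)} \to \{\xi\}$ at an extremal $\xi$, which is precisely where extremality (rather than mere boundary membership) enters; everything else is a straightforward combination of this with convexity of $\Omega$ in the ambient affine chart.
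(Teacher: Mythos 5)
Your proof is correct and follows essentially the same route as the paper's: both arguments hinge on the fact that points at Hilbert distance at most $r$ from a sequence converging to $\xi\in\del\Omega$ can only accumulate in the face of $\xi$, combined with continuity of the map sending $(x',z)$ to the boundary endpoint of the ray $[x'\,z)$. The only difference is presentational — the paper exhibits a second point of the face $\overline{B}_{\overline\Omega}(y,r)\smallsetminus\{y\}$ to conclude non-extremality directly, whereas you assume extremality up front to collapse the balls to $\{\xi\}$ and reach a contradiction — which is just the contrapositive of the same argument.
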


\begin{proof}
Consider a sequence $(y_n)_n$ of points in ${\Acal\cap\Omega\smallsetminus \cone_r^-(x,A^+)}$ that converges to $y\in\del{\Omega}\cap\overline{\Acal}$; we show that $y$ is not extremal. By definition, for each $n$ there exist $x_n\in \overline{B}_\Omega(x,r)$ and $z_n\in \overline{B}_{\Omega}(y_n,r)\smallsetminus \cone_r(x_n,A^+)$. Up to subsequence, we can assume that $x_n\neq z_n$ for any $n$ and that $(x_n)_n$ and $(z_n)_n$ converge respectively to $x'\in\overline{B}_\Omega(x,r)$ and $z\in\overline{B}_{\overline{\Omega}}(y,r)$. 
For each pair $(a,b)\in\Omega\times\overline{\Omega}$ such that $a\neq b$, denote by $\shadow(a,b)$ the unique point $c \in\del\Omega$ such that $b\in[a\,c]$. The map $\shadow$ is continuous, so $\shadow^{-1}(\del\Omega\smallsetminus A^+)$ is closed, hence it must contain $(x',z)$, since it contains $\{(x_n,z_n)\}_n$.
Since $z\in\del\Omega$, we have that $\shadow(x',z)=z\not\in A^+\ni y$. Hence $z \in \overline{B}_{\overline{\Omega}}(y,r) \smallsetminus \{y\}$, and so $y$ is not extremal.

Consider a sequence $(y_n)_n$ of points in $\cone_r^+(x,A^-)\smallsetminus\Acal$ that converges to $y\in\del\Omega\smallsetminus\interior(\Acal)$; we show that $y$ is not extremal. By definition, for each $n$ there exist $x_n\in\overline{B}_\Omega(x,r)$ and $z_n\in \overline{B}_\Omega(y_n,r)\smallsetminus\{x_n\}$ such that $\shadow(x_n,z_n)\in A^-$. Up to subsequence, $(x_n)_n$ and $(z_n)_n$ converge respectively to $x'\in\overline{B}_\Omega(x,r)$ and $z\in \overline{B}_{\overline{\Omega}}(y,r)$. By continuity, $z=\shadow(x',z)$ is in the closed set $A^-$. Thus, $z \in \overline{B}_{\overline{\Omega}}(y,r) \smallsetminus \{y\}$, and hence $y$ is not extremal.
\end{proof}

\subsection{End of proof} 
\begin{prop}\label{prop:3eme etape}
In the setting of Theorem~\ref{thm:orbit_equidist}, for any $x,y\in\Omega$, $\eps>0$, and $\mathcal C^1$ and extremal $\xi_0,\eta_0 \in \del\Omega$, there exist open neighborhoods $\widehat{V}$ and $\widehat{W}$ of $\xi_0$ and $\eta_0$ in $\overline{\Omega}$ such that for any non-negative function $\varphi$ supported on $\widehat{V}\times\widehat{W}$,
\[ e^{-\eps} \int \varphi \,d(\mu_x \otimes \mu_y) \leq \liminf \int \varphi \,d\nu_{x,y}^t \leq \limsup \int \varphi \,d\nu_{x,y}^t \leq e^\eps \int \varphi \,d(\mu_x \otimes \mu_y) .\]
\end{prop}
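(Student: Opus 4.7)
The plan is to approximate $\varphi$ by simple functions on Borel product cells, to replace each such cell by an appropriate pair of cones via Lemmas~\ref{lem:no extremal point} and \ref{lem:alpha<mu}, and then to invoke Proposition~\ref{prop:1ere_etage} cell by cell.

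First, I would apply Proposition~\ref{prop:1ere_etage} with $\eps/4$ in place of $\eps$ to produce $R > 0$ and open neighborhoods $V \ni \xi_0$, $W \ni \eta_0$ in $\del\Omega$; then choose $\widehat V, \widehat W$ open in $\overline\Omega$, with compact closures, containing $\xi_0, \eta_0$, and such that $\widehat V \cap \del\Omega \subset V$ and $\widehat W \cap \del\Omega \subset W$. Given $\varphi$ non-negative continuous and supported in $\widehat V \times \widehat W$, by uniform continuity, for any small $\eta > 0$ I would partition $\widehat V = \bigsqcup_i \mathcal A_i$ and $\widehat W = \bigsqcup_j \mathcal B_j$ into finitely many Borel cells on which $\varphi$ has oscillation $< \eta$, arranging moreover (by perturbing the cells, which is possible since $\mu_x,\mu_y$ are finite) that the boundary of each $A_i := \mathcal A_i \cap \del\Omega$ inside $\del\Omega$ is $\mu_x$-null, and similarly for $B_j := \mathcal B_j \cap \del\Omega$ with $\mu_y$. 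This gives simple functions $\varphi^- \leq \varphi \leq \varphi^+$ of the form $\sum_{i,j} c^\pm_{ij}\mathbf 1_{\mathcal A_i \times \mathcal B_j}$ with $\|\varphi^+ - \varphi^-\|_\infty < \eta$.

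For each cell $(i,j)$, by inner/outer regularity of $\mu_x,\mu_y$ on $\del\Omega$, I would pick an open $A_i^+ \supset \overline{A_i}$ inside $V$ and a compact $A_i^- \subset \interior_{\del\Omega}(A_i)$ with $\mu_x(A_i^\pm)$ within $\eta' > 0$ of $\mu_x(A_i)$, and similarly $B_j^\pm \subset W$. By Lemma~\ref{lem:no extremal point}, the compact sets $\overline{\mathcal A_i \cap \Omega \smallsetminus \cone_R^-(x, A_i^+)}$ and $\overline{\cone_R^+(x, A_i^-) \smallsetminus \mathcal A_i}$ (both sitting inside $\overline{\widehat V}$) contain no extremal points of $\overline\Omega$, and analogously on the $\mathcal B_j$ side. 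Observing that $\nu^t_{x,y}(K \times \overline\Omega) = \delta\|m_\Gamma\|e^{-\delta t} \alpha^t_{x,y}(K)$ and $\nu^t_{x,y}(\overline\Omega \times K') = \delta\|m_\Gamma\|e^{-\delta t}\alpha^t_{y,x}(K')$, Lemma~\ref{lem:alpha<mu} then shows that these error sets carry asymptotically zero $\nu^t_{x,y}$-mass. Combining this with Proposition~\ref{prop:1ere_etage} applied to the quadruple $(A_i^\pm, B_j^\pm)$ yields, for each $(i,j)$,
\begin{align*}
e^{-\eps/4}\mu_x(A_i^-)\mu_y(B_j^-) & \leq \liminf_t \nu^t_{x,y}(\mathcal A_i \times \mathcal B_j) \\
& \leq \limsup_t \nu^t_{x,y}(\mathcal A_i \times \mathcal B_j) \leq e^{\eps/4}\mu_x(A_i^+)\mu_y(B_j^+).
\end{align*}

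Summing these estimates with the weights $c^\pm_{ij}$, letting $\eta' \to 0$ first (using that $\mu_x(A_i^\pm) \to \mu_x(A_i)$) and then $\eta \to 0$, and finally invoking $\varphi^- \leq \varphi \leq \varphi^+$ together with $\|\varphi^+ - \varphi^-\|_\infty < \eta$, yields the desired two-sided $e^{\pm \eps}$ bounds. The main obstacle is the replacement of Borel cells by products of cones: this requires both that the error sets be free of extremal points (provided by Lemma~\ref{lem:no extremal point}) and that they be compact so that Lemma~\ref{lem:alpha<mu} applies cleanly; the latter is where the compactness of $\overline{\widehat V}, \overline{\widehat W}$ inside the compact space $\overline\Omega$ is used in an essential way, and the former is what forces us to pass between $A_i$ and the slightly larger/smaller $A_i^\pm$.
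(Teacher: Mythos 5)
Your argument is correct and follows essentially the same route as the paper's proof: fix $\widehat V,\widehat W$ via Proposition~\ref{prop:1ere_etage}, sandwich Borel product cells between contracted and expanded cones over $A^\pm,B^\pm$ chosen by regularity of $\mu_x,\mu_y$, and kill the error sets using Lemma~\ref{lem:no extremal point} together with Lemma~\ref{lem:alpha<mu} applied to the marginals $\alpha^t_{x,y},\alpha^t_{y,x}$ of $\nu^t_{x,y}$. The only differences are cosmetic: you spell out the simple-function approximation that the paper leaves implicit (just take care that each cell closure $\overline{A_i}$ lies in $V$, e.g.\ by shrinking around the compact support of $\varphi$, so that the open sets $A_i^+\subset V$ exist), and note that Lemma~\ref{lem:alpha<mu} only needs the error sets to be compact in $\overline\Omega$ and free of extremal points --- their containment in $\overline{\widehat V}$ plays no role.
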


\begin{proof}
Let $\widehat{V}$ and $\widehat{W}$ be the open neighborhoods given by Proposition~\ref{prop:1ere_etage}. It is enough to prove that for all measurable subsets $\Acal$ and $\Bcal$ such that $\overline{\Acal}\subset\widehat{V}$, $\overline{\Bcal}\subset\widehat{W}$ and $\mu_x(\del\Acal)=\mu_y(\del\Bcal)=0$, we have \begin{align*}
e^{-\eps} \mu_x(\mathcal{A}) \mu_y(\mathcal{B})\leq \liminf \nu_{x,y}^t(\mathcal{A} \times \mathcal{B})\leq \limsup \nu_{x,y}^t(\mathcal{A} \times \mathcal{B}) \leq e^\eps \mu_x(\mathcal{A}) \mu_y(\mathcal{B}).
\end{align*} 
Let $\eps'>0$. Let $A^+$ (\resp $B^+$) be an open neighborhood of $\Acal\cap\del\Omega$ (\resp $\Bcal\cap\del\Omega$) in $\del\Omega$, and $A^-\subset \interior(\Acal)\cap\del\Omega$ (\resp $B^-\subset\interior(\Bcal)\cap\del\Omega$) be compact such that 
\begin{equation*}
    \mu_x(\interior(\Acal)\smallsetminus A^-)+\mu_x(A^+\smallsetminus\overline{\Acal})+\mu_y(\interior(\Bcal)\smallsetminus B^-)+\mu_y(B^+\smallsetminus\overline{\Bcal})<\eps'.
\end{equation*}

By Lemma \ref{lem:alpha<mu} and Lemma~\ref{lem:no extremal point}, 
\begin{align*}
e^{-\delta t}\alpha_{x,y}^t(\Acal\smallsetminus\cone_r^-(x,A^+)), \  & e^{-\delta t}\alpha_{x,y}^t(\cone_r^+(x,A^-)\smallsetminus\Acal),\\ e^{-\delta t}\alpha_{y,x}^t(\Acal\smallsetminus\cone_r^-(y,B^+)), & \text{ and } e^{-\delta t}\alpha_{y,x}^t(\cone_r^+(y,B^-)\smallsetminus\Bcal)
\end{align*}
all converge to zero as $t \to \infty$. Since the projections of $\nu_{x,y}^t$ to the first and second coordinates are, respectively, $\delta\Vert m_\Gamma\Vert e^{-\delta t}\alpha_{x,y}^t$ and $\delta\Vert m_\Gamma\Vert e^{-\delta t}\alpha_{y,x}^t$, we have that
 \begin{align*}
     \limsup \nu_{x,y}^t(\Acal\times \Bcal) &\leq \limsup \nu_{x,y}^t(\cone_r^-(x,A^+)\times\cone_r^-(y,B^+)) \\
     \liminf \nu_{x,y}^t(\Acal\times \Bcal) &\geq \liminf \nu_{x,y}^t(\cone_r^+(x,A^-)\times\cone_r^+(y,B^-))
 \end{align*}
and hence
\begin{align*}
    \limsup \nu_{x,y}^t(\Acal\times \Bcal) &\leq \limsup \nu_{x,y}^t(\cone_r^-(x,A^+)\times\cone_r^-(y,B^+)) \\
    & \leq e^\eps \mu_x(A^+) \, \mu_y(B^+) \\
    & \leq e^\eps\mu_x(\Acal) \, \mu_y(\Bcal) + \eps'e^\eps(\Vert\mu_x\Vert + \Vert\mu_y\Vert),
\end{align*}
and
\begin{align*}
    \liminf \nu_{x,y}^t(\Acal\times \Bcal) &\geq \liminf \nu_{x,y}^t(\cone_r^+(x,A^-)\times\cone_r^+(y,B^-)) \\
    & \geq e^{-\eps} \mu_x(A^-) \, \mu_y(B^-) \\
    & \geq e^{-\eps} \mu_x(\Acal) \, \mu_y(\Bcal) - \eps'e^\eps(\Vert\mu_x\Vert + \Vert\mu_y\Vert).
\end{align*}
This ends the proof of the proposition, since $\eps'$ can be taken arbitrarily small.
\end{proof}

We now conclude the proof of Theorem \ref{thm:orbit_equidist}.

Let $\nu$ be an accumulation point of $(\nu_{x,y}^t)_{t\to\infty}$. Let $\varphi$ be a non-negative continuous function on $\overline{\Omega}^2$; it is enough to prove that for any $\eps>0$,
\[ e^{-\eps} \int \varphi \,d(\mu_x \otimes \mu_y)-\eps \leq \int \varphi \,d\nu \leq e^\eps \int \varphi \,d(\mu_x \otimes \mu_y)+ \eps .\]
As noted above, the projection of $\nu$ to the first and second coordinates are accumulation points of, respectively, $(\delta\Vert m_\Gamma\Vert e^{-\delta t}\alpha_{x,y}^t)_t$ and $(\delta\Vert m_\Gamma\Vert e^{-\delta t}\alpha_{y,x}^t)_t$. Thus, according to Lemma~\ref{lem:alpha<mu}, $\nu$ (as well as $\mu_x\otimes\mu_x$) gives full measure to the set of pairs of $\mathcal C^1$ and extremal points of $\overline{\Omega}$. Let $K\subset\del\Omega$ be a compact set of $\mathcal C^1$ and extremal points such that 
\begin{equation*}
 \int_{\overline{\Omega}^2\smallsetminus K^2}\varphi \,d\nu + \int_{\overline{\Omega}^2\smallsetminus K^2}\varphi \,d(\mu_x\otimes\mu_y)\leq \epsilon.
\end{equation*}
By Proposition~\ref{prop:3eme etape}, $K^2$ can be covered by a finite number of open sets $(U_i)_{1\leq i\leq n}$, where $n\in\nats_{>0}$, such that for any $1\leq i\leq n$, for any non-negative function $\psi$ supported on $U_i$,
\[ e^{-\eps} \int \psi \,d(\mu_x \otimes \mu_y) \leq \int \psi \,d\nu \leq e^\eps \int \psi \,d(\mu_x \otimes \mu_y) .\]
Set $U_0:=\overline{\Omega}^2\smallsetminus K^2$. Let $(\chi_i)_{0\leq i\leq n}$ be a partition of unity associated to $(U_i)_{0\leq i\leq n}$. Then
\begin{align*}
    \int \varphi \,d\nu & = \int\chi_0\varphi \,d\nu + \sum_{i=1}^n\int\chi_i\varphi \,d\nu \\
    & \leq \eps + e^\eps \sum_{i=1}^n\int\chi_i\varphi \,d(\mu_x\otimes\mu_y) \\
    & \leq e^\eps \int\varphi \,d(\mu_x\otimes\mu_y) +\eps,
\end{align*}
and similarly
\begin{equation*}
    \int \varphi \,d\nu\geq e^{-\eps} \int\varphi \,d(\mu_x\otimes\mu_y) -\eps.\qedhere
\end{equation*}
\end{proof}

\section{Equidistribution of primitive closed geodesics} \label{sec:pcgeod_equidist}

In this section, following \cite[Th.\,5.1.1]{Roblin}, we prove an equidistribution result for primitive closed geodesics, again with counting results for such geodesics as a consequence:
\begin{thm}
\label{thm:pcgeod_equidist}
Let $\Omega\subset \RPn$ be a domain, and suppose $\Gamma \leq \Aut(\Omega)$ is a non-elementary rank-one discrete subgroup such that $S \Omega / \Gamma$ admits a finite Sullivan measure $m_\Gamma$ associated to a $\Gamma$-equivariant conformal density $\mu$ of dimension $\delta(\Gamma)$.
Then
\[ \delta \ell e^{-\delta \ell} \sum_{g \in \mathcal{G}^{\rkone}_\Gamma\!(\ell)} \dirac_g \xrightarrow[\ell \to +\infty]{}
\frac{m_\Gamma}{\|m_\Gamma\|} \]
weakly in $C_c(S\Omega/\Gamma)^*$.
\end{thm}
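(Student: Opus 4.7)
Following the strategy of Roblin \cite[Th.\,5.1.1]{Roblin}, the plan is to test against the indicator $\mathbf 1_B$ of a small flow box. By standard partition-of-unity and approximation arguments (mirroring the final stage of the proof of Theorem~\ref{thm:orbit_equidist}, together with mass-escape control as in Lemma~\ref{lem:alpha<mu}), it suffices to prove the asserted convergence when $\varphi = \mathbf 1_B$ for $B = \pi(\tilde B)$ a small flow box in $S\Omega/\Gamma$ lifting injectively from $\tilde B = \Hopf_o(V \times W \times [-\tau,\tau]) \subset S\Omega$, where $V, W \subset \del_{\hor}\Omega$ are small neighborhoods of $\mathcal C^1$ extremal points $\xi_0,\eta_0$ in $\supp(\mu_o)\subset \del_{\sse}\Omega$ (by Theorem~\ref{thm:HTSR}) and $o$ lies on the geodesic $(\xi_0\,\eta_0)$, so that $\grop o{\xi_0}{\eta_0}=0$ and $m(\tilde B) \approx 2\tau\,\mu_o(V)\,\mu_o(W)$. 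A negligibility estimate (short geodesics contribute sub-exponentially) further permits replacement of the normalized $\dirac_g$ by the unnormalized Lebesgue measure $L_g$ of mass $\ell(g)$ on each closed geodesic, reducing the claim to
\begin{equation*}
\delta\,e^{-\delta\ell}\sum_{\substack{[\gamma]\text{ prim.\ rk-1}\\ \ell(\gamma)\leq\ell}}\int_0^{\ell(\gamma)}\mathbf 1_B(\pi(g^s\tilde v_\gamma))\,ds\xrightarrow[\ell\to\infty]{}\frac{m(\tilde B)}{\|m_\Gamma\|},
\end{equation*}
where the sum runs over conjugacy classes of primitive rank-one elements (in bijection with $\mathcal{G}^{\rkone}_\Gamma$ via the uniqueness statement of Lemma~\ref{lem:diff att => diff rep}).

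Next, I would unfold the inner integral using $\mathbf 1_B\circ\pi=\sum_{\gamma'\in\Gamma}\mathbf 1_{\gamma'\tilde B}$, which is valid since $\tilde B$ projects injectively. Using the $\langle\gamma\rangle$-translation symmetry of the axis of $\gamma$, the double sum rewrites, up to boundary contributions negligible for $\tilde B$ in general position, as a count of single elements:
\begin{equation*}
\sum_{[\gamma]}\int_0^{\ell(\gamma)}\mathbf 1_B(\pi(g^s\tilde v_\gamma))\,ds\,\approx\,2\tau\cdot\#\!\left\{\gamma\in\Gamma\text{ prim.\ rk-1}\,:\,\axis(\gamma)\cap\tilde B\neq\varnothing,\;\ell(\gamma)\leq\ell\right\}.
\end{equation*}
The condition $\axis(\gamma)\cap\tilde B\neq\varnothing$ translates, for small $V,W,\tau$, into $\gamma^+$ lying near $\xi_0$ and $\gamma^-$ near $\eta_0$.

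To count these elements, I would apply Theorem~\ref{thm:orbit_equidist} to the test function $\mathbf 1_V\otimes\mathbf 1_W$. A north-south dynamics argument shows that an element $\gamma\in\Gamma$ with $\gamma o$ and $\gamma^{-1}o$ respectively in slight enlargements of $V,W$ and with $d_\Omega(o,\gamma o)$ large is automatically biproximal with $\gamma^\pm$ close to $\xi_0,\eta_0$, hence rank-one once $V,W\subset\del_{\sse}\Omega$. This yields
\begin{equation*}
\#\!\left\{\gamma\in\Gamma\,:\,d_\Omega(o,\gamma o)\leq\ell,\;\axis(\gamma)\cap\tilde B\neq\varnothing\right\}\sim\frac{\mu_o(V)\,\mu_o(W)}{\delta\,\|m_\Gamma\|}\,e^{\delta\ell}.
\end{equation*}
Substituting back, the limit evaluates to $2\tau\,\mu_o(V)\,\mu_o(W)/\|m_\Gamma\|=m(\tilde B)/\|m_\Gamma\|$, as desired.

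The main obstacle is rigorously identifying the orbit count from Theorem~\ref{thm:orbit_equidist} with the count of primitive rank-one conjugacy classes whose axes meet $\tilde B$. Three subtleties intervene. First, non-rank-one elements in the orbit count must be shown to contribute negligibly; this follows from Proposition~\ref{prop:density of att-rep pairs} combined with the concentration of $\mu_o$ on $\del_{\sse}\Omega$ (Theorem~\ref{thm:HTSR}), once $V,W$ are chosen inside $\del_{\sse}\Omega$. Second, non-primitive elements $\gamma=\gamma_0^k$ with $k\geq 2$ have $\ell(\gamma_0)\leq\ell/k$ and thus contribute at most $\sum_{k\geq 2}e^{\delta\ell/k}=o(e^{\delta\ell})$. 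Third, passing from the orbit-element count to the conjugacy-class count relies on the injective lift $\tilde B\hookrightarrow B$: each conjugacy class $[\gamma]$ contributes one element per crossing of the closed geodesic $g_{[\gamma]}$ with $B$, precisely matching the weight assigned by the integral $\int_0^{\ell(\gamma)}\mathbf 1_B(\pi(g^s\tilde v_\gamma))\,ds$. Making these three identifications uniform in $\ell$ is the technical heart of the argument.
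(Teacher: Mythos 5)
Your overall strategy is the same as the paper's (both follow Roblin's proof of Th.~5.1.1): use orbit equidistribution (Theorem~\ref{thm:orbit_equidist}) together with a closing lemma to convert orbit counts into counts of rank-one axes, compare $\ell(\gamma)$ with $d_\Omega(o,\gamma o)$, discard non-primitive powers, and patch with a partition of unity. The paper packages this with the measures $\Leb_\gamma$ tested on the open sets $\widehat V(x,r)$ of vectors whose geodesic passes within $r$ of $x$ rather than with flow boxes, but that is largely cosmetic. Two of your acknowledged loose ends are precisely the content of Lemmas~\ref{lem:neglects non-rk1} and~\ref{lem:511NSC}: besides discarding non-rank-one elements, you must also show that rank-one $\gamma$ with $(\gamma^-,\gamma^+)$ near $(\eta_0,\xi_0)$ but with $\gamma o$ or $\gamma^{-1}o$ \emph{outside} the enlarged neighbourhoods are negligible --- otherwise the upper bound on your axis count does not follow from the orbit count. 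The second assertion of Lemma~\ref{lem:neglects non-rk1} (that $\gamma x$ is close to $\gamma^+$ for all but a negligible proportion of rank-one $\gamma$), itself derived from Theorem~\ref{thm:orbit_equidist}, is what closes this loop; it should be stated and proved rather than left implicit in ``north-south dynamics''.

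The genuine gap is the torsion case. You assert that primitive rank-one conjugacy classes are in bijection with primitive closed rank-one geodesics ``via Lemma~\ref{lem:diff att => diff rep}'', but that lemma only controls elements sharing an attracting fixed point up to common powers; it does not rule out several distinct conjugacy classes tracing the same closed orbit. Example~\ref{eg:timesZ/2Z} (with $\Gamma\simeq\Gamma'\times\ints/2\ints$) shows this actually happens: there each closed geodesic carries two conjugacy classes, so summing $\dirac_g$ over conjugacy classes rather than over geodesics overcounts by a factor of $2$ and your claimed limit would be off by that factor. Since Theorem~\ref{thm:pcgeod_equidist} is stated for arbitrary discrete (hence possibly torsion) $\Gamma$, you need the bookkeeping of Definition~\ref{defn:strongly primitive} and Observation~\ref{obs:vers l'infini et l'au-delà!}: work with individual strongly primitive elements $\gamma$ and the measures $\frac{1}{\ell(\gamma)}\Leb_\gamma$ in the universal cover, and recover $\dirac_g$ as the quotient of $\sum_{\gamma\in A}\frac{1}{\ell(\gamma)}\Leb_\gamma$ over the whole union $A$ of strongly primitive conjugacy classes associated to $g$. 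With that substitution, and with the negligibility and $\ell(\gamma)$-versus-$d_\Omega(o,\gamma o)$ comparisons made uniform in $\ell$ as you note, your argument goes through.
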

Here $\mathcal{G}^{\rkone}_\Gamma\!(\ell)$ denotes the set of primitive closed rank-one geodesics of length at most $\ell$ in $\Omega/\Gamma$, for any primitive closed rank-one geodesic $g \in \mathcal{G}^{\rkone}_\Gamma = \bigcup_{\ell>0} \mathcal{G}^{\rkone}_\Gamma\!(\ell)$ we write $\dirac_{g}$ to denote the normalized Lebesgue measure supported on $g$, and $C_c(S\Omega / \Gamma)^*$ denotes the weak* dual of the space of compactly-supported continuous functions on $S\Omega / \Gamma$.

If $\Omega$ is strictly convex with $C^1$ boundary, then all primitive closed geodesics are rank-one and we may omit the rank-one hypothesis in the previous paragraph.

As a corollary, we may already obtain the following counting result for primitive closed rank-one geodesics, by integrating against the measures on both sides a function $f\in C_c(S\Omega/\Gamma)$ which is equal to $1$ on the (compact) convex core:
\begin{cor} In the setting of Theorem~\ref{thm:pcgeod_equidist}, if $\Gamma$ acts convex cocompactly on $\Omega$, then

\[ \#\mathcal{G}^{\rkone}_\Gamma \!(\ell) \underset{\ell\to\infty}{\widesim} \frac{e^{\delta\ell}}{\delta\ell}.\]
\end{cor}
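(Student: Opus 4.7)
The plan is to deduce the asymptotic from Theorem~\ref{thm:pcgeod_equidist} by testing both sides against a single carefully chosen function $f \in C_c(S\Omega/\Gamma)$, exactly as suggested in the sentence preceding the corollary. The key observation is that under the convex cocompactness assumption, the convex core $CH(\Lambda_\Gamma^{\orb})/\Gamma$ is compact, so it admits a compact neighborhood $N \subset S\Omega/\Gamma$; by Urysohn's lemma we can then pick $f \in C_c(S\Omega/\Gamma)$ with $0 \leq f \leq 1$, equal to $1$ on (the unit tangent bundle of) the convex core, and supported in $N$.

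Next I would evaluate both sides of the equidistribution statement in Theorem~\ref{thm:pcgeod_equidist} against this $f$. For the left-hand side, every primitive closed geodesic $g \in \mathcal{G}^{\rkone}_\Gamma(\ell)$ is entirely contained in the convex core (as noted in the introduction, and as follows from the fact that a periodic orbit is non-wandering, hence lies in $CH(\Lambda_\Gamma^{\orb})/\Gamma$), so $\int f \, d\dirac_g = 1$ for each such $g$. Therefore
\[
\int_{S\Omega/\Gamma} f \, d\Bigl(\delta \ell\, e^{-\delta\ell} \sum_{g \in \mathcal{G}^{\rkone}_\Gamma(\ell)} \dirac_g\Bigr) \;=\; \delta \ell\, e^{-\delta\ell}\, \#\mathcal{G}^{\rkone}_\Gamma(\ell).
\]
For the right-hand side, I would invoke the fact, recalled in the introduction and proved in \cite[\S7.1]{Blayac_PSdensities}, that when $\Gamma$ is non-elementary rank-one and convex cocompact, the Sullivan measure $m_\Gamma$ is concentrated on vectors based in the convex core. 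Since $f \equiv 1$ there, we get $\int f\, dm_\Gamma = \|m_\Gamma\|$, so that
\[
\int_{S\Omega/\Gamma} f \; d\!\left(\frac{m_\Gamma}{\|m_\Gamma\|}\right) \;=\; 1.
\]

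Combining these two computations with the weak-$*$ convergence in $C_c(S\Omega/\Gamma)^*$ asserted by Theorem~\ref{thm:pcgeod_equidist} yields $\delta\ell\, e^{-\delta \ell}\, \#\mathcal{G}^{\rkone}_\Gamma(\ell) \to 1$ as $\ell \to \infty$, which is exactly the desired asymptotic $\#\mathcal{G}^{\rkone}_\Gamma(\ell) \sim e^{\delta \ell}/(\delta\ell)$.

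There is essentially no obstacle here; the proof is a direct unpacking of the weak-$*$ statement. The only point requiring verification is that the two ingredients legitimately apply in the convex cocompact setting: namely (a) that $m_\Gamma$ is finite and concentrated on the convex core (so that Theorem~\ref{thm:pcgeod_equidist} applies and the integral of $f$ against $m_\Gamma/\|m_\Gamma\|$ really equals $1$), and (b) that $f$ can indeed be chosen with compact support (which requires precisely the compactness of the convex core, i.e., convex cocompactness). Both are supplied by the assumption and by \cite[\S7.1]{Blayac_PSdensities}.
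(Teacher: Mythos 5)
Your proof is correct and follows exactly the route the paper intends: integrate both sides of Theorem~\ref{thm:pcgeod_equidist} against a compactly supported $f$ that equals $1$ on the unit tangent bundle over the compact convex core, using that closed rank-one geodesics lie in the core and that $m_\Gamma$ is concentrated on vectors based in the core (\cite[\S7.1]{Blayac_PSdensities}). The paper's own argument is precisely this one-line integration, so your write-up just supplies the same ingredients in more detail.
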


As above, these results extend and are inspired by results first proven in the context of closed manifolds of constant negative curvature \cite{Margulis,Bowen} 
and subsequently extended to much more general settings; we refer the interested reader to the beginning of \cite[Chap.\,5]{Roblin} 
for a more extended version of this history.

The proof of Theorem~\ref{thm:pcgeod_equidist} will take place in the universal cover. Thus we need to interpret in the universal cover the Lebesgue measure on rank-one periodic orbits of $SM := S\Omega / \Gamma$ (in the setting of Theorem~\ref{thm:pcgeod_equidist}). 

\begin{defn}\label{defn:strongly primitive}
Let $M=\Omega/\Gamma$ be a non-elementary rank-one convex projective orbifold. A rank-one element $\gamma\in\Gamma$ is said to be {\bf strongly primitive} if $\ell(\gamma)\leq \ell(\gamma')$ for any rank-one element $\gamma'\in\Gamma$ with the same axis as $\gamma$. 

Note strong primitivity is conjugacy-invariant.

Let $g$ be a closed rank-one $(g_\Gamma^t)$-orbit in $SM$. The {\bf (strongly primitive) conjugacy classes associated to $g$} are the conjugacy classes of (strongly primitive) elements $\gamma\in\Gamma$ such that $\gamma\tilde v=g^{\ell(\gamma)}\tilde v$ for any $\tilde v\in S\Omega$ in a lift in $S\Omega$ of $g$.
\end{defn}
\newcommand{\LebSegUniv}{\Leb_{[\tilde v\, g^\ell \tilde v]}}
\begin{notation}\label{nota:Leb}
Let $M=\Omega/\Gamma$ be a non-elementary rank-one convex projective orbifold. For any $\tilde v\in S\Omega$ and $\ell\geq 0$, we denote by $\LebSegUniv$ the push-forward by $t\mapsto g^t\tilde v$ of the Lebesgue measure (of mass $\ell$) on $[0,\ell]$.

For any rank-one $\gamma\in\Gamma$, we denote by $\Leb_\gamma$ the push-forward of the Lebesgue measure on $\real$ by $t\mapsto g^t\tilde w$ for any $\tilde w\in S\Omega$ tangent to the axis of $\gamma$.
\end{notation}

\begin{obs}\label{obs:vers l'infini et l'au-delà!}
Let $M=\Omega/\Gamma$ be a non-elementary rank-one convex projective orbifold. Consider a closed rank-one $(g_\Gamma^t)$-orbit $g=\{g^tv\}_t\subset SM$ with period $\ell$, where $v\in SM$. Let $\tilde v\in S\Omega$ be any lift of $v$, and $A$ the union of strongly primitive conjugacy classes associated to $g$. Then, using Notations~\ref{nota:Leb}, the measure $\ell\cdot\dirac_g$ is the quotient of $\sum_{\gamma\in\Gamma} \gamma_* \LebSegUniv$ by $\Gamma$ (in the same sense than $m_\Gamma$ is the quotient of $m$ for any Sullivan measures $m$ and $m_\Gamma$), and
$$\sum_{\gamma\in \Gamma}\gamma_* \LebSegUniv = \sum_{\gamma\in A}\Leb_\gamma.$$
\end{obs}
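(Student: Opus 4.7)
The plan is to work with a distinguished strongly primitive element. By the closed-orbit hypothesis there exists a rank-one $\gamma_0\in\Gamma$ with $\gamma_0\tilde v=g^\ell\tilde v$ and $\ell(\gamma_0)=\ell$; every member of $A$ is a $\Gamma$-conjugate of such a $\gamma_0$. I would then treat the two identities separately.

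For the first identity, I would check the defining relation from Remark~\ref{rmk:mixing in the universal cover} on test functions $\phi\in C_c(S\Omega)$: both
\[
\int_{S\Omega}\phi\,d\Bigl(\sum_{\gamma\in\Gamma}\gamma_*\LebSegUniv\Bigr) \quad \text{and} \quad \int_{SM}\Bigl(\sum_{\gamma\in\Gamma}\phi\circ\gamma\Bigr)d(\ell\,\dirac_g)
\]
collapse to $\sum_{\gamma\in\Gamma}\int_0^\ell\phi(\gamma g^t\tilde v)\,dt$. On the left, this is the definition of $\LebSegUniv$ as the push-forward of Lebesgue on $[0,\ell]$ by $t\mapsto g^t\tilde v$; on the right, one uses that $\sum_\gamma\phi\circ\gamma$ descends to a function on $SM$ and that $\ell\,\dirac_g$ is the unnormalized Lebesgue measure on the closed orbit $g=\pi(\{g^t\tilde v\}_{t\in[0,\ell]})$. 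Proper discontinuity together with compactness of $\supp\phi$ ensures absolute convergence.

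For the second identity, I would partition $\Gamma$ into left cosets of $C:=C_\Gamma(\gamma_0)$, giving
\[
\sum_{\gamma\in\Gamma}\gamma_*\LebSegUniv=\sum_{[\eta]\in\Gamma/C}\eta_*\Bigl(\sum_{\sigma\in C}\sigma_*\LebSegUniv\Bigr).
\]
The structural claim is $C=\langle\gamma_0\rangle\cdot F_0$, where $F_0:=C\cap\Stab_\Gamma(\tilde v)$ is finite by proper discontinuity. Indeed any $\sigma\in C$ fixes $\{\gamma_0^+,\gamma_0^-\}$ pointwise (swapping would conjugate $\gamma_0$ to $\gamma_0^{-1}$), hence preserves $\axis(\gamma_0)$ and acts on it as a translation; by Lemma~\ref{lem:diff att => diff rep}, discreteness of $\Gamma$, and strong primitivity of $\gamma_0$, this translation lies in $\ell\ints$, so $\sigma=\gamma_0^n\tau$ with $\tau\in F_0$. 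Since $\tau\in F_0$ fixes $\tilde v$ and commutes with the flow, one checks $\tau_*\LebSegUniv=\LebSegUniv$; hence the inner sum equals $|F_0|\sum_{n\in\ints}(\gamma_0^n)_*\LebSegUniv=|F_0|\Leb_{\gamma_0}$, the consecutive segments $[\gamma_0^n\tilde v,\gamma_0^{n+1}\tilde v]$ tiling $\axis(\gamma_0)$. Applying $\eta_*$ yields $|F_0|\Leb_{\eta\gamma_0\eta^{-1}}$, and summing over cosets $[\eta]\in\Gamma/C$ reconstructs $\sum_{\gamma\in A}\Leb_\gamma$, the multiplicity $|F_0|$ matching the $|F_0|$ strongly primitive elements in $\gamma_0\cdot F_0$ whose $\Gamma$-conjugacy classes fill out $A$.

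The main obstacle is the identification of $C_\Gamma(\gamma_0)$ and the bookkeeping of torsion in the orbifold case; in the torsion-free case $F_0$ is trivial, $C=\langle\gamma_0\rangle$, $A=[\gamma_0]$, and the argument is transparent. The essential rank-one input throughout is Lemma~\ref{lem:diff att => diff rep}, which controls how rank-one elements can share endpoints and pins down the centralizer.
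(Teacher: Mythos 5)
Your route genuinely differs from the paper's: the paper decomposes $\Gamma=\mathcal{R}\cdot\langle\gamma_0\rangle\cdot\Stab_\Gamma(\tilde v)$ using the stabilizer $H$ of the oriented axis and checks that $(r,b)\mapsto rbr^{-1}$ is a bijection from $\mathcal{R}\times\gamma_0\Stab_\Gamma(\tilde v)$ onto $A$, whereas you sum over cosets of the centralizer $C=C_\Gamma(\gamma_0)$. Your treatment of the first identity is fine, as are the structural claim $C=\langle\gamma_0\rangle\cdot F_0$ and the computation $\sum_{\sigma\in C}\sigma_*\LebSegUniv=|F_0|\,\Leb_{\gamma_0}$.

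The gap is in your last step. The set of strongly primitive elements $\gamma$ with $\gamma\tilde v=g^{\ell(\gamma)}\tilde v$ is $B:=\gamma_0\Stab_\Gamma(\tilde v)$ (minimality of the period forces $\ell(\gamma)=\ell$, hence $\gamma_0^{-1}\gamma\in\Stab_\Gamma(\tilde v)$), so $A$ is the union of the conjugacy classes of the $\#\Stab_\Gamma(\tilde v)$ elements of $B$, not of the $\#F_0$ elements of $\gamma_0 F_0$, and the classes of $\gamma_0F_0$ do \emph{not} fill out $A$ in general: an element of $\Stab_\Gamma(\tilde v)$ fixes the axis pointwise but need not centralize $\gamma_0$. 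Concretely, in the family of examples at the end of Section~\ref{sec:perconj}, take $F=\{\pm I,\pm R\}<\orth(2)$ with $R$ a reflection and $\rho(\gamma_0)$ a quarter-turn; then $F_0=\{\pm I\}$, while $c(\gamma_0,R)$ belongs to $A$ but is conjugate to no element of $\gamma_0F_0$ (the class of the $F$-component in $F/\{\pm I\}$ is a conjugacy invariant here). Your final equality is nonetheless true, for a reason you neither state nor prove: the elements of $A$ on a fixed oriented axis $r\cdot\axis(\gamma_0)$ are exactly $rBr^{-1}$, all $\#\Stab_\Gamma(\tilde v)$ of them contributing the same measure $\Leb_{r\gamma_0r^{-1}}$, while the number of $C$-cosets inside each $H$-coset is $[H:C]=\#\Stab_\Gamma(\tilde v)/\#F_0$ (the map $sF_0\mapsto sC$ is a bijection from $\Stab_\Gamma(\tilde v)/F_0$ to $H/C$, surjectivity using $\gamma_0^{n}s\gamma_0^{-n}\in\Stab_\Gamma(\tilde v)$), so that $[H:C]\cdot\#F_0=\#\Stab_\Gamma(\tilde v)$ and the multiplicities match. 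That index computation is precisely the torsion bookkeeping you identify as the main obstacle, and it is missing; as written, the matching you assert is false.
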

\begin{proof}
The measure $\ell\cdot\dirac_g$ is the push-forward by $\pi_\Gamma: S\Omega\rightarrow SM$ of $\LebSegUniv$, so by definition of a quotient measure, $\ell\cdot\dirac_g$ is the quotient of $\sum_{\gamma\in\Gamma} \gamma_* \LebSegUniv$ by $\Gamma$.

Fix $\gamma_0\in\Gamma$ such that $g^{\ell} \tilde v=\gamma_0\tilde v$. Let $H:= \Stab_\Gamma\{g^t\tilde v\}_t$, let $\mathcal{R}\subset\Gamma$ be a set of representatives of $\Gamma / H$ containing the identity, and $B=\gamma_0 \Stab_\Gamma(\tilde{v})\subset H$. Then $\Gamma=\mathcal{R}\cdot H=\mathcal{R}\cdot \langle \gamma_0 \rangle \cdot \Stab_H(\tilde{v})$.
\begin{align*}
 \sum_{\gamma\in\Gamma}\gamma_*\LebSegUniv & = \!\!\!\!\! \sum_{\substack{n \in \ints \\ (r,h)\in \mathcal{R}\times \Stab_H(\tilde{v})}} \!\!\!\!\! r_*\gamma^n_*h_*\LebSegUniv 
 = \!\!\!\!\!\sum_{\substack{n \in \ints \\ (r,h)\in \mathcal{R}\times \Stab_H(\tilde{v})}} \!\!\!\!\! r_*\gamma^n_*\LebSegUniv \\
 & = \sum_{(r,h)\in \mathcal{R}\times \Stab_H(\tilde{v})}r_*\Leb_\gamma 
 = \sum_{(r,h)\in \mathcal{R}\times \Stab_H(\tilde{v})}r_*\Leb_{\gamma h} \\
 & = \sum_{(r,g)\in \mathcal{R}\times B}\Leb_{rgr^{-1}}.
\end{align*}
Finally observe that the map $\mathcal{R}\times B\rightarrow A$ that maps $(r,g)$ to $rgr^{-1}$ is a bijection, proving our claim.
\end{proof}

\begin{proof}[Proof of Theorem \ref{thm:pcgeod_equidist}]
Let $\Gamma^{\prkone}\subset\Gamma$ be the set of strongly primitive rank-one elements of $\Gamma$ (see Definition~\ref{defn:strongly primitive}).
Using Notations~\ref{nota:Leb}, set
$$\Epsy^L := \delta L e^{-\delta L} \sum_{\substack{\gamma\in\Gamma^{\prkone} \\ \ell(\gamma) \leq L}} \frac{1}{\ell(\gamma)} \Leb_\gamma.$$
As noted in Observation~\ref{obs:vers l'infini et l'au-delà!}, the quotient of $\Epsy^L$ under the action of $\Gamma$ is precisely 
$$\delta L e^{-\delta L} \sum_{g \in \mathcal{G}^{\rkone}_\Gamma\!(L)} \dirac_g.$$ 
Since $m_\Gamma$ is the quotient of $m$, we need to prove that $\Epsy^L \to \frac{m}{\|m_\Gamma\|}$ weakly in $C_c(S\Omega)^*$ when $L\to+\infty$.


We will first use Theorem \ref{thm:orbit_equidist} to obtain a measure $\nu_{x,1}^L$ converging weakly to $\mu$ when $L\to+\infty$, then successively modify $\nu_{x,1}^L$ to form $\nu_{x,2}^L$ and $\nu_{x,3}^L$, so that $\nu_{x,3}^L$ will be supported on pairs of fixed points of rank-one elements, and $\nu_{x,3}^L$ locally approaches $\mu$. By taking the product of $\|m_\Gamma\|^{-1} \nu_{x,3}^L$ with the Lebesgue measure on $\real$, we obtain a measure $\Em_{x,3}^L$ approaching $\|m_\Gamma\|^{-1} m$ locally (i.e. near the fibre over $x\in\Omega$ in $S\Omega$.) To finish, we relate $\Em_{x,3}^L$ 
to the measure of equidistribution $\Epsy^L$. 

To relate our various modified measures we will use the following lemma. Observe that the lemma will also imply that the theorem remains true if we replace $\mathcal{G}^{\rkone}_{\Gamma}$ by any bigger set of primitive closed \emph{straight} geodesics that are in different free homotopy classes.
\begin{lem}\label{lem:neglects non-rk1}
In the setting of Theorem~\ref{thm:pcgeod_equidist}, given $x\in\Omega$, we have
\[ e^{-\delta t}\cdot \#\{\gamma\in\Gamma \text{ not rank-one} \:|\: d_\Omega(x,\gamma x)\leq t\} \xrightarrow[t\to\infty]{} 0. \]
Moreover, for any $\epsilon>0$, we have,
\[ e^{-\delta t}\cdot \#\{\gamma\in\Gamma \text{ rank-one} \:|\: d_\Omega(x,\gamma x)\leq t \text{ and } d_{\proj(V)}(\gamma x,\gamma^+)\geq \eps\} \xrightarrow[t\to\infty]{} 0. \]
\end{lem}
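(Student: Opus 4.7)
The plan combines Theorem~\ref{thm:orbit_equidist} (applied with $y=x$) with a dynamical contraction argument. Setting
\[\nu^t := \delta\|m_\Gamma\|\,e^{-\delta t}\!\!\sum_{d_\Omega(x,\gamma x)\leq t}\!\!\dirac_{\gamma x}\otimes\dirac_{\gamma^{-1}x},\]
one has $\nu^t\to\mu_x\otimes\mu_x$ weakly in $C(\overline\Omega\times\overline\Omega)^*$. Finiteness of $m_\Gamma$ and Poincar\'e recurrence place us in the divergent case of Theorem~\ref{thm:HTSR}, so $\mu_x$ is non-atomic with full mass on $\del_{\sse}\Omega\cap\Lambda_\Gamma$, and $\mu_x\otimes\mu_x$ concentrates on $(\del_{\sse}\Omega\cap\Lambda_\Gamma)^2$ off the diagonal.

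The crux is the following dynamical claim: for any distinct $\xi,\eta\in\del_{\sse}\Omega\cap\Lambda_\Gamma$ and any $\epsilon>0$, there exist open neighborhoods $U_\xi\ni\xi$ and $U_\eta\ni\eta$ in $\overline\Omega$ and $T_0>0$, such that every $\gamma\in\Gamma$ with $\gamma x\in U_\xi$, $\gamma^{-1}x\in U_\eta$ and $d_\Omega(x,\gamma x)\geq T_0$ is rank-one with $d_{\proj(V)}(\gamma x,\gamma^+)<\epsilon$. I would prove this by contradiction: from a violating sequence $(\gamma_n)$ one has $d_\Omega(x,\gamma_n x)\to\infty$, hence $\kappa(\gamma_n)\to\infty$ by Proposition~\ref{prop:kappa and Hilbert}, and along a subsequence the normalized lifts $\tilde\gamma_n/\|\tilde\gamma_n\|$ converge in $\mathrm{End}(V)$ to a non-invertible $L$ with $\xi$ in its image and $\eta$ in its kernel. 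Invariance $\gamma_n\overline\Omega\subset\overline\Omega$ together with extremality of $\xi$ forces $\rank L=1$: otherwise $[L]$ restricted to $\Omega\setminus\proj(\ker L)$ is a non-constant projective map and produces $y_0\in\Omega\cap\proj(L(V))$ with a preimage $[z_0]\in\overline\Omega$, and both possibilities $[z_0]\in\Omega$ (incompatible with $\gamma_n^{-1}y_0$ staying at bounded Hilbert distance from $\gamma_n^{-1}x\to\eta\in\del\Omega$) and $[z_0]\in\del\Omega$ (contradicting $\gamma_n[z_0]\in\del\Omega$ converging to $y_0\in\Omega$) then fail. Rank-one $L$ forces $\gamma_n$ biproximal for large $n$ with $\gamma_n^\pm\to\xi,\eta$, and a parallel convex-geometric argument at the fixed points $\gamma_n^\pm$ propagates $C^1$ and strong extremality from $\xi,\eta$ to $\gamma_n^\pm$, so $\gamma_n$ is rank-one. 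Finally $d_{\proj(V)}(\gamma_n x,\gamma_n^+)\to 0$ since both sequences tend to $\xi$, yielding the desired contradiction.

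Granting the dynamical claim, fix $\epsilon_0>0$ and for any $\epsilon\in(0,\epsilon_0)$ pick a compact $K\subset(\del_{\sse}\Omega\cap\Lambda_\Gamma)^2$ disjoint from the diagonal with $(\mu_x\otimes\mu_x)(K)\geq\|\mu_x\|^2-\epsilon$. Cover $K$ by finitely many product neighborhoods $U_{\xi_i}\times U_{\eta_i}$ given by the claim (with threshold $\epsilon$) and a common $T_0$, and let $U$ be their union, arranged so $\del U$ has zero $\mu_x\otimes\mu_x$-measure. Let $B_t\subset E_t:=\{\gamma:d_\Omega(x,\gamma x)\leq t\}$ be the set of $\gamma$ not rank-one or rank-one with $d_{\proj(V)}(\gamma x,\gamma^+)\geq\epsilon_0$. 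For $\gamma\in B_t\setminus E_{T_0}$, the dynamical claim (with $\epsilon<\epsilon_0$) forces $(\gamma x,\gamma^{-1}x)\notin U$; hence
\[\limsup_{t\to\infty} e^{-\delta t}\#B_t\leq(\delta\|m_\Gamma\|)^{-1}(\mu_x\otimes\mu_x)(\overline\Omega^2\setminus U)\leq(\delta\|m_\Gamma\|)^{-1}\epsilon.\]
Letting $\epsilon\to 0$ while $\epsilon_0>0$ remains arbitrary yields both assertions of the lemma.

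The main obstacle is the dynamical claim, especially the propagation of $C^1$ and strong extremality from $\xi,\eta$ to the fixed points $\gamma_n^\pm$; this requires careful convex-geometric arguments exploiting the invariance $\gamma_n\overline\Omega\subset\overline\Omega$ and the regularity of $\xi,\eta$.
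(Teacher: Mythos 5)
Your overall strategy is the same as the paper's: apply Theorem~\ref{thm:orbit_equidist} with $y=x$, use Theorem~\ref{thm:HTSR} to concentrate $\mu_x\otimes\mu_x$ on pairs of distinct strongly extremal points, and then invoke a ``closing lemma'' saying that orbit pairs $(\gamma x,\gamma^{-1}x)$ accumulating near such a pair come from rank-one elements whose fixed points are nearby; the finite-cover/portmanteau bookkeeping at the end matches the paper's and is fine. The difference is that the paper does not prove the closing lemma: it quotes it as Lemma~\ref{lem:511NSC} (i.e.\ \cite[Cor.\,7.13]{Blayac_PSdensities}), whereas you attempt to derive it from scratch, and that is where your argument has a genuine gap.

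The gap is the sentence ``a parallel convex-geometric argument at the fixed points $\gamma_n^\pm$ propagates $C^1$ and strong extremality from $\xi,\eta$ to $\gamma_n^\pm$.'' This is precisely the nontrivial content of the cited corollary, and it cannot be obtained from proximity alone: neither $C^1$-ness nor strong extremality is an open (or closed) condition on $\del\Omega$, so the mere fact that $\gamma_n^\pm\to\xi,\eta$ with $\xi,\eta\in\del_{\sse}\Omega$ says nothing about the regularity of $\gamma_n^\pm$ themselves. The actual proof has to use the dynamics, e.g.\ that the repelling hyperplane $\gamma_{n*}^-$ converges to the (unique, by $C^1$-ness of $\eta$) supporting hyperplane at $\eta$, which meets $\overline\Omega$ only at $\eta$ by strong extremality, combined with a criterion of the type used in Proposition~\ref{prop:density of att-rep pairs} (``a biproximal element whose repelling hyperplane meets $\overline\Omega$ in a single strongly extremal point is rank-one,'' \cite[Lem.\,3.2 and Prop.\,3.4.1]{Blayac_topmixing}). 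Without an argument of this kind the claim that $\gamma_n$ is rank-one is unsupported. A secondary, fixable issue: your derivation of $\rank L=1$ via the preimage $[z_0]$ is more convoluted than necessary and implicitly assumes the image of $[L]|_\Omega$ meets $\Omega$ and that $\proj(\ker L)$ misses $\Omega$; the clean route is to note that for every $w\in\Omega$ with $Lw\neq0$ one has $\gamma_n w\to[Lw]$ while also $\gamma_n w\to\xi$ (bounded Hilbert distance from $\gamma_n x$ plus extremality of $\xi$ and the lower semicontinuity of $d_{\overline\Omega}$), so $\mathrm{Im}\,L=\Span(\xi)$. If you simply cite Lemma~\ref{lem:511NSC} in place of your ``dynamical claim,'' the rest of your proof goes through and coincides with the paper's.
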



To prove Lemma~\ref{lem:neglects non-rk1} we need the following fact, which can be seen as a kind of closing lemma.

\begin{lem}[{\cite[Cor.\,7.13]{Blayac_PSdensities}}]\label{lem:511NSC}
Let $\Omega\subset \RPn$ be a domain. Fix $x\in\Omega$, $(\xi_-,\xi_+)\in\del^2\Omega$ two distinct strongly extremal points and $W$ a neighbourhood of $(\xi_-,\xi_+)$ in $\RPn^2$.

Then there exists a neighbourhood $U$ of $(\xi_-,\xi_+)$ in $\overline \Omega^2$ such that any $\gamma\in\Aut(\Omega)$ with $(\gamma^{-1}x,\gamma x)\in U$ is rank-one with $(\gamma^-,\gamma^+)\in W$.
\end{lem}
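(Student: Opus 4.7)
Proof plan: I would argue by contradiction using limit dynamics of projective transformations, combined with the strong extremality hypothesis. Suppose the conclusion fails; then there is a sequence $(\gamma_n) \subset \Aut(\Omega)$ with $(\gamma_n^{-1}x, \gamma_n x) \to (\xi_-, \xi_+)$ in $\overline{\Omega}{}^2$, yet for each $n$ either $\gamma_n$ fails to be rank-one or $(\gamma_n^-, \gamma_n^+) \notin W$.

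First, I would promote the convergence at $x$ to convergence at every point of $\Omega$: since $\gamma_n$ is a $d_\Omega$-isometry, $d_\Omega(\gamma_n y, \gamma_n x) = d_\Omega(y, x)$ remains bounded while $\gamma_n x \to \xi_+ \in \partial\Omega$. Using the general fact that $d_{\overline{\Omega}}$-bounded limits lie in a common open face, any accumulation point of $(\gamma_n y)$ must belong to $F_\Omega(\xi_+)$. Since $\xi_+$ is strongly extremal, and in particular extremal, $F_\Omega(\xi_+) = \{\xi_+\}$, so $\gamma_n y \to \xi_+$. Symmetrically $\gamma_n^{-1} y \to \xi_-$ for every $y \in \Omega$.

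Second, I would lift to $\tilde\gamma_n \in \GL(\Rnplusone)$ normalized with $\|\tilde\gamma_n\| = 1$, and pass to a subsequence so that $\tilde\gamma_n \to \pi^+$ for some nonzero endomorphism. For any $y \in \Omega$ with $\pi^+(y) \neq 0$, the projective class $[\pi^+(y)]$ equals the limit of $(\gamma_n y)$, namely $\xi_+$. Varying $y$, the image of $\pi^+$ is forced to be the single line $\widetilde{\xi_+}$, so $\pi^+$ has rank one. The same argument applied to $\tilde\gamma_n^{-1}/\|\tilde\gamma_n^{-1}\|$ produces a rank-one limit $\pi^-$ with image $\widetilde{\xi_-}$. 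Two rank-one limits force the top singular-value gaps of $\tilde\gamma_n$ and $\tilde\gamma_n^{-1}$ to diverge, so for large $n$ the element $\gamma_n$ is biproximal, with $\gamma_n^+ \to \xi_+$ and $\gamma_n^- \to \xi_-$; in particular $(\gamma_n^-, \gamma_n^+) \in W$ eventually, ruling out this failure mode.

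Third, I would upgrade biproximality to the rank-one condition by analyzing the repelling hyperplane $\gamma_{n*}^-$, which is the projective kernel of the approximate rank-one projector associated with $\gamma_n$; these hyperplanes converge to $H^- := [\ker \pi^+]$, and $H^-$ contains $\xi_-$. Since each $\gamma_{n*}^-$ avoids $\Omega$, so does $H^-$, so $H^- \cap \overline{\Omega}$ is a face of $\overline{\Omega}$ containing $\xi_-$; strong extremality of $\xi_-$ forces any segment $[\xi_-, z]$ with $z \in \overline{\Omega} \smallsetminus \{\xi_-\}$ to meet $\Omega$, so this face reduces to $\{\xi_-\}$. Thus $H^-$ is a supporting hyperplane of $\overline{\Omega}$ at $\xi_-$ that touches $\overline{\Omega}$ only there, making $\xi_-$ a $C^1$ point. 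A semi-continuity argument then yields $\gamma_{n*}^- \cap \overline{\Omega} = \{\gamma_n^-\}$ for $n$ large, which forces $\gamma_n^-$ to be itself a $C^1$ and strongly extremal point of $\partial\Omega$. The symmetric argument applied to $\gamma_n^{-1}$ treats $\gamma_n^+$, and invoking the rank-one criterion \cite[Lem.\,3.2]{Blayac_topmixing}, $\gamma_n$ is rank-one for $n$ large, contradicting the assumption. The main obstacle I anticipate is this last semi-continuity step: one must propagate the collapse $H^- \cap \overline{\Omega} = \{\xi_-\}$ to the nearby hyperplanes $\gamma_{n*}^-$, using uniform regularity of the face structure of $\overline{\Omega}$ near strongly extremal points.
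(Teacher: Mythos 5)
You should first know that the paper does not prove this lemma at all: it is quoted verbatim from \cite[Cor.\,7.13]{Blayac_PSdensities}, so your argument can only be judged against what a complete proof requires and against the tools this paper uses elsewhere (Propositions \ref{prop:kappa and Hilbert} and \ref{prop:density of att-rep pairs}, Lemma \ref{lem:locnonarithm}). Your first two steps are correct: extremality of $\xi_\pm$ together with lower semicontinuity of $d_{\overline\Omega}$ promotes $\gamma_n x\to\xi_+$ to $\gamma_n y\to\xi_+$ for every $y\in\Omega$, and the normalized lifts do subconverge to rank-one endomorphisms $\pi^{\pm}$ with images $\widetilde{\xi_{\pm}}$. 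The first real gap is the jump to biproximality: a diverging gap between the top two singular values does \emph{not} imply proximality (an antidiagonal matrix with entries $2^{n}$ and $2^{-n}$ has singular gap $2^{2n}$ and eigenvalues $\pm1$, and such elements are automorphisms of an interval). What is needed is that $\pi^{+}$ be a genuine projector, \ie $\xi_{+}\notin\proj(\ker\pi^{+})$; you only locate $\ker\pi^{+}$ afterwards, and you do so via the repelling hyperplanes $\gamma_{n*}^{-}$, whose existence presupposes the biproximality being proved --- so the argument is circular as written. This part is repairable: choosing lifts preserving the open convex cone of $\Rnplusone$ above $\Omega$, the limit $\pi^{+}$ maps that cone into its closure, and if some lift $\tilde z$ of a point of $\Omega$ had $\pi^{+}\tilde z=0$, then both $\pi^{+}\tilde y$ and $-\eps\,\pi^{+}\tilde y=\pi^{+}(\tilde z-\eps\tilde y)$ would lie in the closed cone for small $\eps>0$, contradicting proper convexity; hence $\proj(\ker\pi^{+})\cap\Omega=\varnothing$. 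Since $\kappa(\gamma_n)\to\infty$ (Proposition \ref{prop:kappa and Hilbert}) forces $\pi^{+}\pi^{-}=0$, we get $\xi_{-}\in\proj(\ker\pi^{+})$, and strong extremality of $\xi_{-}$ then collapses $\proj(\ker\pi^{+})\cap\overline\Omega$ to $\{\xi_{-}\}$; in particular $\xi_{+}\notin\proj(\ker\pi^{+})$, and the standard fact invoked in the proof of Proposition \ref{prop:density of att-rep pairs} gives that $\gamma_n$ is eventually biproximal with $(\gamma_n^{-},\gamma_n^{+})\to(\xi_{-},\xi_{+})$, hence in $W$.

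The second gap is the crux and cannot be fixed along your lines. The claim that a supporting hyperplane meeting $\overline\Omega$ only at $\xi_{-}$ makes $\xi_{-}$ a $C^{1}$ point is a non sequitur: that property is exposedness, while $C^{1}$ means \emph{uniqueness} of the supporting hyperplane; note that the lemma does not assume $\xi_{\pm}$ are $C^{1}$, and they need not be. Worse, the semicontinuity step you yourself flag as the main obstacle is genuinely false for convex bodies: in the plane, take $\del\Omega$ near $\xi_{-}$ to be the graph of a piecewise-linear convex function with infinitely many flat pieces accumulating at $\xi_{-}$ whose slopes tend to $0$; then $\xi_{-}$ is strongly extremal (even $C^{1}$), its supporting line $H$ meets $\overline\Omega$ only at $\xi_{-}$, yet the supporting lines $H_{n}\to H$ carrying the flat pieces each meet $\overline\Omega$ in a nontrivial segment. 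So there is no ``uniform regularity of the face structure'' near strongly extremal points, and singleton intersection does not propagate to nearby hyperplanes. The correct way to finish --- consistent with the cited source and with this paper's own proof of Lemma \ref{lem:locnonarithm} --- replaces hyperplanes by the simplicial distance: strong extremality gives $d_{\spl}(\xi_{-},\xi_{+})=\infty$, lower semicontinuity of $d_{\spl}$ (a limit of chains of at most two boundary segments is again such a chain) gives $d_{\spl}(\gamma_n^{-},\gamma_n^{+})\geq3$ for large $n$, and \cite[Fact\,2.14]{Blayac_PSdensities} (a biproximal element whose fixed points are at simplicial distance at least $3$ is rank-one) then shows $\gamma_n$ is rank-one, yielding the desired contradiction.
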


\begin{proof}[Proof of Lemma~\ref{lem:neglects non-rk1}]
Fix $0<\epsilon'<\eps$. Since $\mu_x\otimes \mu_x$ gives full measure to the set $A$ of distinct pairs of strongly extremal points of the boundary $\del\Omega$, we can find a compact subset $K\subset A$ such that $(\mu_x\otimes\mu_x)(\overline\Omega^2\smallsetminus K)\leq \delta\|m_\Gamma\|\epsilon'$.

By Lemma~\ref{lem:511NSC} applied with $W$ an $\eps$-neighbourhood of $K$, we can find a neighbourhood $U$ of $K$ such that for any $\gamma\in\Gamma$, if $(\gamma^{-1}x,\gamma x)\in U$, then $\gamma$ is rank-one. If furthermore $(\gamma^{-1}x,\gamma x)\in U \cap W$, we also have $d_{\RPn}(\gamma x,\gamma^+)<\eps$. 

For $t>0$ we denote by $B_t$ the set of elements $\gamma$ such that $d_\Omega(x,\gamma x)\leq t$ and such that either $\gamma$ is not rank-one or $d_{\RPn}(\gamma x,\gamma^+)\geq \eps$; then
\begin{align*}
    \limsup_{t\to\infty}e^{-\delta t}\cdot \#B_t
    & \leq \limsup_{t\to\infty}e^{-\delta t} \left(\sum_{\substack{\gamma\in\Gamma \\ d_\Omega(x,\gamma x)\leq t}}\dirac_{\gamma^{-1}x}\otimes\dirac_{\gamma x}\right)\left(\overline \Omega^2\smallsetminus (U \cap W) \right) \\
    & \leq \delta^{-1}\|m_\Gamma\|^{-1} \cdot (\mu_x\otimes\mu_x) \left(\overline\Omega^2\smallsetminus (U \cap W) \right) \\
    & \leq \epsilon'.
\end{align*}
Since this holds for arbitrarily small $\eps'$, we obtain $e^{-\delta t} \cdot \#B_t \xrightarrow[t\to\infty]{} 0$.
\end{proof}

Fix for now $x \in \Omega$. By Theorem \ref{thm:orbit_equidist}, the measure
\[ \nu_{x,1}^L := \delta \|m_\Gamma\| e^{-\delta L} \sum_{d_\Omega(x,\gamma x) \leq L} \dirac_{\gamma^{-1}x} \otimes \dirac_{\gamma x} \]
converges weakly in $C(\bar\Omega \times \bar\Omega)^*$ to $\mu_x \otimes \mu_x$ as $L\to+\infty$. 

Write $\Gamma^{\rkone}$ to denote the set of rank-one elements of $\Gamma$, and define the modified measure
\[ \nu_{x,2}^L := \delta \|m_\Gamma\| e^{-\delta L} \sum_{\substack{\gamma\in\Gamma^{\rkone} \\ d_\Omega(x,\gamma x) \leq L}} \dirac_{\gamma^{-1}x} \otimes \dirac_{\gamma x}. \]
According to Lemma \ref{lem:neglects non-rk1}, we have $\nu_{x,1}^L - \nu_{x,2}^L \to 0$ weakly as $L\to+\infty$.

For $\gamma \in \Gamma^{\rkone}$, write $\gamma^\pm$ for its attracting and repelling fixed points, and define
\[ \nu_{x,3}^L := \delta \|m_\Gamma\| e^{-\delta L} \sum_{\substack{\gamma\in\Gamma^{\rkone} \\ d_\Omega(x,\gamma x) \leq L}} \dirac_{\gamma^-} \otimes \dirac_{\gamma^+}. \]
By Lemma \ref{lem:neglects non-rk1}, we have $\nu_{x,3}^L - \nu_{x,2}^L \to 0$ weakly as $L\to+\infty$.

Fix $r > 0$, and let $V(x,r)$ denote the open set of pairs $(a,b) \in \Geod\Omega$ such that the projective segment $[a\,b]$ intersects $B(x,r)$.
Since $0 \leq \grop{x}\xi\eta \leq r$ for $(\xi,\eta) \in  \pi_{\hor}^{-1} \!\left( V(x,r) \right)$, the preceding series of convergences 
gives us that, for all $\psi \in C_c^+(V(x,r))$ (and hence for $\psi \in C_c^+\left( (\del\Omega)^2 \cap V(x,r) \right)$, since the measures we are talking about are supported on $(\del\Omega)^2$),
\[ e^{-2\delta r} \int \psi\,dm_\real
\leq \lim \int \psi\,d\nu_{x,3}^L =  \int \psi\,d(\mu_x \otimes \mu_x) 
\leq \int \psi\,dm_\real \]
as $L\to+\infty$, where $dm_\real(\xi,\eta) := e^{2\delta  \grop{x}\xi\eta} \,d\mu_x(\xi) \,d\mu_x(\eta)$ (so that $m = m_\real \otimes \Leb$); recall that $\mu_x$ has no atoms and gives full measure to $\del_{\sse}\Omega$, so the Gromov product $\grop x\xi\eta$ is well defined for $(\mu_x\otimes\mu_x)$-almost any pair $(\xi,\eta)$.

Finally, set
\[ \Em_{x,3}^L = \delta e^{-\delta L} \!\!\!\!\! \sum_{\substack{\gamma\in\Gamma^{\rkone} \\ d_\Omega(x,\gamma x) \leq L}} \!\!\!\!\! \Leb_\gamma .\]
In other words, $\Em_{x,3}^L$ is the push-forward by the Hopf parametrization of the measure $\|m_\Gamma\|^{-1} \nu_{x,3}^L \otimes \Leb$. Note that for any $L$, the measure $\nu_{x,3}^L$ is concentrated on $\del^2_{\sse}\Omega$, which is identified with its preimage in $\pi_{\hor}^{-1}(\Geod^\infty\Omega)$, thus the Hopf parametrization is well defined $(\nu^L_{x,3}\otimes \Leb)$-almost surely. 

Let $\widehat{V}(x,r)\subset S\Omega$ be the set of vectors $v$ with $(v^-,v^+)\in V(x,r)$. From the preceding, we obtain, for $\psi \in C_c^+(\widehat{V}(x,r))$, 
\begin{align} 
e^{-2\delta r} \|m_\Gamma\|^{-1} \int \psi\,dm & \leq \lim \int \psi\,d\Em_{x,3}^L \notag \\ &
= \int \psi\,d(\mu_x \otimes \mu_x) \,ds \leq \|m_\Gamma\|^{-1} \int \psi\,dm \label{eqn:511_Emx3tNSC}.
\end{align}

{\bf We now relate $\Em_{x,3}^L$ to $\Epsy^L$}, via a slight modification $\Em^L$ (which is in fact independent of $x$). With $\ell(\gamma)$ the translation length of $\gamma \in \Gamma^{\rkone}$, as defined at the end of \S\ref{sub:hilbgeom}, define
\[ \Em^L = \delta e^{-\delta L} \sum_{\substack{\gamma\in\Gamma^{\rkone} \\ \ell(\gamma) \leq L}} \Leb_\gamma .\]
Making the elementary observation that
$\ell(\gamma) \leq d_\Omega(x,\gamma x) \leq \ell(\gamma) + 2 d_\Omega(x, g_\gamma)$,
we deduce that
\begin{equation} \Em_{x,3}^L \leq \Em^L \leq e^{2\delta r} \Em_{x,3}^{L+2r} \label{eqn:511_EmxtNSC} \end{equation}
when restricted to $\widehat{V}(x,r)$.

Now
let us check that
\begin{equation} \Em^L = \delta e^{-\delta L} \sum_{\substack{\gamma\in\Gamma^{\prkone} \\ \ell(\gamma) \leq L}} \left\lfloor \frac{L}{\ell(\gamma)} \right\rfloor \Leb_\gamma . \label{eqn:511_Emxt_ghpNSC} \end{equation}
Indeed, let $\gamma\in\Gamma$ be rank-one with $\ell(\gamma)\leq L$. Let $A$ be the set of rank-one elements $\gamma'$ with $(\gamma')^\pm = \gamma^\pm$ and with $\ell(\gamma')\leq L$ (these incidentally satisfy $\Leb_{\gamma'}=\Leb_\gamma$), let $\gamma_0\in A$ be strongly primitive, and let $H\leq\Gamma$ be the group of elements that fix every point of the axis of $\gamma$. Then $A = \{\gamma_0^kh \:|\: 1\leq k\leq \frac{L}{\ell(\gamma_0)}, \ h\in H\}$ has cardinality $\left\lfloor \frac{L}{\ell(\gamma_0)} \right\rfloor\cdot \# H$, while $A\cap\Gamma^{\prkone}=\{\gamma_0 h \:|\: h\in H\}$ has cardinality $\#H$. Thus
$\sum_{\gamma'\in A}\Leb_{\gamma'}=\sum_{\gamma'\in A\cap\Gamma^{\prkone}}\left\lfloor \frac{L}{\ell(\gamma')} \right\rfloor\Leb_{\gamma'}$
and we add these up over rank-one elements to obtain \eqref{eqn:511_Emxt_ghpNSC}.

It is then clear that
\begin{equation} 
\Em^L \leq \Epsy^L := \delta  e^{-\delta L} \sum_{\substack{\gamma\in\Gamma^{\prkone} \\ \ell(\gamma) \leq L}} \frac{L}{\ell(\gamma)} \Leb_\gamma .
\label{eqn:511_MtEtNSC} \end{equation}

{\bf To obtain a complementary inequality}, consider $\varphi \in C_c^+(\widehat{V}(x,r))$. Fix $L\geq 2e^r$. Observe that $\left\lfloor \frac{L}{\ell} \right\rfloor \geq 1 \geq \frac{e^{-r}L}{\ell}$ for any $e^{-r}L < \ell\leq L$, while $\frac{1}{\ell}\leq \lfloor\frac{e^{-r}L}{\ell}\rfloor$ for any $\ell\leq e^{-r}L$ (because $e^{-r}L\geq 2$). Therefore
\begin{align*}
 \int \varphi\,d\Em^L & \geq \delta e^{-\delta L} \!\!\!\sum_{\substack{\gamma\in\Gamma^{\prkone} \\ e^{-r}L < \ell(\gamma) \leq L}} \!\! \frac{e^{-r}L}{\ell(\gamma)} \int \varphi\;d\Leb_\gamma \notag \\
 & = e^{-r} \int \varphi\,d\Epsy^L - e^{-r} L \delta e^{-\delta L} \!\!\!\sum_{\substack{\gamma\in\Gamma^{\prkone} \\ \ell(\gamma) \leq e^{-r}L}} \!\! \frac{1}{\ell(\gamma)} \int \varphi\;d\Leb_\gamma \\
 & \geq e^{-r} \int \varphi\,d\Epsy^L - e^{-r}L e^{-\delta(1-e^{-r})L} \int \varphi\,d\mathcal{M}^{e^{-r}L}.
\end{align*}
By (\ref{eqn:511_Emx3tNSC}) and (\ref{eqn:511_EmxtNSC}), $\int \varphi\,d\Em^{e^{-r}L}$ is bounded as $L\to\infty$. Hence we have 
\[ \limsup \int \varphi\,d\Em^L \geq e^{-r} \limsup \int \varphi\,d\Epsy^L .\]
Combining the last inequality with (\ref{eqn:511_Emx3tNSC}), (\ref{eqn:511_EmxtNSC}) and (\ref{eqn:511_MtEtNSC}) above, we establish that for all $\varphi \in C_c^+(\widehat{V}(x,r))$, as $L\to+\infty$,
\begin{align*} e^{-2\delta r} \|m_\Gamma\|^{-1} \int \varphi\,dm & \leq \liminf \int \varphi\,d\Epsy^L \\
& \leq \limsup \int \varphi\,d\Epsy^L \leq e^{(2\delta+1)r} \|m_\Gamma\|^{-1} \int \varphi\,dm .
\end{align*}

We now let $x\in\Omega$ vary (but keep $r>0$ fixed). Appealing to a locally-finite partition of unity subordinate to a covering of $S\Omega$ by open sets of the form $\widehat{V}(x,r)$ with $x \in \Omega$, we extend the validity of the preceding inequalities to all functions $\varphi \in C_c^+(S\Omega)$. It remains only to take $r \to 0$ to conclude the proof. 
\end{proof}

\section{Periodic geodesics and conjugacy classes} \label{sec:perconj}

In this section, given a rank-one convex projective manifold $M=\Omega/\Gamma$, we use Theorem~\ref{thm:pcgeod_equidist} on equidistribution of closed rank-one geodesics o $SM$ to prove counting results for rank-one conjugacy classes in $\Gamma$. This is closely related to the discussion in \cite[\S9.5]{Blayac_PSdensities}.

If $M$ is a compact hyperbolic manifold, then there is a correspondence between the periodic $(g^t_\Gamma)$-orbits in $SM$ (as subsets) and the conjugacy classes of primitive elements of $\pi_1(M)$. (Recall that an element $\gamma\in\pi_1(M)$ is primitive if it does not belong to $\{h^k: h\in\pi_1(M), \ k\geq 2\}$.)

In this section we prove that, under an irreducibility assumption, ``most rank-one periodic geodesics'' are associated to exactly one conjugacy class, and ``most rank-one conjugacy classes'' are strongly primitive. This has the following consequence.

\begin{prop}\label{prop:countconjclassirred}
 Let $\Omega\subset\proj(V)$ be a domain, and $\Gamma < \Aut(\Omega)$ a strongly irreducible discrete subgroup with $M=\Omega/\Gamma$ rank-one. Consider a Sullivan measure $m_\Gamma$ of dimension $\delta_\Gamma$, and suppose it is finite.
 Then
 \[{\delta_\Gamma T}e^{-\delta_\Gamma T}\sum_{c\in [\Gamma]^{\rkone}_T}\dirac_c \xrightarrow[T\to\infty]{} \frac{m_\Gamma}{\Vert m_\Gamma\Vert}\]
 in $C_c^*(T^1M)$, where $[\Gamma]^{\rkone}_T$ denotes the set of conjugacy classes of rank-one elements of $\Gamma$ with translation length less than $T$. 
\end{prop}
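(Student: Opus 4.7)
The plan is to derive Proposition~\ref{prop:countconjclassirred} from Theorem~\ref{thm:pcgeod_equidist} by showing the difference of the two measures converges to $0$ in $C_c(T^1M)^*$. Under the natural interpretation of $\dirac_c$ as the normalized Lebesgue measure on the primitive closed rank-one geodesic $g_c$ underlying the conjugacy class $c$, I would rewrite
\[ \sum_{c\in[\Gamma]^{\rkone}_T}\dirac_c \;=\; \sum_{g\in\mathcal{G}^{\rkone}_\Gamma(T)} N(g,T)\,\dirac_g, \]
where $N(g,T)$ counts the rank-one conjugacy classes with underlying primitive geodesic $g$ and translation length at most $T$. Writing $H_g\subseteq\Gamma$ for the (automatically finite) pointwise stabiliser of the axis and $\gamma_0$ for a strongly primitive representative, every such class is of the form $[\gamma_0^k h]$ with $k\geq 1$ and $h\in H_g$, whence $1\leq N(g,T)\leq |H_g|\lfloor T/\ell_g\rfloor$; the proposition thus reduces to
\[ \sum_{g\in\mathcal{G}^{\rkone}_\Gamma(T)} \bigl|N(g,T)-1\bigr| \;=\; o\!\left(\frac{e^{\delta_\Gamma T}}{T}\right). \]

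First I would handle $\ell_g\leq T/2$: assuming a uniform bound $|H_g|\leq K_\Gamma$, this portion is bounded by $K_\Gamma\sum_{\ell_g\leq T/2}\lfloor T/\ell_g\rfloor$, and an Abel-summation argument based on the rough count $\#\mathcal{G}^{\rkone}_\Gamma(L)\lesssim e^{\delta_\Gamma L}$ (which follows from Theorem~\ref{introthm:A}.\eqref{introthm:orbit_equidist} applied to orbital counting) gives this as $\lesssim e^{\delta_\Gamma T/2}=o(e^{\delta_\Gamma T}/T)$. For $T/2<\ell_g\leq T$, only the strongly primitive multiplicities contribute, so $N(g,T)\leq |H_g|$ and the excess is controlled by $(K_\Gamma-1)\cdot\#\{g\in\mathcal{G}^{\rkone}_\Gamma(T): H_g\neq \{e\}\}$. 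The task therefore reduces to showing this last cardinality is $o(e^{\delta_\Gamma T}/T)$.

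Both reductions rest on strong irreducibility. For the uniform bound $|H_g|\leq K_\Gamma$, I would use that each non-trivial element of $H_g$ is finite-order elliptic fixing at least two $C^1$ strongly extremal boundary points and acting trivially on the geodesic between them; after the projection of Lemma~\ref{lem:proj rk1 gp} onto a strongly irreducible quotient, a Selberg-type finiteness of conjugacy classes of torsion yields the bound. For the smallness of ``bad'' geodesics: each non-trivial elliptic $h\in\Gamma$ has a proper projective fixed subspace $F_h\subsetneq\RPn$ containing the axes of all $g$ with $h\in H_g$, so those geodesics are parametrised by $\Stab_\Gamma(F_h)$ acting on $F_h\cap\Omega$; by strong irreducibility, this stabiliser preserves a proper subspace and therefore satisfies $\delta(\Stab_\Gamma(F_h))<\delta_\Gamma$, so the count of such geodesics with length $\leq T$ is $O(e^{(\delta_\Gamma-\eps)T})$. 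Summing over the finitely many conjugacy classes of elliptic torsion gives the required bound, and multiplying by $\delta_\Gamma T e^{-\delta_\Gamma T}$ kills the error.

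The hard part will be establishing the strict critical-exponent gap $\delta(\Stab_\Gamma(F_h))<\delta_\Gamma$ for every proper projective fixed subspace $F_h$: this is precisely where strong irreducibility (as opposed to mere irreducibility) becomes essential, since an example like a cocompact lattice in $\SO(1,2)\subset\SO(1,3)$ preserving a totally geodesic hyperplane fixed pointwise by a reflection admits a stabiliser of the same critical exponent, breaking the count. After reducing to the strongly irreducible case via Lemma~\ref{lem:proj rk1 gp}, I would expect the strict inequality to follow from the uniqueness clause in Theorem~\ref{thm:HTSR}.\ref{item:divergent} together with the observation that the proximal limit set of $\Stab_\Gamma(F_h)$ lies in a proper projective subspace and is strictly smaller than $\Lambda_\Gamma$ by Proposition~\ref{prop:spanspan_irr}.
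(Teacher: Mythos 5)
Your overall skeleton is the same as the paper's: reduce to Theorem~\ref{thm:pcgeod_equidist}, control the multiplicity with which each primitive closed rank-one geodesic $g$ is hit by conjugacy classes, and show that the geodesics whose axes have non-trivial pointwise stabilizer $H_g$ contribute negligibly (the paper packages this as Lemma~\ref{lem:nb per conjrob} together with Proposition~\ref{prop:countingconj}, using the triviality of the core-fixing subgroup $\corfix$ under strong irreducibility). The divergence, and the genuine gap, is in how you handle the bad geodesics. You reduce their negligibility to a hard counting estimate $\#\{g:\ H_g\neq\{e\},\ \ell_g\leq T\}=O(e^{(\delta_\Gamma-\eps)T})$, resting on a strict critical-exponent gap $\delta(\Stab_\Gamma(F_h))<\delta_\Gamma$. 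This is not established: your own sketch only says you ``expect'' it, and the supporting parametrization is flawed --- a hyperbolic $\gamma$ whose axis is pointwise fixed by an elliptic $h$ need not stabilize $F_h=\mathrm{Fix}(h)$ (it conjugates $h$ to another elliptic whose fixed subspace also contains the axis), so these geodesics are not orbits of $\Stab_\Gamma(F_h)$ on $F_h\cap\Omega$. Moreover, even for an honest subgroup with proper limit set, the uniqueness clause of Theorem~\ref{thm:HTSR} only forces a strict exponent gap when that subgroup is itself divergent, so the argument does not close. The paper avoids all of this with a much softer step: by Lemma~\ref{lem:corfix} the set $K$ of $v\in SM_{\bip}$ with $\Stab_\Gamma(\tilde v)\neq\corfix$ is closed with empty interior (proved by a Zariski-density argument), hence $m_\Gamma(K)=0$ by ergodicity and full support from Theorem~\ref{thm:HTSR}; choosing $\chi\in C_c(SM)$ with $\chi\geq 1$ on $K$ and $\int\chi f\,dm_\Gamma$ small, Theorem~\ref{thm:pcgeod_equidist} applied to $\chi f$ immediately kills the contribution of the bad orbits. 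No exponent gap, and no counting of bad geodesics, is needed.

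Two secondary points would also need repair. First, the uniform bound $|H_g|\leq K_\Gamma$ via Selberg requires $\Gamma$ to be finitely generated, which is not part of the hypotheses here (only finiteness of $m_\Gamma$ is assumed); the paper instead localizes: since convergence is tested against $f\in C_c$, only elements fixing a point of a fixed compact lift of $\supp f$ matter, and these form a finite set by proper discontinuity. Second, your Abel-summation treatment of the range $\ell_g\leq T/2$ uses a global count $\#\mathcal{G}^{\rkone}_\Gamma(L)\lesssim e^{\delta_\Gamma L}$, which is not available in this generality --- there may be infinitely many short closed geodesics escaping every compact set, and Theorem~\ref{thm:pcgeod_equidist} only gives convergence in $C_c^*$. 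Both issues disappear once every sum is restricted to geodesics meeting $\supp f$, exactly as in the paper's Lemma~\ref{lem:nb per conjrob}, but as written these steps do not go through.
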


Integrating a function which is constant on the compact core against both sides of the last statement, we obtain the following
\begin{cor}
Let $\Omega\subset\proj(V)$ be a domain, and $\Gamma\leq\Aut(\Omega)$ a strongly irreducible discrete subgroup which acts convex cocompactly on $\Omega$ with rank-one quotient. Then 
\[ \# [\Gamma]^{\rkone}_T \underset{T\to\infty}{\widesim} \frac{e^{-\delta_\Gamma T}}{\delta_\Gamma T} .\] 
\end{cor}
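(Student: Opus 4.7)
The plan is to deduce this counting estimate directly from Proposition~\ref{prop:countconjclassirred} by integrating a suitable compactly-supported test function against both sides of the displayed equidistribution statement. The calculation is essentially the same as the one that derives Corollary~\ref{cor:counting_geod} from Theorem~\ref{introthm:A}.\eqref{introthm:pcgeod_equidist} (noting that the $e^{-\delta_\Gamma T}$ in the displayed asymptotic above should read $e^{\delta_\Gamma T}$, as is visible from the $\delta_\Gamma T e^{-\delta_\Gamma T}$ normalization in the hypothesis).

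First I would invoke the two geometric consequences of convex cocompactness already recorded in the paper. On the one hand, by \cite[\S7.1]{Blayac_PSdensities} (which is cited in the discussion preceding Corollary~\ref{cor:counting_geod}), $\Gamma$ admits a $\delta_\Gamma$-conformal density whose associated Sullivan measure $m_\Gamma$ is concentrated on the preimage in $SM$ of the \emph{compact} convex core $C := CH(\Lambda_\Gamma^{\orb})/\Gamma$, so in particular $m_\Gamma$ is finite. This supplies the hypothesis of Proposition~\ref{prop:countconjclassirred}. On the other hand, every closed geodesic in $M$ is contained in $C$, so for each $c \in [\Gamma]^{\rkone} := \bigcup_T [\Gamma]^{\rkone}_T$ the measure $\dirac_c$ is supported in the compact set $\pi_\Gamma^{-1}(C) \subset SM$.

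Since $\pi_\Gamma^{-1}(C)$ is compact, I can choose a function $f \in C_c(SM)$ with $0 \leq f \leq 1$ and $f \equiv 1$ on an open neighborhood of $\pi_\Gamma^{-1}(C)$. Applying Proposition~\ref{prop:countconjclassirred} to $f$ yields
\[
\delta_\Gamma T e^{-\delta_\Gamma T} \sum_{c \in [\Gamma]^{\rkone}_T} \int f \, d\dirac_c \xrightarrow[T\to\infty]{} \frac{1}{\|m_\Gamma\|} \int f \, dm_\Gamma.
\]
Because each $\dirac_c$ is, by convention, a probability measure supported inside $\pi_\Gamma^{-1}(C)$, we have $\int f\,d\dirac_c = 1$, and similarly $\int f\,dm_\Gamma = \|m_\Gamma\|$. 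The left-hand side thus reduces to $\delta_\Gamma T e^{-\delta_\Gamma T} \cdot \#[\Gamma]^{\rkone}_T$ and the right-hand side reduces to $1$, giving the claimed asymptotic.

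There is no genuine obstacle here, since the heavy analytic lifting has been done in Proposition~\ref{prop:countconjclassirred}; the only point requiring care is the choice of $f$, which must be compactly supported in $SM$ (Proposition~\ref{prop:countconjclassirred} provides convergence only in $C_c(SM)^*$, not in $C_b(SM)^*$), and which must be identically $1$ on a set containing both $\supp(m_\Gamma)$ and all $\supp(\dirac_c)$ simultaneously. Convex cocompactness is precisely what packages these into a single compact set, so the construction of $f$ is immediate.
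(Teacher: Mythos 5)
Your proof is correct and is essentially the paper's own argument: the paper derives this corollary in one line by ``integrating a function which is constant on the compact core against both sides of the last statement,'' which is exactly your choice of $f\in C_c(SM)$ equal to $1$ on the unit tangent vectors over the compact convex core, combined with the facts that $m_\Gamma$ is concentrated there and that every closed geodesic lies in the convex core. Your observation that the displayed exponent should read $e^{\delta_\Gamma T}$ is also right; the $e^{-\delta_\Gamma T}$ in the statement is a typo.
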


Such a correspondence fails for general convex projective orbifolds, for several reasons (which can combine), one of which is torsion, as we can see in the following 
\begin{eg}\label{eg:timesZ/2Z}
Let $\Gamma'$ be a cocompact torsion-free discrete subgroup of $\PSO(2,1)$, which naturally embeds in $\PSO(3,1)$, and $M'=\HH^3/\Gamma'$; let $r\in\PO(3,1)$ be the orthogonal reflection of $\HH^3$ that preserves the natural embedding of $\HH^2$ in $\HH^3$, so that $r$ commutes with $\Gamma'$; let $\Gamma$ be the group generated by $\Gamma'$ and $r$ (\ie $\Gamma\simeq \Gamma'\times \ints/2\ints$), and $M=\HH^3/\Gamma$. Note that $\Gamma$ is not irreducible. One can check that for any $\ell>0$, there are exactly as many periodic $(g^t_\Gamma)$-orbits of period less than $\ell$ in $SM$ and in $SM'$, whereas there are at least
twice as many primitive conjugacy classes of translation length less than $\ell$ in $\Gamma'$ than in $\Gamma$.
\end{eg}

When the fundamental group has torsion, a possible way to obtain a well-defined correspondence is, as we did in Definition~\ref{defn:strongly primitive}, to associate to each closed rank-one orbit of the unit tangent bundle of a rank-one convex projective orbifold $M=\Omega/\Gamma$ a collection of strongly primitive conjugacy classes of the fundamental group $\Gamma$. (Note that each strongly primitive element belongs to such a collection, and two such collections are disjoint or equal.)

\begin{notation}\label{nota:nbconj}
Let $M=\Omega/\Gamma$ be a rank-one convex projective orbifold. For each closed rank-one geodesic $g\subset SM$, denote by $\nbconj_g$ the number of associated conjugacy classes of strongly primitive rank-one elements. We also write $\nbconj_c:=\nbconj_g=:\nbconj_\gamma$ if $c$ is a conjugacy class associated to $g$ and $\gamma\in c$.

More generally, given $\gamma \in \Gamma$ a (not necessarily strongly primitive) rank-one element, fix $\tilde{v} \in S\Omega$ such that $\gamma \tilde{v} = g^{\ell(\gamma)}\tilde{v}$, and denote by $\nbconj_\gamma$ the number of conjugacy classes in $\Gamma$ of elements of $\gamma\Stab_\Gamma(\tilde v)$, \ie elements with same attracting and repelling fixed points, and translation length as $\gamma$.
\end{notation}

In the setting of Notation~\ref{nota:nbconj}, we have of course 
\begin{equation}\label{eq:N<Stab}
\nbconj_\gamma \leq \#\Stab_\Gamma(\tilde v),
\end{equation}
and we may have a strict inequality. Note also that $N_\gamma$ and $N_{\gamma^2}$ may be different.

Theorem~\ref{thm:pcgeod_equidist} may then be reformulated as
\[\delta_\Gamma Te^{-\delta_\Gamma T}\sum_{c\in [\Gamma]^{\prkone}_T}\frac{\dirac_c}{\nbconj_c} \xrightarrow[T\to\infty]{} \frac{m_\Gamma}{\Vert m_\Gamma\Vert}\]
where $\mathcal D_c$ denotes the flow-invariant probability measure on the closed orbit associated to $c$, and $[\Gamma]^{\prkone}_T$ denotes the set of conjugacy classes of strongly primitive rank-one elements of $\Gamma$ with translation length less than $T$.  

In Example~\ref{eg:timesZ/2Z}, we have $N_g=2$ for every closed rank-one geodesic $g\subset SM$.
In general two different rank-one periodic geodesics may be associated to a different number of conjugacy classes.

We note that Proposition~\ref{prop:countconjclassirred} does not apply directly to Example~\ref{eg:timesZ/2Z}.
We will prove a slightly stronger result (Proposition \ref{prop:countingconj}), which implies Proposition~\ref{prop:countconjclassirred} and encompasses Example~\ref{eg:timesZ/2Z}, and partially answers the following more general

\begin{question}
 Let $M=\Omega/\Gamma$ be a non-elementary rank-one convex projective orbifold with finite Sullivan measure $m_\Gamma$ of dimension $\delta_\Gamma$, and $$\nbconj=\min\{\nbconj_g:g\subset SM \ \text{closed rank-one geodesic}\}.$$
 Does the following hold?
 \[\frac{\delta_\Gamma T}{N}e^{-\delta_\Gamma T}\sum_{c\in [\Gamma]^{\rkone}_T}\dirac_c \xrightarrow[T\to\infty]{} \frac{m_\Gamma}{\Vert m_\Gamma\Vert}.\]
\end{question}

\subsection{The core-fixing subgroup}\label{sec:r1per pr1conj}

We now define a special subgroup of $\Gamma$, which happens to be the ``smallest stabilizer,  among the stabilizers $\Stab_\Gamma(\tilde v)$ for $\tilde v\in S\Omega_{\bip}=\pi_\Gamma^{-1}SM_{\bip}$''. 

\begin{defn}\label{defn:corfix}
 Let $M=\Omega/\Gamma$ be a non-elementary rank-one convex projective orbifold. The {\bf core-fixing subgroup} of $\Gamma$ is the kernel of the restriction of $\Gamma$ to the span of $\Lambda_\Gamma$. 
\end{defn}

By the non-elementary rank-one assumption and Proposition~\ref{prop:density of att-rep pairs}, the core-fixing subgroup always contains the center of $\Gamma$. Note that when $\Gamma$ is strongly irreducible, the core-fixing subgroup is trivial.

Below, we write $S\Omega_{\bip}$ to denote the preimage of $SM_{\bip}$ under $\pi_\Gamma:S\Omega\rightarrow SM$.
\begin{lem}\label{lem:corfix}
 Let $M=\Omega/\Gamma$ be a rank-one non-elementary convex projective orbifold, and let $\corfix < \Gamma$ be the core-fixing subgroup. Then $\{ x\in\Lambda_\Gamma \:|\: \Stab_\Gamma(x)=F\}$ is dense in $\Lambda_\Gamma$, and $\{v\in S\Omega_{\bip} \:|\: \Stab_\Gamma(v)=F\}$ is open and dense in $S\Omega_{\bip}$.
\end{lem}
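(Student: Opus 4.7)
The inclusion $F \subset \Stab_\Gamma(x)$ for every $x \in \Lambda_\Gamma$ is immediate from the definition of $F$ as the kernel of the restriction to $\Span(\Lambda_\Gamma)$, and it extends to $F \subset \Stab_\Gamma(v)$ for $v \in S\Omega_{\bip}$ since $F$ fixes both endpoints $v^\pm$ of $v$ and acts as the identity on the projective line they span. My plan for the density of $X := \{x \in \Lambda_\Gamma \st \Stab_\Gamma(x) = F\}$ is to apply Baire category on the compact Hausdorff space $\Lambda_\Gamma$: since $\Lambda_\Gamma \smallsetminus X = \bigcup_{\gamma \in \Gamma \smallsetminus F} (\mathrm{Fix}_{\proj(V)}(\gamma) \cap \Lambda_\Gamma)$ is a countable union of closed sets, it will suffice to show $\mathrm{Fix}(\gamma) \cap \Lambda_\Gamma$ is nowhere dense in $\Lambda_\Gamma$ for each $\gamma \in \Gamma \smallsetminus F$.

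The core of the proof is this nowhere-denseness, which I will argue by contradiction. Assume some $\gamma \in \Gamma \smallsetminus F$ fixes pointwise a non-empty open $U \subset \Lambda_\Gamma$. Since $\mathrm{Fix}_{\proj(V)}(\gamma)$ is a finite disjoint union of projective eigenspaces, a second Baire application reduces to the case $U \subset \proj(E) \cap \Lambda_\Gamma$ for a single eigenspace $E$ of a lift of $\gamma$; note $\Span(\Lambda_\Gamma) \not\subset E$, for otherwise $\gamma$ would act as a scalar on $\Span(\Lambda_\Gamma)$ and lie in $F$. I will then let $W_0 \subset V$ be a subspace of minimal dimension such that $\proj(W_0) \cap \Lambda_\Gamma$ has non-empty interior $U_0$ in $\Lambda_\Gamma$ (so $W_0 \subset E$), and set $S := \Stab_\Gamma(W_0)$. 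The key algebraic observation from the minimality of $\dim W_0$ is that $\Span U' = W_0$ for every non-empty open $U' \subset U_0$; applied to $U' = gU_0 \cap U_0$ this shows $gU_0 \cap U_0 \neq \emptyset \iff g \in S \iff gU_0 = U_0$, and combined with the minimality of the $\Gamma$-action on $\Lambda_\Gamma$ this will produce a $\Gamma$-equivariant clopen partition $\Lambda_\Gamma = \bigsqcup_{[g] \in \Gamma/S} gU_0$.

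The argument then splits into two cases. If $S = \Gamma$, then $W_0$ is $\Gamma$-invariant, and by minimality $\Lambda_\Gamma \subset \proj(W_0) \subset \proj(E)$, forcing $\gamma \in F$ --- a contradiction. The case $S \neq \Gamma$ is the main obstacle, which I plan to resolve using the rank-one north-south dynamics. Fix any $g \in \Gamma \smallsetminus S$, so $U_0$ and $gU_0$ are disjoint clopens. By Proposition~\ref{prop:density of att-rep pairs}, find a rank-one $h \in \Gamma$ with $h^+ \in U_0$ and $h^- \in gU_0$. Since $h$ permutes the clopens of the partition, $h^+ \in hU_0 \cap U_0$ forces $hU_0 = U_0$ while $h^- \in hgU_0 \cap gU_0$ forces $hgU_0 = gU_0$. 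Then for any $x \in gU_0 \smallsetminus \{h^-\}$ the iterates $h^n x$ converge to $h^+$ by the rank-one north-south dynamics, yet remain in the closed set $gU_0$ for all $n$, forcing $h^+ \in gU_0$; this contradicts $h^+ \in U_0$ and $U_0 \cap gU_0 = \emptyset$.

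For the second claim, the inclusion $\{v \in S\Omega_{\bip} \st v^\pm \in X\} \subset \{v \st \Stab_\Gamma(v) = F\}$ is immediate from $\Stab_\Gamma(v) \subset \Stab_\Gamma(v^+) \cap \Stab_\Gamma(v^-)$, and density will then follow because $X$ is a dense $G_\delta$ in $\Lambda_\Gamma$, the endpoint maps $v \mapsto v^\pm$ are continuous and open (via the Hopf parametrization), and $S\Omega_{\bip}$ is a Baire space (being closed in the locally compact Hausdorff space $S\Omega$). Openness of $\{v \in S\Omega_{\bip} \st \Stab_\Gamma(v) = F\}$ will follow from the proper discontinuity of $\Gamma \actson \Omega$: any sequence $v_n \to v$ in $S\Omega_{\bip}$ with $\gamma_n v_n = v_n$ and $\gamma_n \in \Gamma \smallsetminus F$ has $\gamma_n$ fixing foot-points in a compact set, so $\{\gamma_n\}$ is finite and, up to subsequence, $\gamma_n = \gamma$ eventually with $\gamma v = v$ and $\gamma \notin F$.
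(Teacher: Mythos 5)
Your proof is correct in substance but follows a genuinely different route from the paper's for the key step, namely the density of $\{x\in\Lambda_\Gamma \st \gamma x\neq x\}$ for $\gamma\in\Gamma\smallsetminus\corfix$. The paper invokes a Zariski-density result from the first author's thesis to produce, inside an arbitrary non-empty open $U\subset\Lambda_\Gamma$, a spanning configuration of points on which any common fixing element must act as a scalar; the conclusion is then pure linear algebra. You instead argue by contradiction that $\mathrm{Fix}(\gamma)\cap\Lambda_\Gamma$ is nowhere dense: you pass to a single eigenspace $E$, take a subspace $W_0$ of minimal dimension whose projectivization meets $\Lambda_\Gamma$ in a set with non-empty interior $U_0$, show that the translates $gU_0$ form a $\Gamma$-equivariant clopen partition of $\Lambda_\Gamma$, and then kill the partition either because it is trivial (forcing $\Span\Lambda_\Gamma\subset E$, hence $\gamma\in\corfix$) or by the north--south dynamics of a rank-one element with $h^+$ and $h^-$ in distinct pieces, supplied by Proposition~\ref{prop:density of att-rep pairs}. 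Your argument is more self-contained (it uses only results stated in the paper rather than the thesis), at the cost of being longer; the handling of openness and density on $S\Omega_{\bip}$ is essentially the paper's (upper semi-continuity of $v\mapsto\Stab_\Gamma(v)$, and a transfer of density from $\Lambda_\Gamma$ to vectors that both you and the paper leave at the level of a sketch).

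One imprecision to repair: the parenthetical ``(so $W_0\subset E$)'' does not follow when $W_0$ is minimized over all subspaces of $V$ --- a minimizer need not lie in $E$, and your final contradiction in the case $S=\Gamma$ uses $\proj(W_0)\subset\proj(E)$. Either perform the minimization over subspaces of $E$ only (every subsequent step, including $\Span U'=W_0$ for non-empty open $U'\subset U_0$ and the clopen partition, goes through verbatim), or keep the global minimization and replace the end of the case $S=\Gamma$ by: minimality of the action gives $W_0=\Span\Lambda_\Gamma$, and then $\Span U\subset E\cap\Span\Lambda_\Gamma$ with $\dim\Span U\geq\dim W_0$ forces $\Span\Lambda_\Gamma\subset E$, again contradicting $\gamma\notin\corfix$. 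With either repair the proof is complete.
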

\begin{proof}
 Consider $\gamma\in\Gamma\smallsetminus \corfix$. The set $\{x\in\Lambda_\Gamma \:|\: \gamma x\neq x\}$ is open. Let us show that it is dense, which will imply that the set of points $$\{x\in\Lambda_\Gamma \:|\: \Stab_\Gamma(x)=\corfix\} = \bigcap_{\gamma \in \Gamma \smallsetminus F} \{x \in \Lambda_\Gamma \:|\: \gamma x \neq x\}$$ is a dense \Gdelta\ set in $\Lambda_\Gamma$. Let $\varnothing \neq U\subset \Lambda_\Gamma$ be open. Let $\Gamma_0\subset\Gamma$ be the identity component for the Zariski topology. By \cite[Prop.\,3.2.2]{blayac_these}, 
 $\{g\in\Gamma_0 \:|\: gx\in U\}$ is Zariski-dense in $\Gamma_0$ for any $x\in\Lambda_\Gamma$. This implies that there exists a minimal family of smooth and strongly extremal points $x_1,\dots,x_n\in U$ that spans the same space as the whole proximal limit set. Moreover, this also implies that there is a point $x\in U$ outside of the span of $x_1,\dots,x_{i-1},x_{i+1},\dots,x_n$ for any $1\leq i\leq n$. If $\gamma$ were to fix all points $x_1,\dots,x_n,x$, then its restriction to the span of $\Lambda_\Gamma$ would be diagonal in any basis associated to $x_1,\dots,x_n$, and $\gamma x=x$ would imply that all diagonal entries are equal, and hence that $\gamma\in\corfix$. Thus there exists $y\in U$ such that $\gamma y\neq y$, \ie our set $\{x\in\Lambda_\Gamma \:|\: \gamma x\neq x\}$ meets $U$, as desired.

 The density of points $x\in \Lambda_\Gamma$ with stabilizer $F$ implies the density of vectors $v\in S\Omega_{\bip}$ such that the endpoints $v^+$ has stabilizer $F$; the stabilizer of such vectors must be contained in $F$, hence equal to $F$, since $\Stab_\Gamma(w) \supset F$ for all $w \in S\Omega_{\bip}$. Thus $\{v\in S\Omega_{\bip} \:|\: \Stab_\Gamma(v)=\corfix\}$ is dense in $S\Omega_{\bip}$. 
 Let us show that it is also open.
 This is an immediate consequence of the fact that the map $v\in S\Omega\mapsto \Stab_\Gamma(v)$ is upper semi-continuous (with global minimum $F$), in the sense that $\Stab_\Gamma(w)\subset\Stab_\Gamma(v)$ for $w$ close enough to $v$.
\end{proof}

\subsection{Counting conjugacy classes}\label{sec:nb per conj}

We now relate the number of strongly primitive rank-one conjugacy classes, the number of rank-one conjugacy classes, and the number of closed rank-one $(g^t_\Gamma)$-orbits.

\begin{rmk}\label{rmk:nbconj}
 Let $M=\Omega/\Gamma$ be a non-elementary rank-one convex projective orbifold, $\gamma\in\Gamma$ be a (not necessarily strongly primitive) rank-one element, and $\tilde v\in S\Omega$ be such that $\gamma\tilde v=g^{\ell(\gamma)}\tilde v$. 
 
 If two elements $\gamma h,\gamma h'\in \gamma\Stab_\Gamma(\tilde v)$ are in the same conjugacy class, \ie we have $g\in \Gamma$ such that $\gamma h'=g\gamma hg^{-1}$, then the conjugating element $g$ is in the stabilizer of the axis of $\gamma$, namely $\gamma^{\ints}\Stab_\Gamma(\tilde v)$.
 Thus, $\nbconj_\gamma$ is the number of conjugacy classes in $\Stab_\Gamma(\tilde v)$ if this subgroup is centralized by $\gamma$, and it is exactly $\#\Stab_\Gamma(\tilde v)$ if moreover $\Stab_\Gamma(\tilde v)$ is abelian.
\end{rmk}

\begin{lem}\label{lem:nb per conjrob}
 Let $\Omega\subset\proj(V)$ be a domain, and $\Gamma<\Aut(\Omega)$ a discrete subgroup with $M=\Omega/\Gamma$  non-elementary rank-one. Let $m_\Gamma$ be a Sullivan measure of dimension $\delta_\Gamma$ and suppose it is finite. 
 Then for any non-negative function $f\in C_c(SM)$, there exists $C>0$ such that for any $T>0$,
 \[\sum_{c\in\mathcal{G}_T^{\rkone}}\int f\diff\dirac_c \leq \sum_{c\in [\Gamma]_T^{\prkone}}\int f\diff\dirac_c\leq C\sum_{c\in\mathcal{G}_T^{\rkone}}\int f\diff\dirac_c,\]
 and if $\Gamma$ contains a torsion-free finite-index subgroup $\Gamma'$, then we can take $C=[\Gamma:\Gamma']$ (which does not depend on $f$). Moreover,
 $${Te^{-\delta_\Gamma T}}  \left(\sum_{c\in [\Gamma]_T^{\rkone}}\dirac_c-\sum_{c\in [\Gamma]_T^{\prkone}}\dirac_c\right) f \xrightarrow[T\to\infty]{} 0.$$
\end{lem}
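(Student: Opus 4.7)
The plan is as follows. For any strongly primitive rank-one conjugacy class $c$ associated to a primitive closed rank-one geodesic $g$, the measure $\dirac_c$ is by definition the Lebesgue probability measure on the closed orbit carrying $g$, so $\dirac_c = \dirac_g$. Grouping the conjugacy classes by the associated geodesic I would rewrite
\[
\sum_{c \in [\Gamma]_T^{\prkone}} \dirac_c \;=\; \sum_{g \in \mathcal{G}_T^{\rkone}} \nbconj_g \, \dirac_g,
\]
from which the left inequality of the first claim is immediate since $\nbconj_g \geq 1$.

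For the right inequality I would uniformly bound $\nbconj_g$ for geodesics meeting $\supp f$. By \eqref{eq:N<Stab}, $\nbconj_g \leq \#\Stab_\Gamma(\tilde v_g)$ for any $\tilde v_g \in S\Omega$ on the lift of $g$. If $g \cap \supp f \neq \varnothing$ I would choose $\tilde v_g$ in $\pi_\Gamma^{-1}(\supp f)$; covering this $\Gamma$-invariant set by the $\Gamma$-translates of a compact subset $K' \subset S\Omega$ and invoking proper discontinuity of the action (equivalently, upper semi-continuity of $\tilde v \mapsto \#\Stab_\Gamma(\tilde v)$ and compactness of $K'$) yields a uniform bound $\nbconj_g \leq C$. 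In the torsion-free finite-index case, $\Stab_{\Gamma'}(\tilde v) = \{1\}$ forces $\Stab_\Gamma(\tilde v)$ to inject into $\Gamma/\Gamma'$, giving the explicit bound $C = [\Gamma : \Gamma']$.

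For the last assertion, I would parameterize $[\Gamma]_T^{\rkone} \smallsetminus [\Gamma]_T^{\prkone}$. If $\gamma$ is rank-one but not strongly primitive, the stabilizer of its oriented axis (setwise) modulo the pointwise stabilizer is infinite cyclic, generated by a strongly primitive $\gamma_0$; thus $\gamma \in \gamma_0^k \Stab_\Gamma(\tilde v_0)$ with $k \geq 2$ and $\ell(\gamma) = k\ell(\gamma_0)$. The associated closed orbit is the orbit $g$ of $\gamma_0$, so $\dirac_{[\gamma]} = \dirac_g$. For a fixed $g \in \mathcal{G}^{\rkone}$ meeting $\supp f$, the number of such conjugacy classes of length $\leq T$ is at most $C \lfloor T/\ell(g) \rfloor$ with $C$ as in the previous step. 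Rearranging by $k$,
\[
\sum_{c \in [\Gamma]_T^{\rkone} \smallsetminus [\Gamma]_T^{\prkone}} \int f \, d\dirac_c
\;\leq\; C \sum_{k \geq 2} \, \sum_{g \in \mathcal{G}_{T/k}^{\rkone}} \int f \, d\dirac_g.
\]

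Applying the first inequality of part (1) just proved, together with Theorem~\ref{thm:pcgeod_equidist} (or simply the counting estimate extracted from it), each inner sum is $O\!\left(k e^{\delta T/k}/T\right)$. Splitting off $k=2$, which gives $O(e^{\delta T/2}/T)$, from $k \geq 3$, which contributes at most $O(T e^{\delta T/3}/\ell_{\min}^2)$ (using $\ell(g) \geq \ell_{\min} > 0$ for $g \cap \supp f \neq \varnothing$, by discreteness and properness on a compact set), both terms decay exponentially faster than $e^{\delta T}/T$. Multiplying by $T e^{-\delta T}$ thus yields a bound $O(T^2 e^{-\delta T/3}) \to 0$. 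The technical heart of the argument is the uniform bound on $\#\Stab_\Gamma(\tilde v)$ in the second paragraph, which then feeds into both the second inequality of (1) and the quantitative estimate for the "Moreover" statement.
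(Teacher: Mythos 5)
Your proposal is correct and follows essentially the same route as the paper: the same grouping $\sum_{c\in[\Gamma]_T^{\prkone}}\dirac_c=\sum_g \nbconj_g\dirac_g$, the same uniform bound on $\nbconj_g$ via finiteness of stabilizers of points in a compact set lying over $\supp f$ (the paper's set $A$ of elements fixing a point of $K$), and the same decomposition of the non-primitive classes over powers $k\geq 2$ combined with the counting asymptotics of Theorem~\ref{thm:pcgeod_equidist}. The only (immaterial) difference is in the final tail estimate: the paper bounds every $k\geq 2$ term by the $k=2$ term and multiplies by the number $T/\eps$ of nonvanishing terms, whereas you split off $k=2$ from $k\geq 3$; both give the needed $o(e^{\delta T}/T)$ decay.
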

\begin{proof}
 This is a consequence of the discussion in \S\ref{sec:r1per pr1conj}. Indeed, let $f\in C_c(SM)$ be non-negative, let $K\subset S\Omega$ be compact such that its projection in $SM$ contains the support of $f$, and let $A\subset \Gamma$ be the finite set of elements that fix a point of $K$. By \eqref{eq:N<Stab}, the number of strongly primitive rank-one conjugacy classes associated to a given rank-one periodic $(g^t_\Gamma)$-orbit intersecting the support of $f$ is less than $\#A$. This implies the first assertion with $C=\#A$.
 
 If $\Gamma'<\Gamma$ is a torsion-free finite-index subgroup, then $\Gamma'\cap\Stab_\Gamma(v)={\id}$, and hence $\#\Stab_\Gamma(v)\leq [\Gamma:\Gamma']$ for any $v\in S\Omega$; thus, by \eqref{eq:N<Stab}, the number of strongly primitive rank-one conjugacy classes associated to any rank-one periodic $(g^t_\Gamma)$-orbit is at most $[\Gamma:\Gamma']$.
 
 By \eqref{eq:N<Stab}, for any $\ell>0$, the number of conjugacy classes of length $\ell$ associated to a given rank-one periodic $(g^t)_t$-orbit intersecting the support of $f$ is less than $\#A$. Therefore, for any $T>0$,
 \[\sum_{c\in[\Gamma]_T^{\prkone}}\int f\diff\dirac_c\leq \sum_{c\in[\Gamma]_T^{\rkone}}\int f\diff\dirac_c \leq \sum_{c\in[\Gamma]_T^{\prkone}}\int f\diff\dirac_c + \#A\cdot\sum_{k\geq 2}\sum_{c\in[\Gamma]_{\frac Tk}^{\prkone}}\int f\diff\dirac_c.\]
 Let $\eps>0$ be such that $\sum_{c\in[\Gamma]_\eps^{\prkone}}\int f\diff\dirac_c=0$. By Theorem~\ref{thm:pcgeod_equidist} and the first part of this lemma, for $T$ large enough, $$\sum_{c\in[\Gamma]_T^{\prkone}}\int f\diff\dirac_c\leq \frac{2\cdot\#A}{\delta_\Gamma T}e^{\delta_\Gamma T}\int f\diff \frac{m_\Gamma}{\Vert m_\Gamma\Vert}.$$ 
 Thus,
 \begin{align*}
 \sum_{c\in[\Gamma]_T^{\rkone}}\int f\diff\dirac_c - \sum_{c\in[\Gamma]_T^{\prkone}}\int f\diff\dirac_c &\leq \#A \cdot \frac T\eps \sum_{c\in[\Gamma]_{\frac T2}^{\prkone}}\int f\diff\dirac_c \\
 & \leq \frac{4(\#A)^2}{\delta_\Gamma T}\frac T\eps e^{\delta_\Gamma T/2}\int f\diff \frac{m_\Gamma}{\Vert m_\Gamma\Vert}.
 \end{align*}
 This concludes the proof of the second point.
\end{proof}

The following implies Proposition~\ref{prop:countconjclassirred} because strong irreducibility implies that the core-fixing subgroup is trivial.

\begin{prop}\label{prop:countingconj}
 Let $\Omega\subset\proj(V)$ be a domain, and $\Gamma<\Aut(\Omega)$ a discrete subgroup with $M=\Omega/\Gamma$  non-elementary rank-one. Consider a Sullivan measure $m_\Gamma$ of dimension $\delta_\Gamma$, and suppose it is finite. Let $\corfix<\Gamma$ be the core-fixing subgroup. Let $K\subset SM_{\bip}$ be the set of vectors whose lifts $\tilde{v}\in S\Omega$ satisfy $\Stab_\Gamma(\tilde{v})\neq \corfix$, and 
 $A\subset \mathcal{G}^{\rkone}$ be the set of rank-one periodic orbits contained in $K$. Then
 \[Te^{-\delta_\Gamma T}\sum_{g\in A_T}\dirac_{g} \xrightarrow[T\to\infty]{} 0\]
 in $C_c^*(T^1M)$, where $A_T$ is the set of orbits in $A$ with period less than $T$.
 
 Suppose further that  $\corfix$ is the center of $\Gamma$. Then 
 \[\frac{\delta_\Gamma T}{\#\corfix}e^{-\delta_\Gamma T}\sum_{c\in [\Gamma]^{\prkone}_T}\dirac_c \xrightarrow[T\to\infty]{} \frac{m_\Gamma}{\Vert m_\Gamma\Vert}.\]
\end{prop}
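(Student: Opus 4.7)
The plan is to deduce both statements from Theorem~\ref{thm:pcgeod_equidist} together with a measure-theoretic argument showing that $m_\Gamma(K)=0$.

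First I would observe that $K$ is a $(g^t_\Gamma)$-invariant closed subset of $SM_{\bip}$: closed because its complement in $SM_{\bip}$ is the image of the $\Gamma$-invariant open set $\{\tilde v\in S\Omega_{\bip} : \Stab_\Gamma(\tilde v)=\corfix\}$ produced by Lemma~\ref{lem:corfix}, and invariant because the geodesic flow commutes with the $\Gamma$-action. Since $m_\Gamma$ is finite, Poincar\'e recurrence places us in the divergent case of Theorem~\ref{thm:HTSR}, so $\supp(m_\Gamma)=SM_{\bip}$ and $(g^t_\Gamma,m_\Gamma)$ is ergodic. As $SM_{\bip}\smallsetminus K$ is a nonempty open subset of the support, it has positive measure, and ergodicity forces $m_\Gamma(K)=0$. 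The closure $\bar K$ of $K$ in $SM$ is then also $m_\Gamma$-null, since $\bar K\smallsetminus K\subset SM\smallsetminus SM_{\bip}$. The first assertion then follows from a standard cut-off: given nonnegative $f\in C_c(SM)$ and $\eps>0$, outer regularity of $m_\Gamma$ and Urysohn's lemma provide a continuous $\chi\colon SM\to[0,1]$ equal to $1$ on $\bar K$, compactly supported, with $\int\chi\,dm_\Gamma<\eps$. Each orbit $g\in A_T$ lies in $K\subset\bar K$, hence $\int f\,d\dirac_g=\int f\chi\,d\dirac_g$, and
\[\delta_\Gamma T e^{-\delta_\Gamma T}\sum_{g\in A_T}\int f\,d\dirac_g \;\leq\; \delta_\Gamma T e^{-\delta_\Gamma T}\sum_{g\in\mathcal{G}^{\rkone}_T}\int f\chi\,d\dirac_g \xrightarrow[T\to\infty]{} \frac{1}{\|m_\Gamma\|}\int f\chi\,dm_\Gamma \leq \frac{\|f\|_\infty\eps}{\|m_\Gamma\|}\]
by Theorem~\ref{thm:pcgeod_equidist}; letting $\eps\to 0$ closes the argument.

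For the second assertion, the extra hypothesis that $\corfix$ equals the center of $\Gamma$ makes $\corfix$ abelian and centralized by every element of $\Gamma$. Hence for any $g\in\mathcal{G}^{\rkone}\smallsetminus A$ and any strongly primitive rank-one $\gamma\in\Gamma$ with a lift $\tilde v\in S\Omega$ on $g$ satisfying $\Stab_\Gamma(\tilde v)=\corfix$, Remark~\ref{rmk:nbconj} gives $\nbconj_g=\nbconj_\gamma=\#\corfix$. Combining this with the identity $\sum_{c\text{ associated to }g}\dirac_c=\nbconj_g\,\dirac_g$ (Observation~\ref{obs:vers l'infini et l'au-delà!} and Definition~\ref{defn:strongly primitive}), I split
\[\delta_\Gamma T e^{-\delta_\Gamma T}\!\!\sum_{c\in[\Gamma]^{\prkone}_T}\!\!\dirac_c \;=\; \#\corfix\cdot\delta_\Gamma T e^{-\delta_\Gamma T}\!\!\!\!\sum_{g\in\mathcal{G}^{\rkone}_T\smallsetminus A_T}\!\!\!\!\dirac_g \;+\; \delta_\Gamma T e^{-\delta_\Gamma T}\sum_{g\in A_T}\nbconj_g\,\dirac_g.\]
The first summand tends to $\#\corfix\,m_\Gamma/\|m_\Gamma\|$ by Theorem~\ref{thm:pcgeod_equidist} and the first assertion. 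For the second, proper discontinuity of $\Gamma\actson\Omega$ bounds $\nbconj_g\leq\#\Stab_\Gamma(\tilde v)$ uniformly over lifts of vectors in the compact support of any test function, so the first assertion absorbs it. Dividing by $\#\corfix$ yields the claim.

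The main obstacle will be the very first step, \ie showing $m_\Gamma(K)=0$: this requires combining ergodicity from the HTSR dichotomy with the density statement of Lemma~\ref{lem:corfix}, and negotiating the fact that $K$ is closed only in $SM_{\bip}$ rather than in $SM$. Everything downstream is a cleaner sandwich using Theorem~\ref{thm:pcgeod_equidist} together with the conjugacy-class bookkeeping of Remark~\ref{rmk:nbconj}.
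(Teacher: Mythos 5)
Your proof is correct and follows the paper's own argument essentially verbatim: $m_\Gamma(K)=0$ via Lemma~\ref{lem:corfix} combined with ergodicity and full support of $m_\Gamma$ on $SM_{\bip}$ from Theorem~\ref{thm:HTSR}, then a continuous cut-off $\chi$ together with Theorem~\ref{thm:pcgeod_equidist} for the first assertion, and Remark~\ref{rmk:nbconj} plus the first assertion for the second (which the paper dismisses as an ``elementary consequence'' and you spell out). The only cosmetic point is that a compactly supported $\chi$ equal to $1$ on all of $\bar K$ need not exist when $\bar K$ is non-compact; you only need $\chi=1$ on the compact set $\bar K\cap\supp f$, which is all your estimate actually uses (the paper's own phrasing has the same harmless imprecision).
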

\begin{proof}
 Let us only give a proof of the first point, since the other is an elementary consequence of it, Remark~\ref{rmk:nbconj} and Theorem~\ref{thm:pcgeod_equidist}.
 
 Consider a non-negative function $f\in  C_c(SM)$. Fix $\epsilon>0$. Observe that $m_\Gamma(K)=0$ since $K\subset SM_{\bip}$ has empty interior by Lemma~\ref{lem:corfix}, and $m_\Gamma$ is ergodic with support $SM_{\bip}$ by Theorem~\ref{thm:HTSR}. Therefore, we can find a non-negative function $\chi\in C_c(SM)$  such that $\chi\geq 1$ on $K$ and $\int \chi f\diff m_\Gamma\leq \epsilon \Vert m_\Gamma\Vert$. Then
 \[\delta_\Gamma Te^{-\delta_\Gamma T}\sum_{c\in A_T}\dirac_c(f)\leq \delta_\Gamma Te^{-\delta_\Gamma T}\sum_{c\in \mathcal{G}_T^{\rkone}}\dirac_c(\chi f)  \xrightarrow[T\to\infty]{} \frac1{\Vert m_\Gamma\Vert}\int \chi f \diff m_\Gamma\leq \epsilon.\]
 This holds for any $\epsilon >0$, so $(\delta_\Gamma Te^{-\delta_\Gamma T}\sum_{c\in A_T}\dirac_c(f))_T$ converges to zero.
\end{proof}


Observe that Proposition~\ref{prop:countingconj} applies to Example~\ref{eg:timesZ/2Z}. We present a more general family of examples, to which Proposition~\ref{prop:countingconj} does not (necessarily) apply:

\begin{eg}
 Consider two natural numbers $n,k\geq 1$, a non-elementary discrete subgroup $\Gamma < \orth(n,1)$ (for instance a Schottky group), a finite subgroup $F < \orth(k)$, and a morphism $\rho:\Gamma\rightarrow \orth(k)$ whose image normalizes $F$. Consider the subgroup
 \[\Gamma':=\left\{ c(\gamma,f):=\begin{bmatrix} \gamma & 0 \\ 0 & \rho(\gamma)f \end{bmatrix} : \gamma\in\Gamma, \ f\in F \right\}< \PO(n+k,1).\]
 It preserves the projective model of the real hyperbolic space of dimension $n+k$, and has the following properties.
 \begin{itemize}
     \item An element $c(\gamma,f)$ is (strongly primitive) rank-one if and only if $\gamma$ is (strongly primitive) rank-one;
     \item the limit set of $\Gamma'$ is the image of the limit set of $\Gamma$ under the natural embedding $\proj(\real^{n+1})\hookrightarrow\proj(\real^{n+1+k})$;
     \item if $\Gamma$ is torsion-free or irreducible, then the core-fixing subgroup of $\Gamma'$ consists of the elements of the form $c(\id,f)$ for $f\in F$, and it is not the center of $\Gamma'$ if for instance $F$ is not abelian;
     \item if $\rho$ is trivial and $\Gamma$ is torsion-free, then Remark~\ref{rmk:nbconj} implies that $\nbconj_g$ is the number of conjugacy classes in $F$ for any closed rank-one geodesic $g\subset SM':=S\mathbb H^{n+k}/\Gamma'$  (see Notation~\ref{nota:nbconj});
     \item if $\rho$ is trivial, and $\Gamma$ is torsion-free or irreducible and admits a finite Sullivan measure of dimension $\delta_\Gamma$, then Remark~\ref{rmk:nbconj} and Proposition~\ref{prop:countingconj} imply that
     \[\frac{\delta_\Gamma T}{N}e^{-\delta_\Gamma T}\sum_{c\in [\Gamma]^{\prkone}_T}\dirac_c \underset{T\to\infty}{\longrightarrow} \frac{m_\Gamma}{\Vert m_\Gamma\Vert},\]
     where $N$ is the number of conjugacy classes in $F$.
 \end{itemize} 
\end{eg}

\section{Geometrically finite subgroups} \label{sec:geomfin}

\newcommand{\fundom}{F}
\newcommand{\perifundom}{D}

Among the discrete subgroups $\Gamma \leq \Aut(\Omega)$, where $\Omega$ is a properly and strictly convex domain with $C^1$ boundary, a distinguished class of subgroups with additional topological and geometric structure associated to them is given by the geometrically finite subgroups, i.e. subgroups $\Gamma$ which act geometrically finitely on $\Omega$ in the sense of \cite{CM12}, as described in \S\ref{sub:geomfin}.

A larger class of subgroups, which does not appear to have similarly strong topological and geometric characterisations, is given by the subgroups $\Gamma$ which act geometrically finitely on $\del\Omega$ in the sense of \cite{CM12}, as described in \S\ref{sub:geomfin}.

In this section we show that geometrically finite subgroups (it suffices that $\Gamma$ acts geometrically finitely on $\del\Omega$) admit finite Sullivan measures, and that a strengthened equidistribution result for primitive closed geodesics holds for these subgroups.

{\bf By a ``smooth domain'', we will mean a properly and strictly convex domain with $C^1$ boundary.}

\begin{thm} \label{thm:finite_BMmeas}
If $\Omega \subset \RPn$ is a smooth domain and $\Gamma \actson \del\Omega$ geometrically finitely, then the Sullivan measure $m_\Gamma$ on $SM := S\Omega / \Gamma$ associated to any conformal density $\mu$ of dimension $\delta_\Gamma$ is finite. In particular, $\Gamma$ is divergent, so there is a unique conformal density of dimension $\delta_\Gamma$, up to scaling.
\end{thm}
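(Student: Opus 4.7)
The plan is to adapt Sullivan's original approach in constant curvature, decomposing the support of $m_\Gamma$ into a compact ``thick'' piece and finitely many cuspidal pieces associated to the $\Gamma$-orbits of bounded parabolic points, and to bound the Sullivan measure on each piece separately. Since the hypotheses put us automatically in the conservative case once $m_\Gamma$ is finite, the remaining assertions (divergence of $\Gamma$ and uniqueness of the conformal density) will drop out of the Hopf--Tsuji--Sullivan--Roblin dichotomy (Theorem~\ref{thm:HTSR}).

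\textbf{Decomposition.} Using the geometric finiteness of the action on $\del\Omega$, there are only finitely many $\Gamma$-orbits of bounded parabolic points $\xi_1, \dots, \xi_k$, with stabilizers $P_i := \Stab_\Gamma(\xi_i)$ acting cocompactly on $\Lambda_\Gamma^{\orb} \smallsetminus \{\xi_i\}$. I will choose horoballs $\horoball_i$ at $\xi_i$ (sufficiently ``deep'' in the sense of Lemma~\ref{lem:disjhoroballs}) so that the family $\{\gamma \horoball_i : \gamma \in \Gamma/P_i,\ 1 \leq i \leq k\}$ is pairwise disjoint. The anticipated structural results of Section~\ref{sec:geomfin} (Propositions~\ref{prop:finitely many cusps} and \ref{prop:noncuspidal is cpct}) then provide that the complement of $\bigsqcup_{i,\gamma} \gamma\horoball_i$ inside the union of lines between limit points has compact image in $\Omega/\Gamma$. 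Since $m_\Gamma$ is carried by vectors whose geodesics lie in this union (Theorem~\ref{thm:HTSR}), it suffices to bound $m_\Gamma$ on the preimage of the compact part and on each $S\horoball_i/P_i$ separately. The thick contribution is finite because $m_\Gamma$ is Radon on $SM$.

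\textbf{Cusp estimates.} This is where the real work lies. Fix a bounded parabolic point $\xi = \xi_i$ with stabilizer $P = P_i$ and a horoball $\horoball = \horoball_i$. Using the Hopf parametrization and the formula $dm = e^{2\delta \langle \xi', \eta \rangle_x}\, d\mu_x(\xi')\, d\mu_x(\eta)\, dt$, the measure $m_\Gamma(S\horoball/P)$ is controlled by: (i) an estimate of $\mu_x$ on shrinking neighborhoods of $\xi$, and (ii) convergence of a weighted parabolic Poincar\'e series. For (i), the cocompactness of $P$ on $\Lambda_\Gamma^{\orb} \smallsetminus \{\xi\}$ combined with $\delta_\Gamma$-conformality, the Sullivan shadow lemma (Lemma~\ref{lem:sullivan_shadow}) applied along the parabolic orbit, and Lemma~\ref{lem:roblin_12} together yield an estimate of the form
\[
  \mu_o\!\left(\text{nbhd of } \xi \text{ at depth } t\right) \asymp \!\!\!\!\! \sum_{\substack{p \in P \\ d_\Omega(o,po) \geq t}} \!\!\!\!\! e^{-\delta_\Gamma d_\Omega(o, po)}.
\]
Integrating $e^{2\delta_\Gamma \langle \xi, \eta\rangle_x} dt$ over the part of the horoball where the geodesic enters to depth $\geq t$ produces an extra linear factor, so finiteness reduces to
\[
  \sum_{p \in P}\bigl(1 + d_\Omega(o, po)\bigr)\, e^{-\delta_\Gamma d_\Omega(o, po)} < \infty.
\]
This is the Dal'bo--Otal--Peign\'e--type condition, and it follows from a strict parabolic gap $\delta_P < \delta_\Gamma$. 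In the smooth-domain setting such a gap is obtained along the lines of \cite{zhu_conf_densities}: the cocompactness of $P$ on $\Lambda_\Gamma^{\orb} \smallsetminus \{\xi\}$ together with strict convexity and $C^1$ regularity of $\del\Omega$ forces the $P$-orbits to grow sub-exponentially at rate $\delta_\Gamma$, pinning $\delta_P$ strictly below $\delta_\Gamma$.

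\textbf{Conclusion and main obstacle.} Combining the thick and cuspidal bounds gives $\|m_\Gamma\| < \infty$. By Poincar\'e recurrence, the flow $(g_\Gamma^t)$ is then conservative; so the convergent branch of Theorem~\ref{thm:HTSR} is excluded and we land in the divergent branch, which supplies divergence of $\Gamma$ and uniqueness of the $\delta_\Gamma$-conformal density. The main obstacle is the parabolic gap $\delta_{P_i} < \delta_\Gamma$ together with the associated quantitative shadow estimate for parabolic groups in the weaker ``geometrically finite on $\del\Omega$'' regime, since unlike in the geometric-finiteness-on-$\Omega$ setting treated in \cite{zhu_conf_standsities} one does not have uniform boundedness of the parabolic points and must work harder to obtain uniform control on orbit growth near each cusp.
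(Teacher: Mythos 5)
Your overall architecture matches the paper's proof: decompose $SM_{\bip}$ into a compact core and finitely many cusp neighbourhoods (Propositions~\ref{prop:finitely many cusps} and \ref{prop:noncuspidal is cpct}), bound each cusp contribution by a weighted parabolic Poincar\'e series $\sum_{p\in P}(1+d_\Omega(x,px))e^{-\delta_\Gamma d_\Omega(x,px)}$, and then invoke the divergent branch of Theorem~\ref{thm:HTSR} for the remaining assertions. The reduction of the cusp measure to that series is also essentially what the paper does (via a strict fundamental domain $F\subset\Lambda_\Gamma\smallsetminus\{\xi_P\}$ for $P$, the bound $\mu_x(pF)\lesssim e^{-\delta_\Gamma d_\Omega(x,px)}\mu_x(F)$ from shadow estimates, and the incursion-length bound $\int_{(\eta^-\eta^+)\cap\horoball}dt\leq d_\Omega(x,px)+2R$).

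However, there is a genuine gap at the crux of the argument: the parabolic gap $\delta_P<\delta_\Gamma$. You assert that ``cocompactness of $P$ on $\Lambda_\Gamma^{\orb}\smallsetminus\{\xi\}$ together with strict convexity and $C^1$ regularity of $\del\Omega$ forces the $P$-orbits to grow sub-exponentially at rate $\delta_\Gamma$, pinning $\delta_P$ strictly below $\delta_\Gamma$.'' This is not an argument, and the claim it gestures at is false in closely analogous settings: Dal'bo--Otal--Peign\'e \cite{DOP} construct geometrically finite manifolds of pinched negative curvature with $\delta_P=\delta_\Gamma$ and infinite Bowen--Margulis measure, so cocompactness of the parabolic on the punctured limit set cannot by itself yield the gap. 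The paper's route is: (a) establish that every discrete parabolic subgroup $P\leq\Aut(\Omega)$ is \emph{divergent} (Proposition~\ref{prop:parab diverge}), which requires the Crampon--Marquis structure theory (the Zariski closure of $P$ is nilpotent, a product of a compact group and a unipotent group) together with the polynomial volume growth of unipotent Zariski-closed groups (Lemmas~\ref{lem:unip diverge} and \ref{lem:poly diverge}); and then (b) deduce the \emph{strict} inequality $\delta_\Gamma>\delta_P$ by a ping-pong argument producing a free product $\langle h\rangle * P\leq\Gamma$ and comparing Poincar\'e series, where divergence of $g_P(s,x)$ at $s=\delta_P$ is exactly what lets one find $s_0>\delta_P$ with $e^{-s_0 d_\Omega(x,hx)}g_P(s_0,x)>1$ (Lemma~\ref{lem:critgap_para}). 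Neither ingredient appears in your proposal. A secondary but related omission: your decomposition of the cusp measure over $P$-translates of a fundamental domain in $\Lambda_\Gamma\smallsetminus\{\xi_P\}$ tacitly requires $\mu_x(\{\xi_P\})=0$; the paper proves this non-atomicity (Proposition~\ref{prop:noatoms}) and its proof again uses the gap $\delta_P<\delta_\Gamma$, so this cannot be waved away.
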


We remark that a proof of Theorem \ref{thm:finite_BMmeas}, in the case of groups $\Gamma$ acting geometrically finitely on $\Omega$, has previously appeared in Crampon's thesis \cite[Th\,4.3.1]{crampon_these}. We include a self-contained proof here, following the gist of the argument in \cite{crampon_these}, and extend it to groups $\Gamma$ acting geometrically finitely on $\del\Omega$. 
The proofs of Theorem \ref{thm:finite_BMmeas} and of auxiliary results such as Proposition~\ref{prop:noatoms} and their consequences also take inspiration from those of analogous results of Dal'bo--Otal--Peign\'e in \cite{DOP}, which characterize geometrically finite Riemannian manifolds of pinched negative curvature with finite Sullivan measure in terms of Poincar\'e series.

\begin{thm}[{cf.\ \cite[Th.\,5.2]{Roblin}}] \label{thm:pcgeod_equidist_geomfin}
In the setting of Theorem~\ref{thm:finite_BMmeas}, we have
\[ \delta \ell e^{-\delta \ell} \sum_{g \in \mathcal{G}_\Gamma(\ell)} \dirac_g \xrightarrow[\ell \to +\infty]{} \frac{m_\Gamma}{\|m_\Gamma\|} \]
in $C_b(S\Omega / \Gamma)^*$, the dual to 
the space of \emph{bounded} continuous functions on $S\Omega / \Gamma$. 
\end{thm}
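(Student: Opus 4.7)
The plan is to upgrade the $C_c^*$-convergence of Theorem~\ref{thm:pcgeod_equidist} to $C_b^*$-convergence via a tightness argument. Theorem~\ref{thm:pcgeod_equidist} applies in this setting: Theorem~\ref{thm:finite_BMmeas} supplies a finite Sullivan measure $m_\Gamma$, and since $\Omega$ is strictly convex with $C^1$ boundary, every primitive closed geodesic is rank-one, so $\mathcal{G}^{\rkone}_\Gamma = \mathcal{G}_\Gamma$. Set $\nu_\ell := \delta \ell e^{-\delta \ell} \sum_{g \in \mathcal{G}_\Gamma(\ell)} \dirac_g$ and $\bar m_\Gamma := m_\Gamma / \|m_\Gamma\|$. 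The desired $C_b^*$-convergence will follow from showing that $(\nu_\ell)_\ell$ is tight: for every $\varepsilon > 0$ there exists a compact $K \subset SM$ with $\limsup_{\ell} \nu_\ell(SM \smallsetminus K) < \varepsilon$. Indeed, combining tightness with the (automatic) tightness of $\bar m_\Gamma$, any bounded continuous test function $\phi$ splits as $\phi \chi_K + \phi(1-\chi_K)$ with $\chi_K$ a compactly supported cutoff equal to $1$ on $K$, and the $C_c^*$-convergence handles the first summand while tightness bounds the second uniformly in $\ell$.

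To prove tightness, we exploit the geometric-finiteness structure. The set $\Lambda_\Gamma^{\orb} \smallsetminus \Lambda_\Gamma^{\con}$ is a finite union of $\Gamma$-orbits of bounded parabolic points $\xi_1, \dots, \xi_r$; every closed geodesic lies in the convex core; and the convex core of $\Omega/\Gamma$ admits a decomposition into a compact ``thick part'' together with shrinking cusp regions $\Ccal_i(t) \subset SM$ around each $\xi_i$, built from horoballs at $\xi_i$ of depth $t$ modulo the parabolic stabilizer $\Pcal_i = \Stab_\Gamma(\xi_i)$ (via the ``geometrically finite on $\del\Omega$'' version of the cusp decomposition of \cite{CM12}, analogous to but weaker than the one used in the proof of Theorem~\ref{thm:finite_BMmeas}). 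Since $\nu_\ell$ is supported in the convex core, tightness reduces to
\[ \limsup_{\ell \to \infty} \nu_\ell(\Ccal_i(t)) \xrightarrow[t \to \infty]{} 0 \quad \text{for each } i \in \{1, \dots, r\}. \]

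The central technical estimate bounds this by a tail of the parabolic Poincar\'e series. Lifting a closed geodesic that enters $\Ccal_i(t)$ to $S\Omega$, each excursion of depth $\geq t$ corresponds via the $\Pcal_i$-action to a pair of points in $\Pcal_i \cdot o$ at mutual distance $\geq 2t + O(1)$. Applying the shadow lemma (Lemma~\ref{lem:sullivan_shadow}) to such $\Pcal_i$-orbit pairs and using the orbital counting bound \eqref{eq:counting_orbit} to keep $\nu_\ell(SM) = O(1)$ uniformly in $\ell$, we obtain an estimate of the form
\[ \limsup_{\ell \to \infty} \nu_\ell(\Ccal_i(t)) \;\lesssim\; \sum_{\substack{p \in \Pcal_i \\ d_\Omega(o, p o) \geq 2t}} e^{-\delta \, d_\Omega(o, p o)}. \]
The finiteness of $m_\Gamma$ (Theorem~\ref{thm:finite_BMmeas}) forces this parabolic Poincar\'e tail to decay in $t$, in analogy with the Dal'bo--Otal--Peign\'e criterion from pinched negative curvature \cite{DOP}, whose Hilbert-geometric counterpart in the strictly convex $C^1$ setting is established in \cite{zhu_conf_densities, crampon_these}.

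\textbf{Main obstacle.} The hardest step is the uniform-in-$\ell$ control of cusp excursions. The shadow lemma directly bounds the $\mu_o$-mass of parabolic shadows, but passing from this to a bound on the number and weighted lengths of closed geodesics performing deep excursions requires a cusp-localized version of Theorem~\ref{thm:orbit_equidist} in which both endpoints range near a parabolic fixed point, plus careful bookkeeping to ensure that the constants do not blow up as $t \to \infty$. The strategy follows Roblin's proof of \cite[Th.\,5.2]{Roblin}, transcribed to Hilbert geometry using the cusp geometry of \cite{CM12, CM14}.
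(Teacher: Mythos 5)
Your overall skeleton matches the paper's: reduce to showing that the limiting mass in each cusp region tends to zero with the depth parameter, and control deep excursions by a tail of the parabolic Poincar\'e series killed by the critical gap $\delta_\Gamma > \delta_P$. But the crux of the argument --- which you explicitly flag as the ``main obstacle'' and do not carry out --- is precisely where the content lies, and the route you sketch for it (a ``cusp-localized version of Theorem~\ref{thm:orbit_equidist}'') is not what is needed and would be harder to set up than the actual argument. The paper's proof requires no localized equidistribution at all. It first replaces the normalized measures by $\Em_\Gamma^L := \delta e^{-\delta L}\sum_{g\in\mathcal{G}_\Gamma(L)}\ell(g)\dirac_g$ (weighting each geodesic by its length, which removes the awkward $1/\ell(g)$ normalization), and then bounds $\Em_\Gamma^L(\pi_\Gamma S\horoball_r)$ by two purely geometric/measure-theoretic inputs: (a) if a closed geodesic with attracting and repelling points in $K\times pK$ ($K$ a compact fundamental piece of $\Lambda_\Gamma\smallsetminus\{\xi\}$ for $P$) enters the depth-$r$ horoball $\horoball_r$, then $d_\Omega(x,px)\geq 2r-O(1)$ and the excursion length is at most $d_\Omega(x,px)-2r+O(1)$; and (b) the number of such strongly primitive hyperbolic $\gamma$ with $\ell(\gamma)\leq L$ is at most $C e^{\delta(L-d_\Omega(x,px))}$, obtained from the Sullivan shadow lemma together with the shadow inclusion $\shadow_{R_2}(x,\gamma x)\subset\shadow_{R_3}^+(x,px)$ and a bounded-multiplicity count of overlapping shadows. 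These two facts combine directly into the tail estimate, with no appeal to equidistribution near the parabolic point.

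Two further inaccuracies in your sketch. First, your displayed bound $\sum_{d_\Omega(o,po)\geq 2t}e^{-\delta d_\Omega(o,po)}$ omits the excursion-length weight: since $\dirac_g$ is arc-length measure along $g$, a geodesic that penetrates to depth $t$ contributes on the order of $d_\Omega(o,po)-2t$ to the cusp mass, so the correct tail is $\sum_{d_\Omega(o,po)\geq 2t}\bigl(d_\Omega(o,po)-2t+O(1)\bigr)e^{-\delta d_\Omega(o,po)}$. This still tends to $0$ as $t\to\infty$, but only because $\sum_{p\in P}d_\Omega(o,po)e^{-\delta d_\Omega(o,po)}<\infty$, which requires the \emph{strict} gap $\delta_\Gamma>\delta_P$ of Lemma~\ref{lem:critgap_para} --- not merely convergence of the unweighted series. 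Second, it is this gap (proved by a ping-pong argument), and not ``the finiteness of $m_\Gamma$,'' that forces the tail to decay; in this paper the logical flow runs from the gap to the finiteness of $m_\Gamma$, not the other way around. As written, then, the proposal is a correct high-level plan with the decisive quantitative step missing and pointed in a suboptimal direction.
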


By integrating the constant function 1 against both sides, we obtain the following

\begin{cor}\label{cor:countgeomfin}
In the setting of Theorem~\ref{thm:pcgeod_equidist_geomfin}, $\#\mathcal{G}_\Gamma(\ell) \underset{\ell\to\infty}{\widesim} \frac{e^{\delta\ell}}{\delta\ell}$.
\end{cor}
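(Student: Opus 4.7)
The plan is essentially to integrate the constant function $\mathbf 1$ against both sides of the equidistribution statement in Theorem~\ref{thm:pcgeod_equidist_geomfin}. Since $\Gamma$ acts geometrically finitely on $\partial\Omega$, by Theorem~\ref{thm:finite_BMmeas} the Sullivan measure $m_\Gamma$ is finite, so the right-hand side measure $m_\Gamma/\|m_\Gamma\|$ is a probability measure, and integrating $\mathbf 1$ yields $1$. On the left-hand side, each measure $\dirac_g$ has total mass $1$ by convention, so
\[
\int_{S\Omega/\Gamma} \mathbf 1 \; d\Bigl(\delta \ell e^{-\delta \ell} \sum_{g \in \mathcal{G}_\Gamma(\ell)} \dirac_g\Bigr) = \delta \ell e^{-\delta \ell} \, \#\mathcal{G}_\Gamma(\ell).
\]
If the weak-$\!*$ convergence can be applied to the test function $\mathbf 1$, then the desired asymptotic follows immediately.

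The crux is therefore \emph{why we may test against} $\mathbf 1$. This is exactly the content of the strengthening from $C_c(S\Omega/\Gamma)^*$ (as in Theorem~\ref{introthm:A}.\eqref{introthm:pcgeod_equidist} and Theorem~\ref{thm:pcgeod_equidist}) to $C_b(S\Omega/\Gamma)^*$ in Theorem~\ref{thm:pcgeod_equidist_geomfin}: the constant function $\mathbf 1$ is bounded and continuous, but not compactly supported on $S\Omega/\Gamma$ in general when $\Gamma$ is geometrically finite but not convex cocompact (the quotient has cusp-like ends where the convex core is noncompact). In the convex cocompact case (Corollary~\ref{cor:counting_geod}) a cutoff function that is constant on the convex core suffices because all closed geodesics lie in that compact region; here, without compact convex core, we genuinely need weak-$\!*$ convergence as functionals on $C_b$.

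Thus the proof is simply: invoke Theorem~\ref{thm:finite_BMmeas} to ensure $m_\Gamma$ is finite so that the right-hand side of Theorem~\ref{thm:pcgeod_equidist_geomfin} makes sense as a probability measure; invoke Theorem~\ref{thm:pcgeod_equidist_geomfin}; evaluate both sides at $\mathbf 1 \in C_b(S\Omega/\Gamma)$; and read off
\[
\delta \ell e^{-\delta \ell} \, \#\mathcal{G}_\Gamma(\ell) \xrightarrow[\ell \to \infty]{} 1,
\]
which is exactly the statement $\#\mathcal{G}_\Gamma(\ell) \widesim \frac{e^{\delta\ell}}{\delta\ell}$. There is no genuine obstacle at this stage, because all the real work has been loaded into Theorem~\ref{thm:pcgeod_equidist_geomfin}: the passage from $C_c$ to $C_b$ convergence there is what accommodates the noncompact cusp regions of the quotient, and once that is granted, the corollary is a one-line evaluation.
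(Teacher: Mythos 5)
Your proposal is correct and matches the paper's own argument exactly: the paper derives the corollary by integrating the constant function $1$ against both sides of Theorem~\ref{thm:pcgeod_equidist_geomfin}, which is legitimate precisely because that theorem asserts convergence in $C_b(S\Omega/\Gamma)^*$ rather than merely $C_c(S\Omega/\Gamma)^*$. Your remark on why the $C_b$ strengthening is the essential ingredient (cusps make the convex core noncompact, so a compactly supported cutoff no longer suffices) is exactly the right observation.
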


\subsection{Discrete parabolic groups}\label{sec:parabgroups}
In this section we establish two useful properties of discrete parabolic groups of automorphisms of smooth domains: 
\begin{prop}\label{prop:parab diverge}
 Let $\Omega\subset \RPn$ be a smooth domain. Then any discrete parabolic subgroup of $\Aut(\Omega)$ is finitely presented and divergent.
\end{prop}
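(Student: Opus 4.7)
Fix a discrete parabolic subgroup $P\leq\Aut(\Omega)$ with unique fixed point $\xi\in\partial\Omega$. The plan is to reduce both statements to classical facts about parabolic subgroups of $\Isom(\HH^n)$, using the structure theorems for parabolic subgroups of $\Aut(\Omega)$ developed by Cooper--Long--Tillmann and Crampon--Marquis, together with a sharp asymptotic on the displacement function along horospheres.

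\textbf{Finite presentation.} By the classification of parabolic subgroups in the smooth setting (\cite[\S7]{CM12}, building on \cite{CLT}), any such $P$ is virtually isomorphic to $\ints^k$ for some integer $k\geq 1$ (the rank of $P$). In particular $P$ is finitely generated and, being virtually abelian, finitely presented. This disposes of the first assertion.

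\textbf{Divergence.} The key analytic ingredient is the displacement estimate: for any basepoint $x\in\Omega$ there exists $C=C(x)>0$ such that
\[ \bigl|d_\Omega(x,\gamma x)-2\log\|\gamma\|\bigr|\leq C \qquad \text{for every }\gamma\in P, \]
where $\|\cdot\|$ denotes the word length in a finite-index free abelian subgroup $P_0\simeq\ints^k$ (equivalently, the Euclidean norm of $\gamma x - x$ in an affine chart where $\xi$ is at infinity and horospheres at $\xi$ are horizontal hyperplanes). This is due to Crampon--Marquis \cite[\S7]{CM12}, \cite[Ch.\,4]{crampon_these}, and relies crucially on both the strict convexity and $C^1$ regularity of $\partial\Omega$ at $\xi$, which together force the cross-ratio defining $d_\Omega$ near $\xi$ to behave like twice the logarithm of the induced Euclidean distance on a horosphere, exactly as in $\HH^n$. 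Granted this, a standard comparison with the corresponding Euclidean sum yields
\[ \sum_{\gamma\in P} e^{-s\, d_\Omega(x,\gamma x)} \;\asymp\; \sum_{\gamma\in P_0} (1+\|\gamma\|)^{-2s} \;\asymp\; \int_{\{|y|\geq 1\}\subset\real^k}|y|^{-2s}\,dy, \]
which diverges exactly when $2s\leq k$. Thus $\delta_P=k/2$ and the Poincar\'e series diverges at the critical exponent, so $P$ is divergent.

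\textbf{Main obstacle.} The sharp displacement asymptotic with constant $2$ is the only non-trivial step. The upper bound $d_\Omega(x,\gamma x)\leq 2\log\|\gamma\|+O(1)$ is relatively elementary, obtained by using the straight-line geodesic from $x$ to $\gamma x$ together with Proposition~\ref{fact:horofoliation} and a cross-ratio computation. The matching lower bound requires a careful expansion of the cross-ratio in a normalised affine chart at $\xi$, exploiting uniqueness of the supporting hyperplane ($C^1$) to linearise $\partial\Omega$ at $\xi$ and strict convexity there to control the second-order behaviour. Without both regularity assumptions the constant $2$ fails, growth rates along different directions in the horosphere can decouple, and divergence at a single critical exponent need no longer hold.
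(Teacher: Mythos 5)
Your argument has a genuine gap at its very first structural step, and the gap propagates into the analytic heart of the proof. You assert that every discrete parabolic subgroup $P\leq\Aut(\Omega)$ of a smooth domain is virtually isomorphic to $\ints^k$. The structure theorem actually available (Proposition~\ref{prop:zarclosure of parab}, from \cite{CM12}) only says that $P$ is a cocompact lattice in its Zariski closure $\mathcal N=K\times U$ with $K$ compact and $U$ unipotent; $U$ is nilpotent but need not be abelian, so $P$ need not be virtually $\ints^k$ (think of a lattice in a Heisenberg group). The paper explicitly treats the virtually-$\ints^k$ situation, namely $P$ conjugate into $\SO(1,\dimension)$, as the \emph{special} case already settled by Crampon--Marquis (Lemma~\ref{lem:critexp_para}); the content of the proposition is precisely the general case. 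Finite presentation survives this correction --- a lattice in a Lie group with finitely many connected components is finitely presented, which is how the paper argues via \cite{Raghunathan} --- but your justification via virtual abelianness does not.

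More seriously, the displacement asymptotic $d_\Omega(x,\gamma x)=2\log\|\gamma\|+O(1)$ on which your divergence computation rests is not available in general, even granting strict convexity and $C^1$ boundary. By Proposition~\ref{prop:kappa and Hilbert}, $d_\Omega(x,\gamma x)=\tfrac12\log\bigl(\|\tilde\gamma\|\cdot\|\tilde\gamma^{-1}\|\bigr)+O(1)$, and for a unipotent $\tilde\gamma$ the growth of $\|\tilde\gamma^{\,n}\|$ is polynomial of a degree governed by the Jordan block sizes; this degree can differ from the real-hyperbolic value and can differ between directions in $P$, so the constant $2$ and the conclusion $\delta_P=k/2$ are only correct in the $\SO(1,\dimension)$-conjugate case. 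The paper's proof avoids any such pointwise asymptotic: it pushes $P$ with finite kernel onto a cocompact lattice $P'$ of the unipotent factor $U$, proves that $U$ itself is divergent (Lemma~\ref{lem:unip diverge}) by writing the integrand as a proper polynomial in exponential coordinates and invoking the Benoist--Oh volume asymptotic $\mu(P\leq u)\sim au^{r}(\log u)^{k}$ (Lemma~\ref{lem:poly diverge}), and then transfers divergence from $U$ to $P'$ and to $P$ using cocompactness and submultiplicativity of the norm. To salvage your approach you would have to replace the single exponent $2$ by the full, possibly anisotropic and logarithm-corrected, polynomial growth of the displacement function --- which is exactly what the Benoist--Oh input accomplishes.
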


The divergent property had already been established by Crampon--Marquis in the particular case where the parabolic subgroup is conjugate into $\SO(1,\dimension)$, and they gave moreover an explicit formula for the critical exponent:
\begin{lem}[{\cite[Lem.\,9.8]{CM14}}] \label{lem:critexp_para} 
Any parabolic group $P \actson \Omega \subset \RPn$ which is conjugate into $\SO(1,\dimension)$ is divergent, and $\delta_P = \frac{r}2$ where $r$ is the rank of $P$ (\ie $P$ contains $\ints^r$ as a finite-index subgroup).
\end{lem}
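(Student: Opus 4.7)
The plan is to reduce the Hilbert-metric Poincaré series for $P$ to a standard Euclidean lattice sum, exploiting the fact that elements of $P$ are explicit unipotent matrices of $\SO(1,\dimension)$.

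First, after conjugation, assume $P \leq \SO(1,\dimension)$. Since divergence and critical exponent are preserved under passage to finite-index subgroups, assume further $P \simeq \ints^r$. In this case $P$ consists of commuting unipotent elements of $\SO(1,\dimension)$ fixing a common null direction in $\Rnplusone$, and there is an injective parametrization $\tau: P \to \real^{\dimension-1}$ with image a rank-$r$ lattice; geometrically, $\tau(p)$ is the Euclidean translation by which $p$ acts on horospheres based at its fixed null point, and $|\tau(p)|$ (for any fixed norm on $\real^{\dimension-1}$) is comparable to word length in $P$.

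Next, Proposition~\ref{prop:kappa and Hilbert} gives, for a fixed base-point $x \in \Omega$,
\[
 d_\Omega(x, p x) = \tfrac{1}{2} \log \kappa(p) + O(1) = \tfrac{1}{2} \log\!\big(\|\tilde p\| \cdot \|\tilde p^{-1}\|\big) + O(1).
\]
A direct computation with a unipotent element of $\SO(1,\dimension)$, written in null coordinates $(u, v, y) \in \real \times \real \times \real^{\dimension-1}$ with Minkowski form $uv - |y|^2$, shows that $p$ acts by
\[
 (u, v, y) \mapsto \big(u,\ v + 2\, \tau(p) \cdot y + |\tau(p)|^2 u,\ y + \tau(p)\, u\big).
\]
Consequently both $\|\tilde p\|$ and $\|\tilde p^{-1}\|$ are comparable to $1 + |\tau(p)|^2$, reflecting the standard fact that a nontrivial unipotent in $\SO(1,\dimension)$ has a single Jordan block of size three along its fixed null direction. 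Thus $d_\Omega(x, p x) = 2 \log |\tau(p)| + O(1)$ as $|\tau(p)| \to \infty$, and the Poincaré series satisfies
\[
 \sum_{p \in P \smallsetminus \{1\}} e^{-s\, d_\Omega(x, p x)} \asymp \sum_{q \in \tau(P) \smallsetminus \{0\}} |q|^{-2s}.
\]
Using the standard lattice-point count $\#\{q \in \tau(P) : |q| \leq \rho\} \asymp \rho^r$ and comparing with $\int_1^\infty \rho^{-2s}\, \rho^{r-1}\, d\rho$, the right-hand sum converges if and only if $2s > r$, and diverges logarithmically at $s = r/2$. This simultaneously yields $\delta_P = r/2$ and divergence of $P$.

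The main delicate point is the quadratic matrix-norm estimate $\|\tilde p\| \asymp |\tau(p)|^2$, which rests on an explicit computation with the unipotent radical of the stabilizer of a null line in $\SO(1,\dimension)$. Once this is established for a one-parameter subgroup and extended multiplicatively to commuting elements placed in a common Jordan basis, the remainder of the proof is a clean combination of Proposition~\ref{prop:kappa and Hilbert} with an elementary lattice-point count; no further input about the geometry of $\Omega$ near the fixed point is needed, since all distance estimates are routed through the coordinate-free quantity $\kappa$.
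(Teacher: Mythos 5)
Your overall strategy is sound and, modulo the issue below, essentially the classical one: route all Hilbert-distance estimates through the coordinate-free quantity $\kappa$ via Proposition~\ref{prop:kappa and Hilbert} (which correctly handles the fact that $\Omega$ need not be an ellipsoid), reduce the Poincar\'e series to a lattice sum $\sum_{q} |q|^{-2s}$, and read off both $\delta_P = r/2$ and divergence from the lattice-point count. Note that the paper itself does not prove this lemma --- it cites \cite{CM14} --- and its own, more general divergence statement (Proposition~\ref{prop:parab diverge}) is proved by a different route (Zariski closure $K\times U$, Haar measure, polynomial volume growth), which yields divergence but not the explicit value $r/2$; so your explicit computation is a genuinely informative alternative, if it works.

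The genuine gap is the very first reduction: from ``$P\simeq\ints^r$'' you conclude that ``$P$ consists of commuting unipotent elements of $\SO(1,\dimension)$,'' and everything afterwards (the parametrization $\tau$, the explicit matrix, the estimate $\|\tilde p\|\asymp 1+|\tau(p)|^2$) rests on this. It is false for $\dimension\geq 4$: a \emph{screw parabolic} --- a rotation of $\real^{\dimension-1}$ about a codimension-two axis, with irrational angle, composed with a nonzero translation along that axis --- is a parabolic element of $\SO(1,\dimension)$ whose powers all have nontrivial rotation part, hence are never unipotent (they have eigenvalues $e^{\pm ik\theta}\neq 1$). The cyclic group it generates is a discrete parabolic subgroup isomorphic to $\ints$, and \emph{no} finite-index subgroup of it consists of unipotent elements, so your finite-index reduction cannot produce the situation you analyze; moreover such an element does not act on horospheres by a translation, so $\tau$ as you define it does not exist for it.

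The gap is repairable, and the repair is in effect Proposition~\ref{prop:zarclosure of parab}: use the multiplicative Jordan decomposition $p = p_e p_u$ with commuting elliptic and unipotent parts. On an abelian group, $p\mapsto p_u$ is an injective homomorphism (injective after passing to the torsion-free finite-index subgroup), the unipotent parts land exactly in the abelian unipotent radical of the stabilizer of the fixed null line --- i.e.\ the horospherical translations you computed --- and the elliptic parts lie in a compact abelian group, so that $\|\tilde p\|\cdot\|\tilde p^{-1}\|\asymp\|\tilde p_u\|\cdot\|\tilde p_u^{-1}\|\asymp \bigl(1+|\tau(p_u)|^2\bigr)^2$. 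Discreteness of $P$ then forces $\tau(p_u)$ to range over a rank-$r$ lattice, and the remainder of your argument (the quadratic displacement estimate, the lattice count, and the divergence at $s=r/2$) goes through verbatim with $\tau(p)$ replaced by $\tau(p_u)$.
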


The idea to prove Proposition~\ref{prop:parab diverge} more generally is to use the strong structural results given to us by the next proposition to prove that the Zariski closure of our parabolic group is divergent in some sense. 

\begin{prop}[{\cite[Prop.\,7.1 \ \& \ Lem.\,7.6]{CM12}}]\label{prop:zarclosure of parab}
 Let $\Omega\subset \RPn$ be a smooth domain, and $P\leq\Aut(\Omega)$ a discrete parabolic subgroup.
 
 Then $P$ is a cocompact lattice of its Zariski closure $\mathcal{N}$, which is nilpotent and equal to the direct product $K\times U$, where $K\leq \mathcal N$ is Zariski-closed, compact, abelian, and consists of semi-simple elements, $U\leq \mathcal{N}$ is Zariski-closed and unipotent, and the map $(k,u)\mapsto ku$ is an isomorphism from $K\times U$ to $\mathcal N$.
\end{prop}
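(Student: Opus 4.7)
The strategy is to combine the Lie-theoretic structure of the stabilizer in $\PGL(\Rnplusone)$ of the flag determined by the parabolic fixed point with the strong eigenvalue restrictions coming from $P$ consisting only of elliptic and parabolic isometries.

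First, I would identify the ambient Lie-theoretic parabolic and exploit eigenvalue constraints. Since $\Omega$ is strictly convex with $C^1$ boundary and $P$ fixes a unique $\xi\in\del\Omega$, the group $P$ must also preserve the unique supporting hyperplane $H_\xi$ at $\xi$ (any element preserving $\Omega$ sends supporting hyperplanes to supporting hyperplanes, and uniqueness forces this one to be fixed). Thus $P\subseteq Q:=\Stab_{\PGL}(\xi)\cap\Stab_{\PGL}(H_\xi)$, a parabolic of $\PGL(\Rnplusone)$ in the Lie-theoretic sense, with Levi decomposition $Q=L\ltimes U_\xi$, where $L$ is a reductive Levi (block-diagonal matrices for the flag $\xi\subset H_\xi\subset\Rnplusone$) and $U_\xi$ is the unipotent radical. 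Moreover, each $g\in P$ is elliptic or parabolic, so $\ell(g)=\tfrac12\log(\lambda_1/\lambda_{\dimension+1})=0$ in the notation of \eqref{eq:translength}; after renormalizing a lift, every eigenvalue of every lift of $g\in P$ to $\GL(\Rnplusone)$ lies on the unit circle. Consequently, every character $\chi$ of $L$ evaluates on $P$ to numbers of modulus $1$; since the only algebraic subgroups of $\mathbb{G}_m$ contained in the unit circle are finite, $\chi$ has finite image on the Zariski closure $\mathcal{N}$.

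Next, I would use the algebraic Jordan decomposition to produce the decomposition $\mathcal{N}=K\times U$. For each $g\in\mathcal{N}$, the decomposition $g=g_sg_u$ places both factors in $\mathcal{N}$; the semisimple parts form a Zariski-closed subset $K$ whose elements all have eigenvalues of modulus $1$, hence lie in a compact real torus of $L$, while the unipotent parts form a unipotent subgroup $U\subseteq U_\xi$. The key technical step is to show that $K$ is a subgroup centralizing $U$. This reduces to a weight-space analysis of the adjoint action of $L$ on $\mathrm{Lie}(U_\xi)$: the weights of the torus $K$ are integer combinations of the three ``cusp-flag characters'' of $L$ (the scalars by which $L$ acts on $\xi$, on $H_\xi/\xi$, and on $\Rnplusone/H_\xi$), and a horoball-preservation argument — any nontrivial weight of $K$ would produce a dilation along the horosphere centered at $\xi$, incompatible with $P$ (and hence $\mathcal{N}$) preserving a horoball based at $\xi$ — forces these weights to be trivial. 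Consequently $\mathcal{N}=K\times U$ as a direct product with $K$ compact abelian semisimple and $U$ unipotent, so in particular $\mathcal{N}$ is nilpotent.

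Finally, I would establish cocompactness of $P$ in $\mathcal{N}$ via Mostow's theorem that a Zariski-dense discrete subgroup of a simply connected nilpotent Lie group is a cocompact lattice. Applied to the projection of a finite-index torsion-free subgroup $P_0\leq P$ onto the unipotent factor $U$ (modding out the compact factor $K^0$), together with the finiteness of $\mathcal{N}/\mathcal{N}^0$, this yields that $P$ is cocompact in $\mathcal{N}$; finite presentation is then standard for cocompact lattices in connected-by-finite Lie groups. The main obstacle is the centralization step in the preceding paragraph: a Levi of a parabolic can in general act nontrivially on the unipotent radical by conjugation, and without exploiting that $P$ preserves a horoball at $\xi$ (which provides the needed extra rigidity) one would only obtain a semidirect product rather than the direct product claimed.
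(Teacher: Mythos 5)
Your proposal cannot be compared against a proof in the paper, because the paper gives none: the proposition is imported wholesale from Crampon--Marquis (the bracketed citation \cite[Prop.\,7.1 \& Lem.\,7.6]{CM12} \emph{is} the proof). Their route is: discreteness of $P$ forces $P$ to be \emph{virtually nilpotent}, via the Zassenhaus--Kazhdan--Margulis lemma for Hilbert geometries; then a purely algebraic lemma says that a \emph{nilpotent} linear group all of whose elements have unimodular eigenvalues has Zariski closure of the form $K\times U$; cocompactness then comes from Mal'cev theory, essentially as in your last step (which is the one correct and matching part of your proposal).

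The fatal gap is that your structural argument --- eigenvalue constraints, Jordan decomposition, weight analysis, horoball preservation --- never uses discreteness of $P$, and the structural conclusion is false without it. Take $\Omega\subset\proj(\real^4)$ the round ball, so $\Aut(\Omega)=\PO(3,1)$, and let $G=\SO(2)\ltimes\real^2$ be the group of orientation-preserving Euclidean isometries of a horosphere centered at $\xi$. Every element of $G$ is elliptic or parabolic, fixes $\xi$ and $H_\xi$, has all eigenvalues of modulus $1$, and preserves every horoball at $\xi$; moreover $G$ is Zariski-closed. Yet $G$ is not nilpotent and is not a direct product of a compact group with a unipotent group, because the rotations act nontrivially on the translations. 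This shows exactly where your sketch breaks: (i) the semisimple parts of elements of $G$ are rotations about \emph{arbitrary} centers, and they do not form a subgroup (two such rotations can compose to a nontrivial translation), so your set $K$ need not be a group, let alone a torus --- Jordan parts form subgroups only in nilpotent groups, which is circular here; and (ii) your horoball argument can only exclude weights of modulus $\neq 1$ (genuine dilations), whereas the obstruction to the direct product comes from \emph{unimodular}, rotation-type weights, which are perfectly compatible with preserving every horoball. The ingredient that kills the rotation part is discreteness, via the Margulis lemma --- and even then only up to finite index: the Klein-bottle group of parabolic translations and glide reflections in $\PO(3,1)$ is a discrete parabolic subgroup whose Zariski closure is $\real^2\rtimes\ints/2\ints$, which is not nilpotent; this is why [CM12] state virtual nilpotency and why the finite-index reduction cannot be bypassed by a direct argument of the kind you propose. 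A secondary, smaller issue: conditions such as ``preserves a horoball'' or ``all eigenvalues have modulus $1$'' are not Zariski-closed, so they do not pass from $P$ to $\mathcal{N}$ without justification; only genuinely algebraic consequences (such as your correct observation that real characters take values in $\{\pm 1\}$ on $\mathcal{N}$) survive Zariski closure.
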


To make sense of divergence for the Zariski closure, we extend the definitions of critical exponent and divergent subgroups to closed, not necessarily discrete subgroups of $\SL(\Rnplusone)$ which do not necessarily preserve a properly convex open set. (This definition will not be needed anywhere else than this section.) For any element $g\in\SL(\Rnplusone)$, let $\|g\|$ denote the square root of the sum of the squared entries of $g$.

\begin{defn}\label{def:generalised divergent}
 For any closed subgroup $G\leq\SL(\Rnplusone)$, we define the {\bf critical exponent} $\delta_G$ of $G$ as the (possibly infinite) supremum of the numbers $s\geq 0$ such that $\int_G(\|g\|\|g^{-1}\|)^{-\frac s2}\,d\mu_G(g)=\infty$, where $\mu_G$ is any Haar measure on $G$. We say that $G$ is {\bf divergent} if $\delta_G$ is finite and $\int_G(\|g\|\|g^{-1}\|)^{-\frac {\delta_G}2}\,d\mu_G(g)=\infty$.
\end{defn}

Note that Definition~\ref{def:generalised divergent} is compatible with the definition of the critical exponent given in \S\ref{subsec:pat_sul}, thanks to Proposition~\ref{prop:kappa and Hilbert}. We now prove

\begin{lem}\label{lem:unip diverge}
 Any unipotent Zariski-closed subgroup of $\SL(\Rnplusone)$ is divergent.
\end{lem}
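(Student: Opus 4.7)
The plan is to transfer the divergence integral from the group $U$ to its Lie algebra $\mathfrak{u}$ via the exponential map, and then exploit the polynomial structure to identify the critical exponent and show divergence at it. Concretely, since $U \subset \SL(\Rnplusone)$ is Zariski-closed and unipotent, every $X \in \mathfrak{u}$ is nilpotent of degree at most $\dimension + 1$, so $\exp(X) = \sum_{k=0}^{\dimension} X^k/k!$ and $\log(g) = \sum_{k=1}^{\dimension} (-1)^{k+1}(g-I)^k/k$ are mutually inverse polynomial maps, giving a polynomial diffeomorphism $\mathfrak{u} \xrightarrow{\sim} U$. Since $\mathrm{ad}(\mathfrak{u})$ is nilpotent, $\exp$ has Jacobian determinant one at every point, and Haar measure on $U$ pulls back to Lebesgue measure $dX$ on $\mathfrak{u} \cong \real^m$ with $m = \dim_\real U$. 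I would then set $P(X) := \kappa(\exp X)^2 = \|\exp X\|^2 \, \|\exp(-X)\|^2$, a polynomial function on $\mathfrak{u}$ with $P(0) = (\dimension + 1)^2 > 0$; it is proper because $\det(\exp X) = 1$ forces the singular values of $\exp X$ to satisfy $\sigma_1/\sigma_{\dimension+1} \to \infty$ as $\|\exp X\| \to \infty$, hence $\kappa(\exp X) \to \infty$. The lemma thus reduces to showing that $I(s) := \int_{\mathfrak{u}} P(X)^{-s/4}\,dX$ has a finite positive critical value $\delta_U$ at which it diverges.

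To analyze $I(s)$ I would exploit the quasi-homogeneous structure on $\mathfrak{u}$ coming from its nilpotent Lie algebra structure. The descending central series $\mathfrak{u} = \mathfrak{u}^{(1)} \supset \mathfrak{u}^{(2)} \supset \dotsb \supset \mathfrak{u}^{(\ell+1)} = 0$ admits vector-space complements $V_k$ of $\mathfrak{u}^{(k+1)}$ in $\mathfrak{u}^{(k)}$, yielding a grading $\mathfrak{u} = \bigoplus_{k=1}^{\ell} V_k$ and a one-parameter family of linear dilations $\phi_t \in \GL(\mathfrak{u})$ defined by $\phi_t|_{V_k} = t^k \, \mathrm{Id}_{V_k}$. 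The leading part $P_\infty$ of $P$ under $\phi_t$ is then weighted-homogeneous of some positive even degree $2\sigma$. A change of variables using $\phi_t$ together with a layer-cake decomposition should yield the asymptotic $\mathrm{Vol}\{P \leq v\} \asymp v^{M/(2\sigma)}$ as $v \to \infty$, where $M := \sum_k k \dim V_k$ is the total weighted dimension. Plugging this into $I(s) = \tfrac{s}{4}\int_0^\infty v^{-s/4-1}\mathrm{Vol}\{P \leq v\}\,dv$ identifies $\delta_U = 2M/\sigma$, with divergence at $s = \delta_U$ coming from the logarithmic integral $\int_1^\infty v^{-1}\,dv = \infty$.

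The hard part will be controlling $P$ near the \emph{degenerate locus} in $\mathfrak{u}$ where $P_\infty$ vanishes: there, $P$ is asymptotically governed by lower weighted-degree terms, which could \emph{a priori} inflate the asymptotics of $\mathrm{Vol}\{P \leq v\}$ and shift the critical exponent. I would address this by induction on $\dim U$: the degenerate locus lies inside a proper Zariski-closed unipotent subvariety associated with a smaller unipotent subgroup (for instance the derived subgroup $[U,U]$, or a weighted-homogeneous stratum), and a Fubini decomposition along the filtration $\mathfrak{u} \supset \mathfrak{u}^{(2)} \supset \dotsb$ combined with the inductive hypothesis reduces the argument to the abelian base case. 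In that case $P$ factors asymptotically over one-dimensional components, and the conclusion follows from the elementary computation for a single one-parameter unipotent subgroup, where $\kappa(\exp tX) \sim c\,|t|^{2d}$ (with $d$ the nilpotency degree of $X$), yielding $\delta = 1/d$ with divergence via $\int_\real |t|^{-1}\,dt = \infty$.
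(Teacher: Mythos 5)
The first half of your argument---passing to the Lie algebra via $\exp$, identifying Haar measure on $U$ with Lebesgue measure on $\mathfrak u$, setting $P(X)=\|\exp X\|^2\,\|\exp(-X)\|^2$, and observing that $P$ is a proper polynomial with $P\geq 1$---is exactly the paper's reduction (Lemma~\ref{lem:unip diverge} is reduced to Lemma~\ref{lem:poly diverge}). The genuine gap is in how you then treat $\int_{\mathfrak u}P^{-s/4}$. What is actually needed is that $\mathrm{Vol}\{P\leq v\}$ has the precise shape $a\,v^{r}(\log v)^{k}$ with $k\geq 0$ an integer: divergence at the critical exponent is sensitive to this shape, since a volume growth of, say, $v^{r}(\log v)^{-2}$ would make the layer-cake integral $\int v^{-1}(\log v)^{-2}\,dv$ converge and the group would fail to be divergent. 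The paper obtains this shape by invoking a theorem of Benoist and Oh on volumes of sublevel sets of arbitrary proper polynomials; that is a substantive input (proved via resolution-of-singularities-type techniques), not something that falls out of a scaling argument.

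Your proposed substitute does not close this. The weighted dilations $\phi_t$ built from the lower central series produce a leading part $P_\infty$, but $P_\infty$ controls $\mathrm{Vol}\{P\leq v\}$ only away from its zero locus, and that zero locus is an algebraic subvariety of $\mathfrak u$ determined by the coefficients of $P$ (i.e.\ by the embedding $U\hookrightarrow\SL(\Rnplusone)$ and the chosen norm), not by the group structure; there is no reason for it to lie in $[\mathfrak u,\mathfrak u]$ or in any subalgebra, so the induction on $\dim U$ has nothing to induct on. The claimed base case is also unjustified: even for abelian $U$, $P$ is a multivariate polynomial that need not factor asymptotically over one-dimensional components, and sublevel-set volumes of multivariate polynomials genuinely exhibit logarithmic corrections (already $\mathrm{Vol}\{(1+x^2)(1+y^2)\leq v\}\asymp\sqrt v\,\log v$), so the pure power law $v^{M/(2\sigma)}$ and the identification $\delta_U=2M/\sigma$ cannot be expected in general. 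To repair the proof you should either quote a sublevel-set volume theorem of Benoist--Oh type, as the paper does, or give a complete elementary treatment of $\mathrm{Vol}\{P\leq v\}$ (e.g.\ via the Newton polyhedron of $P$), which is considerably more work than the sketch suggests.
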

\begin{proof}
 Let $U<\SL(\Rnplusone)$ be a Zariski-closed unipotent subgroup; denote by $\mathfrak u$ its Lie algebra. The exponential map $\exp : \mathfrak u\rightarrow U$ is a diffeomorphism such that the entries of $\exp(x)$ are polynomials in the entries of $x\in\mathfrak u$ (by the Baker--Campbell--Hausdorff formula and nilpotence; see \cite[\S 4]{BorelLinAlgGpshort}). Furthermore, the push-forward by $\exp$ of any Lebesgue measure on $\mathfrak u$ (let us fix one) is a Haar measure on $U$ (see \cite[Th.\,1.2.10]{repofnilp}).
 
 Set $P(x):=\left\|\exp\left(x\right)\right\|^2\cdot\left\|\exp\left(-x\right)\right\|^2\geq 1$ for any $x\in\mathfrak u$, and observe that $P$ is a polynomial on $\mathfrak u$, and is proper, in the sense that $P(x)\to\infty$ as $\|x\|\to\infty$. By definition, $\delta_U$ is the supremum of the $s\geq 0$ such that $\int_{\mathfrak u}P^{-s/4}$ diverges, where we integrate against the Lebesgue measure. To conclude the proof, it is enough to show that $\int_{\mathfrak u}P^{-\delta_U/4}$ diverges. This is a consequence of Lemma~\ref{lem:poly diverge} below.
\end{proof}

\begin{lem}\label{lem:poly diverge}
 Let $P$ be a proper polynomial in $\dimension \in \ints_{\geq 1}$ variables, with real coefficients, such that $P\geq 1$ on $\real^\dimension$. Let $\delta = \sup \{s\geq 0 \st \int_{\real^n}P^{-s} d\mu \text{ diverges}\}$, where $\mu$ denotes the Lebesgue measure on $\real^\dimension$. Then $\delta$ is finite and $\int_{\real^\dimension}P^{-\delta} d\mu$ diverges.
\end{lem}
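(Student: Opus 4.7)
The plan is to reduce the multivariate integral to a one-dimensional question about the sub-level-set volume function
\begin{equation*}
\phi(t) := \mu(\{x \in \real^\dimension : P(x) \leq t\}),
\end{equation*}
which is finite-valued for each $t$ since $P$ is proper and $\geq 1$, and then to read off both the finiteness of $\delta$ and the divergence at $\delta$ directly from the asymptotic behaviour of $\phi$. A layer-cake computation (using $P(x)^{-s} = s \int_{P(x)}^\infty t^{-s-1}\,dt$) followed by Fubini yields
\begin{equation*}
\int_{\real^\dimension} P^{-s}\,d\mu \;=\; s \int_1^\infty \phi(t)\, t^{-s-1}\, dt,
\end{equation*}
so divergence of the left-hand side is controlled by the behaviour of $\phi(t) t^{-s-1}$ at infinity.

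For the finiteness of $\delta$, I would invoke the \emph{Łojasiewicz inequality at infinity} for the proper polynomial $P$: there exist constants $c, \alpha, R > 0$ with $P(x) \geq c \|x\|^\alpha$ for $\|x\| \geq R$; combined with $P \geq 1$, this gives a global lower bound $P(x) \geq c'(1+\|x\|)^\alpha$ on $\real^\dimension$. Therefore $\phi(t) = O(t^{\dimension/\alpha})$, and the right-hand side of the display above converges for every $s > \dimension/\alpha$, so $\delta \leq \dimension/\alpha < \infty$.

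The heart of the argument, and the genuinely hard step, is the divergence at $s = \delta$. Here I would appeal to the following structural fact about semi-algebraic volume functions: $\phi$ admits an asymptotic expansion
\begin{equation*}
\phi(t) \;=\; C\, t^\beta (\log t)^k \bigl(1 + o(1)\bigr) \quad (t \to \infty),
\end{equation*}
for some $\beta \in \rats_{\geq 0}$, some integer $k \geq 0$, and some $C > 0$. Either of two routes establishes this: apply Hironaka's resolution of singularities to $P$ on a projective compactification of $\real^\dimension$ so that $\{P = 0\}$ together with the hyperplane at infinity becomes a normal-crossings divisor on the resolution, and read off the asymptotic from the standard toric/monomial contributions (the real Igusa local zeta function approach); or invoke o-minimality of the semi-algebraic structure together with the preparation theorem for semi-algebraic functions, which directly produces expansions of this shape.

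Given this asymptotic, substituting into the Mellin-type integral yields, for $s$ near $\beta$,
\begin{equation*}
\int_{\real^\dimension} P^{-s}\,d\mu \;\sim\; sC \int_1^\infty t^{\beta - s - 1} (\log t)^k \, dt,
\end{equation*}
which is finite for $s > \beta$ and infinite at $s = \beta$, because $\int_1^\infty t^{-1}(\log t)^k \, dt = +\infty$ for every integer $k \geq 0$. This identifies $\delta = \beta$, and since $s \mapsto P^{-s}$ is increasing as $s \downarrow \delta$ (using $P \geq 1$), monotone convergence then gives $\int P^{-\delta}\,d\mu = +\infty$, completing the proof. A hands-on alternative would be to induct on $\dimension$ by integrating out one variable at a time and tracking how logarithmic factors arise along the "parabolic" directions where $P$ fails to grow polynomially; but the combinatorial bookkeeping becomes heavy, so the semi-algebraic asymptotic is the cleanest route.
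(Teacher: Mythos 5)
Your argument is correct and is essentially the paper's proof: the same layer-cake reduction to the Mellin-type integral $s\int_1^\infty \mu(P\le t)\,t^{-s-1}\,dt$, followed by the asymptotic $\mu(P\le t)\sim C\,t^{\beta}(\log t)^k$ for sublevel-set volumes, which the paper simply imports from Benoist--Oh (their Prop.\,7.2) rather than rederiving via resolution of singularities or o-minimal preparation as you sketch. One small remark: your closing monotone-convergence step is both unnecessary (divergence at $s=\delta=\beta$ already follows directly from $\int_1^\infty t^{-1}(\log t)^k\,dt=\infty$) and, taken on its own, insufficient, since an increasing family of finite integrals need not have infinite limit.
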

\begin{proof}
 For any $s>0$, we have
 \begin{align*}
     \int_{\real^\dimension}P^{-s}(x)\,d\mu(x) & = \int_{x\in\real^\dimension}\int_{t=0}^{P^{-s}(x)} dt\,d\mu(x) \\
     & = \int_{t=0}^1\mu(P^{-s}\geq t)\,dt \\
     & = s\int_{u\geq 1}\mu(P\leq u)\;u^{-s-1} du
 \end{align*}
 By \cite[Prop.\,7.2]{BenoistOh}, there exist $a>0$, $r\in\rats_{>0}$ and $k\in\nats$ such that $\mu(P\leq u)$ is asymptotically equivalent to $au^r(\log u)^k$ as $u \to\infty$. This concludes the proof since $\int_{u\geq 1}u^{r-s-1} (\log u)^k\,du$ is finite for any $s>r$, and is equal to $\lim_{u\to\infty}\frac{\log(u)^{k+1}}{k+1}=\infty$ for $s=r$.
\end{proof}

We conclude this section with the proof of Proposition~\ref{prop:parab diverge}.

\begin{proof}[Proof of Proposition~\ref{prop:parab diverge}]
 Let $P\leq\Aut(\Omega)$ be a discrete parabolic subgroup. By Proposition~\ref{prop:zarclosure of parab}, the group $P$ is a uniform lattice of its Zariski closure $\mathcal N=K\times U$, where $K$ is compact, and $U$ is Zariski-closed and unipotent. The group $P$ is finitely presented by \cite[Cor.\,6.14]{Raghunathan}; 
 let us prove that it is divergent. The restriction to $P$ of the projection onto $U$ has finite kernel, and its image $P'$ is a uniform lattice of $U$. By Lemma~\ref{lem:unip diverge}, the group $U$ is divergent, and hence so are $P'$ and $P$. Indeed, denote by $\mu$ a Haar measure on $U$ and fix a relatively compact measurable subset $\fundom\subset U$ such that
 $(p,g)\in\ P'\times\fundom \mapsto pg\in U$ is a bijection. Then for any $s\geq 0$:
 \[\int_U (\|g\|\cdot\|g^{-1}\|)^{-s}\,d\mu(g) = \sum_{p\in P'}\int_{\fundom}(\|pg\|\cdot\|pg^{-1}\|)^{-s}\,d\mu(g).\]
 Set $C:=\max\{\|g\|\cdot\|g^{-1}\| \:|\: g\in\fundom\}$, which is finite since $\fundom$ is relatively compact. The norm $\|\cdot\|$ we have chosen is submultiplicative, therefore
 \[C^{-s}\sum_{p\in P'}(\|p\|\cdot\|p^{-1}\|)^{-s} \leq \frac 1{\mu(\fundom)} \int_U (\|g\|\cdot\|g^{-1}\|)^{-s}\,d\mu(g) \leq C^s\sum_{p\in P'}(\|p\|\cdot\|p^{-1}\|)^{-s}.\]
These estimates conclude the proof.
\end{proof}

\subsection{Finiteness properties for boundary geometrically finite subgroups}

The proofs of Theorems \ref{thm:finite_BMmeas} and \ref{thm:pcgeod_equidist_geomfin} for the general case of subgroups $\Gamma$ acting geometrically finitely on $\del\Omega$, but not geometrically finitely on $\Omega$, will require some finiteness results for such subgroups. We will establish these here.

\begin{prop}[{\cite[Prop.\,9.10]{CM12}}]\label{prop:finitely many cusps}
Let $\Omega\subset \RPn$ be a smooth domain and $\Gamma\leq\Aut(\Omega)$ a discrete subgroup acting geometrically finitely on $\del\Omega$. Then there are finitely many $\Gamma$-orbits of parabolic points, $\Gamma$ is hyperbolic relative to its maximal parabolic subgroups, and $\Gamma$ is finitely presented.
\begin{proof}
We can assume that $\Gamma$ is non-elementary. 
%
%
%
%
Since $\Gamma$ is a discrete subgroup of $\Aut(\Omega)$, it is countable. 

By Yaman's criterion \cite{Yaman}, $\Gamma$ is hyperbolic relative to its maximal parabolic subgroups. In particular, it has finitely many classes of maximal parabolic subgroups (see also \cite[Th.\,1B]{Tukia}).

By Proposition \ref{prop:parab diverge}, the maximal parabolic subgroups are finitely presented.
Hence, by \cite[Cor.\,2.4]{Osin} (which states that relatively hyperbolic groups, as defined in \cite{Osin}, inherit finite properties from their peripheral subgroups) and \cite[Th.\,5.1]{Hruska} (which proves the equivalence of several characterizations of countable relatively hyperbolic groups, including the definitions used in \cite{Osin} and in \cite{CM12}), $\Gamma$ is also finitely presented.
\end{proof} \end{prop}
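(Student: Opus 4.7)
The plan is to apply Yaman's convergence-group characterization of relative hyperbolicity and then upgrade from finite presentability of the peripheral subgroups to finite presentability of $\Gamma$ via an inheritance theorem for relatively hyperbolic groups. The elementary cases ($\Gamma$ finite, parabolic, or elementary hyperbolic) can be dispatched immediately using the classification recalled in \S\ref{sub:geomfin} together with Proposition~\ref{prop:parab diverge} for the parabolic case, so I will assume $\Gamma$ is non-elementary from the outset.

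In the non-elementary setting, $\Gamma$ is countable, being a discrete subgroup of the second-countable Lie group $\PGL(\Rnplusone)$; the limit set $\Lambda_\Gamma^{\orb} \subset \del\Omega$ is perfect and infinite; and the action $\Gamma \actson \Lambda_\Gamma^{\orb}$ is a convergence action. This last assertion is where the geometry of strictly convex domains with $C^1$ boundary enters: proper discontinuity of $\Gamma \actson \Omega$, combined with the fact that every boundary point is both extremal and $C^1$, forces any sequence $(\gamma_n) \subset \Gamma$ of pairwise distinct elements to admit a subsequence with north--south dynamics on $\overline\Omega$. The hypothesis that $\Gamma$ acts geometrically finitely on $\del\Omega$ then says, by definition, that every point of $\Lambda_\Gamma^{\orb}$ is either conical or bounded parabolic, which is precisely the hypothesis needed for Yaman's theorem. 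Applying it, $\Gamma$ is hyperbolic relative to the stabilizers of its parabolic fixed points, which are exactly its maximal parabolic subgroups. A standard by-product of this result --- or, alternatively, of Tukia's analysis of bounded parabolic points --- is that there are only finitely many $\Gamma$-conjugacy classes of maximal parabolic subgroups, equivalently finitely many $\Gamma$-orbits of parabolic points.

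For the finite presentation statement, Proposition~\ref{prop:parab diverge} provides finite presentability of each maximal parabolic subgroup, so I would invoke an inheritance theorem for relatively hyperbolic groups --- for instance Osin's result that a countable relatively hyperbolic group with finitely presented peripheral subgroups is itself finitely presented --- to conclude. The main obstacle I anticipate is not any individual step (each being a citation once the setup is correct) but rather carefully matching the various definitions in the literature: Yaman's and Osin's frameworks are phrased in terms of convergence actions on compact metrizable spaces and of Cayley graphs respectively, whereas the boundary-geometric finiteness of \S\ref{sub:geomfin} is expressed in terms of dynamics on $\del\Omega$. Translating among these formulations --- in particular, verifying that $\Gamma \actson \Lambda_\Gamma^{\orb}$ is a geometrically finite convergence action in Yaman's sense, and that Osin's and Yaman's definitions of relative hyperbolicity agree in this countable setting (via an equivalence result such as Hruska's) --- is where most of the care is needed.
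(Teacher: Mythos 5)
Your proposal follows essentially the same route as the paper's proof: reduce to the non-elementary case, invoke Yaman's criterion (with Tukia for the finiteness of parabolic orbit classes), use Proposition~\ref{prop:parab diverge} for finite presentability of the peripherals, and conclude via Osin's inheritance theorem together with Hruska's equivalence of definitions. The extra care you take in verifying that $\Gamma \actson \Lambda_\Gamma^{\orb}$ is a geometrically finite convergence action is a detail the paper leaves implicit, but it does not change the argument.
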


\begin{lem} \label{lem:disjoint horoballs}
Let $\Omega \subset\proj(\real^{\dimension +1})$ be a smooth domain and $\Gamma\leq\Aut(\Omega)$ a discrete subgroup, $\xi,\xi'$ two bounded parabolic points of the proximal limit set, and $\horoball'$ a horoball centered at $\xi'$. Then there exists a horoball $\horoball$ centered at $\xi$ such that for any $\gamma\in\Gamma$, either $\horoball'\cap\gamma \horoball=\varnothing$ or $\gamma \xi=\xi'$.
\end{lem}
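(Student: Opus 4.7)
The plan is to argue by contradiction. Suppose no such $\horoball$ exists; we can then find a sequence of horoballs $\horoball_n := \{y \in \Omega : \beta_\xi(o,y) > \alpha_n\}$ with $\alpha_n \to +\infty$ (so $\bigcap_n \overline{\horoball_n} = \{\xi\}$ in $\overline\Omega$) together with elements $\gamma_n \in \Gamma$ satisfying $\gamma_n\xi \neq \xi'$ yet $\gamma_n\horoball_n \cap \horoball' \neq \varnothing$, writing $\horoball' = \{y : \beta_{\xi'}(o,y) > \alpha'\}$. By Proposition~\ref{prop:horoball_geomdesc} we may choose $z_n$ in this intersection lying on the geodesic $(\gamma_n\xi,\xi')$. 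Using the bounded parabolicity of $\xi'$, and the standard fact that elements of $P' := \Stab_\Gamma(\xi')$ preserve every horoball centered at $\xi'$ in a smooth-domain Hilbert geometry (see \cite{CM12}), we left-multiply each $\gamma_n$ by a suitable $p_n \in P'$ so that $\gamma_n\xi$ remains in a fixed compact subset of $\Lambda_\Gamma^{\orb} \setminus \{\xi'\}$ without spoiling the intersection condition; after extraction, $\gamma_n\xi \to \eta \in \del\Omega \setminus \{\xi'\}$.

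Combining the horoball conditions $\beta_{\gamma_n\xi}(o,z_n) > \alpha_n + \beta_{\gamma_n\xi}(o,\gamma_n o)$ and $\beta_{\xi'}(o,z_n) > \alpha'$ with the Gromov-product identity $\beta_{\gamma_n\xi}(o,z_n) + \beta_{\xi'}(o,z_n) = 2\grop{o}{\gamma_n\xi}{\xi'}$ on $(\gamma_n\xi,\xi')$ coming from Proposition~\ref{prop:Gromovprod}, I would derive
\[ \beta_{\gamma_n\xi}(o, \gamma_n o) \;<\; 2\grop{o}{\gamma_n\xi}{\xi'} - \alpha_n - \alpha'. \]
Since the Gromov product on the right stays bounded (by continuity, as $\gamma_n\xi \to \eta \neq \xi'$), the right-hand side tends to $-\infty$, forcing $\beta_{\gamma_n\xi}(o,\gamma_n o) \to -\infty$. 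By the equivariance $\beta_{\gamma\xi}(\gamma x, \gamma y) = \beta_\xi(x,y)$ this rewrites as $\beta_\xi(\gamma_n^{-1}o, o) \to -\infty$, and the continuity of the Busemann function on $\overline\Omega\setminus\{\xi\}$ (valid in smooth domains, where horofunctions are indexed by boundary points) then forces $\gamma_n^{-1}o \to \xi$ in $\overline\Omega$.

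At this stage the bounded parabolicity of $\xi$ closes the argument: it provides a precisely invariant horoball at $\xi$ (a standard consequence of bounded parabolicity, see \cite{CM12}), from which it follows that any sequence of orbit points converging to $\xi$ satisfies, after passing to a further subsequence, $\gamma_n^{-1} = p_n h$ with $p_n \in P := \Stab_\Gamma(\xi)$ and some fixed $h \in \Gamma$. Since $p_n^{-1} \in P$ fixes $\xi$ and preserves each $\horoball_n$, this gives $\gamma_n\horoball_n = h^{-1}\horoball_n$ and $\gamma_n\xi = h^{-1}\xi$, with $h^{-1}\xi \neq \xi'$ by hypothesis. But $h^{-1}\horoball_n$ shrinks to the single point $\{h^{-1}\xi\} \subset \del\Omega \setminus \{\xi'\}$ as $n \to \infty$, whereas $\overline{\horoball'} \cap \del\Omega = \{\xi'\}$ by strict convexity with $C^1$ boundary; hence $h^{-1}\horoball_n \cap \horoball' = \varnothing$ for large $n$, contradicting our setup. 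The main technical obstacle is the passage from $\gamma_n^{-1}o \to \xi$ to the decomposition $\gamma_n^{-1} = p_n h$, which repackages the precisely invariant horoball property at a bounded parabolic point and is where the bounded parabolicity of $\xi$ is really used.
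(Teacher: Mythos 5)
The first two-thirds of your argument is sound and follows a genuinely workable route: normalizing by $\Stab_\Gamma(\xi')$ via bounded parabolicity of $\xi'$, and then combining the two horoball conditions with the identity $\beta_{\gamma_n\xi}(o,z_n)+\beta_{\xi'}(o,z_n)=2\grop{o}{\gamma_n\xi}{\xi'}$ and the continuity of the Gromov product, correctly yields $\beta_\xi(\gamma_n^{-1}o,o)\to-\infty$, i.e.\ the orbit of $o$ would have to enter every horoball centered at $\xi$. (The auxiliary claim that $\Stab_\Gamma(\xi')$ preserves horoballs at $\xi'$ is true but deserves a line: $\beta_{\xi'}(x,p^nx)=n\beta_{\xi'}(x,px)$ while $|\beta_{\xi'}(x,p^nx)|\leq d_\Omega(x,p^nx)$ grows sublinearly for $p$ elliptic or parabolic, forcing $\beta_{\xi'}(x,px)=0$.)

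The gap is the final step. You dispose of the configuration ``$\Gamma o$ accumulates at $\xi$ from inside every horoball'' by invoking a precisely invariant horoball at the bounded parabolic point $\xi$ as a standard consequence of bounded parabolicity, citing \cite{CM12}. This is circular: the existence of such a horoball is essentially the case $\xi=\xi'$ of the lemma being proved, and \cite{CM12} establishes cusp structures of this kind only for \emph{uniformly} bounded parabolic points (actions geometrically finite on $\Omega$), whereas the point of this section is to work with the weaker hypothesis of geometric finiteness on $\del\Omega$. Moreover, the intermediate implication you state --- that orbit points converging to $\xi$ lie, after extraction, in a single coset $Ph$ --- is false even in $\HH^2$: in $\PSL(2,\ints)$ with $\xi=\infty$, the points $T^n\sigma T^n\sigma\cdot i$ (with $T$ the unit translation and $\sigma:z\mapsto-1/z$) converge to $\infty$ while representing pairwise distinct cosets of $\Stab(\infty)$. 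What precise invariance actually gives is that orbit points cannot go arbitrarily deep into horoballs at $\xi$ --- which is exactly the statement you still owe. The paper closes this step by a different mechanism: it extracts convergent points $x_n\to x$, $y_n\to y$ in $\Omega$ (using bounded parabolicity of \emph{both} $\xi$ and $\xi'$), applies Crampon's convexity estimate (Lemma~\ref{lem:crampon}) to show $d_\Omega(\gamma z_n,z_n)\to0$ for every $\gamma\in\Stab_\Gamma(\xi)$ and suitable points $z_n$ deep in the horoballs, and concludes by proper discontinuity that $\Stab_\Gamma(\xi)$ embeds into the finite group $\Stab_\Gamma(x)$, contradicting the fact that a parabolic group is infinite. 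Your argument needs to be completed by something of this nature rather than by appeal to a precisely invariant horoball.
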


\begin{proof}
Suppose by contradiction that we can find a decreasing sequence of horoballs $(\horoball_n)_n$ centered at and converging to $\xi$, and a sequence of elements $(\gamma_n)_{n\in\nats} \subset \Gamma$ such that $\horoball'\cap\gamma_n \horoball_n\neq\varnothing$ and $\gamma_n \xi \neq \xi'$. From Proposition \ref{fact:horofoliation}, the intersection $[\gamma_n\xi\ \xi']\cap \horoball \cap \gamma_n\horoball_n$ is non-empty (and compact in $\Omega$) for all $n$, and we can consider its closest point $x_n$ to $\gamma_n \xi$, which belongs to $\horosphere'$. Similarly, for any $n$, we consider $y_n\in[\xi\ \gamma_n^{-1}\xi']\cap \horosphere \cap \gamma_n^{-1}\horoball'$. 

Since $\xi$ and $\xi'$ are bounded parabolic, up to replacing $(\gamma_n)_n$ by a sequence of the form $(g_n\gamma_nh_n)_n$, where $(g_n)_n\subset\Stab_\Gamma(\xi')$ and $(h_n)_n\subset\Stab_\Gamma(\xi)$, we can assume that $(x_n)_n$ and $(y_n)_n$ stay in a compact subset of $\Omega$; moreover, up to extraction, we can assume that these sequences converge respectively to $x$ and $y$ in $\Omega$. 

Let us prove that $\Stab_\Gamma(\xi)$ is finite, which will contradict the fact that $\xi$ is bounded parabolic, and hence conclude the proof. Fix $\gamma\in\Stab_\Gamma(\xi)$. Observe that $d_\Omega(\gamma z_n,z_n) \to 0$ as $n \to \infty$, where $z_n=\gamma_n^{-1}x_n$. Indeed for each $n$ let $v_n\in S_{y_n}\Omega$ be such that $v_n^+=\xi$, and $t_n=d_\Omega(y_n,z_n)$, so that $\pi g^{t_n} v_n=z_n$. By construction $(t_n)_n$ diverges, hence by Lemma~\ref{lem:crampon}, 
\[\limsup_{n\to\infty}d_\Omega(z_n,\gamma z_n) \leq \lim_{t\to\infty}\limsup_{n\to\infty}d_\Omega(\pi g^{t}v_n,\pi g^{t}\gamma v_n) = \lim_{t\to\infty}d_\Omega(\pi g^t v,\pi g^t\gamma v)=0,\]
where $v\in S_y\Omega$ is such that $v^+=\xi$. As a consequence,
\begin{align*}
d_\Omega(\gamma_n\gamma\gamma_n^{-1}x,x) & \leq d_\Omega(\gamma_n\gamma\gamma_n^{-1}x,\gamma_n\gamma\gamma_n^{-1}x_n) + d_\Omega(\gamma_n\gamma\gamma_n^{-1}x_n,x_n) + d_\Omega(x_n,x) \\
& \leq 2d_\Omega(x_n,x) + d_\Omega(\gamma z_n,z_n) \xrightarrow[n\to\infty]{} 0
\end{align*}
and $(\gamma_n\gamma\gamma_n^{-1}x)_n$ converges to $x$, hence $\gamma_n\gamma\gamma_n^{-1}$ stabilizes $x$ for $n$ large enough by proper discontinuity. Thus $\Stab_\Gamma(\xi)$ is no larger than $\Stab_\Gamma(x)$, which is finite.
\end{proof}

\begin{prop}\label{prop:noncuspidal is cpct}
 Let $\Omega \subset \RPn$ be a smooth domain and $\Gamma\leq\Aut(\Omega)$ a discrete subgroup acting geometrically finitely on $\del\Omega$. For each parabolic point $\xi\in\Lambda_\Gamma$, fix an open horoball $\horoball_\xi$ centered at $\xi$ such that $\horoball_{\gamma\xi}=\gamma \horoball_{\xi}$ for each $\gamma\in\Gamma$. Then the set of unit tangent vectors of $SM_{\bip}$ whose foot-point does not belong to the projection of a horoball in $\{\horoball_\xi \st \xi\in\Lambda_\Gamma \text{ parabolic}\}$ is compact.
\end{prop}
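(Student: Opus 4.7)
The plan is a contradiction argument via minimization of distance to a basepoint. Suppose the set described fails to be compact in $SM_{\bip}$: then there is a sequence $(v_n)_n$ in it with no convergent subsequence. Choose lifts $\tilde v_n \in S\Omega_{\bip}$ so that the foot-point $x_n := \pi\tilde v_n$ minimizes $d_\Omega(\cdot, o)$ over $\Gamma\cdot \pi\tilde v_n$; since translating by $\Gamma$ permutes the family of horoballs, each $x_n$ still lies outside every $\gamma\horoball_\xi$. Passing to a subsequence, we may assume $\tilde v_n^\pm \to \xi^\pm \in \Lambda_\Gamma$ and $x_n \to x_\infty \in \overline\Omega$.

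If $x_\infty \in \Omega$, then $\tilde v_n$ itself converges in $S\Omega$ (foot-point and both endpoints converge), hence $v_n$ converges in $SM$, a contradiction. So $x_\infty \in \del\Omega$; since $x_n \in (\tilde v_n^-\,\tilde v_n^+)$ and $\Omega$ is strictly convex (the segment $[\xi^-\,\xi^+]$ meets $\del\Omega$ only in $\{\xi^-,\xi^+\}$), we get $x_\infty \in \{\xi^-,\xi^+\} \subset \Lambda_\Gamma$. Say $x_\infty = \xi^+$. By the geometric-finiteness hypothesis on $\del\Omega$, the point $x_\infty$ is then either bounded parabolic or conical.

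In the bounded parabolic case, the $C^1$ regularity of $x_\infty$ implies that $\horoball_{x_\infty}$ is a neighborhood of $x_\infty$ in $\overline\Omega$, so $x_n \in \horoball_{x_\infty}$ for large $n$, contradicting the hypothesis that $x_n$ avoids every $\gamma\horoball_\xi$. In the conical case I aim for a contradiction with the minimality of $d_\Omega(x_n,o)$: conicality provides $R>0$ and $\eta_k \in \Gamma$ with $\eta_k o \to x_\infty$ and $d_\Omega(\eta_k o, [o\, x_\infty)) \leq R$. Using that two geodesic rays with a common $C^1$ endpoint in $\del\Omega$ are asymptotic (compare Lemma~\ref{lem:crampon} and \cite[Prop.\,5.1]{Blayac_topmixing}; this is the mechanism behind the argument of Lemma~\ref{lem:disjoint horoballs}), together with $\beta_{x_\infty}(o, x_n)\to\infty$ (the horoballs at the $C^1$ point $x_\infty$ form a neighborhood basis), I will choose $\eta_k = \eta_{k(n)}$ so that $\eta_k o$ and $x_n$ both lie near $[o\,x_\infty)$ at matching horospheric depth $\beta_{x_\infty}(o,\cdot)$, and hence at uniformly bounded Hilbert distance; since $d_\Omega(o,x_n)\to\infty$, this yields $d_\Omega(\eta_k o, x_n) < d_\Omega(o, x_n)$ for large $n$, contradicting the minimality of the lift.

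The conical case is the main obstacle, and specifically the quantitative matching of $x_n$ with an orbit point $\eta_k o$. Making this rigorous requires combining the $C^1$ regularity at $x_\infty$ with Crampon-type convexity estimates to ensure that two points close to $[o\,x_\infty)$ at comparable Busemann depth are at uniformly bounded Hilbert distance; once this is in hand, the remaining bookkeeping (choice of $k=k(n)$ and comparison of $d_\Omega(o, x_n)$ with $d_\Omega(o, \eta_k o)$) is routine.
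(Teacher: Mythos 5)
There is a fatal gap in your bounded parabolic case. You assert that ``the $C^1$ regularity of $x_\infty$ implies that $\horoball_{x_\infty}$ is a neighborhood of $x_\infty$ in $\overline\Omega$.'' This is false: the closure of a horoball centered at $\xi$ meets $\del\Omega$ only at $\xi$, so it cannot be a neighborhood of $\xi$ in $\overline\Omega$; and even within $\Omega$, a sequence $x_n\to\xi$ can perfectly well avoid a fixed horoball $\horoball_\xi$ --- for instance, take $x_n\to\xi$ along a horosphere $\horosphere_\xi(x')$ with $x'\notin\horoball_\xi$, so that $\beta_\xi$ is constant and negative along the whole sequence. Ruling out exactly this behavior is the entire content of the proposition in the parabolic case, so your argument begs the question. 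The paper's proof has to work harder here, and it uses precisely the structure you never exploit: that $x_n$ lies on a geodesic $(\xi_n\,\eta_n)$ with \emph{both} endpoints in $\Lambda_\Gamma$. One first shows $\eta_n\to\xi$ (otherwise the limiting geodesic $(\xi\,\eta)$ would enter the convex, $C^1$ horoball $\horoball_\xi$, forcing $[x_n\,\eta_n)$ to enter it for large $n$), and then invokes bounded parabolicity --- cocompactness of $\Stab_\Gamma(\xi)$ on $\Lambda_\Gamma\smallsetminus\{\xi\}$ --- to translate by parabolic elements $\gamma_n$ so that $\gamma_n\eta_n\to\eta'\neq\xi$ and $\gamma_n x_n\to x\in\overline\Omega\smallsetminus\{\xi\}$, whence $d_\Omega(o,\gamma_n o)\leq 2\grop{o}{\gamma_n x_n}{\gamma_n o}\to 2\grop{o}{x}{\xi}<\infty$, contradicting discreteness.

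Your conical case, by contrast, is based on the right idea (minimality of the lift over the $\Gamma$-orbit is incompatible with conical approach), but it is the \emph{easy} case and does not need the asymptotic-geodesic machinery you sketch: with $x_n$ in the Dirichlet domain one has $\beta_{x_n}(o,\gamma_k o)=d_\Omega(o,x_n)-d_\Omega(\gamma_k o,x_n)\leq 0$ for every $k$, while conicality gives $\beta_\xi(o,\gamma_k o)\to\infty$, and continuity of the Busemann function at the $C^1$ point $\xi$ yields an immediate contradiction. So your assessment of where the difficulty lies is inverted: the conical case closes in three lines, and the bounded parabolic case --- which you dispatch with a false claim --- is where the real work (and the use of geometric finiteness on $\del\Omega$) happens.
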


\begin{proof}
Pick $o\in\Omega$ and let $D:=\{x\in\Omega \st d_\Omega(x,\gamma o)\geq d_\Omega(x,o) \ \forall\gamma\in\Gamma\}$ be the Dirichlet domain associated to $o$ and $\Gamma$. It is enough to show that the set 
\[A:=D \cap \bigcup_{\xi,\eta\in\Lambda_\Gamma} (\xi\,\eta) \smallsetminus \bigcup_{\xi \text{ parabolic}} \horoball_\xi\subset\Omega\]
is compact. 

Assume that this is not the case, so that there is a sequence $(x_n)_{n\in\nats} \subset A$ that converges to some $\xi\in\del\Omega$. Observe that $\xi\in\Lambda_\Gamma$ since $(x_n)_n$ is contained in the convex hull of the limit set. $\Gamma$ acts geometrically finitely on $\del\Omega$, so $\xi$ is either conical or bounded parabolic. 

If $\xi$ were conical, there would exists a sequence $(\gamma_k)_{k\in\nats} \subset \Gamma$ such that $(\gamma_k o)_k$ converges to $\xi$ while staying at bounded distance from $[o\,\xi)$, and
\begin{align*}
    \infty & = \lim_{k\to\infty}\beta_\xi(o,\gamma_ko) 
    \leq \lim_{k\to\infty}\lim_{n\to\infty}\beta_{x_n}(o,\gamma_ko) \\
    & = \lim_{k\to\infty}\lim_{n\to\infty}d_\Omega(x_n,o)-d_\Omega(x_n,\gamma_ko)
    \leq 0,
\end{align*}
which is absurd! Thus $\xi$ is bounded parabolic.

By the definition of $A$, we can find sequences $(\xi_n)_n$ and $(\eta_n)_n$ in $\Lambda_\Gamma$ such that $x_n\in (\xi_n \eta_n)$ for each $n$. Since $x_n\not\in \horoball_\xi$ and $\horoball_\xi$ is convex, up to exchanging $\xi_n$ and $\eta_n$ we can assume that $[x_n \eta_n)\cap \horoball_\xi$ is empty for all $n$. Up to subsequence, we can assume that $(\eta_n)_n$ converges to $\eta\in\Lambda_\Gamma$: if $\eta$ were different from $\xi$, then $(\xi\,\eta)$ would intersect $\horoball_\xi$ non-trivially (because $\Omega$ is strictly convex and $\horoball_\xi$ is $C^1$), and thus so would $[x_n\eta_n)$ for $n$ large enough; since $[x_n\eta_n) \cap \horoball_\xi = \varnothing$ for all $n$, hence $\eta=\xi$.

Since $\xi$ is bounded parabolic, we can find a diverging sequence $(\gamma_n)_{n\in\nats} \subset \Gamma$ of parabolic elements fixing $\xi$ such that, up to subsequence, $(\gamma_n\eta_n)_n$ converges to some $\eta'\neq\xi$. Up to subsequence, we can also assume that $(\gamma_n x_n)_n$ converges to some $x\in\overline\Omega$, which is different from $\xi$ since, as before, $[\gamma_nx_n\;  \gamma_n \eta_n) \cap \horoball_\xi = \varnothing$ and so taking the limit as $n\to\infty$, $[x\,\eta') \cap \horoball_\xi$ is also empty. But then
\begin{align*}
    \infty & = \lim_{n\to\infty} d_\Omega(o,\gamma_no)
    \leq 2 \lim_{n\to\infty} \grop{o}{\gamma_n x_n}{\gamma_n o}
    \leq 2 \grop ox\xi <\infty,
\end{align*}
which is a contradiction.
\end{proof}

\subsection{Finiteness of Sullivan measure}

Since the support of the Sullivan measure $m_\Gamma$ outside of the cusp neighborhoods is compact, it suffices to check that the $m_\Gamma$-measure of (the unit tangent bundle over) each cusp neighborhood is finite. 

To obtain estimates in the cusp neighborhoods, it will be useful to have the two lemmas below, the first establishing a gap between the critical exponent $\delta_\Gamma$ and the critical exponent of any parabolic subgroup, and the second showing that the Patterson--Sullivan measures have no atoms.



\begin{lem} \label{lem:critgap_para}
Let $\Omega$ be a smooth domain. For any non-elementary discrete subgroup $\Gamma\leq\Aut(\Omega)$ containing a parabolic subgroup $P$, we have $\delta(\Gamma) > \delta(P)$.
\begin{proof}
It follows from the definition of the critical exponent that $\delta_\Gamma(\Omega) \geq \delta_P(\Omega)$, and it suffices to show that the inequality is strict. 
Since $\Gamma$ is non-elementary, we can use a ping-pong argument to find a free product subgroup $\langle h \rangle * P \leq \Gamma$ where $h \in \Gamma$ is a hyperbolic element (up to replacing $P$ by a finite-index subgroup). 
In particular, $\Gamma$ contains all the distinct elements $h p_1 \cdots h p_k$ with $k \geq 1$, $p_i \in P$. Then we have a lower bound for the Poincar\'e series
\begin{align*}
    g_\Gamma(s,x) := \sum_{\gamma\in\Gamma}e^{-s\cdot d_\Omega(x,\gamma x)} & \geq \sum_{k \geq 1} \sum_{p_1,\dots,p_k} e^{-s \cdot d_\Omega(x, hp_1 \cdots hp_k x)} 
\end{align*}
and applying the triangle inequality
\[ d_\Omega(x, hp_1 \cdots hp_k x) \leq \sum_{i=1}^k d_\Omega(x,hx) + d_\Omega (x,p_i x) \]
to the right-hand side we obtain
\begin{align*}
    g_\Gamma(s,x) & \geq \sum_{k \geq 1} \left( e^{-s \cdot d_\Omega(x, h x)} \sum_{p \in P} e^{-s \cdot d_\Omega(x,px)} \right)^k \\
    & = \sum_{k \geq 1} \left( e^{-s \cdot d_\Omega(x, h x)} g_P(s,x)\right)^k
\end{align*}

The Poincar\'e series $g_P(s,x):=\sum_{p\in P}e^{-sd_\Omega(x,p x)}$ converges for any $s > \delta_P$ and diverges at $s = \delta_P$. Hence there exists $s_0 > \delta_P$ such that $e^{-s_0 \cdot d_\Omega(x, h x)}g_P(s_0,x) > 1$, so that $g_\Gamma(s_0,x)$ diverges. Then $\delta_\Gamma(\Omega) \geq s_0 > \delta_P$.
\end{proof} \end{lem}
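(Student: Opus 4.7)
The plan is classical: use a ping-pong construction to produce many distinct words in $\Gamma$, then use divergence of $P$ (Proposition~\ref{prop:parab diverge}) to conclude that the Poincar\'e series of $\Gamma$ still diverges at some exponent strictly larger than $\delta_P$.

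First I will select a rank-one element $h \in \Gamma$ whose attracting and repelling fixed points $h^+, h^-$ on $\partial\Omega$ avoid the parabolic fixed point $\xi_P$ of $P$. Such an $h$ exists because, by Proposition~\ref{prop:density of att-rep pairs}, the pairs $(h^-, h^+)$ with $h \in \Gamma$ rank-one are dense in $\Lambda_\Gamma \times \Lambda_\Gamma$, which is infinite. I then carry out a ping-pong between $h^N$, for $N$ large, and a finite-index subgroup $P' \le P$ chosen so that its non-identity elements send a fixed compact set off $\xi_P$ deeply into a small neighborhood of $\xi_P$. The output is a free product subgroup $\langle h^N \rangle * P' \le \Gamma$, so that all elements of the form $h^N p_1 h^N p_2 \cdots h^N p_k$ with $k \ge 1$ and $p_i \in P' \smallsetminus \{e\}$ are pairwise distinct. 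Since finite-index inclusion preserves both the critical exponent and the convergence type, I relabel $h^N$ as $h$ and $P'$ as $P$.

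Next comes the Poincar\'e-series estimate. The triangle inequality yields
\[ d_\Omega(x, hp_1 \cdots hp_k x) \le k\, d_\Omega(x, hx) + \sum_{i=1}^k d_\Omega(x, p_i x), \]
so grouping by word length gives
\[ g_\Gamma(s, x) := \sum_{\gamma \in \Gamma} e^{-s\, d_\Omega(x, \gamma x)} \;\ge\; \sum_{k \ge 1} \bigl( e^{-s\, d_\Omega(x,hx)} \, g_P(s, x) \bigr)^k, \]
where $g_P(s,x)$ is the analogous sum over $p \in P$. By Proposition~\ref{prop:parab diverge}, $P$ is divergent, so $g_P(s, x) \to +\infty$ as $s \to \delta_P^+$. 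Choosing $s_0 > \delta_P$ close enough to $\delta_P$ to force the geometric ratio $e^{-s_0 d_\Omega(x,hx)} g_P(s_0, x) > 1$ makes the right-hand side diverge, hence $g_\Gamma(s_0, x) = +\infty$ and $\delta_\Gamma \ge s_0 > \delta_P$.

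The main obstacle is the ping-pong step in the convex projective setting: I need to ensure that a suitably high power of $h$ exhibits enough north-south dynamics on $\bar\Omega$ to interact cleanly with the contraction of $P'$ toward $\xi_P$. The rank-one hypothesis on $h$, via the $\mathcal{C}^1$ and strongly extremal nature of $h^\pm$, guarantees the required contracting dynamics (cf.\ \cite[Prop.\,5.1]{Blayac_topmixing}); once that is in place, the series estimate and the extraction of the strict gap $s_0 > \delta_P$ are routine, and crucially rely on the divergence of discrete parabolic subgroups established earlier.
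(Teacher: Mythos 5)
Your proposal is correct and follows essentially the same route as the paper: a ping-pong construction producing a free product $\langle h\rangle * P' \leq \Gamma$ (after passing to a finite-index subgroup of $P$), the triangle-inequality lower bound on the Poincar\'e series, and the divergence of $P$ (Proposition~\ref{prop:parab diverge}) to extract $s_0 > \delta_P$ with geometric ratio exceeding $1$. The extra detail you supply on selecting $h$ with fixed points away from $\xi_P$ and on the north--south dynamics is a reasonable elaboration of the step the paper compresses into ``we can use a ping-pong argument.''
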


\begin{prop}[{cf.\ \cite[Prop.\,4.3.5]{crampon_these}}]
\label{prop:noatoms}
Let $\Omega\subset \RPn$ be a smooth domain, and
$\Gamma\leq\Aut(\Omega)$ be a discrete subgroup acting geometrically finitely on $\del\Omega$. 


Then there exists a Patterson--Sullivan density $(\mu_x)_{x\in\Omega}$ of dimension $\delta(\Gamma)$
that has no atoms.

\begin{proof}
{
Fix $x\in\Omega$. Consider a function $j:\real_{\geq 0}\rightarrow\real_{>0}$ such that 
$$g_\Gamma'(s,x):=\sum_{\gamma\in\Gamma}j(d_\Omega(x,\gamma x))e^{-s\cdot d_\Omega(x,\gamma x)}$$
diverges at $s=\delta_\Gamma$, and such that for any $\eps>0$ there exists $T\geq0$ with $j(t+r)\leq e^{\eps r}j(t)$ for all $t\geq T$ and $r\geq 0$. Let $\mu_x$ be an accumulation point as $s$ tends to $\delta_\Gamma$ of $\mu_{x,s}:=\frac1{g_\Gamma'(s,x)}\sum_{\gamma}j(d_\Omega(x,\gamma x))e^{-sd_\Omega(x,\gamma x)}\dirac_{\gamma x}$.
}

We will show that $\mu_x(\{\xi\})=0$ if $\xi\in\Lambda_\Gamma$ is conical or bounded parabolic with $\delta(\Stab_\Gamma(\xi))<\delta(\Gamma)$. This will suffice, since $\delta(P)<\delta(\Gamma)$ for any parabolic subgroup $P< \Gamma$ from Proposition \ref{prop:parab diverge}.

We can use the shadow lemma (Lemma \ref{lem:sullivan_shadow}) to show that $\mu$ has no atoms on the conical limit set. Given a conical limit point $\xi$, we have a sequence of elements $(\gamma_n^{-1}) \subset \Gamma$, a point $x \in \Omega$ and $r>0$ such that $\gamma_n^{-1} x \to \xi$ and $\gamma_n^{-1} x \in B(x_n,r)$ for some $x_n \in [x\,\xi)$. Thus $\xi \in \shadow_r(x,\gamma_n^{-1} x)$ for all $n$, and so 
\begin{equation} 
\mu_x(\{\xi\}) \leq \mu_x(\shadow_r(x,\gamma_n^{-1} x)) \leq C_{x,r} e^{-\delta_\Gamma d_\Omega(x, \gamma_n^{-1} x)} . 
\label{eqn:conical_shadow_approx} \end{equation}
Since $\gamma_n^{-1} \to \infty$ as $n \to \infty$ and $\delta_\Gamma > 0$, $e^{-\delta_\Gamma d_\Omega(x, \gamma_n^{-1} x)} \to 0$ as $n \to \infty$. Hence $\xi$ cannot be an atom.

It then remains to show that $\mu_x(\{\xi_P\})=0$ for any bounded parabolic point $\xi_P$ stabilized by $P < \Gamma$ such that $\delta(P)<\delta(\Gamma)$. 

We have $\mu_x(\{\xi_P\}) \leq \mu_x(V) \leq \limsup_{s \searrow \delta_\Gamma} \mu_{x,s}(V)$ for any open set $V \subset \overline\Omega$ containing $\xi_P$; hence it suffices to find a family  $(V_n)_{n\in\nats}$ of neighborhoods of $\xi_P$ such that $\limsup_{s\searrow\delta_\Gamma} \mu_{x,s}(V_n) \to 0$ as $n \to \infty$.

Take $\xi_0\in\Lambda_\Gamma\smallsetminus\{\xi_P\}$. Without loss of generality, we can assume that $x\in[\xi_0\xi_P]$. By Lemma~\ref{lem:disjoint horoballs}, we can find an open horoball $\horoball$ centered at $\xi_P$ and which contains no point of the orbit $\Gamma\cdot x$. Since $\xi_P$ is bounded parabolic, we can find a compact subset $K$ of $\Lambda_\Gamma \smallsetminus \{\xi_P\}$ such that $P\cdot K=\Lambda_\Gamma\smallsetminus \{\xi_P\}$. Consider the compact set $K':=\{y\in\overline\Omega \st [y\,\xi]\cap H=\varnothing\ \forall \xi\in K\}$, which does not contain $\xi_P$, and observe that $\Gamma\cdot x\subset P\cdot K'$.

Enumerate $P=\{p_1,p_2,\dots\}$. The set $V_n:=\overline\Omega\smallsetminus (p_1 K'\cup\dots\cup p_n K')$ is a neighborhood of $\xi_P$ in $\overline\Omega$ for each $n\geq 1$; moreover $V_n\cap\Gamma\cdot x=\{p_k\gamma x \st k>n, \gamma\in\Gamma'\}$, where $\Gamma':=\{\gamma\in\Gamma \st \gamma x\in K'\}$. Thus
\[ \mu_{x,s}(V_n) \leq 
\frac{1}{g'_\Gamma(s,x)}
\sum_{k>n} \sum_{\gamma \in \Gamma'} j(d_\Omega(x,p_k\gamma x)) e^{-s\cdot d_\Omega(x, p_k \gamma x)} \]
for each $n\geq 1$ and $s>\delta_\Gamma$, where $g'_\Gamma(s,x) := \sum_{\gamma\in\Gamma} j(d_\Omega(\gamma \cdot x, x)) e^{-s \cdot d_\Omega(\gamma\cdot x, x)}$ is the modified Poincar\'e series defined in \S\ref{subsec:pat_sul} (with $o=x$). 

Let us estimate $d_\Omega(x,p_k\gamma x)=d_\Omega(p_k^{-1}x,\gamma x)$ for $\gamma\in\Gamma'$ and for $k$ large (independent of $\gamma$). Take a compact neighborhood $K''$ of $\xi_P$ in $\overline\Omega$ which is disjoint from $K'$. By strict convexity of $\Omega$, we can find $R>0$ such that $(\xi\,\eta) \cap B_\Omega(x,R)$ is non-empty for every $\xi\in K''$ and $\eta\in K'$; in particular, $\grop{x}{\xi}{\eta}\leq R$. Since $(p_nx)_n$ converges to $\xi_P$, there exists $N$ such that $p_nx\in K''$ for every $n\geq N$. As a consequence, for all $k\geq N$ and $\gamma\in\Gamma'$,
\begin{align*} 
d_\Omega(p_k^{-1}x,\gamma x) & = d_\Omega(x,p_kx)+d_\Omega(x,\gamma x) - 2\grop{x}{p_k^{-1}x}{\gamma x} \\
& \geq d_\Omega(x,p_kx)+d_\Omega(x,\gamma x) -2R.
\end{align*}
Therefore we obtain, because $j$ is an increasing function,
\[ \mu_{x,s}(V_n) \leq \frac{e^{2sR'}}{g'_\Gamma(s,x)} \sum_{k> n} e^{-s d_\Omega(x, p_k x)} \sum_{\gamma\in\Gamma'} j\left(d_\Omega(x,p_k x) + d_\Omega(x,\gamma x)\right) e^{-s\cdot d_\Omega(x,\gamma x)}\]
for all $s>\delta_\Gamma$ and $n\geq N$. By assumption, $\eps:=\frac12(\delta(\Gamma)-\delta(P))>0$; by the definition of $j$, there is $C>0$ such that $j(r+t)\leq Ce^{\eps r} j(t)$ for all $r\geq 0$ and all sufficiently large $t > 0$. Hence
\[ \mu_{x,s}(V_n) \leq Ce^{2sR'} \sum_{k> n} e^{-(s-\eps) d_\Omega(x, p_k x)}\]
for any $s>\delta_\Gamma$. Thus
\[\mu_x(\{\xi_P\})\leq \liminf_{n\to\infty} \, \limsup_{s\searrow\delta_\Gamma} \mu_{x,s}(V_n)\leq \liminf_{n\to\infty}Ce^{2\delta_\Gamma R'}\sum_{k>n}e^{-(\delta_\Gamma-\epsilon)d_\Omega(x,p_kx)} = 0.\qedhere\]
\end{proof}
\end{prop}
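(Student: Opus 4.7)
The plan is to construct the Patterson--Sullivan density by the classical limiting procedure using a modified Poincaré series, and then rule out atoms by handling separately the two types of points in $\Lambda_\Gamma$ allowed by geometric finiteness on $\partial\Omega$: conical limit points and bounded parabolic points. Choose an auxiliary function $j\colon \real_{\geq 0}\to\real_{>0}$ of subexponential growth such that $\sum_\gamma j(d_\Omega(x,\gamma x))e^{-s d_\Omega(x,\gamma x)}$ diverges at $s=\delta_\Gamma$ (by Patterson's trick such $j$ exists, with the additional property that for every $\eps>0$ there is $T$ with $j(t+r)\leq e^{\eps r}j(t)$ for $t\geq T$, $r\geq0$), and let $\mu_x$ be a weak-$*$ accumulation point as $s\searrow \delta_\Gamma$ of the normalized sums of Dirac masses. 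Standard arguments show $\mu_x$ is a conformal density of dimension $\delta_\Gamma$.

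For a conical limit point $\xi$, I would apply the Sullivan shadow lemma (Lemma~\ref{lem:sullivan_shadow}) directly: by definition of conical, there are $r>0$ and $(\gamma_n)\subset\Gamma$ with $\gamma_n^{-1}x\to\xi$ and $d_\Omega(\gamma_n^{-1}x,[x\,\xi))\leq r$, so $\xi \in \shadow_r(x,\gamma_n^{-1}x)$, and
\[\mu_x(\{\xi\})\leq \mu_x\bigl(\shadow_r(x,\gamma_n^{-1}x)\bigr)\leq C e^{-\delta_\Gamma d_\Omega(x,\gamma_n^{-1}x)}\xrightarrow[n\to\infty]{}0.\]

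The main obstacle, and the case requiring real work, is the bounded parabolic case. Let $\xi_P$ be fixed by a maximal parabolic $P\leq \Gamma$. Combining Lemma~\ref{lem:critgap_para} and Proposition~\ref{prop:parab diverge} gives the gap $\delta_\Gamma > \delta_P$. Since $\xi_P$ is bounded parabolic, I can pick a compact set $K\subset \Lambda_\Gamma\smallsetminus\{\xi_P\}$ with $P\cdot K = \Lambda_\Gamma\smallsetminus\{\xi_P\}$, and then the set $K':=\{y\in\overline\Omega \st [y\,\eta]\cap \horoball=\varnothing\ \forall \eta\in K\}$ (for a suitable small horoball $\horoball$ at $\xi_P$ avoiding $\Gamma\cdot x$, produced via Lemma~\ref{lem:disjoint horoballs}) is compact, does not contain $\xi_P$, and satisfies $\Gamma\cdot x\subset P\cdot K'$. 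Enumerating $P=\{p_1,p_2,\dots\}$, the neighborhoods $V_n:=\overline\Omega\smallsetminus(p_1K'\cup\cdots\cup p_n K')$ form a basis at $\xi_P$, and I bound
\[\mu_{x,s}(V_n)\leq \frac1{g'_\Gamma(s,x)}\sum_{k>n}\sum_{\gamma\in\Gamma'} j(d_\Omega(x,p_k\gamma x))e^{-s d_\Omega(x,p_k\gamma x)},\]
where $\Gamma':=\{\gamma\in\Gamma\st \gamma x\in K'\}$ and $g'_\Gamma$ is the modified Poincaré series.

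The key geometric estimate is a uniform Gromov-product bound: choosing a compact neighborhood $K''$ of $\xi_P$ disjoint from $K'$, strict convexity of $\Omega$ produces $R>0$ such that $[\eta\,\zeta]\cap B_\Omega(x,R)\neq\varnothing$ for every $(\eta,\zeta)\in K''\times K'$, hence $\grop{x}{\eta}{\zeta}\leq R$. For all $k$ large enough $p_k x\in K''$, so for every $\gamma\in\Gamma'$,
\[d_\Omega(x,p_k\gamma x)=d_\Omega(p_k^{-1}x,\gamma x)\geq d_\Omega(x,p_k x)+d_\Omega(x,\gamma x)-2R.\]
Plugging this in, using monotonicity of $j$ and the property $j(r+t)\leq C e^{\eps r}j(t)$ with $\eps:=\tfrac12(\delta_\Gamma-\delta_P)$, the double sum factors:
\[\mu_{x,s}(V_n)\leq C'e^{2sR}\sum_{k>n}e^{-(s-\eps)d_\Omega(x,p_k x)}.\]
At $s=\delta_\Gamma$ the exponent is $\delta_\Gamma-\eps > \delta_P$, so the Poincaré series of $P$ converges there and its tail goes to $0$ as $n\to\infty$. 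Taking $\limsup$ in $s$ and then $n\to\infty$ gives $\mu_x(\{\xi_P\})=0$. The delicate points to verify will be that the auxiliary function $j$ can be chosen with the subexponential control simultaneously with divergence, and that the uniform Gromov-product bound is robust enough across all $\gamma\in\Gamma'$ and large $k$.
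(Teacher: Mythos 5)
Your proposal is correct and follows essentially the same route as the paper's proof: the same Patterson construction with the subexponentially-controlled auxiliary function $j$, the same shadow-lemma argument at conical points, and the same treatment of bounded parabolic points via the critical exponent gap $\delta_\Gamma>\delta_P$, the compact sets $K$ and $K'$, the neighborhoods $V_n$, and the uniform Gromov-product estimate that factors the double sum into a convergent tail of the parabolic Poincar\'e series. The two ``delicate points'' you flag are indeed the ones the paper addresses (Patterson's construction of $j$, and the strict-convexity argument giving the uniform bound $\grop{x}{\eta}{\zeta}\leq R$ on $K''\times K'$), and both go through as you describe.
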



\begin{proof}[Proof of Theorem~\ref{thm:finite_BMmeas}]
By Theorem~\ref{thm:HTSR}, we can assume that $m_\Gamma$ is associated to the specific conformal density we have constructed in Proposition~\ref{prop:noatoms}.

Let us explain why we may assume without loss of generality that $\Gamma$ is torsion-free, and hence that the action of $\Gamma$ on $\Omega$ is free (since it is properly discontinuous). We know from Proposition~\ref{prop:finitely many cusps} that $\Gamma$ is finitely generated. By Selberg's Lemma \cite{Selberg} (see also \cite{Nica_notes}),
we can find a torsion-free, finite-index, normal subgroup $\Gamma'\leq\Gamma$. Let $(\mu_x)_{x\in\Omega}$ be a $\Gamma$-equivariant conformal density of dimension $\delta_\Gamma=\delta_{\Gamma'}$, with associated Sullivan measure $m_\Gamma$ (\resp $m_{\Gamma'}$) on the quotient $S\Omega/\Gamma$ (\resp $S\Omega/\Gamma'$). Let $\pi_\Gamma^{\Gamma'}:S\Omega/\Gamma'\rightarrow S\Omega/\Gamma$ be the natural projection. Observe that 
\[m_\Gamma = \frac1{[\Gamma:\Gamma']}(\pi_\Gamma^{\Gamma'})_*m_{\Gamma'}.\]
Therefore, $m_{\Gamma}$ is finite if and only if $m_{\Gamma'}$ is finite. We assume for the rest of the proof that $\Gamma$ is torsion-free.

Fix a $\Gamma$-invariant family of disjoint horoballs centered at the parabolic points of $\Lambda_\Gamma$. By Proposition~\ref{prop:noncuspidal is cpct}, we have a decomposition of $SM_{\bip}$ into a compact core and a finite number of ``cusp'' neighborhoods, which are the quotients of our fixed horoballs centered at the parabolic points. To prove the theorem, it suffices to show that the associated measure of each cusp neighborhood is finite.

Let $P\subset\Gamma$ be a maximal parabolic subgroup that fixes some $\xi_P \in \Lambda_\Gamma$. Let $\horoball = \horoball_P$ be a horoball fixed by $P$ (\ie centered at $\xi_P$), and $\Ccal = \Ccal_P$ be a 
\emph{strict} fundamental domain for the action of $P$ on $\horoball$, in the sense that for any $x\in\horoball$, there exists a unique element $p\in P$ such that $px\in\Ccal$.

Since the action of $\Gamma$ on $\partial\Omega$ is geometrically finite, we can choose a relatively compact (measurable) strict fundamental domain $F\subset\Lambda_\Gamma \setminus \{\xi_P\}$ for the action of $P$. Fix $x\in\Omega$. Since $\mu_x$ has no atoms (Proposition \ref{prop:noatoms}), we have 
\[ m_\Gamma(\pi_\Gamma S\horoball) = \sum_{p,q \in P}\, \int_{p F \times qF} e^{2\delta_\Gamma\grop x{\xi^-}{\xi^+}} \mu_x^2(d\xi^- d\xi^+) \ \int_{(\xi^-\xi^+) \cap \mathcal{C}} dt .\]
By using the $\Gamma$-invariance of $\mu$ and the definition of $\mathcal{C}$, we have
\begin{align*}
m_\Gamma(\pi_\Gamma S\horoball) & = \sum_{p,q \in P}\, \int_{F \times p^{-1} q F} e^{2\delta_\Gamma\grop x{\xi^-}{\xi^+}} \mu_x^2(d\eta^- d\eta^+) \ \int_{(\eta^-\eta^+) \cap p^{-1}\mathcal{C}} dt \\
 & = \sum_{p\in P} \int_{F \times p F}\, e^{2\delta_\Gamma\grop x{\xi^-}{\xi^+}} \mu_x^2(d\eta^- d\eta^+) \ \int_{(\eta^-\eta^+) \cap \horoball} dt .
\end{align*} 

From a geometric point of view, any geodesic $(\eta^-\eta^+)$ intersecting $\horoball$ projects to a geodesic on $\Omega/\Gamma$ which makes an incursion into the cusp neighborhood which $\mathcal{C}$ projects to, and the term $\int_{(\eta^-\eta^+) \cap \horoball} dt$ corresponds to the length of this incursion. 

We will now bound the lengths of these incursions using a geometric argument.

Let $U\subset\overline\Omega$ be an open neighborhood of $\xi_P$ such that $[y\,\eta) \cap \horoball$ is non-empty for all $\eta\in F$ and $y\in U$, and set $R:=d_\Omega(x, \horosphere \smallsetminus U)<\infty$. For all $p\in P$ and $(\eta^-,\eta^+)\in \fundom\times p\fundom$, if $(\eta^-\eta^+)\cap \horoball\neq\varnothing$, then there exists $y,z\in \horosphere$ such that $\eta^-,y,z,\eta^+$ are aligned in this order, and by the definition of $U$ we observe that $d_\Omega(x,y) \leq R$ and $d_\Omega(px,z) \leq R$; thus
\[\int_{(\eta^-\eta^+)\cap \horoball}dt = d_\Omega(y,z) \leq  d_\Omega(x,px) + 2R,\text{ and } 0\leq \grop x{\eta^-}{\eta^+}\leq R.\]
As a consequence,
\begin{align*}
 m_\Gamma(\pi_\Gamma S\horoball) \leq e^{2\delta_\Gamma R}\mu_x(F)\ \sum_{p\in P} (d_\Omega(x,px)+2R) \mu_x(pF).
\end{align*}

Since $\overline F$ and $P\cdot x\cup\{\xi_P\}$ are compact and disjoint, and $\Omega$ is strictly convex, we can find $R'>0$ such that $[y\,z]\cap B_\Omega(x,R') \neq \varnothing$ for all $(y,z)\in \overline F\times \left( P\cdot x\cup\{\xi_P\} \right)$; this immediately implies that $F\subset \shadow_{R'}(px,x)$ for any $p\in P$. Therefore, by Lemma~\ref{lem:estimate Busemann behind shadows}, 
\begin{align*}
    \mu_x(pF) = \mu_{p^{-1}x}(F) =\int_{\xi\in F}e^{-\delta_\Gamma\beta_{\xi}(x,p^{-1}x)}d\mu_x(\xi) \leq e^{4\delta_\Gamma R'}e^{-\delta_\Gamma d_\Omega(x,px)}\mu_x(F).
\end{align*}
We assemble these pieces to obtain 
\begin{align*}
 m_\Gamma(\Ccal) = m_\Gamma(\pi_\Gamma S\horoball) \leq e^{2\delta_\Gamma (R+2R')}\mu_x(F)^2 \ \sum_{p\in P} (d_\Omega(x,px)+2R) e^{-\delta_\Gamma d_\Omega(x,px)}.
\end{align*}
The right-hand side is finite since $\delta_\Gamma>\delta_P$ by Lemma~\ref{lem:critgap_para}, so this concludes the proof of Theorem \ref{thm:finite_BMmeas}. \end{proof}

\subsection{Equidistribution of primitive closed geodesics: proof of Theorem \ref{thm:pcgeod_equidist_geomfin}} \label{sub:geomfin_pcgeod_equidist}


Let $\Epsy_\Gamma^L$ denote the measure $\delta L e^{\delta L} \sum_{g \in \mathcal{G}_\Gamma(L)} \dirac_g$ on $S\Omega/\Gamma$ for $L\geq 0$. By Theorem \ref{thm:pcgeod_equidist}, we already know that $\Epsy_\Gamma^L \to \frac{m_\Gamma}{\|m_\Gamma\|}$ weakly in $C_c(S\Omega/\Gamma)^*$ when $L\to+\infty$. We start by replacing $\Epsy_\Gamma^L$ by a nearby measure which will be better adapted to the argument to come, namely
\[ \Em_\Gamma^L := \delta e^{-\delta L} \sum_{g\in\mathcal{G}_\Gamma(L)} \ell(g) \dirac_g .\]

We may verify, by arguing as in the proof of \cite[Th.\,5.2]{Roblin}, that we still have $\Em_\Gamma^L \to \frac{m_\Gamma}{\|m_\Gamma\|}$ weakly in $C_c(S\Omega/\Gamma)^*$ when 
$L\to+\infty$, and that it suffices to show that 
$\Em_\Gamma^L$ converges weakly to $\frac{m_\Gamma}{\|m_\Gamma\|}$ in $C_b(S\Omega/\Gamma)^*$ as $L\to+\infty$, to obtain the same (desired) conclusion for $\Epsy_\Gamma^L$.

{\bf The rest of the proof consists in demonstrating that $\Em_\Gamma^L$ converges weakly to $\frac{m_\Gamma}{\|m_\Gamma\|}$ in $C_b(S\Omega/\Gamma)^*$ as $L\to+\infty$.} We present this step in more detail since it more intimately involves the Hilbert geometry of the cusps.

Let us fix a $\Gamma$-invariant family of disjoint horoballs centered at the parabolic points of $\Lambda_\Gamma$. By Proposition~\ref{prop:noncuspidal is cpct}, we have a decomposition of $SM_{\bip}$ into a compact core and a finite number of ``cusp'' neighborhoods, which are the quotients of our fixed horoballs centered at the parabolic points. By Theorem~\ref{thm:pcgeod_equidist}, it suffices to show that $\int f d\Em_\Gamma^L \xrightarrow[L\to\infty]{} \int f d \frac{m_\Gamma}{\|m_\Gamma\|}$ for each bounded continuous function $f$ which is supported on a cusp neighborhood.

Fix a parabolic point $\xi\in\Lambda_\Gamma$, $P:=\Stab_\Gamma(\xi)$ and a open horoball $\horoball$ centered at $\xi$ such that $\gamma \horoball \cap \horoball$ is empty for any $\gamma\in\Gamma\smallsetminus P$. For each $r>0$, denote by $\horoball_r\subset \horoball$ the open horoball centered at $\xi$ whose boundary is at distance $r$ from that of $\horoball$. To prove the theorem, it is enough to prove that $$\limsup_{L\to\infty}\Em_\Gamma^L(\pi_\Gamma S\horoball_r) \xrightarrow[r\to\infty]{} 0.$$

The rest of the argument will resemble a more refined version of the argument in the proof of Theorem \ref{thm:finite_BMmeas}: whereas there we had a finite bound for the measure of the cusps, here we want a bound that goes to zero as $L\to\infty$. 

Let $K\subset\Lambda_\Gamma\smallsetminus\{\xi\}$ be a compact subset such that $P\cdot K=\Lambda_\Gamma\smallsetminus\{\xi\}$. By the definition of $\Em_\Gamma^L$, we have
\begin{align}
\Em^L_\Gamma(\pi_\Gamma S\horoball_r) 
& \leq \delta e^{-\delta L} \sum_{\substack{\gamma\in\Gamma^{\mathrm{ph}}\\\ell(\gamma)\leq L,\, \gamma^-\in K}} \Leb_\gamma(S\horoball_r) \nonumber\\
& \leq \delta e^{-\delta L} \sum_{p \in P} \sum_{\gamma \in \Gamma(L,p)} \Leb_\gamma(S\horoball_r),
\label{eqn:52_decomp_horos} \end{align}
where $\Gamma^{\mathrm{ph}}\subset\Gamma$ consists of the strongly primitive hyperbolic elements (\ie $\Gamma^{\mathrm{ph}}=\Gamma^{\mathrm{pr1}}$), and, for $p\in P$, the subset $\Gamma(L, p)\subset\Gamma^{\mathrm{ph}}$ consists of the strongly primitive hyperbolic elements $\gamma$ such that $\ell(\gamma) \leq L$ and $(x_\gamma^-,x_\gamma^+)\in K\times pK$.

{\bf We now fix $r>0$ and $p\in P$, and bound from above $\sum_{\gamma\in\Gamma(L,p)} \Leb_\gamma(S\horoball_r)$.} In particular, we will bound from above the cardinality of the set $\Gamma(L,p,r)$ of $\gamma \in \Gamma(L,p)$ such that $\Leb_\gamma(S\horoball_r) > 0$, i.e. such that the axis $(\gamma^-\gamma^+)$ intersects $\horoball_r$.

Fix $\gamma \in \Gamma(L,p,r)$. Let $a,d\in \horosphere$ and $b,c\in\horosphere_r$ be such that $\gamma^-,a,b,c,d,\gamma^+$ are aligned along $(\gamma^- \gamma^+)$ in this order. By definition, $a$ belongs to the closed subset $A\subset\overline\Omega$ of points $y$ for which there exists $\eta\in K$ with $(\eta\,y]\cap \horoball=\varnothing$.  The set $A\cap \horosphere \subset\Omega$ is compact, hence $d_\Omega(x,a)\leq R_1:=\max\{d_\Omega(x,y) \st y\in\horosphere\cap A\} < \infty$. As a first consequence, $d_\Omega(x,\gamma x)\leq L+2R_1\leq N:=\lceil L+2R_1\rceil$. Moreover, $p^{-1}d\in A$ and $d_\Omega(px,d)\leq R_1$, therefore
\begin{equation}\label{eq:cuspexcursion} \Leb_\gamma(S\horoball_r) = d_\Omega(b,c) = d_\Omega(a,d) - d_\Omega(a,b) - d_\Omega(c,d) \leq d_\Omega(x,p x) + 2R_1 - 2r. \end{equation}
Note, in particular, that $d_\Omega(x,p x) \geq 2r - 2R_1$.

According to the shadow lemma (Lemma~\ref{lem:sullivan_shadow}), we can find $R_2>0$ such that for any $R \geq R_2$, there exists $C_R > 0$ so that for any $g \in \Gamma$ we have
\[C_R^{-1}e^{-\delta\cdot d_\Omega(x,gx)}\leq  \mu_x(\shadow_R(x,gx))\leq \mu_x(\shadow_R^+(x,gx))\leq C_Re^{-\delta\cdot d_\Omega(x,gx)}.\]

Since $\gamma \horoball \cap \horoball = \varnothing$ by our definition of $\horoball$, we have $\gamma a\in[d\,\gamma^+]$, and hence by \eqref{eq:shad+ in shad}
\begin{align*}
\shadow_{R_2}(x,\gamma x) & \subset \shadow_{R_2+2R_1}^+(a,\gamma a)\subset \shadow_{2R_2+4R_1}(a,\gamma a)\subset \shadow_{2R_2+4R_1}(a,d) \\
& \subset \shadow_{R_3}^+(x,px),
\end{align*}
where $R_3:=2R_2+4R_1$. We combine all of these observations to produce:
\begin{align*}
    \#\Gamma(L,p,r) & = \sum_{0\leq n\leq N}  \#\{\gamma\in\Gamma(L,p,r) : n-1<d_\Omega(x,\gamma x)\leq n\} \\
    & \leq \sum_{0\leq n\leq N} \sum_{\substack{\gamma\in\Gamma(L,p,r) :\\ n-1<d_\Omega(x,\gamma x)\leq n}} \!\!\!\!\! C_{R_2}e^{\delta n} \mu_x(\shadow_{R_2}(x,\gamma x)) \\
    & \leq C_{R_2} \sum_{0\leq n\leq N}e^{\delta n}\int_{\xi\in\shadow^+_{R_3}(x,px)} \!\!\! \sum_{\substack{\gamma\in\Gamma(L,p,r) :\\ n-1<d_\Omega(x,\gamma x)\leq n}} \!\!\!\!\!\!\! \mathbf{1}_{\shadow_{R_2}(x,\gamma x)}(\xi) d \mu_x(\xi) \\
    \mbox{(using \eqref{Equation : intersection d ombres})}
    & \leq C_{R_2} \cdot \#\{g\in \Gamma: d_\Omega(x,gx)\leq 4R_2+1\} \sum_{0\leq n\leq N}e^{\delta n}\mu_x(\shadow_{R_3}^+(x,px))
    \\
    & \leq C_{R_2}\cdot \#\{g\in \Gamma: d_\Omega(x,gx)\leq 4R_2+1\} \frac{e^{\delta(N+1)}}{e^\delta-1} C_{R_3}e^{-\delta\cdot d_\Omega(x,px)} \\
    & \leq C e^{\delta(L-d_\Omega(x,px))},
\end{align*}
where $C:=C_{R_2}C_{R_3}\frac{e^{\delta(2R_1+1)}}{e^\delta - 1}\cdot \#\{g\in \Gamma: d_\Omega(x,gx)\leq 4R_2+1\}$.

Combining this with \eqref{eqn:52_decomp_horos} and \eqref{eq:cuspexcursion} yields 
\[ \Em^L_\Gamma(\pi_\Gamma S\horoball_r) \leq \delta {C} \!\!\! \sum_{\substack{p\in P \\ d_\Omega(x,p x) > 2r-2R_1}} (d_\Omega(x,p x) - 2r + 2R_1) e^{-\delta\cdot d_\Omega(x,p x)}. \]
By Lemma~\ref{lem:critgap_para}, $\sum_{p\in P} d_\Omega(x,p x) e^{-\delta\cdot d_\Omega(x,p x)}$ converges. Therefore,
\begin{align*}
\limsup_{L\to+\infty} \Em^L_\Gamma(\pi_\Gamma S\horoball_r) & \xrightarrow[r\to\infty]{} 0.
\qed
\end{align*}

\subsection{The number of conjugacy classes in the geometrically finite case}\label{sec:nb per conj geomfin}

As in \S\ref{sec:perconj}, we can relate the number of strongly primitive rank-one conjugacy classes, the number of rank-one conjugacy classes, and the number of rank-one periodic $(g^t_\Gamma)$-orbits; using Theorem~\ref{thm:pcgeod_equidist_geomfin} instead of Theorem~\ref{thm:pcgeod_equidist}, we can extend these results to the geometrically finite case.

We recall that for any given conjugacy class $c$, $\dirac_c$ denotes the flow-invariant probability measure on the closed orbit associated to $c$.

\begin{lem}\label{lem:nb per conjgeomfin}
 Let $\Omega\subset\proj(V)$ be a smooth domain, and $\Gamma\leq\Aut(\Omega)$ a discrete subgroup which acts geometrically finitely on $\partial\Omega$. Let $\Gamma'\leq\Gamma$ be a torsion-free finite-index subgroup. Then 
 \[\sum_{c\in\mathcal{G}_T^{\rkone}}\int f\diff\dirac_c \leq \sum_{c\in[\Gamma]_T^{\prkone}}\int f\diff \dirac_c\leq [\Gamma':\Gamma]\sum_{c\in\mathcal{G}_T^{\rkone}}\int f\diff\dirac_c\]
 for any $T>0$ and any non-negative function $f \in C_b(T^1M)$, and
 $${Te^{-\delta_\Gamma T}} \left(\#[\Gamma]_T^{\rkone}-\#[\Gamma]_T^{\prkone}\right) \xrightarrow[T\to\infty]{} 0.$$
\end{lem}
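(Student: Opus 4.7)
I would follow the blueprint of Lemma~\ref{lem:nb per conjrob}, replacing each appeal to Theorem~\ref{thm:pcgeod_equidist} by the stronger Corollary~\ref{cor:countgeomfin} (derived from Theorem~\ref{thm:pcgeod_equidist_geomfin}), which supplies a sharp count of primitive closed geodesics even when the quotient $M$ is not compact and thus lets us test against bounded continuous functions rather than only compactly-supported ones.

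For the sandwich inequality, I would recall, as in Observation~\ref{obs:vers l'infini et l'au-delà!} and Notation~\ref{nota:nbconj}, that every closed rank-one geodesic $g \subset SM$ carries finitely many associated strongly primitive conjugacy classes $c_1, \ldots, c_{N_g}$, each satisfying $\dirac_{c_i} = \dirac_g$; hence
\[ \sum_{c \in [\Gamma]_T^{\prkone}} \dirac_c \;=\; \sum_{g \in \mathcal{G}_T^{\rkone}} N_g \, \dirac_g. \]
By \eqref{eq:N<Stab}, $N_g \leq \#\Stab_\Gamma(\tilde v)$ for any lift $\tilde v \in S\Omega$ of a vector on $g$. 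Every element of $\Stab_\Gamma(\tilde v)$ fixes the foot-point of $\tilde v$ and both endpoints of the corresponding bi-infinite geodesic, so it cannot be hyperbolic nor parabolic, and hence is elliptic and of finite order; since $\Gamma'$ is torsion-free it intersects $\Stab_\Gamma(\tilde v)$ trivially, so $\Stab_\Gamma(\tilde v)$ injects into $\Gamma/\Gamma'$, giving $N_g \leq [\Gamma:\Gamma']$. The two-sided inequality is then immediate.

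For the asymptotic statement I would first integrate the constant function $1$ against the sandwich inequality; combined with Corollary~\ref{cor:countgeomfin} this yields
\[ \#[\Gamma]_T^{\prkone} \;\leq\; [\Gamma:\Gamma'] \cdot \#\mathcal{G}_T^{\rkone} \;=\; O\!\left(\tfrac{e^{\delta_\Gamma T}}{T}\right). \]
Next, any rank-one element $\gamma$ with axis $\mathrm{ax}$ lies in the axis-stabilizer $\gamma_0^{\ints} K$, where $\gamma_0$ is strongly primitive and $K = \Stab_\Gamma(\tilde v)$ for some unit tangent vector on $\mathrm{ax}$; writing $\gamma = \gamma_0^k h$ with $k \geq 1$ (possibly replacing $\gamma_0$ by $\gamma_0^{-1}$), the translation length is $k\,\ell(\gamma_0)$ and $\gamma$ fails to be strongly primitive exactly when $k \geq 2$. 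Each class in $[\Gamma]_T^{\rkone} \smallsetminus [\Gamma]_T^{\prkone}$ thus arises, in at most $\#K \leq [\Gamma:\Gamma']$ ways, from a pair $([\gamma_0],k)$ with $k \geq 2$ and $\ell(\gamma_0) \leq T/k$, so
\[ \#\!\left([\Gamma]_T^{\rkone} \smallsetminus [\Gamma]_T^{\prkone}\right) \;\leq\; [\Gamma:\Gamma'] \sum_{k \geq 2} \#[\Gamma]_{T/k}^{\prkone} \;=\; O\!\left(\sum_{k \geq 2} \tfrac{k\,e^{\delta_\Gamma T/k}}{T}\right) \;=\; O\!\left(\tfrac{e^{\delta_\Gamma T/2}}{T}\right), \]
the $k=2$ term dominating. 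Multiplying by $T e^{-\delta_\Gamma T}$ yields $O(e^{-\delta_\Gamma T/2}) \to 0$.

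The only genuinely new ingredient compared to Lemma~\ref{lem:nb per conjrob} is the polynomial-over-exponential upper bound on $\#[\Gamma]_T^{\prkone}$; the real obstacle—controlling primitive closed geodesics that wander arbitrarily deep into the cusps—has already been overcome by Theorem~\ref{thm:pcgeod_equidist_geomfin} and its consequence Corollary~\ref{cor:countgeomfin}, so the remainder is just careful bookkeeping of the axis-stabilizer structure.
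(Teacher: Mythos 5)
Your proof is correct and follows essentially the same route as the paper, which simply transplants the proof of Lemma~\ref{lem:nb per conjrob} (stabilizer bound $N_g\leq\#\Stab_\Gamma(\tilde v)\leq[\Gamma:\Gamma']$ for the sandwich, then a sum over powers $k\geq 2$ controlled by the equidistribution/counting result) with Theorem~\ref{thm:pcgeod_equidist_geomfin} in place of Theorem~\ref{thm:pcgeod_equidist}. The only point you leave implicit is that the sum over $k\geq 2$ is finite because closed geodesics have length bounded below (so $[\Gamma]^{\prkone}_{T/k}=\varnothing$ for $k>T/\eps_0$), which is exactly the role of the $\eps>0$ chosen in the paper's proof of Lemma~\ref{lem:nb per conjrob}; with that observation your tail estimate $O\bigl(e^{\delta_\Gamma T/2}/T\bigr)$ is justified.
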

\begin{proof}
 The proof is the same as that of Lemma~\ref{lem:nb per conjrob}, except that we use Theorem~\ref{thm:pcgeod_equidist_geomfin} instead of Theorem~\ref{thm:pcgeod_equidist}.
%
\end{proof}

\begin{prop}
 Let $\Omega\subset\proj(V)$ be a smooth domain, and $\Gamma\leq\Aut(\Omega)$ a discrete subgroup which acts geometrically finitely on $\partial\Omega$.  Let $\corfix\leq\Gamma$ be the core-fixing subgroup (see Definition~\ref{defn:corfix}). Let $K\subset T^1M_{\bip}$ be the set vectors of whose lifts $v\in T^1\Omega$ satisfy $\Stab_\Gamma(v)\neq \corfix$. 
 Let $A\subset \mathcal{G}^{\rkone}$ be the set of rank-one periodic orbits contained in $K$. Then
 \[Te^{-\delta_\Gamma T}\cdot \#A_T \xrightarrow[T\to\infty]{} 0,\]
 where $A_T$ is the set of conjugacy classes in $A$ with translation length less than $T$.
 If $\Gamma$ is strongly irreducible, then $\corfix$ is trivial and 
 \[{\delta_\Gamma T}e^{-\delta_\Gamma T}\sum_{c\in [\Gamma]^{\prkone}_T}\int_{T^1M} f\diff\dirac_c \xrightarrow[T\to\infty]{} \int_{T^1M} f\diff\frac{m_\Gamma}{\Vert m_\Gamma\Vert}.\]
\end{prop}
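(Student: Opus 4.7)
My plan is to adapt the argument of Proposition~\ref{prop:countingconj} to the geometrically finite setting, replacing Theorem~\ref{thm:pcgeod_equidist} with its stronger $C_b$-version Theorem~\ref{thm:pcgeod_equidist_geomfin}, so that the non-compact ends of $\Omega/\Gamma$ cease to be an obstacle.

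For the first claim, I will first check that $K$ is closed in $T^1M$. Indeed $S\Omega_{\bip}$ is closed in $S\Omega$, being the preimage of $\Lambda_\Gamma\times\Lambda_\Gamma$ under the continuous endpoint map coming from the Hopf parametrization (which is a homeomorphism since $\Omega$ is strictly convex with $C^1$ boundary); and the set $\{v \in S\Omega_{\bip} : \Stab_\Gamma(v) \supsetneq \corfix\}$ is closed there by upper semi-continuity of the stabilizer map, itself a consequence of proper discontinuity of $\Gamma \actson S\Omega$. Being $\Gamma$-invariant, this closed set descends to the closed subset $K \subset T^1M$. By Lemma~\ref{lem:corfix}, $K$ has empty interior in $T^1M_{\bip}$, so $T^1M_{\bip}\smallsetminus K$ is a $(g^t_\Gamma)$-invariant open subset that meets the support $T^1M_{\bip}$ of $m_\Gamma$; the ergodicity of $(g^t_\Gamma,m_\Gamma)$ provided by Theorem~\ref{thm:HTSR} (applicable since $\Gamma$ is divergent by Theorem~\ref{thm:finite_BMmeas}) then forces $m_\Gamma(K)=0$. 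Given $\eps>0$, regularity of the finite Radon measure $m_\Gamma$ yields an open $U \supset K$ with $m_\Gamma(U)<\eps\|m_\Gamma\|$, and Urysohn's lemma produces $\chi\in C_b(T^1M)$ valued in $[0,1]$, equal to $1$ on $K$ and supported in $U$. Since every $g\in A_T$ is contained in $K$, one has $\dirac_g(\chi)\geq 1$; as all primitive closed geodesics are rank-one in the strictly convex $C^1$ setting, Theorem~\ref{thm:pcgeod_equidist_geomfin} will give
\begin{equation*}
\limsup_{T \to \infty} \delta_\Gamma T e^{-\delta_\Gamma T} \# A_T \leq \lim_{T \to \infty} \delta_\Gamma T e^{-\delta_\Gamma T} \sum_{g \in \mathcal{G}_\Gamma(T)} \dirac_g(\chi) = \frac{1}{\|m_\Gamma\|}\int \chi \, \diff m_\Gamma \leq \eps,
\end{equation*}
and letting $\eps\to 0$ will prove $Te^{-\delta_\Gamma T} \cdot \#A_T \to 0$.

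For the second claim, Selberg's lemma, applied to the finitely presented group $\Gamma$ (Proposition~\ref{prop:finitely many cusps}), provides a torsion-free finite-index subgroup $\Gamma'\leq\Gamma$, whence $\#\Stab_\Gamma(\tilde v)\leq [\Gamma:\Gamma']$ for every $\tilde v \in S\Omega$. Strong irreducibility makes $\corfix$ trivial, so Remark~\ref{rmk:nbconj} (applied to a strongly primitive rank-one representative) gives $\nbconj_g=1$ whenever $g\notin A$, and $\nbconj_g \leq [\Gamma:\Gamma']$ in general. Using the identity
\begin{equation*}
\sum_{c\in [\Gamma]^{\prkone}_T} \dirac_c \;=\; \sum_{g\in \mathcal G^{\rkone}_\Gamma(T)} \nbconj_g \, \dirac_g,
\end{equation*}
I then estimate, for any $f\in C_b(T^1M)$,
\begin{equation*}
\left| \delta_\Gamma T e^{-\delta_\Gamma T}\sum_{c\in[\Gamma]^{\prkone}_T} \int f\, \diff\dirac_c \;-\; \delta_\Gamma T e^{-\delta_\Gamma T}\sum_{g\in\mathcal G^{\rkone}_\Gamma(T)}\int f\, \diff\dirac_g\right| \leq [\Gamma:\Gamma']\,\|f\|_\infty \cdot \delta_\Gamma T e^{-\delta_\Gamma T}\# A_T,
\end{equation*}
which tends to $0$ by the first claim; combining this with Theorem~\ref{thm:pcgeod_equidist_geomfin} applied to $f$ will yield the desired convergence.

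The only substantive new difficulty compared with the convex-cocompact argument of Proposition~\ref{prop:countingconj} is that the continuous majorant $\chi$ of $\mathbf{1}_K$ must be chosen in $C_b$ rather than $C_c$, because $K$ can extend into the cusps; this is precisely what Theorem~\ref{thm:pcgeod_equidist_geomfin} is designed to absorb. All remaining steps follow the general rank-one template already set up in Section~\ref{sec:nb per conj}.
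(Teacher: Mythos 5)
Your proof is correct and follows essentially the same route as the paper, whose own proof simply says it is identical to that of Proposition~\ref{prop:countingconj} with Theorem~\ref{thm:pcgeod_equidist} replaced by Theorem~\ref{thm:pcgeod_equidist_geomfin}. You have merely filled in the details the paper leaves implicit (closedness of $K$, the Urysohn construction of the bounded majorant $\chi$, and the multiplicity bound $\nbconj_g\leq[\Gamma:\Gamma']$ via Selberg's lemma), all of which are carried out correctly.
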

\begin{proof}
 The proof is the same as that of Proposition~\ref{prop:countingconj}, except that we use Theorem~\ref{thm:pcgeod_equidist_geomfin} instead of Theorem~\ref{thm:pcgeod_equidist}.\qedhere
%
 
\end{proof}

Integrating a constant function against both sides of the last statement, we obtain
\begin{cor}
Let $\Omega\subset\proj(V)$ be a smooth domain, and $\Gamma\leq\Aut(\Omega)$ an irreducible discrete subgroup which acts geometrically finitely on $\partial\Omega$. Then 
\[ \# [\Gamma]^{\prkone}_T \underset{T\to\infty}{\widesim} \frac{e^{-\delta_\Gamma T}}{\delta_\Gamma T} .\] 
\end{cor}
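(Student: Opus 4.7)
The plan is to apply the immediately preceding proposition to the bounded continuous test function $f \equiv 1$. The irreducibility of $\Gamma$ guarantees that the core-fixing subgroup $F$ is trivial (the center of an irreducible subgroup of $\PGL(V)$ acting on $\Omega$ consists only of the identity), so we are in the setting where the second conclusion of that proposition applies.

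The crucial point that makes this a one-step deduction is that the convergence in the preceding proposition takes place in $C_b(T^1M)^*$, not merely $C_c(T^1M)^*$; this is inherited directly from the $C_b$-strength of Theorem~\ref{thm:pcgeod_equidist_geomfin}, which was established in \S\ref{sub:geomfin_pcgeod_equidist} precisely by bounding the $\Em_\Gamma^L$-mass of horoball neighborhoods uniformly in $L$ (using Lemma~\ref{lem:critgap_para} to absorb the parabolic contribution). Without this strengthening, one could not evaluate against the constant function $1$, since in the geometrically finite but non-convex-cocompact regime closed geodesics enter arbitrarily deep into cusp neighborhoods and no compactly supported cutoff equals $1$ on all of them.

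Once the $C_b^*$-convergence is available, the computation is immediate. Since each $\dirac_c$ is the flow-invariant probability measure supported on the closed orbit associated to $c$, we have $\int_{T^1M} 1\, d\dirac_c = 1$, so
\[
\sum_{c\in[\Gamma]^{\prkone}_T}\int_{T^1M} 1\,\diff\dirac_c \;=\; \#[\Gamma]^{\prkone}_T.
\]
By Theorem~\ref{thm:finite_BMmeas}, $m_\Gamma$ is a finite nonzero measure, so $\int_{T^1M} 1\, d(m_\Gamma/\|m_\Gamma\|) = 1$. Substituting $f \equiv 1$ into the preceding proposition therefore yields
\[
\delta_\Gamma T \, e^{-\delta_\Gamma T}\, \#[\Gamma]^{\prkone}_T \xrightarrow[T\to\infty]{} 1,
\]
which rearranges to the claimed asymptotic $\#[\Gamma]^{\prkone}_T \widesim \frac{e^{\delta_\Gamma T}}{\delta_\Gamma T}$.

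There is no genuine obstacle to overcome at this level: all the substantive work was already done upstream, namely in verifying Theorem~\ref{thm:finite_BMmeas} (finiteness of $m_\Gamma$), Theorem~\ref{thm:pcgeod_equidist_geomfin} (the strengthened $C_b^*$-equidistribution in the geometrically finite case), and the passage to strongly primitive conjugacy classes via Lemma~\ref{lem:nb per conjgeomfin} and the core-fixing analysis. The corollary itself is simply the numerical readout obtained by pairing the equidistribution statement against the unit function.
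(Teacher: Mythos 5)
Your proof is correct and is exactly the paper's argument: the paper derives this corollary in one line by integrating the constant function $1$ against both sides of the preceding proposition, which is legitimate precisely because the convergence there is in $C_b(T^1M)^*$ (inherited from Theorem~\ref{thm:pcgeod_equidist_geomfin}). Your additional remarks — that irreducibility forces the core-fixing subgroup to be trivial and that $\|m_\Gamma\|<\infty$ by Theorem~\ref{thm:finite_BMmeas} — are the same implicit inputs the paper relies on.
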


\printbibliography

\end{document}